\let\oldnl\nl
\newcommand{\nonl}{\renewcommand{\nl}{\let\nl\oldnl}}
\newtheorem{theorem}{Theorem}[section]
\newtheorem{corollary}[theorem]{Corollary}
\newtheorem{proposition}[theorem]{Proposition}
\newtheorem{lemma}[theorem]{Lemma}
\newtheorem{definition}[theorem]{Definition}
\newtheorem{claim}[theorem]{Claim}
\newcommand{\R}{\mathbb{R}}
\newcommand{\id}[1]{\mathbbm{1}_{#1}}
\newcommand\norm[1]{\left\lVert#1\right\rVert}
\title{Eigenstripping, Spectral Decay, and Edge-Expansion on Posets}
\author{%
Jason Gaitonde\thanks{Department of Computer Science, Cornell University. Email: \texttt{jsg355@cornell.edu}. Supported by NSF Award CCF-1408673 and AFOSR Award FA9550-19-1-0183.}
\and
  Max Hopkins\thanks{Department of Computer Science and Engineering, UCSD, CA 92092. Email: \texttt{nmhopkin@eng.ucsd.edu}. Supported by NSF Award DGE-1650112.}
  \and
    Tali Kaufman\thanks{Department of Computer Science, Bar-Ilan University. Email: \texttt{kaufmant@mit.edu}. Supported by ERC and BSF.}
    \and
    Shachar Lovett\thanks{Department of Computer Science and Engineering, UCSD, CA 92092. Email: \texttt{slovett@cs.ucsd.edu}. Supported by NSF Award CCF-1909634.}
    \and
    Ruizhe Zhang\thanks{Department of Computer Science, UT Austin. Email: \texttt{ruizhe@utexas.edu}.}
}
\begin{document}

\maketitle

\begin{abstract}
%Fast mixing of random walks on hypergraphs (simplicial complexes) has led to myriad breakthroughs throughout theoretical computer science in the last five years. On the other hand, many important applications (e.g.\ to locally testable codes, 2-2 games) rely on a more general class of underlying structures called \emph{posets}, and crucially take advantage of non-simplicial structure. These works make it clear that the global expansion properties of posets depend strongly on their underlying architecture (e.g.\ simplicial, cubical, linear algebraic), but the overall phenomenon remains poorly understood. In this work, we quantify the advantage of different poset architectures in both a spectral and combinatorial sense, highlighting how \emph{regularity} controls the spectral decay and edge-expansion of corresponding random walks.

We study the relationship between the underlying structure of posets and the spectral and combinatorial properties of their higher-order random walks. While fast mixing of random walks on hypergraphs (simplicial complexes) has led to myriad breakthroughs throughout theoretical computer science in the last five years, many other important applications (e.g.\ to locally testable codes, 2-2 games) rely on the more general non-simplicial structure of posets. These works make it clear that the global expansion properties of posets depend strongly on their underlying architecture (e.g.\ simplicial, cubical, linear algebraic), but the overall phenomenon remains poorly understood. In this work, we quantify the advantage of different poset architectures, highlighting how  structural \emph{regularity} controls the spectral decay and edge-expansion of corresponding random walks.

In particular, we show that the spectra of walks on expanding posets (Dikstein, Dinur, Filmus, Harsha RANDOM 2018) concentrate in strips around a small number of approximate eigenvalues controlled by the regularity of the underlying poset. This gives a simple condition to identify poset architectures (e.g. the Grassmann) that exhibit fast (exponential rate) decay of eigenvalues, versus architectures like hypergraphs whose eigenvalues decay slowly (linear rate)---a crucial distinction in applications to hardness of approximation and agreement testing such as the recent proof of the 2-2 Games Conjecture (Khot, Minzer, Safra FOCS 2018). We show these results lead to a tight variance-based characterization of edge-expansion on expanding posets generalizing (Bafna, Hopkins, Kaufman, and Lovett (SODA 2022)), and pay special attention to the case of the Grassmann where we show our results are tight for a natural set of sparsifications of the Grassmann graphs. We note for clarity that our results do not recover the characterization used in the proof of the 2-2 Games Conjecture which relies on $\ell_\infty$ rather than $\ell_2$-structure.

\end{abstract}
\section{Introduction}\label{sec:intro-intro}
Random walks on high dimensional expanders (HDX) have been the object of intense study in theoretical computer science in recent years. Starting with their original formulation by Kaufman and Mass \cite{kaufman2016high}, a series of works on the spectral structure of these walks \cite{kaufman2020high,dikstein2018boolean,alev2020improved} led to significant breakthroughs in approximate sampling \cite{anari2019log,alev2020improved,anari2020spectral,chen2020rapid,chen2021optimal,chen2021rapid,feng2021rapid,jain2021spectral,liu2021coupling,blanca2021mixing}, CSP-approximation \cite{alev2019approximating,bafna2020high}, error-correcting codes \cite{jeronimo2020unique,jeronimo2021near}, agreement testing \cite{dinur2017high,dikstein2019agreement,kaufman2020local}, and more. Most of these works focus on the structure of expansion in \textit{hypergraphs} (typically called \textit{simplicial complexes} in the HDX literature). On the other hand, it has become increasingly clear that hypergraphs are not always the right tool for the job---recent breakthroughs in locally testable \cite{dinur2021locally} and quantum LDPC codes \cite{panteleev2021asymptotically,lin2022c,leverrier2022quantum}, for instance, all rely crucially on \textit{cubical} structure not seen in hypergraphs, while many agreement testing results like the proof of the 2-2 Games Conjecture \cite{subhash2018pseudorandom} rely crucially on \textit{linear algebraic} rather than simplicial structure. 

In this work, we study a generalized notion of high dimensional expansion on \textit{partially ordered sets} (posets) introduced by Dikstein, Dinur, Filmus, and Harsha (DDFH) \cite{dikstein2018boolean} called \textit{expanding posets} (eposets). Random walks on eposets capture a broad range of important structures beyond their hypergraph analogs, including natural sparsifications of the Grassmann graphs that recently proved crucial to the resolution of the 2-2 Games Conjecture \cite{subhash2018pseudorandom, khot2017independent,dinur2018towards,dinur2018non,barak2018small,khot2018small}. DDFH's notion of eposets is a \emph{global} definition of high dimensional expansion based on a relaxation of Stanley's \cite{stanley1988differential} sequentially differential posets, a definition originally capturing both the Grassmanian and complete simplicial complex. 
%and put forward a \emph{global} definition of high dimensional expansion in posets, that goes by the name \emph{eposets}. 
% The definition naturally generalizes a global notion of expansion on simplicial complexes studied in the same work, which they proved to be equivalent to standard \textit{local} notions of expansion in this special setting.
% of DDFH is the natural generalization of the global high dimensional expansion condition that they have studied for simplicial complexes.  
More recently, Kaufman and Tessler (KT) \cite{kaufman2021local} have extended the study of eposets in two important aspects. First, in contrast to DDFH's original global definition, KT introduced the local-to-global study of high dimensional expansion in eposets. Second, they identified \textit{regularity} as a key parameter controlling expansion. In particular, the authors showed strengthened local-to-global theorems for strongly regular posets like the Grassmann, giving the first general formulation for characterizing expansion based on an eposet's underlying architecture.

While analysis of the second eigenvalue is certainly an important consideration (e.g.\ for mixing applications), a deeper understanding of the spectral structure of eposets is required for applications like the proof of the 2-2 Games Conjecture. As such, our main focus in this work lies in characterizing the spectral and combinatorial behavior of random walks on eposets \textit{beyond the second eigenvalue}. Strengthening DDFH and recent work of Bafna, Hopkins, Kaufman, and Lovett (BHKL) \cite{bafna2020high}, we prove that at a coarse level (walks on) eposets indeed exhibit the same spectral and combinatorial characteristics as expanding hypergraphs (e.g.\ spectral stripping, expansion of pseudorandom sets). On the other hand, as in KT, we show that the finer-grained properties of these objects are actually controlled by the underlying poset's regularity, including the \textit{rate of decay} of the spectrum and combinatorial expansion of associated random walks. This gives a stronger separation between structures like hypergraphs with weak (linear) eigenvalue decay, and Grassmann-based eposets with strong (exponential) eigenvalue decay (a crucial property in the proof of the 2-2 Games Conjecture \cite{subhash2018pseudorandom}).

In slightly more detail, we show that all eposets exhibit a behaviour called ``eigenstripping'' \cite{kaufman2020high,dikstein2018boolean,bafna2020high}: the spectrum of any associated random walk concentrates around a few unique approximate eigenvalues. Moreover, the approximate eigenvalues of walks on eposets are tightly controlled by the poset architecture's regularity\footnote{We will additionally assume a slightly stronger condition introduced in KT \cite{kaufman2021local} called \textit{middle regularity} throughout. See \Cref{sec:poset} for details.} $R(j,i)$, which denotes the total number of rank-$i$ elements\footnote{We consider \textit{regular graded posets}, where each element has a corresponding rank. In a hypergraph, for instance, rank is given by the size of a set, while in the Grassmann poset it is given by subspace dimension.} less than any fixed rank-$j$ element (see \Cref{sec:intro-background} for standard definitions). For simplicity, we specialize our result below to the popular ``lower'' or ``down-up'' walk (this simply corresponds to taking a random step down and back up the poset, again see \Cref{sec:intro-background}); a more involved version holds for higher order random walks in full generality. 
\begin{theorem}[Eigenstripping and Regularity (informal \Cref{cor:balanced-eig} and \Cref{claim:reg-lower})]
\label{intro-eigenstripping}
The spectrum of the lower walk $UD$ on a $k$-dimensional $\gamma$-eposet is concentrated in $(k+1)$ strips: 
\[
Spec(UD) \in \{1\} \cup \bigcup\limits_{i=1}^k [ \lambda_i(UD) + O_k(\gamma), \lambda_i(UD) - O_k(\gamma)],
\]
where the approximate eigenvalues $\lambda_i(UD)$ are determined by the poset's regularity:
\[
\lambda_i(UD) = \frac{R(k-1,i)}{R(k,i)}.
\]
\end{theorem}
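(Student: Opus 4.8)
The plan is to follow the template of ``eigenstripping'' arguments for simplicial complexes \cite{kaufman2020high,dikstein2018boolean,bafna2020high}, adapting each step to the poset setting. There are two ingredients. First, a \emph{harmonic decomposition} of the top-level function space $C_k = \bigoplus_{i=0}^{k} H_i$ into approximate eigenspaces indexed by a ``birth level'' $i$. Second, an inductive computation showing that the down-up operator $UD$ acts on $H_i$ as the scalar $\lambda_i(UD)$ up to an additive operator of norm $O_k(\gamma)$. Together with self-adjointness of $UD$ with respect to the natural level inner product and the approximate orthogonality of the $H_i$, this confines the entire spectrum to the claimed $k+1$ strips; the strip at $i=0$ degenerates to the exact point $\{1\}$ since $H_0$ is the space of constants, which $UD$ fixes exactly. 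The regularity formula $\lambda_i(UD) = R(k-1,i)/R(k,i)$ is then extracted by identifying the constants appearing in the inductive computation with ratios of the regularity function $R$, which is exactly what the middle-regularity hypothesis pins down and is isolated as the separate ``regularity claim''.

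In more detail, I would first recall the eposet axioms, which furnish up/down operators $U_j\colon C_{j-1}\to C_j$ and $D_j\colon C_j\to C_{j-1}$ that are adjoint with respect to the level measures and satisfy an approximate commutation relation of the form $D_{j+1}U_{j+1} = a_j\, U_jD_j + b_j\, I + E_j$ with $\norm{E_j}\le O(\gamma)$ and poset-dependent constants $a_j,b_j\ge 0$ (this is the statement that the \emph{non-lazy} parts of the up-down and down-up walks at a given level agree up to $\gamma$). I would then set $H_i\subseteq C_k$ to be (essentially) the image of $\ker(D_i)\subseteq C_i$ under $U_kU_{k-1}\cdots U_{i+1}$; a standard argument using the commutation relation together with $\norm{U_j},\norm{D_j}=O(1)$ shows the $H_i$ are pairwise $O_k(\gamma)$-close to orthogonal and jointly $O_k(\gamma)$-close to spanning $C_k$, with an exact orthogonal eigendecomposition recovered in the sequentially differential case.

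The core step is the peeling computation. For $f\in H_i$ write $f = U_k\cdots U_{i+1}g$ with $D_i g = 0$ and expand $UDf = U_kD_kU_k(U_{k-1}\cdots U_{i+1}g)$ by repeatedly substituting the commutation relation, lowering a $D$ by one level at each step. Each substitution of $D_{j+1}U_{j+1}$ contributes one term proportional to $f$ (with coefficient a product of $a$'s times a single $b$) plus a residual with the $D$ moved one level down; after $k-i$ steps the residual reaches $D_i g = 0$ and vanishes. Collecting the surviving terms yields $UDf = \lambda_i(UD)\,f + O_k(\gamma)\norm{f}$ with
\[
\lambda_i(UD) \;=\; b_{k-1} + a_{k-1}b_{k-2} + \cdots + a_{k-1}a_{k-2}\cdots a_{i+1}\,b_i ,
\]
the error being $O_k(\gamma)$ because the $\gamma$-bounded $E_j$ is invoked at most $k$ times, each time pre- and post-composed with operators of bounded norm. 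It remains to show this telescoping sum equals $R(k-1,i)/R(k,i)$: under middle regularity the constants $a_j,b_j$ are themselves expressible through the $R(j,\cdot)$ (the laziness of $U_jD_j$ has exactly the size forced by regularity), after which the sum collapses --- e.g.\ for hypergraphs $R(j,i)=\binom{j+1}{i+1}$ and the sum telescopes to $(k-i)/(k+1)$.

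I expect the main obstacle to be the bookkeeping in the peeling step: controlling the accumulation of $O(\gamma)$ errors across the $k$ levels, and verifying that the $a_j,b_j$ genuinely coincide with the regularity ratios. The latter is precisely where \emph{middle} regularity, rather than plain regularity, is essential --- it is what forces the non-lazy parts of consecutive walks to match and hence closes the recursion. A secondary subtlety is passing from ``$UD$ acts as $\lambda_i$ up to $O_k(\gamma)$ on each $H_i$'' to ``every eigenvalue of $UD$ lies in a strip'': since the $H_i$ are only approximately orthogonal and approximately invariant, this needs a perturbation argument --- writing $UD$ as the block-diagonal operator with blocks $\lambda_i I_{H_i}$ plus an $O_k(\gamma)$-bounded remainder and applying Weyl's inequality --- rather than an exact spectral decomposition.
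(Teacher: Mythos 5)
Your overall architecture is the right one and matches the paper's: decompose $C_k$ via the HD-Level-Set Decomposition (your $H_i = U_k\cdots U_{i+1}\ker(D_i)$ is exactly DDFH's $V_k^i$), show by an inductive peeling that $UD$ acts as $\lambda_i \pm O_k(\gamma)$ on $H_i$ using the approximate commutation relation, and then transfer the approximate eigendecomposition to a statement about the true spectrum. Your peeling is also essentially right: with $a_j=\delta_j,\ b_j=1-\delta_j$, the sum $b_{k-1}+a_{k-1}b_{k-2}+\cdots$ telescopes to $1-\delta_{k-1}\cdots\delta_i$, which is the paper's eigenvalue formula (\Cref{prop:pure-eig-vals}, \Cref{cor:lower}). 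But there are two places where you diverge from the paper's proof in ways worth flagging.

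\textbf{The spectrum-transfer step.} You propose writing $UD$ as a block operator with scalar blocks $\lambda_i I_{H_i}$ plus an $O_k(\gamma)$-bounded remainder and invoking Weyl's inequality. This is essentially the route of the prior BHKL proof, which the paper explicitly reworks because it is complicated and yields sub-optimal constants: since the $H_i$ are only approximately orthogonal and the per-space errors $c_i\norm{f_i}$ can point in arbitrary directions, aggregating them into a single operator-norm bound on the remainder incurs dimension-dependent losses (a Cauchy–Schwarz step costs roughly $\sqrt{k}$, and the approximate orthogonality costs more). The paper instead applies Courant–Fischer to each $(UD-\lambda_i I)^2$ separately (\Cref{thm:approx-ortho}): restricting the quadratic form to $H_i$ gives $\dim H_i$ eigenvalues of $(UD-\lambda_i I)^2$ below $c_i^2$, hence $\dim H_i$ eigenvalues of $UD$ in $[\lambda_i\pm c_i]$, and a counting argument shows this exhausts the spectrum once the strips are disjoint. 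This is cleaner (no block bookkeeping or orthogonalization) and gives the exactly tight error $c_i$ per strip. Your version would still prove the informal statement's $O_k(\gamma)$ bound, but you should not expect it to match the paper's tightness, and the ``block-diagonal $+$ small remainder'' decomposition needs more care than your sketch suggests.

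\textbf{The regularity formula.} You assert that the constants $a_j,b_j$ ``are themselves expressible through the $R(j,\cdot)$'' and that the telescoped sum therefore equals $R(k-1,i)/R(k,i)$. This overstates what the eposet axioms give you. The $\delta_i$ in \Cref{def:eposet} are free parameters of the eposet; they are not determined by the regularity function, and equality $1-\prod_{j\ge i}\delta_j = R(k-1,i)/R(k,i)$ fails in general. What the paper proves (\Cref{claim:regularity2} and the upper-walk version in \Cref{app:regularity}, then \Cref{claim:reg-lower}) is only an \emph{approximate} relation up to $O_k(\gamma)$, and establishing even that requires both middle regularity \emph{and} the non-laziness hypothesis (\Cref{def:non-lazy}), via a separate combinatorial argument: one compares the laziness probability of the canonical walk computed two ways (combinatorially, giving the regularity ratio, versus algebraically through the commutation relation \eqref{eq:UD-DU-refined}), using that non-laziness controls the maximum transition probability of $U_{i-1}D_i$ (\Cref{lemma:anti-concentration}) to bound the discrepancy. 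Your proposal treats this identification as an exact bookkeeping fact, which is precisely the step that needs a real argument and an extra hypothesis.
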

\Cref{intro-eigenstripping} generalizes and tightens recent work on expanding hypergraphs of BHKL \cite[Theorem 2.2]{bafna2020high} (which itself extended a number of earlier works on the topic \cite{kaufman2020high,dikstein2018boolean,alev2019approximating}). % En route to this theorem, we give a tight variant for general inner product spaces of independent interest, substantially simplifying an analogous result of \cite{bafna2020high} (Theorem 2.1).
%
%On the other hand, a more careful examination reveals a stark contrast between poset architectures: the approximate eigenvalues of walks on eposets are tightly controlled by the poset architecture's regularity\footnote{We will additionally assume a slightly stronger condition introduced in KT \cite{kaufman2021local} called \textit{middle regularity} throughout. See \Cref{sec:poset} for details.} $R(j,i)$, which denotes the total number of rank-$i$ elements\footnote{We consider \textit{regular graded posets}, where each element has a corresponding rank. In a hypergraph, for instance, rank is given by the size of a set, while in the Grassmann poset it is given by subspace dimension.} less than any fixed rank-$j$ element. For simplicity, we focus on the popular setting of the ``lower'' or ``down-up'' walk (this simply corresponds to taking a random step down and back up the poset, see \Cref{sec:intro-background}).
%\begin{theorem}[Regularity Controls Spectral Decay (informal \Cref{claim:reg-lower})]\label{intro-walk-regularity}
%The approximate eigenvalues of the lower walk $UD$ on the $k$th level of a $\gamma$-eposet are controlled by the poset's underlying regularity:
%\[
%\lambda_i(UD) = \frac{R(k-1,i)}{R(k,i)} \pm O_k(\gamma).
%\]
%\end{theorem}
Additionally, our result on the connection between regularity and approximate eigenvalues generalizes the work of KT \cite{kaufman2021local}, who show an analogous result for $\lambda_2$. \Cref{intro-eigenstripping} reveals a stark contrast between the spectral behavior of eposets with different regularity parameters. As a prototypical example, consider the case of hypergraphs versus subsets of the Grassmann ($k$-dimensional vector spaces over $\mathbb{F}_q^n$). In the former, each $k$-set contains ${k \choose i}$ $i$-sets, leading to approximate eigenvalues that decay \textit{linearly} ($\lambda_i \approx (k-i)/k$). On the other hand, each $k$-dimensional vector space contains ${k \choose i}_q$ $i$-dimensional subspaces, which leads to eigenvalues that decay \textit{exponentially} ($\lambda_i \approx q^{-i}$). The latter property, which we call \textit{strong decay} is often crucial in applications (e.g.\ for hardness of approximation \cite{subhash2018pseudorandom} or fast algorithms \cite{bafna2020high}), and while it is possible to recover strong decay on weaker posets by increasing the length of the walk \cite{bafna2020high}, this is often untenable in application due to the additional degrees of freedom it affords.\footnote{For instance such a walk might take exponential time to implement, or correspond to a more complicated agreement test than desired.}
% This complements a recent result of Kaufman and Tessler \cite{kaufman2021local} who showed that posets such as the Grassmann satisfying certain regularity guarantees exhibit stronger properties in the behavior of the second eigenvalue.

The spectral structure of walks on eposets is closely related to their \textit{edge-expansion}, an important combinatorial property that has recently played a crucial role both in algorithms for \cite{bafna2020playing, bafna2020high} and hardness of unique games \cite{subhash2018pseudorandom}. The key insight in both cases lay in understanding \textit{the structure of non-expanding sets}. We give a tight understanding of this phenomenon across all eposets in the so-called $\ell_2$-regime \cite{bafna2020high}, where we show that expansion is tightly controlled by the behavior of local restrictions called \textit{links} (see \Cref{def:link}).
% (see \Cref{def:link}) are exactly controlled by the approximate eigenvalue of their corresponding eigenstrip.
\begin{theorem}[Expansion in the $\ell_2$-Regime (informal \Cref{thm:hdx-expansion})]
The expansion of any $i$-link is almost exactly $1-\lambda_i(M)$. Conversely, any set with expansion less than $1-\lambda_{i+1}(M)$ has \textbf{high variance}
%\footnote{We note that using variance rather maximum here is what distinguishes between the $\ell_2$-regime (typically used for algorithms \cite{bafna2020playing,bafna2020high}), and the stricter $\ell_\infty$-regime (typically used for hardness \cite{subhash2018pseudorandom}).} 
across $i$-links.
\end{theorem}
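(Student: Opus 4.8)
The plan is to combine the eigenstripping decomposition from \Cref{intro-eigenstripping} with a matching decomposition of the ``variance across $i$-links'' functional, and then play the two against each other. First I would fix the orthogonal decomposition $\R^{X_k} = H_0 \oplus H_1 \oplus \cdots \oplus H_k$ guaranteed by the eigenstripping theorem, where $H_0$ is the constants, $M|_{H_i}$ has spectrum within $O_k(\gamma)$ of $\lambda_i(M)$, and the approximate eigenvalues satisfy $1 = \lambda_0 > \lambda_1 > \cdots > \lambda_k$ with gaps dictated by the regularity $R(\cdot,\cdot)$. Writing $\id{S} = \sum_i f_i$ with $f_i \in H_i$ and $\tilde\alpha_i := \|f_i\|^2/\|\id{S}\|^2$ (so $\sum_i \tilde\alpha_i = 1$ and $\tilde\alpha_0 = \mu(S)$, the density of $S$), \Cref{intro-eigenstripping} gives the Rayleigh-quotient estimate $\Phi(S) = 1 - \sum_i \lambda_i(M)\tilde\alpha_i \pm O_k(\gamma)$.

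Next I would establish the key structural lemma: for any $f$ on $X_k$,
\begin{align*}
\mathrm{Var}_i(f) \;:=\; \Ebb_{\tau \in X_i}\big[\mathrm{Var}_{\sigma \supset \tau}(f(\sigma))\big]
&= \|f\|^2 - \|D^{(i)} f\|^2 \\
&= \sum_{j > i}\|f_j\|^2 + \sum_{1 \le j \le i}(1 - c_{i,j})\|f_j\|^2,
\end{align*}
where $D^{(i)}$ is the composition of down operators from level $k$ down to level $i$ (i.e.\ $i$-link averaging) and the $c_{i,j} \in (0,1]$ satisfy $c_{i,0} = 1$. The identity $\Ebb_\tau \Ebb_{\sigma \supset \tau}[f(\sigma)^2] = \|f\|^2$ is immediate from consistency of the eposet's weighting; the content is that $D^{(i)}$ annihilates $H_j$ for $j > i$ and acts as $\sqrt{c_{i,j}}$ times a partial isometry on $H_j$ for $j \le i$, which follows from writing $H_j$ at level $k$ as the lift $U^{k-j}$ of the ``new'' space at level $j$ and identifying $c_{i,j}$ as the eigenvalue of the canonical (all-the-way-up-to-level-$k$-and-back) walk $D^{(i)}U^{(i)}$ on the corresponding piece of $\R^{X_i}$. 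The main obstacle is pinning these $c_{i,j}$ down and, crucially, lower-bounding $1 - c_{i,j}$ by an explicit function of the regularity for $1 \le j \le i$: this is exactly the step generalizing the hypergraph computation of BHKL, it is where \emph{middle regularity} is used, and the associated ``strong decay'' is what makes this constant large for Grassmann-type posets.

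For the forward direction I would observe that, after the natural normalization, the indicator $\id{X_\tau}$ of the $i$-link of $\tau \in X_i$ is exactly the lift $U^{(i)}\delta_\tau$ of the point mass at $\tau$. Since lifting carries the level-$j$ new space into $H_j$, this already puts $\id{X_\tau} \in H_0 \oplus \cdots \oplus H_i$ and hence $\Phi(X_\tau) \ge 1 - \lambda_i(M) - O_k(\gamma)$. For the matching upper bound I would decompose $\delta_\tau = \sum_{j \le i}(\delta_\tau)_j$ at level $i$ and note that the lifted Hodge mass at level $j$ is proportional to $c_{i,j}\|(\delta_\tau)_j\|^2$; because the new spaces at levels below $i$ form a vanishing fraction of level $i$ (by regularity --- polynomially small for hypergraphs, exponentially small for the Grassmann), the point mass, hence $\id{X_\tau}$, concentrates on $H_i$ up to lower-order terms, giving $\Phi(X_\tau) = 1 - \lambda_i(M) \pm o(1)$.

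Finally, for the converse, suppose $\Phi(S) < 1 - \lambda_{i+1}(M)$ and let $\epsilon := 1 - \lambda_{i+1}(M) - \Phi(S) > 0$ be the expansion deficit. Setting $A := \sum_{1 \le j \le i}\tilde\alpha_j$ and $B := \sum_{j > i}\tilde\alpha_j$ (so $\mu(S) + A + B = 1$), the Rayleigh-quotient estimate together with $\lambda_j \le \lambda_1$ for $1 \le j \le i$ and $\lambda_j \le \lambda_{i+1}$ for $j > i$ yields, after substituting $B = 1 - \mu(S) - A$, the bound $A \ge \big(\epsilon - \mu(S)(1 - \lambda_{i+1}(M)) - O_k(\gamma)\big)/(\lambda_1 - \lambda_{i+1})$; morally, $S$ cannot fall below the $i$-link threshold unless it places genuine mass on the ``middle'' eigenspaces $H_1, \ldots, H_i$, since the constants explain only a $\mu(S)$-sized deficit and all of $H_{>i}$ has eigenvalue at most $\lambda_{i+1}$. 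Plugging this into the variance identity and using $1 - c_{i,j} \ge c_R$ for $1 \le j \le i$ gives $\mathrm{Var}_i(\id{S}) \ge c_R\,\mu(S)A \gtrsim_R \mu(S)\big(\epsilon - \mu(S)(1-\lambda_{i+1}(M))\big) - O_k(\gamma)\mu(S)$ --- that is, once the expansion deficit exceeds the trivial density contribution, $S$ has variance across $i$-links bounded below in terms of that deficit. Beyond the $c_{i,j}$ computation flagged above, I expect only routine bookkeeping around normalizations; everything else is linear algebra on top of \Cref{intro-eigenstripping}.
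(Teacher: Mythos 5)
Your high-level outline (decompose $\id{S}$ into eigenstrips, expand the Rayleigh quotient, relate a variance functional to the eigenstrip masses) matches the paper's in spirit, but two of your key steps break.

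On the forward direction: containment $\id{X_\tau}\in H_0\oplus\cdots\oplus H_i$ together with monotonicity $\lambda_0\geq\cdots\geq\lambda_i$ gives $1-\Phi(X_\tau)=\sum_{j\leq i}\lambda_j\tilde\alpha_j\geq\lambda_i$, i.e.\ the \emph{upper} bound $\Phi(X_\tau)\leq 1-\lambda_i(M)+O(\gamma)$ --- the opposite of the direction you assert falls out for free. The genuinely nontrivial step is the matching lower bound on expansion, which requires showing $\id{X_\tau}$ concentrates on $H_i$ rather than on $H_j$ with $j<i$, and your dimension count only controls this \emph{on average}: $\sum_\tau\pi_i(\tau)\|(\delta_\tau)_j\|^2=\dim H_j^{(i)}$, but an individual point mass in a non-transitive eposet can place disproportionate weight on a low-dimensional $H_j$. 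The paper's Lemma~\ref{lemma:link-projection} avoids this by computing $\langle\id{X_\tau},N_k^1\id{X_\tau}\rangle/\langle\id{X_\tau},\id{X_\tau}\rangle$ \emph{exactly} as $R(k,i)/R(k+1,i)$ via regularity (a per-link combinatorial identity, not an average), and then extracts concentration from the rigidity of the spectral expansion $\sum_{j\leq i}\lambda_j(N_k^1)\tilde\alpha_j$: any appreciable mass on $H_j$ for $j<i$ would push the non-expansion strictly above $\lambda_i(N_k^1)$. That rigidity step is what your approach lacks.

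On the converse: you work with $\mathrm{Var}_i(f)=\Ebb_{\tau}[\mathrm{Var}_{\sigma>\tau}f]=\|f\|^2-\|D_i^kf\|^2$, the average \emph{within}-link variance, but the paper's ``variance across $i$-links'' (Definition~\ref{def:pseudorandom}) is $\mathrm{Var}(D_i^kf)=\|D_i^kf\|^2-\Ebb[f]^2$, the variance \emph{of the link averages}. By the law of total variance these are complements, $\mathrm{Var}_i(f)+\mathrm{Var}(D_i^kf)=\mathrm{Var}(f)$, so proving $\mathrm{Var}_i(\id{S})$ is large is tantamount to showing $S$ \emph{is} pseudorandom --- the opposite of the intended conclusion. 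Spectrally, $\mathrm{Var}_i(f)\approx\sum_{j>i}\|f_j\|^2+\sum_{1\leq j\leq i}(1-c_{i,j})\|f_j\|^2$ carries the coefficients $1-c_{i,j}$, whereas the correct functional $\mathrm{Var}(D_i^kf)\approx\sum_{1\leq j\leq i}c_{i,j}\|f_j\|^2$ carries $c_{i,j}=\lambda_j(\widecheck N_k^{k-i})\approx R(i,j)/R(k,j)$; the latter is exactly what drives the paper's Theorem~\ref{lem:low-level-weight} and the level-$i$ inequality (Corollary~\ref{cor:ell2-proj}). Your deficit bound on $A=\sum_{1\leq j\leq i}\tilde\alpha_j$ is fine, but it must be paired with the lower bound $\mathrm{Var}(D_i^kf)\gtrsim\rho_i^k\uinner{f}{f_i}-O(\gamma)\|f\|^2$ (the diagonal coefficient $c_{i,i}=\rho_i^k\approx 1/R(k,i)$ is precisely where the $R(k,i)$ factor in the final bound originates), not with the complementary $1-c_{i,j}$.
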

% In other words, if the underlying walk in question is not itself an expander graph, then links are non-expanding. Our second result in this vein is a type of converse: \textit{every} non-expanding set is explained by its behavior on links.
% \begin{theorem}[The $\ell_2$-Structure of non-Expanding Sets (informal)]
% If $S$ is a small non-expanding set, it must have \textbf{high variance}\footnote{We note that using variance rather maximum here is what distinguishes between the $\ell_2$-regime (typically used for algorithms \cite{bafna2020playing,bafna2020high}), and the stricter $\ell_\infty$-regime (typically used for hardness \cite{subhash2018pseudorandom}).} across links.
% \end{theorem}
In \cite{bafna2020high}, it was shown this characterization allows for the application of a local-to-global algorithmic framework for unique games on such walks. This remains true on eposets,
%\footnote{Though we note that technically some additional regularity assumptions may be required.} 
and it is an interesting open question whether there are significant applications beyond those given in BHKL's original work.\footnote{While one can apply the framework to playing unique games on the Grassmann poset (or sparsifications thereof), the spectral parameters are such that this does not give a substantial improvement over standard algorithms \cite{arora2008unique}.} 

Finally, as an application of our structure theorems, we give an in-depth analysis of the $\ell_2$-structure of walks on expanding subsets of the Grassmann poset called $q$-eposets (first studied in \cite{dikstein2018boolean}). We focus in particular on the natural $q$-analog of an important set of walks called \textit{partial-swap walks} introduced by Alev, Jeronimo, and Tulsiani \cite{alev2019approximating} that generalize the Johnson graphs when applied to expanding hypergraphs. We show that applied to $q$-eposets, these objects give a natural set of walks generalizing the Grassmann graphs and further prove that our generic analysis for eposets gives a tight characterization of non-expansion in this setting. We note that this does not recover the result used for the proof of the 2-2 Games Conjecture which lies in the \textit{$\ell_\infty$-regime} (replacing variance above with maximum) and requires a \textit{dimension-independent} bound. This issue was recently (and independently) resolved for simplicial complexes in \cite{bafna2021hypercontractivity} and \cite{gur2021hypercontractivity}, and we view our work as an important step towards a more general understanding for families like the Grassmann beyond hypergraphs.

\subsection{Background}\label{sec:intro-background}
Before jumping into our results in any further formality, we'll briefly overview the theory of expanding posets and higher order random walks. All definitions are covered in full formality in \Cref{sec:prelims}. A $d$-dimensional graded poset is a set $X$ equipped with a partial order ``<'' and a ranking function $r: X \to [d]$ that respects the partial order and partitions $X$ into levels $X(0) \cup \ldots \cup X(d)$. When $x < y$ and $r(y)=r(x)+1$, we write $x \lessdot y$ or equivalently $y \gtrdot x$.\footnote{This is traditionally called a `covering relation.'} Finally, we will assume throughout this work that our posets are \textit{downward regular}: there exists a regularity function $R(k,i)$ such that every $k$-dimensional element is greater than exactly $R(k,i)$ $i$-dimensional elements.\footnote{For notational convenience, we also define $R(i,i)=1$ and $R(j,i)=0$ whenever $j<i$.}

Graded posets come equipped with a natural set of averaging operators called the \textit{up} and \textit{down} operators. Namely, for any function $f: X(i) \to \mathbb{R}$, these operators average $f$ up or down one level of the poset respectively:
\begin{align*}
    U_i f(x) &= \underset{y \lessdot x}{\mathbb{E}}[f(y)],\\
    D_if(y) &= \underset{x \gtrdot y}{\mathbb{E}}[f(x)].
\end{align*}
Composing the averaging operators leads to a natural notion of random walks on the underlying poset called \textit{higher order random walks} (HD-walks). The simplest example of such a walk is the \textit{upper walk} $D_{i+1}U_i$ which moves between elements $x,x' \in X(i)$ via a common element $y \in X(i+1)$ with $y> x,x'$. Similarly, the \textit{lower walk} $U_{i-1}D_i$ walks between $x,x' \in X(i)$ via a common $y \in X(i-1)$ with $y< x,x'$. It will also be useful at points to consider longer variants of the upper and lower walks called \textit{canonical walks} $\widehat{N}_k^i = D_{k+1} \circ \ldots \circ D_{k+i} \circ U_{k+i-1}\circ \ldots \circ U_k$ and $\widecheck{N}_k^i = U_k \circ \ldots \circ U_{k-i} \circ D_{k-i+1} \circ \ldots \circ D_k$ which similarly walk between $k$-dimensional elements in $X(k)$ via a shared element in $X(k+i)$ or $X(k-i)$ respectively.

Following DDFH \cite{dikstein2018boolean}, we call a poset a $(\delta,\gamma)$-expander for $\delta \in [0,1]^{d-1}$ and $\gamma \in \mathbb{R}_+$ if the upper and lower walks are spectrally similar up to a laziness factor:
\[
\norm{D_{i+1}U_{i} - (1-\delta_i)I - \delta_iU_{i-1}D_{i}} \leq \gamma.
\]
This generalizes standard spectral expansion which can be equivalently defined as looking at the spectral norm of $A_G - U_0D_1$, where $A_G$ (the adjacency matrix) is exactly the non-lazy upper walk. We note that under reasonable regularity conditions (see \cite{kaufman2021local,dikstein2018boolean}), this definition is equivalent to \textit{local-spectral expansion} \cite{dinur2017high}, which requires every local restriction of the poset to be an expander graph. While most of our results hold more generally, it will also be useful to assume a weak non-laziness condition on our underlying posets throughout that holds in most cases of interest (see \Cref{def:non-lazy}).
\subsection{Results}
With these definitions in mind, we can now cover our results in somewhat more formality. We split this section into three parts for readability: spectral stripping, characterizing edge expansion, and applications to the Grassmann.
\subsubsection{Eigenstripping}
We start with our generalized spectral stripping theorem for walks on expanding posets.
\begin{theorem}[Spectrum of HD-Walks (informal \Cref{cor:balanced-eig})]\label{intro:eposet-spectra}
Let $M$ be an HD-walk on the $k$th level of a $(\delta,\gamma)$-eposet. Then the spectrum of $M$ is highly concentrated in $k+1$ strips:
\[
\text{Spec}(M) \in \{1\} \cup \bigcup_{i=1}^k \left[\lambda_i(M) - e, \lambda_i(M) + e \right]
\]
where $e \leq O_{k,\delta}(\gamma)$. Moreover, the span of eigenvectors in the $i$th strip approximately correspond to functions lifted from $X(i)$ to $X(k)$.
\end{theorem}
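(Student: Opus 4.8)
The plan is to reduce the spectral statement to two ingredients: an approximate block-diagonalization of $M$ with respect to the ``harmonic'' decomposition of $\mathbb{R}^{X(k)}$, and an explicit computation of the scalar by which $M$ acts on each block. First I would invoke DDFH \cite{dikstein2018boolean}, with the middle-regularity refinement of KT \cite{kaufman2021local}, to obtain an orthogonal decomposition $\mathbb{R}^{X(k)} = \bigoplus_{i=0}^k V_i^k$ in which $V_i^k$ is (approximately) the lift $U_{k-1}\cdots U_i(H_i)$ of the space $H_i = \ker D_i \subseteq \mathbb{R}^{X(i)}$ of ``new'' functions. This already gives the ``moreover'' clause: up to $O_{k,\delta}(\gamma)$ error, vectors in the $i$-th block are genuine lifts of functions on $X(i)$. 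The work at this step is to confirm that the decomposition is (approximately) orthogonal and that the down operators restrict to rescaled approximate isometries $V_i^k \to V_i^{k-1}$ --- precisely where middle regularity is used --- so that compositions of up and down operators can be tracked block by block.

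Next, I would use the HD-walk calculus of BHKL \cite{bafna2020high} to write $M$ as an affine combination $M = \sum_j c_j \widehat{N}_k^j$ of canonical walks ($c_j \ge 0$, $\sum_j c_j = 1$), and then reduce each canonical walk, via the eposet identity $D_{t+1}U_t = (1-\delta_t)I + \delta_t U_{t-1}D_t + E_t$ with $\|E_t\| \le \gamma$, to an approximate polynomial in $U_{k-1}D_k$ of degree at most $k$. Evaluating on a lifted harmonic vector $v = U_{k-1}\cdots U_i\, g$ with $g \in H_i$ --- repeatedly using $Dg = 0$ on the harmonic part --- the walk acts as $\lambda_i(M)\,I + (\text{error})$, where $\lambda_i(M)$ is the corresponding affine combination of ratios of regularities, collapsing to $R(k-1,i)/R(k,i)$ in the base case $M = U_{k-1}D_k$ (cf.\ \Cref{claim:reg-lower}). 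Accumulating the $\gamma$-errors over the $O(k)$ operator applications, together with the $1/\delta$ factors the eposet identity introduces, bounds the error by $e \le O_{k,\delta}(\gamma)$.

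Finally, writing $\Pi_i$ for the orthogonal projector onto $V_i^k$, the two steps give $\bigl\|M - \sum_{i=0}^k \lambda_i(M)\Pi_i\bigr\| \le O_{k,\delta}(\gamma)$, with $\lambda_0(M) = 1$ from the constant function. Since $M$ is self-adjoint in the canonical inner product and $\sum_i \lambda_i(M)\Pi_i$ has spectrum exactly $\{\lambda_0(M),\dots,\lambda_k(M)\}$, Weyl's inequality places every eigenvalue of $M$ within $O_{k,\delta}(\gamma)$ of some $\lambda_i(M)$, giving the $k+1$ strips. I expect the main obstacle to lie in Steps 1 and 2 jointly: showing the harmonic decomposition is robust enough that pushing a vector through many up/down operators does not blow up the $\gamma$-error beyond a constant depending only on $k$ and $\delta$, and that the surviving scalar is \emph{exactly} the stated ratio of regularities rather than merely close to it. Managing the $1/\delta$ losses from iterating the eposet identity and the inexactness of the decomposition on a general eposet (as opposed to an honest differential poset) is the delicate part, and this is exactly what the middle-regularity hypothesis is there to enable.
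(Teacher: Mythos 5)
Your overall architecture matches the paper's at a high level --- show the HD-Level-Set Decomposition $\bigoplus V_k^i$ gives approximate eigenvectors, then deduce that the genuine spectrum must concentrate in strips --- but your mechanism for the second step is genuinely different from the paper's and contains a gap that you should be aware of. You propose to build the self-adjoint operator $\sum_i \lambda_i(M)\Pi_i$ (with $\Pi_i$ the orthogonal projector onto $V_k^i$), bound $\|M - \sum_i \lambda_i(M)\Pi_i\|$ by $O_{k,\delta}(\gamma)$, and finish with Weyl's inequality. The trouble is that the $V_k^i$ are only \emph{approximately} orthogonal on a $(\delta,\gamma)$-eposet (this is Lemma 8.11 of DDFH, restated as \Cref{lemma:fvsg-body}), so the orthogonal projectors $\Pi_i$ neither sum to the identity nor commute; consequently $\sum_i \lambda_i(M)\Pi_i$ does \emph{not} have spectrum exactly $\{\lambda_0(M),\dots,\lambda_k(M)\}$, only approximately so. To make Weyl's inequality deliver the claimed strips you would need to first orthogonalize the blocks (or otherwise quantify how far the comparison operator's spectrum deviates from $\{\lambda_i\}$) and separately control $\sum_i\|v_i\| \lesssim \sqrt{k}\,\|v\|$ for a general decomposition $v=\sum v_i$ --- both of which are possible but introduce exactly the extra bookkeeping that BHKL's original proof carried, which this paper went out of its way to replace.

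The paper's \Cref{thm:approx-ortho} avoids all of this by never forming a comparison operator: for each $i$ it looks at $(M-\lambda_i I)^2$, notes that its Rayleigh quotient is $\le c_i^2$ on the $\dim(V_k^i)$-dimensional subspace $V_k^i$, applies Courant--Fischer to conclude that $(M-\lambda_i I)^2$ has at least $\dim(V_k^i)$ eigenvalues below $c_i^2$, and then counts: since $\sum_i\dim(V_k^i)=\dim(C_k)$ and the intervals $[\lambda_i\pm c_i]$ are disjoint, every eigenvalue of $M$ is accounted for. Crucially, this argument requires no orthogonality of the $V_k^i$ whatsoever --- only the per-block approximate-eigenvector inequality and the fact that the dimensions add up --- which is why it gives tight $c_i$-dependence and a one-paragraph proof. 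Your step (1), establishing that each $f_\ell=U_\ell^k g_\ell$ with $g_\ell\in\ker D_\ell$ is an approximate eigenvector by iterating the eposet identity $D_{t+1}U_t=(1-\delta_t)I+\delta_t U_{t-1}D_t+E_t$, is the same as the paper's \Cref{prop:pure-eig-vals}, and the expression of $\lambda_i$ via regularity ratios (your $R(k-1,i)/R(k,i)$ for the lower walk) matches \Cref{claim:reg-lower}. Two smaller corrections: middle regularity is used in the paper to relate the eposet parameters $\delta$ to the regularity function $R(\cdot,\cdot)$ (\Cref{claim:reg}, \Cref{claim:regularity2}), not to establish the decomposition itself, which only needs downward regularity; and a general HD-walk is an affine combination of pure walks with possibly \emph{negative} coefficients (the partial-swap walks have alternating signs), so you cannot assume $c_j\ge0$, and the error bounds must carry the weight $w(M)=|\alpha|_1$ rather than being normalized to $1$.
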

This generalizes and improves an analogous result of BHKL \cite{bafna2020high} on expanding hypergraphs, which had sub-optimal error dependence of $O_k(\gamma^{1/2})$. The main improvement stems from an optimal spectral stripping result for arbitrary inner product spaces of independent interest.
\begin{theorem}[Eigenstripping (informal \Cref{thm:approx-ortho})]\label{intro:approx-ortho}
Let $M$ be a self-adjoint operator over an inner product space $V$, and $V=V^1 \oplus \ldots \oplus V^k$ be an ``approximate eigendecomposition'' in the sense that there exist $\{\lambda_i\}_{i=1}^k$ and sufficiently small error factors $\{c_i\}_{i=1}^k$ such that for all $f_i \in V^i$:
\[
\norm{Mf_i - \lambda_if_i}_2 \leq c_i\norm{f_i}.
\]
Then the spectrum of $M$ is concentrated around each $\lambda_i$:
\[
\text{Spec}(M) \subseteq \bigcup_{i=1}^k \left [ \lambda_i - c_i, \lambda_i + c_i\right ].
\]
\end{theorem}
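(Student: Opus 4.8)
The plan is to localize the spectrum one eigenvalue at a time. Fix $\mu \in \mathrm{Spec}(M)$; since $M$ is self-adjoint on a (finite-dimensional, as in all the applications) inner product space, $\mu$ has a genuine unit eigenvector $g$ with $Mg = \mu g$. Set $T := M - \mu I$, so $T$ is self-adjoint and $g \in \ker T$, whence $g \perp \mathrm{Im}(T)$. Decompose $g = \sum_{i=1}^{k} g_i$ along the (orthogonal) approximate eigendecomposition $V = V^1 \oplus \cdots \oplus V^k$, and write $e_i := M g_i - \lambda_i g_i$, so that $\|e_i\| \le c_i \|g_i\|$ by hypothesis and $T g_i = (\lambda_i - \mu) g_i + e_i$.

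The heart of the argument is to pair the eigenvector equation against $T$ itself, rather than crudely triangle-inequality-ing the identity $\sum_i (\mu - \lambda_i) g_i = \sum_i e_i$ (that route loses a square root and is what produced the $O(\gamma^{1/2})$ bound in prior work). Since $T g_i \in \mathrm{Im}(T)$ and $g \perp \mathrm{Im}(T)$, we get $0 = \langle g, T g_i\rangle = (\lambda_i - \mu)\langle g, g_i\rangle + \langle g, e_i\rangle$; orthogonality of the decomposition gives $\langle g, g_i\rangle = \|g_i\|^2$, so $(\mu - \lambda_i)\|g_i\|^2 = \langle g, e_i\rangle$ and Cauchy--Schwarz yields the per-block bound $|\mu - \lambda_i|\,\|g_i\| \le \|e_i\| \le c_i \|g_i\|$. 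Hence $\|g_i\| \le c_i / |\mu - \lambda_i|$ for every $i$ (assuming $\mu \ne \lambda_i$, else $\mu$ already lies in the $i$-th strip). Squaring and summing against $\sum_i \|g_i\|^2 = 1$ gives the clean pseudospectral inequality
\[
\sum_{i=1}^{k} \frac{c_i^2}{(\mu - \lambda_i)^2} \;\ge\; 1 \qquad \text{for every } \mu \in \mathrm{Spec}(M),
\]
so the largest term is $\ge 1/k$ and therefore $|\mu - \lambda_i| \le \sqrt{k}\, c_i$ for some $i$; moreover, when the $c_i$ are small relative to the gaps the off-diagonal terms are negligible, pinning $\|g_{i_0}\| = 1 - o(1)$ for the nearest index $i_0$ and upgrading the estimate to $|\mu - \lambda_{i_0}| \le (1 + o(1)) c_{i_0}$, i.e.\ the strips as stated.

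I expect the only real friction to be the passage from the pseudospectral inequality $\sum_i c_i^2/(\mu - \lambda_i)^2 \ge 1$ to a genuine union of disjoint intervals: if the $\lambda_i$ are permitted to cluster, a handful of small terms can conspire to reach $1$ with $\mu$ outside every strip, so the ``sufficiently small error factors'' hypothesis is doing real work, and the cleanest honest statement inflates each radius by a factor depending only on $k$ and on $\min_{j \ne i}|\lambda_i - \lambda_j|$ — harmless in the applications, where $c_i = O_{k,\delta}(\gamma)$ while the gaps are $\Omega_{k,\delta}(1)$. The secondary point worth flagging is that $\langle g, g_i\rangle = \|g_i\|^2$ uses orthogonality of $V^1 \oplus \cdots \oplus V^k$, exactly the structure of the approximate Hodge decomposition available in the poset setting; for an abstract (non-orthogonal) direct sum one would instead pay the condition number of the splitting in the final error. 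Everything else — existence of the eigenvector, the orthogonal splitting $V = \ker T \oplus \mathrm{Im}\,T$ — is routine finite-dimensional linear algebra.
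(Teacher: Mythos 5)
Your argument is a genuinely different route from the paper's, and it does not quite establish the theorem as stated. The paper's proof is a dimension-counting argument via Courant--Fischer: for each $i$, set $A = (M-\lambda_i I)^2$ and apply Courant--Fischer with the test space $V^i$ to conclude that $A$ has at least $\dim(V^i)$ eigenvalues $\le c_i^2$, hence $M$ has at least $\dim(V^i)$ eigenvalues in $[\lambda_i \pm c_i]$; since these intervals are disjoint by the gap hypothesis and $\sum_i \dim(V^i) = \dim V$, the pigeonhole is exact and nothing can lie outside. Two things make this strictly stronger than your approach. First, it uses only that $V^1 \oplus \cdots \oplus V^k$ is a direct sum (so dimensions add), not that it is orthogonal. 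Your argument leans on orthogonality twice --- once for $\langle g, g_i\rangle = \|g_i\|^2$ and once for $\sum_i \|g_i\|^2 = 1$ --- and the hypothesis of the theorem does not grant it; indeed the HD-Level-Set Decomposition to which this is applied (\Cref{lemma:fvsg-body}) is only \emph{approximately} orthogonal, so you would have to carry a condition-number loss that the paper simply never incurs. Second, your pseudospectral inequality $\sum_i c_i^2/(\mu-\lambda_i)^2 \ge 1$ is strictly weaker than the claimed inclusion: with $k=2$, $\lambda_1 = 1$, $\lambda_2 = 0$, $c_1 = c_2 = c \in [1/\sqrt 8,\, 1/2)$, the midpoint $\mu = 1/2$ satisfies the inequality and lies outside both disjoint intervals, so the inequality alone cannot place $\mu$ inside a strip of radius exactly $c_i$. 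Your bootstrapping step (pin $\|g_{i_0}\| = 1 - o(1)$ using gap separation, then refine) patches this but only up to $(1+o(1))c_{i_0}$, and the paper explicitly cares about the exact constant --- they note the theorem is tight because a true eigendecomposition of a perturbed operator is a $(\{\lambda_i \pm c_i\},\{c_i\})$-approximate eigendecomposition, a tightness your version no longer exhibits.

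The underlying structural difference is worth internalizing: you localize each eigenvalue $\mu$ individually and must then fight to show $\mu$ lands in the \emph{right} strip with the \emph{right} radius, whereas the paper never reasons about individual $\mu$'s at all. It counts eigenvalues per strip and lets disjointness plus $\sum\dim(V^i)=\dim V$ do the exclusion globally, which is why it gets the sharp radius for free. Your observation about where BHKL lost a square root is a reasonable diagnosis, and your per-block identity $(\mu-\lambda_i)\|g_i\|^2 = \langle g, e_i\rangle$ is a clean computation that does avoid that loss; but to reach the stated theorem without extra hypotheses or lossy constants you would need to replace the per-$\mu$ localization with a counting argument, at which point you have essentially rediscovered the Courant--Fischer proof.
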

Note that this result is tight---when there is $c_i$ ``error'' in our basis we cannot expect to have better than $c_i$ error in the resulting spectral strips. \Cref{intro:approx-ortho} improves over a preliminary result to this effect in \cite{bafna2020high} which had substantially worse dependence on $c_i$ and required much stronger assumptions.\footnote{It is also worth noting that the proof in this work is substantially simplified from \cite{bafna2020high}, requiring no linear algebraic manipulations at all.} \Cref{intro:eposet-spectra} then follows by work of DDFH (\cite[Theorem 8.6]{dikstein2018boolean}), who introduced a natural approximate eigendecomposition on eposets we call the HD-Level-Set Decomposition.

In full generality, the approximate eigenvalues in \Cref{intro:eposet-spectra} depend on the eposet parameters $\delta$, and can be fairly difficult to interpret. However, we show that under weak assumptions (see \Cref{sec:prelims}) the eigenvalues can be associated with the regularity of the underlying poset. We focus on the lower walks for simplicity, though the result can be similarly extended to general walks on eposets.
\begin{theorem}[Regularity Controls Spectral Decay (informal \Cref{claim:reg-lower})]\label{intro:regularity-thm}
The approximate eigenvalues of the lower walk $\widecheck{N}_k^{k-i}$ on a $(\delta,\gamma)$-eposet are controlled by the poset's regularity function:
\[
\lambda_j(\widecheck{N}_k^{k-i}) \in \frac{R(i,j)}{R(k,j)} \pm O_{k,\delta}(\gamma).
\]
\end{theorem}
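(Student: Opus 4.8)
The plan is to reduce the claim to a telescoping identity relating the eposet parameters $\delta$ to the regularity function $R$, and then to verify that identity. By \Cref{intro:eposet-spectra} (i.e.\ \Cref{cor:balanced-eig}) the canonical lower walk $\widecheck{N}_k^{k-i}$ is approximately diagonalized by the HD-Level-Set decomposition $C_k = H_0^k \oplus \cdots \oplus H_k^k$ of DDFH \cite{dikstein2018boolean}, with $H_j^k$ — the $(k-j)$-fold lift of the ``new'' functions $\ker D_j \subseteq C_j$ — carrying a single approximate eigenvalue $\Lambda_j = \Lambda_j(\delta)$ up to additive error $O_{k,\delta}(\gamma)$. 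Moreover DDFH's analysis produces $\Lambda_j(\delta)$ in closed form as an explicit polynomial in the $\delta_\ell$'s: one repeatedly applies the defining relation $\norm{D_{\ell+1}U_\ell - (1-\delta_\ell)I-\delta_\ell U_{\ell-1}D_\ell}\le\gamma$ to commute the $k-i$ down-steps of $\widecheck{N}_k^{k-i}$ past its up-steps until each meets the new function at the bottom and either annihilates an up-step (contributing a factor built from $\delta_\ell,1-\delta_\ell$) or kills $g$ outright. So it suffices to show that this polynomial evaluates to $R(i,j)/R(k,j)$ once the $\delta_\ell$'s are expressed through $R$ — an equality with no further reference to $\gamma$, the $O_{k,\delta}(\gamma)$ error having been absorbed across the $O(k^2)$ applications of the eposet relation (legitimate since every operator in sight has norm at most $1$, and the standing non-laziness hypothesis keeps the $\delta_\ell$'s away from the degenerate regime so the implied constant stays finite).

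The second step is to identify each $\delta_\ell$ with a ratio of values of $R$. Testing the eposet relation against the constant function and against lifts of new functions, one finds that under downward and middle regularity $\delta_\ell$ equals, up to $O(\gamma)$, a ratio of $R$ at consecutive levels; this is exactly the computation KT \cite{kaufman2021local} carry out to match $\lambda_2$ of the lower walk with $R(k-1,1)/R(k,1)$, and it goes through verbatim at every level. Middle regularity is precisely what is consumed here: it forces the number of maximal chains between two comparable elements to depend only on their ranks, so that the iterated down operator $D_{i+1}\circ\cdots\circ D_k$ coincides, up to $O(\gamma)$, with uniform averaging over the full down-set, which is what makes the counting homogeneous. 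Substituting these expressions into the closed form for $\Lambda_j(\delta)$ and telescoping collapses the product to the single ratio $R(i,j)/R(k,j)$. When $j>i$ the $k-i$ down-steps outnumber the $k-j$ available up-steps, so every surviving term contains the down-projection $D_j(\text{new function})=0$ and $\Lambda_j\approx 0$, consistent with the convention $R(i,j)=0$ for $j<i$. The special case $k-i=1$ recovers $\lambda_j(UD)=R(k-1,j)/R(k,j)$ from \Cref{intro-eigenstripping}, and the simplicial instance $R(m,j)=\binom{m}{j}$ recovers the familiar $\binom{i}{j}/\binom{k}{j}$.

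A cleaner route to the exact constant, bypassing the explicit polynomial, is the adjoint computation: for $f\in H_j^k$ an approximate eigenvector, and using that the iterated up and down operators are adjoint (exactly so under the regularity hypotheses), $\Lambda_j\approx\langle\widecheck{N}_k^{k-i}f,f\rangle/\langle f,f\rangle = \norm{D_{i+1}\circ\cdots\circ D_k\,f}^2/\norm{f}^2$. Writing $f$ as a full lift of a new function $g$ and pairing, each norm squared becomes a ``full-lift-then-full-project'' evaluation of $g$, equal to $\norm{g}^2$ times a ratio of values of $R$ by a direct chain count (again invoking middle regularity); collecting the ratios gives $\Lambda_j = R(i,j)/R(k,j)$ immediately.

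The main obstacle is the bookkeeping in these intertwined computations — pinning down the closed form of $\Lambda_j(\delta)$ (equivalently, the relevant chain counts) and the regularity expression for each $\delta_\ell$ precisely enough that, upon substitution, everything telescopes to a single ratio with no residual $\delta$-dependence. This is exactly where both hypotheses are used: downward regularity for the existence of $R$, and middle regularity for the homogeneity of the chain counts. Once the identity is set up correctly, the telescoping and the $O_{k,\delta}(\gamma)$ error tracking are routine.
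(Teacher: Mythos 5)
Your second "cleaner route" (the adjoint computation) has a genuine gap, and it sits precisely at the point where the paper has to work hardest. You write that for an approximate eigenvector $f = U^k_j g$ with $g\in\ker D_j$, the quantities $\norm{U^k_j g}^2$ and $\norm{D^k_i U^k_j g}^2$ are ``$\norm{g}^2$ times a ratio of values of $R$ by a direct chain count.'' That is not so. For $g$ a new function, iterating the eposet relation gives $\langle U^k_j g, U^k_j g\rangle \approx \rho^k_j\norm{g}^2$ where $\rho^k_j$ is a product of $(1-\delta^{\cdot}_{\cdot})$ factors (\Cref{lemma:fvsg-body}); this is an algebraic quantity built from the eposet parameters, not a chain count. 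The identity $\rho^k_j \approx 1/R(k,j)$ is precisely \Cref{claim:reg}, and it is nontrivial. In particular, it is here --- and in \Cref{claim:regularity2}, which underlies it --- that the non-laziness hypothesis carries its full weight: one compares the ``laziness'' (probability of returning below the starting face) of both sides of the refined eposet identity $D_{k+1}U^{k+1}_i = (1-\delta^k_{k-i})U^k_i + \delta^k_{k-i}U^k_{i-1}D_i + (\text{error})$, where the left side has laziness exactly $R(k,i)/R(k+1,i)$ by a genuine chain count, while on the right side the cross term $U^k_{i-1}D_i$ contributes laziness bounded by $R(k,i)\cdot\gamma$ \emph{because of} non-laziness. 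Your characterization of non-laziness as merely ``keeping the $\delta_\ell$'s away from the degenerate regime so the implied constant stays finite'' misses its role entirely: it is the structural mechanism, not a technical regularity condition.

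Your first route (express $\Lambda_j$ as a polynomial in $\delta_\ell$, express each $\delta_\ell$ via $R$, substitute and telescope) is closer to being correct in principle, but the paper does not take that road, and for good reason --- the inversion must be carried through nested products $\delta^k_j$ and their complements, and the error accounting compounds. The paper's actual argument avoids the symbolic inversion almost entirely: it directly computes the non-expansion of the $j$-link $X^i_\tau$ under the \emph{upper} canonical walk $N_i^{k-i}$, which really is a chain count,
\[
\bar\Phi(X^i_\tau) \;=\; \frac{\langle U^k_i\id{X^i_\tau}, U^k_i\id{X^i_\tau}\rangle}{\langle \id{X^i_\tau},\id{X^i_\tau}\rangle} \;=\; \frac{R(i,j)}{R(k,j)},
\]
and then invokes \Cref{lemma:link-projection} to conclude that a link indicator sits almost entirely on a single eigenstrip, forcing this non-expansion to equal $\lambda_j(N_i^{k-i})$ up to $O(\gamma)$. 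The chain count is thus applied to $\id{X^i_\tau}$, a genuinely combinatorial object, not to the algebraic approximate eigenvector $f=U^k_j g$. Finally, the passage from $\widehat N_i^{k-i}$ to $\widecheck N_k^{k-i}$ is by equality of their approximate spectra (\Cref{prop:pure-eig-vals}). If you want to salvage your adjoint computation, you would need to first prove \Cref{claim:reg} --- i.e.\ that $\rho^k_j\approx 1/R(k,j)$ --- and at that point you have rederived the paper's engine.
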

As discussed in \Cref{sec:intro-intro}, this generalizes work of Kaufman and Tessler \cite{kaufman2021local} for the second eigenvalue of the upper/lower walks, and reveals a major distinction among poset architectures: posets with higher regularity enjoy faster decay of eigenvalues. 
% and generalizes the result of  of the upper/lower walks, adding to the growing evidence that underlying architecture plays a large role in resulting expansion properties \cite{dinur2021locally,panteleev2021asymptotically,lin2022c,kaufman2021local}. 
We note that \Cref{intro-eigenstripping} can also be obtained by combining \Cref{intro:approx-ortho} with recent independent work of Dikstein, Dinur, Filmus, and Harsha on connections between eposets and regularity (namely in the recent update of their seminal eposet paper, see \cite[Section 8.4.1]{dikstein2018boolean}).

On a more concrete note, \Cref{intro:regularity-thm} gives a new method of identifying potential poset architectures exhibiting strong spectral decay in the sense that for any $\delta>0$, the lower walk only contains $O_\delta(1)$ approximate eigenvalues larger than $\delta$ (rather than a number growing with dimension). This property, referred to as \textit{constant ST-Rank} in the context of hypergraphs in \cite{bafna2020high}, is an important factor not only for the run-time of approximation algorithms on HDX \cite{bafna2020high}, but also for the soundness of the Grassmann-based agreement test in the proof of the 2-2 Games Conjecture \cite{subhash2018pseudorandom}.

\subsubsection{Characterizing Edge Expansion}
Much of our motivation for studying the spectrum of HD-walks comes from the desire to understand a fundamental combinatorial quantity of graphs called \textit{edge expansion}.
\begin{definition}[Edge Expansion]
Let $X$ be a graded poset and $M$ an HD-Walk on $X(k)$. The edge expansion of a subset $S \subset X(k)$ with respect to $M$ is
\[
\Phi(S) = \underset{v \sim S}{\mathbb{E}}\left[ M(v,X(k) \setminus S)\right],
\]
where 
\[
M(v, X(k) \setminus S) = \sum\limits_{y \in X(k) \setminus S} M(v,y)
\]
and $M(v,y)$ denotes the transition probability from $v$ to $y$.
\end{definition}
As mentioned in the introduction, characterizing the edge-expansion of sets in HD-walks has recently proven crucial to understanding both algorithms for \cite{bafna2020playing,bafna2020high} and hardness of unique games \cite{subhash2018pseudorandom}. On expanding hypergraphs, it has long been known that \textit{links} give the canonical example of small non-expanding sets.
\begin{definition}[Link]\label{def:link}
Let $X$ be a $d$-dimensional graded poset. The $k$-dimensional link of an element $\sigma \in X$ is the set of rank $k$ elements greater than $\sigma$:\footnote{We note that in the literature a link is usually defined to be all such elements, not just those of rank $k$. We adopt this notation since we are mostly interested in working at a fixed level of the complex.}
\[
X^k_\sigma = \{ y \in X(k): y > \sigma\}.
\]
We call the link of a rank-$i$ element an ``$i$-link.'' When the level $k$ is clear from context, we write $X_\sigma$ for $X^k_\sigma$ for simplicity.
\end{definition}
In greater detail, BHKL \cite{bafna2020high} proved that on hypergraphs, the expansion of links is exactly controlled by their corresponding spectral strip. While their proof of this fact relied crucially on simplicial structure, we show via a more general analysis that the result can be recovered for eposets.
\begin{theorem}[Expansion of Links (informal \Cref{thm:link-lower})]\label{thm:intro-link-lower}
Let $X$ be a $(\delta,\gamma)$-eposet and $M$ an HD-walk on $X(k)$.
Then for all $0 \leq i \leq k$ and $\tau \in X(i)$:
\[
\Phi(X_\tau) = 1 - \lambda_{i}(M) \pm O_{M,k,\delta}(\gamma).
\]
\end{theorem}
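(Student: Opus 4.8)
The plan is to compute $\Phi(X_\tau)$ by expressing the indicator $\id{X_\tau}$ in terms of the HD-Level-Set Decomposition and exploiting the near-eigenvalue structure from \Cref{intro:eposet-spectra}. First I would recall the standard identity relating edge expansion to a quadratic form: since $M$ is a reversible Markov chain with stationary distribution $\pi$ on $X(k)$,
\[
\Phi(S) = 1 - \frac{\uinner{\id{S}}{M\id{S}}_\pi}{\uinner{\id{S}}{\id{S}}_\pi},
\]
so it suffices to understand $\uinner{\id{X_\tau}}{M\id{X_\tau}}_\pi / \norm{\id{X_\tau}}_\pi^2$. The key structural observation is that for $\tau \in X(i)$, the indicator $\id{X_\tau}$ is (up to normalization) obtained by lifting a function from level $i$: specifically, $\id{X_\tau}$ is a scalar multiple of $U_{k-1}\cdots U_i$ applied to the indicator of $\{\tau\}$ in $X(i)$, plus lower-order terms. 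More precisely, I would decompose $\id{X_\tau}$ according to the HD-Level-Set Decomposition $V = V_0 \oplus V_1 \oplus \cdots \oplus V_k$, write $\id{X_\tau} = \sum_{j=0}^{i} g_j$ with $g_j$ in the $j$-th piece, and observe that the decomposition \emph{truncates at level $i$} because $\id{X_\tau}$ is a lift from $X(i)$ (this is exactly the statement that functions lifted from $X(i)$ live in $V_0 \oplus \cdots \oplus V_i$, which is part of the DDFH machinery underlying \Cref{intro:eposet-spectra}).

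Given this truncation, I would apply the approximate eigenvalue bound: $Mg_j = \lambda_j(M) g_j + \mathrm{err}_j$ with $\norm{\mathrm{err}_j} \leq O_{M,k,\delta}(\gamma)\norm{g_j}$. Then
\[
\uinner{\id{X_\tau}}{M\id{X_\tau}}_\pi = \sum_{j=0}^{i} \lambda_j(M)\norm{g_j}_\pi^2 \pm O_{M,k,\delta}(\gamma)\norm{\id{X_\tau}}_\pi^2,
\]
using near-orthogonality of the pieces (also from the HD-Level-Set Decomposition, up to $O(\gamma)$ cross terms). So $\Phi(X_\tau) = 1 - \sum_{j=0}^i \lambda_j(M) \frac{\norm{g_j}_\pi^2}{\norm{\id{X_\tau}}_\pi^2} \pm O_{M,k,\delta}(\gamma)$. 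To conclude that this equals $1 - \lambda_i(M) \pm O(\gamma)$, the crucial remaining step is to show that $\id{X_\tau}$ is \emph{dominated by its top-level ($j=i$) component}, i.e.\ $\norm{g_i}_\pi^2 = (1-o(1))\norm{\id{X_\tau}}_\pi^2$, or at least that the weight on components $j < i$ is $O(\gamma)$. This should follow from a direct computation: $\norm{\id{X_\tau}}_\pi^2 = \pi(X_\tau)$ is the measure of the link, while the lower-level components $g_j$ for $j<i$ correspond to "averaging $\id{X_\tau}$ down to level $j$ and lifting back," whose squared norm can be computed via the regularity function and seen to be a lower-order contribution — or more cleanly, one shows $D_i \cdots D_{k}\id{X_\tau}$ restricted appropriately is supported near $\tau$, forcing the mass into $V_i$.

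The main obstacle I anticipate is precisely this last point: controlling the weight distribution of $\id{X_\tau}$ across the level-set decomposition. On simplicial complexes BHKL handled this using explicit combinatorial identities specific to the Johnson-type structure, and the whole point here is that we lack that structure. I would instead argue abstractly: the projection of $\id{X_\tau}$ onto $V_0 \oplus \cdots \oplus V_{i-1}$ equals the lift of $D_{i+1}\cdots D_k \id{X_\tau}$ (the down-projection to level $i$) minus its own top component, and one can bound $\norm{D_{i+1}\cdots D_k \id{X_\tau}}$ relative to $\norm{\id{X_\tau}}$ using downward regularity together with the fact that $X_\tau$ is the link of a single rank-$i$ element. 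A clean alternative, if the weights do \emph{not} concentrate but merely satisfy a recursion, is to note that $\lambda_j(M) \geq \lambda_i(M)$ for $j \leq i$ (monotonicity of approximate eigenvalues for these walks, which follows from \Cref{intro:regularity-thm} since $R(i,j)/R(k,j)$ is decreasing in... the relevant index), and separately $\lambda_j(M)$ for the link — careful here — so that the weighted average $\sum_j \lambda_j \norm{g_j}^2/\norm{\id{X_\tau}}^2$ is pinned to $\lambda_i(M)$ from one side by monotonicity and from the other by a matching lower bound on $\Phi(X_\tau)$ obtained from the variational characterization (any test function on the link achieving expansion close to $1-\lambda_i$). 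I expect the concentration argument to be the cleanest route and would pursue it first, falling back to the sandwiching argument if the regularity bookkeeping gets unwieldy.
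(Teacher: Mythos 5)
Your high-level frame coincides with the paper's: write $\Phi(X_\tau)$ as a quadratic form, note that $\id{X_\tau}=R(k,i)\,U^k_i\id{\tau}$ is a lift from level $i$ so its HD-Level-Set Decomposition truncates at level $i$, apply the approximate-eigenvalue and near-orthogonality bounds, and reduce everything to showing $\id{X_\tau}$ concentrates on $V^i_k$. The genuine gap is exactly at the step you flag: neither of your routes establishes that concentration. The ``regularity bookkeeping'' route fails because the asserted identity (projection onto $V^0_k\oplus\cdots\oplus V^{i-1}_k$ equals the lift of $D^k_i\id{X_\tau}$ minus its top piece) is not correct, and more fundamentally because $\norm{D^k_i\id{X_\tau}}$ is \emph{not} a lower-order quantity: the value $1$ at $\tau$ alone contributes $\pi_i(\tau)=\norm{\id{X_\tau}}^2/R(k,i)$ to its squared norm, which is already the full scale of the level-$i$ contribution (recall $\lambda_i(\widecheck{N}^{k-i}_k)\approx 1/R(k,i)$), so no crude bound on this norm can separate level-$i$ mass from mass on levels $j<i$; worse, $D^k_i\id{X_\tau}(\sigma)$ for $\sigma\neq\tau$ depends on joint containment counts that downward/middle regularity simply do not determine. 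The sandwich fallback gives only the easy side: monotonicity plus the fact that the (approximately nonnegative) weights sum to $\approx 1$ yields $\sum_j\lambda_j(M)w_j\geq\lambda_i(M)-O(\gamma)$, i.e.\ $\Phi(X_\tau)\leq 1-\lambda_i(M)+O_{M,k,\delta}(\gamma)$; the ``matching lower bound from the variational characterization'' is circular, since variational arguments bound eigenvalues by the expansion of \emph{some} set, whereas a lower bound on the expansion of \emph{this} set is equivalent to the concentration you are trying to avoid proving.

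The paper closes this gap (\Cref{lemma:link-projection}) by computing \emph{upward} rather than downward: since $U_k\id{X_\tau}$ is again proportional to a link indicator, the non-expansion of $X_\tau$ under the one-step walk $N^1_k$ is \emph{exactly} $R(k,i)/R(k+1,i)$ by regularity alone; \Cref{claim:regularity2} (proved via a laziness-counting argument and requiring the $\gamma$-non-laziness hypothesis, which the informal statement suppresses, as it does monotonicity of $M$) identifies this ratio with $\lambda_i(N^1_k)$ up to $O(\gamma)$; and then an eigenvalue-gap contradiction --- the strips $j<i$ have $N^1_k$-eigenvalues exceeding $\lambda_i(N^1_k)$ by roughly $\delta^k_{k-1}(1-\delta_{i-1})$ --- forces all but an $O(\gamma)$ relative fraction of the mass onto $V^i_k$. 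This concentration, established for $N^1_k$, then transfers to every monotonic HD-walk $M$, giving both directions of \Cref{thm:link-lower}. Without this exact computation for a specific walk (or an equivalent substitute), your argument proves only the one-sided bound $\Phi(X_\tau)\leq 1-\lambda_i(M)+O_{M,k,\delta}(\gamma)$.
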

As an immediate consequence, we get that when $M$ is not a small-set expander, links are examples of small non-expanding sets. One might reasonably wonder whether the converse is true as well: \textit{are all non-expanding sets explained by links}? This requires a bit of formalization. Following BHKL's exposition \cite{bafna2020high}, given a set $S$ consider the function $L_{S,i}: X(i) \to \R$ that encodes the behavior of $S \subset X(k)$ on links:
\[
\forall \tau \in X(i): L_{S,i}(\tau) = \underset{X_\tau}{\mathbb{E}}[\id{S}] - \mathbb{E}[\id{S}].
\]
The statement ``Non-expansion is explained by links'' can then be interpreted as saying that a non-expanding set $S$ should be detectable by some simple measure of $L_{S,i}$. There are two standard formalizations of this idea studied in the literature: the $\ell_2$-regime, and the $\ell_\infty$-regime. These are captured by the following notion of \textit{pseudorandomness} based on $L_{S,i}$.
\begin{definition}[Pseudorandom Sets \cite{bafna2020high} (informal Definitions \ref{def:pseudorandom}, \ref{def:pseudorandom-infty})]
We say a set $S$ is $(\varepsilon,\ell)$-$\ell_2$-pseudorandom if\footnote{Throughout, $\norm{\cdot}_2$ will always refer to the \textit{expectation} norm $\norm{f}_{2}=\mathbb{E}[f^2]^{1/2}$.}
\[
\forall i \leq \ell: \norm{L_{S,i}}_2^2 \leq \varepsilon\mathbb{E}[\mathbbm{1}_S].
\]
A set is $(\varepsilon,\ell)$-$\ell_\infty$-pseudorandom if:
\[
\forall i \leq \ell: \norm{L_{S,i}}_\infty \leq \varepsilon.
\]
In cases that $\ell_2$ and $\ell_\infty$-pseudorandomness can be used interchangeably, we will simply write $(\varepsilon,\ell)$-pseudorandom.
\end{definition}

We prove that pseudorandom sets expand near-optimally.
\begin{theorem}[Pseudorandom Sets Expand (informal \Cref{thm:hdx-expansion})]\label{intro:PR-sets-expand}
Let $X$ be a $(\delta,\gamma)$-eposet and $M$ a walk on $X(k)$. Then the expansion of any $(\varepsilon,i)$-pseudorandom set $S$ is at least:
\[
\Phi(S) \geq 1 - \lambda_{i+1} - O_{\delta}(R(k,i)\varepsilon) - O_{k,\delta,M}(\gamma).
\]
\end{theorem}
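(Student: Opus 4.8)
The plan is to expand $\id{S}$ in the HD-Level-Set Decomposition and track how each level-$i$ component contributes to $\Phi(S) = 1 - \frac{\uinner{\id{S}}{M\id{S}}}{\mathbb{E}[\id{S}]}$. Write $\id{S} = \sum_{j=0}^k f_j$ where $f_j$ lies in the $j$th piece of the decomposition (i.e.\ $f_j = U^{k-j} g_j$ for some $g_j$ at level $j$ with no lower-order content). By \Cref{intro:eposet-spectra}, $M$ acts on $f_j$ as approximately $\lambda_j(M) f_j$, with error $O_{k,\delta,M}(\gamma)\norm{f_j}_2$; and by approximate orthogonality of the decomposition (again from the eigenstripping setup) the cross terms $\uinner{f_j}{Mf_{j'}}$ are $O_{k,\delta}(\gamma)$-small. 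Hence
\[
\uinner{\id{S}}{M\id{S}} = \sum_{j=0}^k \lambda_j(M)\norm{f_j}_2^2 \pm O_{k,\delta,M}(\gamma)\norm{\id{S}}_2^2.
\]
Since $\lambda_0(M) = 1$, $\norm{f_0}_2^2 = \mathbb{E}[\id{S}]^2$, and the $\lambda_j(M)$ are non-increasing, the sum over $j \ge i+1$ is at most $\lambda_{i+1}(M)\sum_{j\ge i+1}\norm{f_j}_2^2 \le \lambda_{i+1}(M)(\mathbb{E}[\id{S}] - \mathbb{E}[\id{S}]^2)$, which after dividing by $\mathbb{E}[\id{S}]$ and using $\norm{\id{S}}_2^2 = \mathbb{E}[\id{S}]$ contributes the main term $1 - \lambda_{i+1}(M)$ plus the claimed $O_{k,\delta,M}(\gamma)$ error. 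It remains to bound the contribution of the ``low'' levels $1 \le j \le i$, namely $\sum_{j=1}^i (\lambda_j(M) - \lambda_{i+1}(M))\norm{f_j}_2^2 \le \sum_{j=1}^i \norm{f_j}_2^2$, by something of order $R(k,i)\varepsilon\,\mathbb{E}[\id{S}]$.

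This is where pseudorandomness enters, and it is the crux of the argument. The function $L_{S,j}$ is exactly $D^{k-j}\id{S} - \mathbb{E}[\id{S}]$, i.e.\ it is $\id{S}$ averaged all the way down to level $j$ (minus its mean). I would relate $\norm{L_{S,j}}_2^2$ to $\sum_{j'\le j}\norm{f_{j'}}_2^2$ (up to the eposet error and up to the spectral factors coming from how the down operators act on each level-set piece): applying $D^{k-j}$ kills nothing in levels $\le j$ but contracts each $f_{j'}$ by a factor that is $\Theta(1)$ relative to its norm for $j' \le j$. Inverting these relations, $\norm{f_j}_2^2$ is controlled by $\norm{L_{S,j}}_2^2$ and $\norm{L_{S,j-1}}_2^2$ up to bounded multiplicative constants depending on $\delta, k$ — and crucially on the regularity, since the normalization of $U, D$ between levels involves $R(\cdot,\cdot)$. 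Summing over $j \le i$ and using the hypothesis $\norm{L_{S,j}}_2^2 \le \varepsilon\mathbb{E}[\id{S}]$ gives a bound of the form $O_\delta(R(k,i))\cdot\varepsilon\mathbb{E}[\id{S}]$ for $\sum_{j=1}^i\norm{f_j}_2^2$, as needed. Dividing the whole estimate by $\mathbb{E}[\id{S}] = \norm{\id{S}}_2^2$ collects the three terms in the statement.

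The main obstacle is getting the right dependence on $R(k,i)$ in the translation between the $\ell_2$-mass of the level-$j$ components $\norm{f_j}_2^2$ and the pseudorandomness parameter $\norm{L_{S,j}}_2^2$: one must carefully account for the normalizations of the up/down operators across levels (which is exactly where the regularity function $R$ appears via \Cref{intro:regularity-thm}), rather than treating them as $O(1)$ as one does for hypergraphs in BHKL. A secondary technical point is that the decomposition is only \emph{approximately} orthogonal and $M$ only \emph{approximately} block-diagonal, so one must propagate the $\gamma$-errors through the Cauchy–Schwarz steps and confirm they aggregate to $O_{k,\delta,M}(\gamma)$ after normalizing by $\mathbb{E}[\id{S}]$; this should follow from \Cref{intro:approx-ortho} and \Cref{intro:eposet-spectra} together with the crude bound $\norm{f_j}_2 \le \norm{\id{S}}_2$, but it needs to be done uniformly in the (unbounded) ratio between $\mathbb{E}[\id{S}]$ and $\norm{f_j}_2^2$, which is why it is cleanest to carry the normalization by $\mathbb{E}[\id{S}]$ throughout.
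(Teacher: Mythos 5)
Your overall strategy matches the paper's: expand $\id{S}$ in the HD-Level-Set Decomposition, use eigenstripping/approximate orthogonality to write $\uinner{\id{S}}{M\id{S}} \approx \sum_j \lambda_j(M)\uinner{\id{S}}{\id{S,j}}$, bound the high-level tail ($j \geq i+1$) via monotonicity, and bound the low levels ($1\le j\le i$) via pseudorandomness through a level-$i$ inequality. That last step is the crux, and it is precisely there that your sketch has a gap.

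You assert that ``applying $D^{k-j}$ \ldots contracts each $f_{j'}$ by a factor that is $\Theta(1)$ relative to its norm for $j' \le j$.'' This is not what happens, and it cannot be, because if the contraction were genuinely $\Theta(1)$ the factor $R(k,i)$ would never appear in the final bound. What the paper proves (\Cref{lem:low-level-weight}, Corollaries~\ref{cor:lower} and~\ref{cor:ell2-proj}) is that $\mathrm{Var}(D^k_j f) = \|L_{S,j}\|_2^2 \approx \sum_{j'\le j}\lambda_{j'}(\widecheck{N}_k^{k-j})\uinner{f}{f_{j'}}$, and the coefficient on the $j'=j$ term is $\lambda_j(\widecheck{N}_k^{k-j}) = \rho^k_j$, which for $\gamma$-non-lazy eposets is \emph{exactly} $\approx 1/R(k,j)$ (\Cref{claim:reg}) --- i.e., the lower walk shrinks the level-$j$ component by a $1/R(k,j)$ factor, not by a constant. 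The $R(k,i)$ factor in the theorem is the reciprocal of this spectral gap, recovered when you divide through. You gesture at this with ``crucially on the regularity,'' but the $\Theta(1)$ claim contradicts it, and a proof built on that premise would not produce the stated bound.

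Two secondary points. First, you propose solving a triangular system in $\|L_{S,j}\|_2^2,\|L_{S,j-1}\|_2^2$ to isolate $\|f_j\|_2^2$; the paper avoids this entirely by noting that all $\lambda_{j'}(\widecheck{N}_k^{k-j})$ are non-negative and all $\uinner{f}{f_{j'}}$ are non-negative up to an $O(\gamma)$ defect (\Cref{cor:DDFK-cor}), so one simply drops the $j' < j$ terms to get $\mathrm{Var}(D^k_j f) \geq \rho^k_j\uinner{f}{f_j} - O(\gamma)\|f\|^2$ directly; your inversion route is both harder to control and unnecessary. Second, the ``crude bound $\|f_j\|_2 \le \|\id{S}\|_2$'' you invoke to propagate $\gamma$-errors does not hold for an approximately orthogonal decomposition; the correct tool is the bound $\sum_j\|f_j\| \le O(\sqrt{k}\|f\|)$ from \Cref{cor:DDFK-cor}, which is what actually aggregates the cross-term errors into $O_{k,\delta,M}(\gamma)$.
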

In other words, any set with expansion less than $1-\lambda_{i+1}$ must have appreciable variance across links at level $i$. We note that the formal version of this result is essentially tight in the $\ell_2$-regime, but can be improved in many important cases in the $\ell_\infty$-regime. We'll discuss this further in the next section, especially in the context of the Grassmann poset.

Before this, however, it is worth separately mentioning the main technical component behind \Cref{intro:PR-sets-expand}, a result traditionally called a ``level-$i$'' inequality.
\begin{theorem}[Level-$i$ inequality (informal \Cref{thm:body-local-spec-proj})]\label{intro:body-local-spec-proj}
Let $X$ be a $(\delta,\gamma)$-eposet and $S \subset X(k)$ a $(\varepsilon, \ell)$-pseudorandom set. Then for all $1 \leq i \leq \ell$:
\[
|\langle \id{S}, \id{S,i} \rangle| \leq  \left(R(k,i)\varepsilon + O_{k,\delta}(\gamma)\right)\langle \id{S}, \id{S} \rangle
\]
where $\id{S,i}$ is the projection of $\id{S}$ onto the $i$th eigenstrip.\footnote{Note that since walks on eposets are simultaneously diagonalizable, the decomposition of $X$ into eigenstrips is independent of the choice of walk.}
% and
% \begin{align}\label{eq:def_rho}
% \rho^k_i = \prod\limits^{k-i}_{j=1} \left(1- \prod\limits^{k-j}_{\ell=i} \delta_\ell \right).
% \end{align}
\end{theorem}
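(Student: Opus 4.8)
The plan is to work in the HD-Level-Set Decomposition of DDFH, writing $\id{S} = \sum_{j=0}^{k} \id{S}^{=j}$ where $\id{S}^{=j}$ lies in the $j$th approximate-eigenspace $V^j$ (the image of the level-$j$ lifting). Since the decomposition is orthogonal up to $O_{k,\delta}(\gamma)$ error and the walks are simultaneously (approximately) diagonalizable, $\id{S,i}$ is essentially $\id{S}^{=i}$ up to $O_{k,\delta}(\gamma)\norm{\id{S}}^2$-sized corrections in the inner product, so it suffices to bound $\langle \id{S}, \id{S}^{=i}\rangle = \norm{\id{S}^{=i}}_2^2 \pm O_{k,\delta}(\gamma)\langle\id{S},\id{S}\rangle$. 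The heart of the matter is therefore to show $\norm{\id{S}^{=i}}_2^2 \leq (R(k,i)\varepsilon + O_{k,\delta}(\gamma))\langle \id{S},\id{S}\rangle$ under the $(\varepsilon,\ell)$-pseudorandomness hypothesis.

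**Relating the level-$i$ mass to the link function.**

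The key identity is that the level-$i$ component of $\id{S}$ is, up to lower-order terms, the lift of the link-deviation function $L_{S,i}$. Concretely, applying the down operators $i$ times, $D^i \id{S}$ is (a reweighted version of) the function $\tau \mapsto \Ebb_{X_\tau}[\id{S}]$ on $X(i)$, so after subtracting the mean and projecting off the lower levels, the genuinely-level-$i$ piece of $D^i\id{S}$ agrees with $L_{S,i}$ up to contributions from $\id{S}^{=0},\dots,\id{S}^{=i-1}$, which are controlled inductively by the pseudorandomness at levels $<i$ together with the $O_{k,\delta}(\gamma)$ near-orthogonality. Then I would invoke the norm comparison between a level-$i$ function and its lift: on a $(\delta,\gamma)$-eposet, for $f_i \in V^i$ one has $\norm{U^{k-i} f_i}_2^2 \asymp \frac{1}{R(k,i)}\norm{f_i}_2^2$ up to $(1\pm O_{k,\delta}(\gamma))$ factors — this is exactly the statement that the lift scales the squared norm down by the regularity. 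Combining, $\norm{\id{S}^{=i}}_2^2 = R(k,i)\norm{(\text{level-}i\text{ part of }D^i\id S)}_2^2 \cdot(1\pm O_{k,\delta}(\gamma)) \leq R(k,i)\norm{L_{S,i}}_2^2 + O_{k,\delta}(\gamma)\langle\id S,\id S\rangle \leq R(k,i)\varepsilon\Ebb[\id S] + O_{k,\delta}(\gamma)\langle\id S,\id S\rangle$, and $\Ebb[\id S] = \langle\id S,\id S\rangle$ since $\id S$ is a $0/1$ indicator, giving the claim.

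**The main obstacle and how to handle it.**

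The delicate step is controlling the cross-contamination between levels: $D^i \id{S}$ is not purely the level-$i$ lift of $L_{S,i}$ — it carries components from all of $V^0,\dots,V^i$, and the approximate (rather than exact) orthogonality means these do not cleanly cancel. The way through is an induction on $i$: assuming the level-$j$ inequality $\norm{\id{S}^{=j}}_2^2 \leq (R(k,j)\varepsilon + O_{k,\delta}(\gamma))\langle\id S,\id S\rangle$ for all $j < i$, one shows the "contaminating" mass in $D^i\id S$ from lower levels is itself $O_{k,\delta}(\gamma)\langle \id S,\id S\rangle$ plus terms already bounded by $R(k,j)\varepsilon\Ebb[\id S]$, and these get absorbed (the $R(k,j)$ for $j<i$ being dominated, up to the poset-dependent constants hidden in $O_{k,\delta}$, within the error budget once multiplied by the small near-orthogonality parameter). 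One must also be careful that all the comparison constants — the norm-scaling factor $R(k,i)$ relating $\norm{f_i}$ to $\norm{U^{k-i}f_i}$, and the accumulated $\gamma$-errors across $i$ applications of $U,D$ — depend only on $k$ and $\delta$, not on $S$; this is where the non-laziness assumption and the $\Theta_{k,\delta}(1)$ bounds on the $\delta_i$-derived quantities from \Cref{sec:prelims} are used. Modulo this bookkeeping, the argument reduces to the clean statement "level-$i$ mass $=$ $R(k,i)\times$(squared norm of $L_{S,i}$) up to $\gamma$-error," which is where pseudorandomness enters directly.
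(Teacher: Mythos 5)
Your proposal is correct and is essentially the paper's argument, phrased with slightly more machinery than needed. Both approaches reduce to the same fact: the level-$i$ mass of $\id{S}$, scaled by $\rho^k_i \approx 1/R(k,i)$, is controlled by $\mathrm{Var}(D^k_i\id{S}) = \norm{L_{S,i}}_2^2$, with near-orthogonality of the HD-Level-Set Decomposition absorbing the $O_{k,\delta}(\gamma)$ slack.

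The paper's route (\Cref{lem:low-level-weight} together with \Cref{cor:ell2-proj}) avoids decomposing $D^k_i\id{S}$ on $C_i$ altogether. One writes $\mathrm{Var}(D^k_i f) = \langle \widecheck{N}_k^{k-i}f, f\rangle - \langle f, f_0\rangle$, uses that each $f_j$ is an approximate eigenvector of the lower walk with eigenvalue $\lambda_j(\widecheck{N}_k^{k-i})$, and gets $\mathrm{Var}(D^k_i f) \in \sum_{j=1}^i \lambda_j(\widecheck{N}_k^{k-i})\langle f, f_j\rangle \pm O_{k,\delta}(\gamma)\norm{f}^2$. Dropping the nonnegative terms with $j\ne i$ and bounding the (few, small) negative ones via \Cref{cor:DDFK-cor} yields $\langle f, f_i\rangle \le \tfrac{1}{\rho^k_i}\mathrm{Var}(D^k_i f) + O_{k,\delta}(\gamma)\norm{f}^2$, and pseudorandomness plus $\rho^k_i\approx 1/R(k,i)$ (via \Cref{claim:reg}) finish. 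In particular, no induction on $i$ is needed: the ``cross-contamination'' you flag is a one-shot near-orthogonality calculation, and the global bound $\sum_j\norm{f_j} \le O(\sqrt{k})\norm{f}$ (\Cref{cor:DDFK-cor}) controls it uniformly across all levels at once. Your induction isn't wrong, but it addresses a difficulty that disappears if you bound the linear quantity $\langle f, f_i\rangle$ directly (letting self-adjointness of $\widecheck{N}_k^{k-i}$ on $C_k$ do the work) rather than routing through $\norm{f_i}^2$ and a separate decomposition of $D^k_i\id{S}$ on $C_i$.

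Two small imprecisions worth noting: $D^k_i\id{S}$ is exactly (not ``a reweighted version of'') the function $\tau \mapsto \Ebb_{X_\tau}[\id{S}]$, so $L_{S,i} = D^k_i\id{S} - \Ebb[\id{S}]$ on the nose; and the relevant norm comparison is $\norm{U^k_i g_i}^2 \approx \rho^k_i\norm{g_i}^2$ with $\rho^k_i\approx 1/R(k,i)$ (\Cref{lemma:fvsg-body} plus \Cref{claim:reg}), which you invert correctly but should tie to those statements explicitly.
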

In other words, pseudorandomness controls the projection of $S$ onto eigenstrips. Theorems \ref{intro:PR-sets-expand} and \ref{intro:body-local-spec-proj} recover the analogous optimal bounds for simplicial high dimensional expanders in \cite{bafna2020high}, where the regularity parameter $R(k,i)={k \choose i}$, and are tight in a number of other important settings such as the Grassmann (discussed below). \Cref{intro:PR-sets-expand} and \Cref{intro:body-local-spec-proj} can also be viewed as another separation between eposet architectures, this time in terms of \textit{combinatorial} rather than \textit{spectral} properties.

\subsubsection{Application: $q$-eposets and the Grassmann Graphs}

Finally, we'll discuss the application of our results to a particularly important class of eposets called ``$q$-eposets.'' Just like standard high dimensional expanders arise from expanding subsets of the complete complex (hypergraph), $q$-eposets arise from expanding subsets of the Grassmann Poset.
\begin{definition}[Grassmann Poset]
The Grassmann Poset is a graded poset $(X,<)$ where $X$ is the set of all subspaces of $\mathbb{F}_q^n$ of dimension at most $d$, the partial ordering ``$<$'' is given by inclusion, and the rank function is given by dimension.
\end{definition}
We call a (downward-closed) subset of the Grassmann poset a $q$-simplicial complex, and an expanding $q$-simplicial complex a $q$-eposet (see \Cref{sec:prelim-q-HDX} for exact details). Using our machinery for general eposets, we prove a tight level-$i$ inequality for pseudorandom sets.
\begin{corollary}[Grassmann level-$i$ inequality (informal \Cref{thm:grassmann-level})]\label{intro:grassmann-level}
Let $X$ be a $\gamma$-$q$-eposet and $S \subseteq X(k)$. If $S$ is $(\varepsilon,\ell)$-pseudorandom, then for all $1 \leq i \leq \ell$: 
\[
|\langle \id{S}, \id{S,i} \rangle| \leq  \left({k \choose i}_q\varepsilon + O_{q,k}(\gamma) \right) \langle \id{S},\id{S} \rangle
\]
where $\binom{k}{i}_q=\frac{(1-q^k)\cdots (1-q^{k-i+1})}{(1-q^i)\cdots (1-q)}$ is the Gaussian binomial coefficient.
\end{corollary}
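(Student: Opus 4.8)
The plan is to derive \Cref{intro:grassmann-level} as a direct specialization of the general level-$i$ inequality \Cref{intro:body-local-spec-proj} to the case of $q$-eposets. The general statement says that for a $(\delta,\gamma)$-eposet and an $(\varepsilon,\ell)$-pseudorandom set $S \subseteq X(k)$, we have $|\langle \id{S}, \id{S,i} \rangle| \leq (R(k,i)\varepsilon + O_{k,\delta}(\gamma))\langle \id{S}, \id{S} \rangle$ for all $1 \leq i \leq \ell$. So the only work is to (i) verify that a $\gamma$-$q$-eposet is a $(\delta,\gamma)$-eposet for an appropriate choice of the laziness vector $\delta$, so that the general theorem applies, (ii) compute the downward regularity function $R(k,i)$ of the Grassmann poset, and (iii) check that the $O_{k,\delta}(\gamma)$ error term becomes $O_{q,k}(\gamma)$ once $\delta$ is pinned down by the $q$-eposet structure.

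For step (ii), the key combinatorial fact is that in the Grassmann poset over $\mathbb{F}_q$, every $k$-dimensional subspace contains exactly $\binom{k}{i}_q = \frac{(1-q^k)\cdots(1-q^{k-i+1})}{(1-q^i)\cdots(1-q)}$ subspaces of dimension $i$, since this Gaussian binomial coefficient counts $i$-dimensional subspaces of a $k$-dimensional $\mathbb{F}_q$-vector space. This is the $q$-analog of the statement that a $k$-set contains $\binom{k}{i}$ $i$-subsets, and it shows the Grassmann poset is downward regular with $R(k,i) = \binom{k}{i}_q$. Substituting this into the general bound immediately turns $R(k,i)\varepsilon$ into $\binom{k}{i}_q\varepsilon$. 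For step (iii), I would recall (or cite from the preliminaries in \Cref{sec:prelim-q-HDX}) that the eposet parameters $\delta_i$ of a $q$-simplicial complex are determined by $q$ alone — essentially $\delta_i$ is a fixed function of $q$ arising from the ratio of up/down walk weights in the complete Grassmann, analogous to how $\delta_i = \frac{i}{i+1}$ for simplicial complexes — so that any dependence on $\delta$ collapses to a dependence on $q$ and $k$, giving the claimed $O_{q,k}(\gamma)$ error.

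I do not expect a serious obstacle here, since the corollary is explicitly billed as an application of the general machinery; the one point requiring care is making sure the notion of $\gamma$-$q$-eposet as defined in \Cref{sec:prelim-q-HDX} really does instantiate the $(\delta,\gamma)$-eposet hypothesis with the right $\delta$, rather than some reparametrized or rescaled version — in particular that the weighting of the Grassmann poset (uniform on each level, with the natural containment-induced transition probabilities) satisfies the spectral-similarity inequality $\norm{D_{i+1}U_i - (1-\delta_i)I - \delta_i U_{i-1}D_i} \leq \gamma$ with the stated $\delta$. Once that identification is in place, the proof is just: invoke \Cref{intro:body-local-spec-proj}, plug in $R(k,i) = \binom{k}{i}_q$, and absorb the $\delta$-dependence into a $q$-dependence. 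I would also briefly double check that the middle-regularity / non-laziness side conditions assumed throughout are satisfied by $q$-simplicial complexes, which should follow from the homogeneity of the Grassmann poset, so that \Cref{intro:body-local-spec-proj} is applicable without extra hypotheses.
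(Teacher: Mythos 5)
Your proposal is correct and matches the paper's route: specialize \Cref{intro:body-local-spec-proj} (formally \Cref{thm:body-local-spec-proj}/\Cref{cor:ell2-proj}) to $q$-eposets, plug in $R(k,i)=\binom{k}{i}_q$, and note that the $\delta$-dependence collapses to $q$-dependence since $\delta_i = q\frac{q^i-1}{q^{i+1}-1}$ is fixed for $q$-eposets. Two small remarks: your step (i) is vacuous because being a $(\delta,\gamma)$-eposet with this specific $\delta$ is literally the definition of a $\gamma$-$q$-eposet, so there is nothing to verify; and the paper additionally observes that on $q$-eposets $\rho^k_\ell=\frac{1}{\binom{k}{\ell}_q}$ holds exactly (not just up to $O(\gamma)$ as \Cref{claim:reg} would give), which sharpens the constant but makes no difference at the $O_{q,k}(\gamma)$ level of the informal statement you're proving.
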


\Cref{intro:grassmann-level} is tight in a few senses. First, we prove the bound cannot be improved by any constant factor, even in the $\ell_\infty$-regime. In other words, for every $c<1$, it is always possible to find an $(\varepsilon,i)$-pseudorandom function satisfying:
\[
|\langle \id{S}, \id{S,i} \rangle| > c\left({k \choose i}_q\varepsilon + O_{q,k}(\gamma) \right) \langle \id{S}, \id{S} \rangle.
\]
Furthermore, it is well known the dependence on $k$ in this result is necessary \cite{khot2017independent}, even if one is willing to suffer a worse dependence on the pseudorandomness $\varepsilon$. This is different from the case of standard simplicial complexes, where the dependence can be removed in the $\ell_\infty$-regime \cite{khot2018small,bafna2021hypercontractivity,gur2021hypercontractivity}. However, there is a crucial subtlety here. It is likely that the $k$-dependence in this result can be removed by \textit{changing the definition of pseudorandomness}. On the Grassmann poset itself, for instance, it is known that this can be done by replacing links with a closely related but finer-grained local structure known as ``zoom-in zoom-outs'' \cite{subhash2018pseudorandom}. Indeed, more generally it is an interesting open problem whether there always exists a notion of locality based on the underlying poset structure that gives rise to $k$-independent bounds in the $\ell_\infty$-regime.

We close out the section by looking at an application of this level-$i$ inequality to studying edge-expansion in an important class of walks that give rise to the well-studied \textit{Grassmann graphs}.
\begin{definition}[Grassmann Graphs]
The Grassmann Graph $J_q(n,k,t)$ is the graph on $k$-dimensional subspaces of $\mathbb{F}_q^n$ where $(V,W) \in E$ exactly when $\text{dim}(V \cap W)=t$.
\end{definition}
It is easy to see that the non-lazy upper walk on the Grassmann poset is exactly the Grassmann graph $J_q(n,k,k-1)$. In fact, it is possible to express any $J_q(n,k,t)$ as a sum of standard higher order random walks.
\begin{proposition}[Grassmann Graphs are HD-Walks (informal \Cref{prop:Grassmann-decomp})]\label{intro:prop-grass-graphs}
The Grassmann graphs are a hypergeometric sum of canonical walks:
\[
 J_q(n,k,t) =\frac{1}{q^{(k-t)^2}{k \choose t}_q}\sum\limits_{i=0}^{k-t} (-1)^{k-t-i}q^{{k-t-i \choose 2}}{k-t \choose i}_q{k+i \choose i}_q
 N_k^{i}.
 \]
\end{proposition}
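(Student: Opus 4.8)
The plan is to verify the identity entrywise inside the complete Grassmann poset $X$ (equipped with its natural $\mathrm{GL}_n(\mathbb{F}_q)$-invariant weights). Both the random-walk operator $J_q(n,k,t)$ and each canonical walk $N_k^i := \widehat{N}_k^i$ act on $L(X(k))$, and by $\mathrm{GL}_n(\mathbb{F}_q)$-symmetry all of their matrix entries depend only on $\dim(V\cap W)$; writing $s := k-\dim(V\cap W)\in\{0,\dots,k\}$ for the ``distance'' of a pair $V,W\in X(k)$, it suffices to check that the two sides of the claimed identity agree as functions of $s$. Equivalently: both operators lie in the Bose--Mesner algebra of the Grassmann association scheme, which is spanned by $\{J_q(n,k,t)\}_t$, so the proposition is a change-of-basis statement inside this algebra and it is enough to compare coefficients on each distance class.

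\textbf{Entries of the canonical walks.} First I would compute the matrix entries of $N_k^i=D_{k+1}\circ\cdots\circ D_{k+i}\circ U_{k+i-1}\circ\cdots\circ U_k$. Unwinding the composition and using that $X$ is regular up and down---so that iterating the down-operators from $V\in X(k)$ yields the uniform distribution on $(k+i)$-dimensional $Z\supseteq V$, and iterating the up-operators into $Z$ yields the uniform distribution on $k$-dimensional $W\subseteq Z$---one obtains
\[
N_k^i(V,W)=\frac{1}{{k+i\choose i}_q}\,\Pr_{Z\supseteq V,\ \dim Z=k+i}\!\big[W\subseteq Z\big]=\frac{{n-k-s\choose i-s}_q}{{n-k\choose i}_q\,{k+i\choose i}_q},
\]
where $s=k-\dim(V\cap W)$ and the numerator counts the $(k+i)$-dimensional subspaces $Z$ containing $V+W$ (note $\dim(V+W)=k+s$). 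In particular this entry vanishes unless $s\le i$, consistent with $N_k^i$ being supported on pairs at distance at most $i$, and at distance $s=i$ it equals $1/\big({n-k\choose i}_q{k+i\choose i}_q\big)\neq 0$, so the triangular system relating the $N_k^i$ to the $J_q(n,k,t)$ is invertible.

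\textbf{Reduction to a $q$-binomial identity.} Substituting the entry above into the right-hand side of the proposition, the factor ${k+i\choose i}_q$ in the coefficient cancels the one in the denominator of $N_k^i(V,W)$, and the remaining dependence on $n$ collapses through the elementary identity ${n-k-s_0\choose i-s_0}_q\big/{n-k\choose i}_q={i\choose s_0}_q\big/{n-k\choose s_0}_q$. Matching with the entry of the random walk $J_q(n,k,t)$, which equals $1/\big(q^{(k-t)^2}{k\choose t}_q{n-k\choose k-t}_q\big)$ at distance $k-t$ and $0$ otherwise, the claim reduces---writing $s=k-t$ and letting $s_0$ denote the distance of $(V,W)$---to the purely formal identity
\[
\sum_{i=s_0}^{s}(-1)^{s-i}q^{{s-i\choose 2}}{s\choose i}_q{i\choose s_0}_q=\mathbbm{1}\{s_0=s\},\qquad 0\le s_0\le s.
\]
To prove this, use the $q$-analogue of the ``subset-of-a-subset'' relation ${s\choose i}_q{i\choose s_0}_q={s\choose s_0}_q{s-s_0\choose i-s_0}_q$ to pull out ${s\choose s_0}_q$, then reindex by $l=s-i$ and $r=s-s_0$; the sum becomes ${s\choose s_0}_q\sum_{l=0}^{r}(-1)^l q^{{l\choose 2}}{r\choose l}_q$, which by the finite $q$-binomial theorem equals ${s\choose s_0}_q\prod_{m=0}^{r-1}(1-q^m)$. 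This product vanishes when $r=s-s_0\ge 1$ (the $m=0$ factor is $0$) and equals $1$ when $r=0$, which is exactly $\mathbbm{1}\{s_0=s\}$.

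\textbf{Main obstacle.} I expect the difficulty to be bookkeeping rather than conceptual: one must pin down the weights on the complete Grassmann poset so that the composed up- and down-operators genuinely realize the uniform transition probabilities used above (this is where regularity of the Grassmann, and the fact that it is a $0$-eposet, enters), fix the normalization of $J_q(n,k,t)$ so the identity is homogeneous in $n$, and dispatch the small-$n$ edge cases where some quantity is undefined. A conceptually different route would be to note that both sides are simultaneously diagonalized by the HD-level-set decomposition and to match eigenvalues on each level set $X(j)$ instead, using the known closed forms for the eigenvalues of $N_k^i$ and of $J_q(n,k,t)$; this avoids the explicit entry computation but still bottoms out in $q$-binomial identities of the same flavor.
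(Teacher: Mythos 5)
Your proof is correct, but it takes a genuinely different (and more specialized) route than the paper's. The paper proves the formal version of this statement, \Cref{prop:Grassmann-decomp}, for an \emph{arbitrary} weighted pure $q$-simplicial complex: it first establishes the forward relation $N_k^j=\sum_{i} q^{i^2}\frac{{j\choose i}_q{k\choose k-i}_q}{{k+j\choose k}_q}S_k^i$ (\Cref{lemma:N-vs-S-q}) via a probabilistic conditioning argument on the intersection dimension, and then applies the $q$-binomial inversion theorem (\Cref{lemma:q-binom-inversion}) to solve for $S_k^j$ in terms of the $N_k^i$. You instead work entirely inside the complete Grassmann poset and verify the identity entrywise in the Bose--Mesner algebra, exploiting the fact that on the \emph{full} Grassmann the entries of $N_k^i$ and of $J_q(n,k,t)$ depend only on $s=k-\dim(V\cap W)$ and have explicit closed forms; the verification then collapses to the $q$-binomial orthogonality $\sum_{i=s_0}^{s}(-1)^{s-i}q^{{s-i\choose 2}}{s\choose i}_q{i\choose s_0}_q=\mathbbm{1}\{s_0=s\}$, which you correctly establish from the subset-of-a-subset identity plus the finite $q$-binomial theorem. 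Your route is more elementary and self-contained (it avoids quoting a $q$-inversion theorem, and the two approaches are really the same inversion viewed from opposite ends), but it is intrinsically limited to the complete Grassmann: for a general $q$-simplicial complex the entries of $N_k^i(V,W)$ do not depend only on $\dim(V\cap W)$, so the entrywise reduction fails. The paper needs the general form of \Cref{prop:Grassmann-decomp}---e.g.\ to define and analyze partial-swap walks on arbitrary $q$-eposets in \Cref{cor:swap-q-HDX-spectra} and \Cref{cor:grassmann-exp}---which is why it opts for the poset-agnostic inversion argument. Your alternate eigenvalue-matching sketch at the end suffers from the same limitation (matching eigenvalues pins down an operator only inside a fixed commutative algebra such as the one generated by the complete Grassmann's walks).
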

In \Cref{sec:q-HDX} we prove a more general version of this result for any $q$-simplicial complex. This leads to a set of natural sparsifications of the Grassmann graphs that may be of independent interest for agreement testing, PCPs, and hardness of approximation. For simplicity, on a given $q$-simplicial complex $X$, we'll refer to these ``sparsified'' Grassmann graphs as $J_{X,q}(n,k,t)$ for the moment (more formally they are the ``partial-swap walks,'' see \Cref{sec:prelim-q-HDX}). With this in mind, let's take a look at what our level-$i$ inequality implies for the edge-expansion of these graphs.
\begin{corollary}[$q$-eposets Edge-Expansion (informal \Cref{cor:grassmann-exp})]\label{cor:intro-grassmann-exp}
Let $X$ be a $d$-dimensional $\gamma$-$q$-eposet and $S \subset X(k)$ a $(\varepsilon,\ell)$-pseudorandom set. Then the expansion of $S$ with respect to the sparsified Grassmann graph $J_{X,q}(n,k,t)$ is at least:
\[
\Phi(S) \geq 1 - \mathbb{E}[\id{S}] - \varepsilon \sum\limits_{i=1}^\ell {t \choose i}_q  - q^{-(\ell+1)j} - O_{q,k}(\gamma).
\]
\end{corollary}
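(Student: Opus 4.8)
The plan is to run the spectral argument behind \Cref{intro:PR-sets-expand}, specialized to the walk $M = J_{X,q}(n,k,t)$ and carried out while keeping the per-strip eigenvalues $\lambda_j(M)$ explicit rather than uniformly replacing them by $\lambda_{\ell+1}$. First I would pass to the spectral form of edge-expansion,
\[
\Phi(S) \;=\; 1 - \frac{\langle \id{S}, M\id{S}\rangle}{\langle \id{S},\id{S}\rangle},
\]
and decompose $\id{S} = \sum_{j=0}^{k}\id{S,j}$ along the HD-Level-Set Decomposition. Since this decomposition is simultaneously an approximate eigenbasis for every HD-walk (\Cref{intro:eposet-spectra}), eigenstripping together with the approximate orthogonality of the strips yields
\[
\langle \id{S}, M\id{S}\rangle \;=\; \sum_{j=0}^{k}\lambda_j(M)\,\langle \id{S,j},\id{S,j}\rangle \;\pm\; O_{q,k}(\gamma)\,\langle \id{S},\id{S}\rangle,
\]
where I used that on a $q$-eposet the laziness vector $\delta$ is a fixed function of $q$ and $k$, so the error $O_{k,\delta}(\gamma)$ collapses to $O_{q,k}(\gamma)$. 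It thus suffices to upper bound $\sum_{j=0}^{k}\lambda_j(M)\langle \id{S,j},\id{S,j}\rangle$.

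Next I would split this sum into three ranges. The $j=0$ term equals $\lambda_0(M)\,\mathbb{E}[\id{S}]^2 = \mathbb{E}[\id{S}]\langle\id{S},\id{S}\rangle$, producing the $\mathbb{E}[\id{S}]$ term. For $1 \le j \le \ell$ I would combine approximate orthogonality with the Grassmann level-$i$ inequality (\Cref{intro:grassmann-level}) to get $\langle\id{S,j},\id{S,j}\rangle \le \bigl(\binom{k}{j}_q\varepsilon + O_{q,k}(\gamma)\bigr)\langle\id{S},\id{S}\rangle$, and use the eigenvalue identity $\lambda_j(M)\binom{k}{j}_q = \binom{t}{j}_q \pm O_{q,k}(\gamma)$ established below, together with $\binom{t}{j}_q \le \binom{k}{j}_q$; summing bounds this middle range by $\bigl(\varepsilon\sum_{j=1}^{\ell}\binom{t}{j}_q + O_{q,k}(\gamma)\bigr)\langle\id{S},\id{S}\rangle$. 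For $j > \ell$ the level-$i$ inequality no longer applies, so I would instead use that the eigenvalues $\lambda_j(M)$ are nonincreasing in $j$ (again read off from the canonical-walk decomposition and \Cref{intro:regularity-thm}) and that $\sum_{j>\ell}\langle\id{S,j},\id{S,j}\rangle \le (1+O_{q,k}(\gamma))\langle\id{S},\id{S}\rangle$, bounding this tail by $\lambda_{\ell+1}(M)\langle\id{S},\id{S}\rangle$; since $\lambda_{\ell+1}(M) = \binom{t}{\ell+1}_q/\binom{k}{\ell+1}_q \le q^{-(\ell+1)(k-t)}$, this is the penultimate term of the statement. Adding the three pieces and dividing through by $\langle\id{S},\id{S}\rangle$ gives the bound.

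The genuinely technical step, and the main obstacle, is the eigenvalue evaluation $\lambda_j(J_{X,q}(n,k,t)) = \binom{t}{j}_q/\binom{k}{j}_q \pm O_{q,k}(\gamma)$. I would derive it from the canonical-walk expansion of the Grassmann graph in \Cref{intro:prop-grass-graphs} (whose general $q$-simplicial-complex form is proved in \Cref{sec:q-HDX}): substituting the approximate eigenvalues of the canonical walks $N_k^i$, which by eigenstripping and \Cref{intro:regularity-thm} are ratios of the $q$-eposet regularity function and hence Gaussian binomial coefficients, turns $\lambda_j(J_{X,q}(n,k,t))$ into a fixed-length alternating $q$-hypergeometric sum with weights $(-1)^{k-t-i}q^{\binom{k-t-i}{2}}\binom{k-t}{i}_q\binom{k+i}{i}_q$; the $\pm O_{q,k}(\gamma)$ slack is then controlled by the triangle inequality over its $O_{q,k}(1)$ summands. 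Collapsing the sum to the single term $\binom{t}{j}_q/\binom{k}{j}_q$ is a $q$-Chu--Vandermonde-type identity, and I would also record the auxiliary $q$-binomial identities $\binom{k-j}{k-t}_q = \binom{k-j}{t-j}_q$ and $\binom{k}{j}_q\binom{k-j}{t-j}_q = \binom{k}{t}_q\binom{t}{j}_q$, which together rewrite the natural partial-swap eigenvalue $\binom{k-j}{k-t}_q/\binom{k}{k-t}_q$ in the form $\binom{t}{j}_q/\binom{k}{j}_q$ used above. Everything else is bookkeeping already done inside the general eposet theorems, so once this identity is verified the corollary follows.
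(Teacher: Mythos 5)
Your proposal is correct and takes essentially the same route as the paper: it specializes the general pseudorandom-expansion argument of \Cref{thm:hdx-expansion} to the partial-swap walk, invokes the $q$-level-$i$ inequality (\Cref{thm:grassmann-level}, i.e.\ \Cref{cor:ell2-proj} with $R(k,i)=\binom{k}{i}_q$), and obtains the eigenvalues $\binom{t}{j}_q/\binom{k}{j}_q$ from the canonical-walk decomposition exactly as in \Cref{prop:Grassmann-decomp} and \Cref{cor:swap-q-HDX-spectra}. The only cosmetic difference is that you re-derive the three-range spectral split inline (working with $\langle \id{S,j},\id{S,j}\rangle$ instead of $\langle \id{S},\id{S,j}\rangle$, which approximate orthogonality makes equivalent up to $O_{q,k}(\gamma)$) rather than citing the general theorem with $\eta=\lambda_{\ell+1}$ and the ST-rank bookkeeping.
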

In practice, $t$ is generally thought of as being $\Omega(k)$ (or even $k-O(1)$), which results in a $k$-dependent bound. It remains an open problem whether a $k$-independent version can be proved for any $q$-eposet beyond the Grassmann poset itself. We conjecture such a result should indeed hold (albeit under a different notion of pseudorandomness), and may follow from $q$-analog analysis of recent work proving $k$-independent bounds for standard expanding hypergraphs \cite{bafna2021hypercontractivity,gur2021hypercontractivity}.
\subsection{Related Work}
\paragraph{Higher Order Random Walks.} Higher order random walks were first introduced in 2016 by Kaufman and Mass \cite{kaufman2016high}. Their spectral structure was later elucidated in a series of works by Kaufman and Oppenheim \cite{kaufman2020high}, DDFH \cite{dikstein2018boolean}, Alev, Jeronimo, and Tulsiani \cite{alev2019approximating}, Alev and Lau \cite{alev2020improved}, and finally BHKL \cite{bafna2020high}. With the exception of DDFH (who only worked with approximate eigenvectors without analyzing the true spectrum), all of these works focused on hypergraphs rather than general posets. Our spectral stripping theorem for eposets essentially follows from combining eposet machinery developed by DDFH with our improved variant of BHKL's general spectral stripping theorem. 

Higher order random walks have also seen an impressive number of applications in recent years, frequently closely tied to analysis of their spectral structure. This has included breakthrough works on approximate sampling \cite{anari2019log,alev2020improved,anari2020spectral,chen2020rapid,chen2021optimal,chen2021rapid,feng2021rapid,jain2021spectral,liu2021coupling,blanca2021mixing}, CSP-approximation \cite{alev2019approximating,bafna2020high}, error-correcting codes \cite{jeronimo2020unique,jeronimo2021near}, and agreement testing \cite{dinur2017high,dikstein2019agreement,kaufman2020local}. In this vein, our work is most closely related to that of Bafna, Barak, Kothari, Schramm, and Steurer \cite{bafna2020playing}, and BHKL \cite{bafna2020high}, who used the spectral and combinatorial structure of HD-walks to build new algorithms for unique games. As previously discussed, the generalized analysis in this paper also lends itself to the algorithmic techniques developed in those works, but we do not know of any interesting examples beyond those covered in BHKL.

\paragraph{High Dimensional Expansion Beyond Hypergraphs.} Most works listed above (and indeed in the high dimensional expansion literature in general) focus only on the setting of hypergraphs. However, recent years have also seen the nascent development and application of expansion beyond this setting \cite{dinur2021,panteleev2021asymptotically,lin2022c,leverrier2022quantum,Hopkins2022ExplicitLB}, including the seminal work of DDFH \cite{dikstein2018boolean} on expanding posets as well as more recent breakthroughs on locally testable and quantum codes \cite{dinur2021locally,panteleev2021asymptotically}. While DDFH largely viewed eposets as having similar structure (with the exception of the Grassmann), we strengthen the case that different underlying poset architectures exhibit different properties. This complements the recent result of Kaufman and Tessler \cite{kaufman2021local}, who showed that expanding posets with strong regularity conditions such as the Grassmann exhibit more favorable properties with respect to the second eigenvalue. Our results provide a statement of the same flavor looking at the entire spectrum, along with additional separations in more combinatorial settings. We note that a related connection between poset regularity and the approximate spectrum of walks on eposets was independently developed by DDFH in a recent update of their seminal work \cite{dikstein2018boolean}.

\paragraph{Expansion and Unique Games.}
One of the major motivations behind this work is towards building a more general framework for understanding the structure underlying the Unique Games Conjecture \cite{khot2002power}, a major open problem in complexity theory that implies optimal hardness of approximation results for a large swath of combinatorial optimization problems (see e.g.\ Khot's survey \cite{khot2005unique}). In 2018, Khot, Minzer, and Safra \cite{subhash2018pseudorandom} made a major breakthrough towards the UGC in proving a weaker variant known as the 2-2 Games Conjecture, completing a long line of work in this direction \cite{khot2017independent,dinur2018towards,dinur2018non,barak2018small,khot2018small,subhash2018pseudorandom}. The key to the proof lay in a result known as the ``Grassmann expansion hypothesis,'' which stated that any non-expanding set in the Grassmann graph $J_q(d,k,k-1)$ had to be non-trivially concentrated inside a local-structure called ``zoom-in zoom-outs.'' As noted in the previous section, this result differs from our analysis in two key ways: it lies in the $\ell_\infty$-regime, and must be totally independent of dimension.

Unfortunately, very little progress has been made towards the UGC since this result. This is in part because KMS' proof of the Grassmann expansion hypothesis, while a tour de force, is complicated and highly tailored to the exact structure of the Grassmann. To our knowledge, the same proof cannot be used, for instance, to resolve the related ``shortcode expansion hypotheses'' beyond degree-2, similar conjectures offered by Barak, Kothari, and Steurer \cite{barak2018small} in an effort to push beyond hardness of 2-2 Games. Just as the $\ell_2$-regime analysis of DDFH and BHKL recently lead to a dimension independent bound in the $\ell_\infty$-regime for standard HDX \cite{bafna2021hypercontractivity,gur2021hypercontractivity}, we expect the groundwork laid in this paper will be important for proving generalized dimension independent expansion hypotheses in the $\ell_\infty$-regime beyond the special case of the Grassmann graphs.

\section{Preliminaries}\label{sec:prelims}
Before jumping into the details in full formality, we give a more careful review of background definitions regarding expanding posets, higher order random walks, and the Grassmann.
\subsection{Graded Posets}\label{sec:poset}
We start with eposets' underlying structure, graded posets. A partially ordered set (poset) $P=(X,<)$ is a set of elements $X$ endowed with a partial order ``$<$''. A graded poset comes equipped additionally with a rank function $r: X \to \mathbb{N}$ satisfying two properties:
\begin{enumerate}
    \item $r$ preserves ``$<$'': if $y < x$, then $r(y) < r(x)$.
    \item $r$ preserves cover relations: if $x$ is the smallest element greater than $y$, then $r(x)=r(y)+1$.
\end{enumerate}
In other words, the function $r$ partitions $X$ into subsets by rank:
\[
X(0) \cup \ldots \cup X(d),
\]
where $\max_{X}(r)=d$, and $X(i)=r^{-1}(i)$. We refer to a poset with maximum rank $d$ as ``$d$-dimensional'', and elements in $X(i)$ as ``$i$-faces''. Throughout this work, we will consider only $d$-dimensional graded posets with two additional restrictions:
\begin{enumerate}
    \item They have a unique minimal element, i.e. $|X(0)|=1$.
    \item They are ``pure'': all maximal elements have rank $d$.
\end{enumerate}
Finally, many graded posets of interest satisfy certain regularity conditions which will be crucial to our analysis. The first condition of interest is a natural notion called \textit{downward} regularity.
\begin{definition}[Downward Regularity]
We call a $d$-dimensional graded poset downward regular if for all $i \leq d$ there exists some constant $R(i)$ such that every element $x \in X(i)$ covers exactly $R(i)$ elements $y \in X(i-1)$.
\end{definition}
Second, we will also need a useful notion called \textit{middle regularity} that ensures uniformity across multiple levels of the poset.
\begin{definition}[Middle Regularity]
We call a $d$-dimensional graded poset middle-regular if for all $0 \leq i \leq k \leq d$, there exists a constant $m(k,i)$ such that for any $x_k \in X(k)$ and $x_i \in X(i)$ satisfying $x_k>x_i$, there are exactly $m(k,i)$ chains\footnote{Such objects are sometimes called flags, e.g.\ in the case of the Grassmann poset.} of elements $x_k > x_{k-1} > \ldots > x_{i+1} > x_i$ where each $x_j \in X(j)$.
\end{definition}
We call a poset regular if it is both downward and middle regular. We note that regular posets also have the nice property that for any dimensions $i < k$, there exists a higher order regularity function $R(k,i)$ such that any $x \in X(k)$ is greater than exactly $R(k,i)$ elements in $X(i)$ (see \Cref{app:regularity}). We will use this notation throughout. For notational convenience, we also define $R(i,i)=1$ and $R(j,i)=0$ whenever $j<i$.

Important examples of regular posets include pure simplicial complexes and the Grassmann poset (subspaces of $\mathbb{F}^n_q$ ordered by inclusion). We will assume all posets we discuss in this work are regular from this point forward.
\subsection{Measured Posets and The Random Walk Operators}
Higher order random walks may be defined over posets in a very similar fashion to simplicial complexes. The main difference is simply that ``inclusion'' is replaced with the poset order relation. Just as we might want these walks on HDX to have non-uniform weights, the same is true for posets, which can be analogously endowed with a distribution over levels. In slightly more detail, a \textit{measured poset} is a graded poset $X$ endowed with a distribution 
%Informally, a higher order random walk is a Markov process that starts at some $\ell$-face $X(\ell)$ of a graded poset, walks randomly to some higher (or lower) $k$-face via the poset structure (e.g. by inclusion for a simplicial complex), and then similarly back to another $\ell$-face. While in the simplest case these steps may proceed uniformly at random (such as in the complete complex), in the study of high dimensional expansion we often consider non-uniform weights (indeed, there is no known sparse HDX over the uniform distribution). These structures, called measured posets, are a combination of a graded poset $X$ with a distribution
$\Pi=(\pi_0,\ldots,\pi_d)$, where each marginal $\pi_i$ is a distribution over $X(i)$. While measured posets may be defined in further generality (cf.\ \cite[Definition 8.1]{dikstein2018boolean}), we will focus on the case in which the distribution $\Pi$ is induced entirely from $\pi_d$, analogous to weighted simplicial complexes. More formally, we have that for every $0 \leq i < d$:
\[
\pi_i(x) = \frac{1}{R(i+1,i)}\sum\limits_{y \gtrdot x}\pi_{i+1}(y).
\]
In other words, each lower dimensional distribution $\pi_i$ may be induced through the following process: an element $y \in X(i+1)$ is selected with respect to $\pi_{i+1}$, and an element $x \in X(i)$ such that $x<y$ is then chosen uniformly at random.

The averaging operators $U$ and $D$ are defined analogously to their notions on simplicial complexes, with the main change being the use of the general regularity function $R(i+1,i)$:
\begin{align*}
U_if(y) = &~ \frac{1}{R(i+1,i)}\sum\limits_{x \lessdot y}f(x),\\
D_{i+1}f(x) = &~ \frac{1}{\pi_{i+1}(X_{x})}\sum\limits_{y \gtrdot x}\pi_{i+1}(y)f(y),
\end{align*}
where for $i<k$ and $x \in X(i)$,
\[
\pi_{k}(X_x) = \sum\limits_{y \in X(k): y > x} \pi_{k}(y) = R(k,i)\pi_i(x)
\]
is the appropriate normalization factor (we will use this notation throughout). On regular posets, it is useful to note that the up operators compose nicely, and in particular that:
\[
U^k_i f(y) \coloneqq U_{k-1} \circ \ldots \circ U_i f(y) = \frac{1}{R(k,i)}\sum\limits_{x \in X(i): x < y}f(x)
\]
(see \Cref{app:regularity}). Furthermore, just like on simplicial complexes, the down and up operators are adjoint with respect to the standard inner product on measured posets:
\[
\langle f,g \rangle_{X(k)} = \sum\limits_{\tau \in X(k)} \pi_k(\tau)f(\tau)g(\tau),
\]
that is to say for any $f: X(k) \to \R$ and $g: X(k-1) \to \R$:
\[
\langle f,U_{k-1}g\rangle_{X(k)}=\langle D_k f,g\rangle_{X(k-1)}.
\]
Note that we'll generally drop the $X(k)$ from the notation when clear from context. This useful fact allows us to define basic self-adjoint notions of higher order random walks just like on simplicial complexes. 

\subsection{Higher Order Random Walks}
Let $C_k$ denote the space of functions $f: X(k) \to \mathbb{R}$. We define a natural set of random walk operators via the averaging operators.
\begin{definition}[$k$-Dimensional Pure Walk \cite{kaufman2016high,dikstein2018boolean,alev2019approximating}]
Given a measured poset $(X,\Pi)$, a $k$-dimensional pure walk $Y: C_k \to C_k$ on $(X,\Pi)$ (of height $h(Y)$) is a composition:
\[
Y = Z_{2h(Y)} \circ \cdots \circ Z_{1},
\]
where each $Z_i$ is a copy of $D$ or $U$, and there are $h(Y)$ of each type.
\end{definition}
Following AJT and BHKL, we define general higher order random walks to be affine combinations\footnote{An affine combination is a linear combination whose coefficients sum to $1$.} of pure walks.
\begin{definition}[HD-walk]
Let $X$ be a graded poset. Let $\mathcal Y$ be a family of pure walks $Y: C_k \to C_k$ on $(X,\Pi)$. We call an affine combination 
\[
M = \sum\limits_{Y \in \mathcal Y} \alpha_Y Y
\]
a $k$-dimensional HD-walk on $(X,\Pi)$ if it is stochastic and self-adjoint. The height of $M$, denoted $h(M)$, is the maximum height of any pure $Y \in \mathcal Y$ with a non-zero coefficient. The weight of $M$, denoted $w(M)$, is $|\alpha|_1$.
\end{definition}
While most of our results will hold for general HD-walks (or at least some large subclass), we pay special attention to a basic class of pure walks that have seen the most study in the literature: \textit{canonical walks}.
\begin{definition}[Canonical Walk]
Given a $d$-dimensional measured poset $(X,\Pi)$ and parameters $k+j \leq d$, the upper canonical walk $\widehat{N}_k^j$ is:
\[
\widehat{N}_k^j = D^{k+j}_{k}U^{k+j}_{k},
\]
and for $j \leq k$ the lower canonical walk $\widecheck{N}^j_k$ is:
\[
\widecheck{N}^j_k = U_{k-j}^kD^k_{k-j},
\]
where $U^k_\ell = U_{k-1}\ldots U_{\ell}$, and $D^k_\ell = D_{\ell+1}\ldots D_{k}$.
\end{definition}
Since the non-zero spectrum of $\widehat{N}_k^j$ and $\widecheck{N}_{k+j}^j$ are equivalent (c.f.\ \cite{alev2020improved}), we focus in this work mostly on the upper walks which we write simply as $N_k^j$.

For certain specially structured posets, we will also study an important class of HD-walks known as (partial) \textit{swap walks}. We will introduce these well-studied walks in more detail in \Cref{sec:prelim-q-HDX}, and for now simply note that they give a direct generalization of the Johnson and Grassmann graphs when applied to the complete complex and Grassmann poset respectively.

\subsection{Expanding Posets and the HD-Level-Set Decomposition}
Dikstein, Dinur, Filmus, and Harsha \cite{dikstein2018boolean} observed that one can use the averaging operators to define a natural extension of spectral expansion to graded posets. Their definition is inspired by the fact that $\gamma$-spectral expansion on a standard graph $G$ can be restated as a bound on the spectral norm of the adjacency matrix minus its stationary operator:
\[
\norm{A_G - UD} \leq \gamma.
\]
Informally, DDFH's definition can be thought of as stating that this relation holds for every level of a higher dimensional poset.
\begin{definition}[eposet \cite{dikstein2018boolean}]\label{def:eposet}
Let $(X,\Pi)$ be a measured poset, $\delta \in [0,1]^{d-1}$, and $\gamma < 1$. $X$ is an $(\delta,\gamma)$-eposet if for all $1 \leq i \leq d-1$:
\[
\norm{D_{i+1}U_i - (1-\delta_i) I - \delta_i U_{i-1}D_i} \leq \gamma
\].
\end{definition}
We note that for a broad range of posets, this definition is actually equivalent (up to constants) to \textit{local-spectral expansion}, a popular notion of high dimensional expansion introduced by Dinur and Kaufman \cite{dinur2017high}. This was originally proved for simplicial complexes by DDFH \cite{dikstein2018boolean}, and later extended to a more general class of posets by Kaufman and Tessler \cite{kaufman2021local}. It is also worth noting that when $\gamma=0$, posets satisfying the guarantee in \Cref{def:eposet} are known as \textit{sequentially differential}, and were actually introduced much earlier by Stanley \cite{stanley1988differential} in the late 80s.

Much of our analysis in this work will be based off of an elegant approximate Fourier decomposition for eposets introduced by DDFH \cite{dikstein2018boolean}.
\begin{theorem}[HD-Level-Set Decomposition, Theorem 8.2 \cite{dikstein2018boolean}]\label{thm:decomp-ddfh}
Let $(X,\Pi)$ be a $d$-dimensional $(\delta,\gamma)$-eposet with $\gamma$ sufficiently small. For all $0 \leq k \leq d$, let
\[
H^0=C_0, H^i=\text{Ker}(D_i), V_k^i = U^{k}_iH^i.
\]
Then:
\[
C_k = V^0_k \oplus \ldots \oplus V^k_k.
\]
In other words, every $f \in C_k$ has a unique decomposition $f=f_0+\ldots+f_k$ such that $f_i=U^{k}_ig_i$ for $g_i \in \text{Ker}(D_i)$.
\end{theorem}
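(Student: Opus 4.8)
The plan is to mirror the known level-set decomposition argument for simplicial complexes (Kaufman--Oppenheim, Dikstein--Dinur), replacing set inclusion by the poset cover relation and using only two structural inputs that are already available: the adjointness $D_{i+1}=U_i^{*}$ on measured posets, and the eposet relation $\norm{D_{i+1}U_i-(1-\delta_i)I-\delta_iU_{i-1}D_i}\le\gamma$. I would split the proof into three essentially independent pieces: a \emph{non-degeneracy lemma} (each $U_i$ is injective), a \emph{dimension count} ($\sum_{i=0}^k\dim V_k^i=\dim C_k$), and a \emph{spanning claim} ($C_k=\sum_{i=0}^k V_k^i$). Spanning together with a matching dimension count forces the sum to be direct, and injectivity of the up-maps gives uniqueness of the $g_i$.

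First I would establish the non-degeneracy lemma: for $\gamma$ small enough, every $U_i$ is injective, equivalently every $D_i$ is surjective. For $f\in C_i$, adjointness gives $\norm{U_i f}^2=\langle D_{i+1}U_i f,f\rangle$, and the eposet relation lets me replace $D_{i+1}U_i$ by $(1-\delta_i)I+\delta_iU_{i-1}D_i$ at the cost of an additive error of absolute value at most $\gamma\norm{f}^2$. Since $\langle U_{i-1}D_i f,f\rangle=\norm{D_i f}^2\ge 0$ and $\delta_i\ge 0$, this yields $\norm{U_i f}^2\ge(1-\delta_i-\gamma)\norm{f}^2$ (the case $i=0$ being trivial because $|X(0)|=1$). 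Hence, reading ``$\gamma$ sufficiently small'' as $\gamma<1-\max_{1\le i\le d-1}\delta_i$, each $U_i$ — and therefore each composition $U_i^{k}=U_{k-1}\circ\cdots\circ U_i$ — is injective.

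Next, the dimension count and spanning. Surjectivity of $D_i$ gives $\mathrm{rank}(D_i)=|X(i-1)|$, so $\dim H^i=\dim\text{Ker}(D_i)=|X(i)|-|X(i-1)|$ for $i\ge 1$ while $\dim H^0=|X(0)|=1$; these telescope to $\sum_{i=0}^k\dim H^i=|X(k)|=\dim C_k$, and injectivity of $U_i^{k}$ upgrades this to $\sum_{i=0}^k\dim V_k^i=\dim C_k$. For spanning I would induct on $k$: the base case is $C_0=H^0=V_0^0$, and for the step I use $V_k^k=H^k=\text{Ker}(D_k)=\text{Ker}(U_{k-1}^{*})=\text{Im}(U_{k-1})^{\perp}$, so that $C_k=\text{Im}(U_{k-1})\oplus\text{Ker}(D_k)$ as an orthogonal decomposition (the standard fact that a codomain splits as image of an operator plus kernel of its adjoint, in finite dimensions). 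Applying the inductive hypothesis $C_{k-1}=\sum_{i<k}V_{k-1}^i$ together with $U_{k-1}U_i^{k-1}=U_i^{k}$ gives $\text{Im}(U_{k-1})=U_{k-1}\bigl(\sum_{i<k}V_{k-1}^i\bigr)=\sum_{i<k}V_k^i$, hence $C_k=\sum_{i=0}^k V_k^i$. Combining spanning with the dimension count, the sum is direct, and injectivity of $U_i^{k}$ on $H^i$ makes the $g_i$ unique.

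The step I expect to be the crux is the non-degeneracy lemma, since everything downstream rests on keeping $1-\delta_i$ bounded away from $0$. For the posets of interest here (simplicial complexes, the Grassmann) the $\delta_i$ are bounded away from $1$ for fixed dimension $d$, so the hypothesis ``$\gamma$ sufficiently small'' is harmless; but a clean statement must either fold a quantitative bound such as $\gamma<1-\max_i\delta_i$ into the hypotheses or invoke a structural fact ruling out $\delta_i=1$. The only remaining bookkeeping is to check the finitely many ``sufficiently small'' thresholds (one per level $i\le d-1$) are simultaneously satisfiable, which is immediate from the single-level estimate above.
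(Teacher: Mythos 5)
Your proof is correct and takes essentially the same route as the source this statement rests on: the paper does not reprove \Cref{thm:decomp-ddfh} but imports it from DDFH, whose argument is exactly your three-step scheme---injectivity of each $U_i$ from $\norm{U_i f}^2 = \langle D_{i+1}U_i f, f\rangle \geq (1-\delta_i-\gamma)\norm{f}^2$ via the eposet relation and positivity of $\langle U_{i-1}D_i f, f\rangle = \norm{D_i f}^2$, the orthogonal splitting $C_k = \mathrm{Im}(U_{k-1}) \oplus \ker(D_k)$ with induction on $k$, and the dimension count that forces the sum to be direct and the $g_i$ unique. Your explicit reading of ``$\gamma$ sufficiently small'' as $\gamma < 1 - \max_i \delta_i$ (with the observation that the posets of interest have $\delta_i$ bounded away from $1$) is the right quantification of the hypothesis left implicit in the statement.
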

It is well known that the HD-Level-Set Decomposition is approximately an eigenbasis for HD-walks on simplicial complex \cite{dikstein2018boolean,alev2019approximating,bafna2020high}. We show this statement extends to all eposets in \Cref{sec:spectrum-eposet} (extending DDFH's similar analysis of the upper walk $N_k^1$).

Finally, before moving on, we will assume for simplicity throughout this work an additional property of eposets we called (approximate) non-laziness.
\begin{definition}[$\beta$-non-Lazy Eposets]\label{def:non-lazy}
Let $(X,\Pi)$ be a $d$-dimensional measured poset. We call $(X,\Pi)$ $\beta$-non-lazy if for all $1 \leq i \leq d$, the laziness of the lower walk satisfies:
\[
\max_{\sigma \in X(i)}\{\id{\sigma}^T U_{i-1}D_i \id{\sigma}\} \leq \beta.
\]
\end{definition}
Another way to think about this condition is that no element in the poset carries too much weight, even upon conditioning. All of our results hold for general eposets,\footnote{The one exception is the lower bound of \Cref{thm:intro-link-lower}.} but their form is significantly more interpretable when the poset is additionally non-lazy. In fact, most $\gamma$-eposets of interest are $O(\gamma)$-non-lazy. It is easy to see for instance that any ``$\gamma$-local-spectral'' expander satisfies this condition, an equivalent notion of expansion to $\gamma$-eposets under suitable regularity conditions \cite{kaufman2021local}. We discuss this further in \Cref{app:regularity}.

\subsection{The Grassmann Poset and $q$-eposets}\label{sec:prelim-q-HDX}
At the moment, there are only two known families of expanding posets of significant interest in the literature: those based on pure simplicial complexes (the downward closure of a $k$-uniform hypergraph), and pure $q$-simplicial complexes (the analogous notion over subspaces). The $\ell_2$-structure of the former set of objects is studied in detail in \cite{bafna2020high}. In this work, we will focus on the latter which has seen less attention in the literature, but is responsible for a number of important results including the resolution of the 2-to-2 Games Conjecture \cite{subhash2018pseudorandom}.

\begin{definition}[$q$-simplicial complex]
Let $G_q(n,d)$ denote the $d$-dimensional subspaces of $\mathbb{F}_q^n$. A weighted, pure $q$-simplicial complex $(X,\Pi)$ is given by a family of subspaces $X \subseteq G_q(n,d)$ and a distribution $\Pi$ over $X$. We will usually consider the downward closure of $X$ in the following sense:
\[
X = X(0) \cup \ldots \cup X(d),
\]
where $X(i) \subseteq G_q(n,i)$ consists of all $i$-dimensional subspaces contained in some element in $X=X(d)$. Further, on each level $X(i)$, $\Pi$ induces a natural distribution $\pi_i$:
\[
\forall V \in X(i): \pi_i(V) = \frac{1}{{d \choose i}_q}\sum\limits_{W \in X(d): W \supset V} \pi_d(W),
\]
where $\pi_d=\Pi$ and $\binom{d}{i}_q=\frac{(1-q^d)\cdots (1-q^{d-i+1})}{(1-q^i)\cdots (1-q)}$ is the Gaussian binomial coefficient.
\end{definition}
The most basic example of a $q$-simplicial complex is the Grassmann poset, which corresponds to taking $X=G_q(n,d)$. This is the $q$-analog of the complete simplicial complex. The Grassmann poset is well known to be a expander in this sense (see e.g.\ \cite{stanley1988differential})---in fact it is a sequentially differential poset with parameters
\[
\delta_i = \frac{(q^i-1)(q^{n-i+1}-1)}{(q^{i+1}-1)(q^{n-i}-1)},
\]
the $q$-analog of the eposet parameters for the complete complex \cite{dikstein2018boolean}. With this in mind, let's define a special class of eposets based on $q$-simplicial complexes.
\begin{definition}[$\gamma$-$q$-eposet \cite{dikstein2018boolean}]
A pure, $d$-dimensional weighted $q$-simplicial complex $(X,\Pi)$ is a $\gamma$-$q$-eposet if it is a $(\delta,\gamma)$-eposet satisfying $\delta_i = q\frac{q^i-1}{q^{i+1}-1}$ for all $1 \leq i \leq d-1$.
\end{definition}
Constructing bounded-degree $q$-eposets (a problem proposed by DDFH \cite{dikstein2018boolean}) remains an interesting open problem. Kaufman and Tessler \cite{kaufman2021local} recently made some progress in this direction, but the expansion parameter of their construction is fairly poor (around $1/2$).

Finally, in our applications to the Grassmann we'll focus our attention on a particularly important class of walks called \textit{partial-swap walks}. These should essentially be thought of as non-lazy variants of the upper canonical walks.
\begin{definition}[Partial-Swap Walk]
Let $(X,\Pi)$ be a weighted, $d$-dimensional $q$-simplicial complex. The partial-swap walk $S^j_k$ is the restriction of the canonical walk $N^j_k$ to faces whose intersection has dimension $k-j$. In other words, if $|V \cap W| > k-j$ then $S^j_k(V,W)=0$, and otherwise $S^j_k(V,W) \ \propto \ N^j_k(V,W)$.
\end{definition}
When applied to the Grassmann poset itself, it is clear by symmetry that the partial-swap walk $S_k^j$ returns exactly the Grassmann graph $J_q(d,k,k-j)$. On the other hand, it is not immediately obvious these objects are even HD-walks when applied to a generic $q$-simplicial complex. We prove this is the case in \Cref{sec:q-HDX}.

\section{Approximate Eigendecompositions and Eigenstripping}
With preliminaries out of the way, we can move on to understanding HD-walks' spectral structure. It turns out that on expanding posets, these walks exhibit almost exactly the same properties as on the special case of simplicial complexes studied in \cite{kaufman2020high,dikstein2018boolean,alev2019approximating,bafna2020high}: a walk's spectrum lies concentrated in strips corresponding to levels of the HD-Level-Set Decomposition. The key to proving this lies in a more general theorem characterizing the spectral structure of any inner product space admitting a ``approximate eigendecomposition.''
\begin{definition}[Approximate Eigendecomposition \cite{bafna2020high}]\label{def:approx-eigen}
Let $M$ be an operator over an inner product space $V$. A decomposition $V=V^1 \oplus \ldots \oplus V^k$ is called a $(\{\lambda_i\}_{i=1}^k,\{c_i\}_{i=1}^k)$-approximate eigendecomposition if for all $i$ and $v_i \in V^i$, $Mv_i$ is close to $\lambda_i v_i$:
\[
\norm{Mv_i - \lambda_i v_i} \leq c_i \norm{v_i}.
\]
We will always assume for simplicity (and without loss of generality) that the $\lambda_i$ are sorted: $\lambda_1 \geq \ldots \geq \lambda_k$.
\end{definition}
BHKL \cite{bafna2020high} proved that as long as the $c_i$ are sufficiently small, each $V^i$ (loosely) corresponds to an ``eigenstrip,'' the span of eigenvectors with eigenvalue closely concentrated around $\lambda_i$, and that these strips account of the entire spectrum of $M$. While sufficient for their purposes, their proof of this result was complicated and resulted in a variety of sub-optimal parameters. We give a tight variant of this result and significantly simplify the proof.

\begin{theorem}[Eigenstripping]\label{thm:approx-ortho}
Let $M$ be a self-adjoint operator over an inner product space $V$, and $V=V^1 \oplus \ldots \oplus V^k$ a $(\{\lambda_i\}_{i=1}^k,\{c_i\}_{i=1}^k)$-approximate eigendecomposition. Then as long as $c_{i}+c_{i+1} < \lambda_{i} - \lambda_{i+1}$, the spectrum of $M$ is concentrated around each $\lambda_i$:
\[
\text{Spec}(M) \subseteq \bigcup_{i=1}^k \left [ \lambda_i - c_i, \lambda_i + c_i\right ]
\]
\end{theorem}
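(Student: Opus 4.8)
The plan is to show that any eigenvector $v$ of $M$ whose eigenvalue $\mu$ lies outside all the intervals $[\lambda_i - c_i, \lambda_i + c_i]$ must be zero. Since $M$ is self-adjoint, it suffices to work eigenvalue-by-eigenvalue: fix such a $\mu$ and a unit eigenvector $v$ with $Mv = \mu v$, and decompose $v = v_1 + \dots + v_k$ according to $V = V^1 \oplus \dots \oplus V^k$. The key observation is that for each $i$ we have $\norm{Mv_i - \lambda_i v_i} \le c_i \norm{v_i}$, so $Mv_i = \lambda_i v_i + e_i$ with $\norm{e_i} \le c_i \norm{v_i}$. Summing, $\mu v = Mv = \sum_i (\lambda_i v_i + e_i)$, i.e. $\sum_i (\mu - \lambda_i) v_i = \sum_i e_i$. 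The strategy is to extract, from this single vector identity, a contradiction with $|\mu - \lambda_i| > c_i$ for all $i$.

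The crucial subtlety — and what I expect to be the main obstacle — is that the decomposition $V = V^1 \oplus \dots \oplus V^k$ need not be orthogonal, so I cannot simply take norms componentwise. To get around this I would first argue that the $V^i$ are \emph{approximately} orthogonal as a consequence of the spectral gap hypothesis $c_i + c_{i+1} < \lambda_i - \lambda_{i+1}$ (which, chained, gives $|\lambda_i - \lambda_j|$ bounded below by a sum of $c$'s for $i \ne j$). Concretely, for $v_i \in V^i$ and $v_j \in V^j$ with $i \ne j$, self-adjointness gives $\langle Mv_i, v_j\rangle = \langle v_i, Mv_j\rangle$; expanding both sides via $Mv_i = \lambda_i v_i + e_i$ yields $(\lambda_i - \lambda_j)\langle v_i, v_j\rangle = \langle v_i, e_j\rangle - \langle e_i, v_j\rangle$, so $|\langle v_i, v_j\rangle| \le \frac{c_i \norm{v_i}\norm{v_j} + c_j\norm{v_i}\norm{v_j}}{|\lambda_i - \lambda_j|}$, and this ratio is $< 1$, strictly. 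Iterating this (or packaging it as a bound on the Gram matrix of the normalized components) shows the components are nearly orthogonal, hence $\sum_i \norm{v_i}^2$ is comparable to $\norm{v}^2 = 1$; in particular not all $v_i$ vanish.

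Given approximate orthogonality, I would finish as follows. Pick the index $m$ maximizing $\norm{v_m}$ (so $\norm{v_m} > 0$). Take the inner product of the identity $\sum_i (\mu - \lambda_i) v_i = \sum_i e_i$ with $v_m$: the left side is $(\mu - \lambda_m)\norm{v_m}^2 + \sum_{i \ne m}(\mu - \lambda_i)\langle v_i, v_m\rangle$, and using the near-orthogonality bounds on $\langle v_i, v_m\rangle$ together with $\norm{v_i} \le \norm{v_m}$, the cross terms are dominated; the right side is bounded by $\sum_i c_i \norm{v_i}\norm{v_m}$. Rearranging gives $|\mu - \lambda_m| \norm{v_m}^2 \le (c_m + \text{small})\norm{v_m}^2$, forcing $|\mu - \lambda_m| \le c_m$ up to the slack, which contradicts $\mu \notin [\lambda_m - c_m, \lambda_m + c_m]$. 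The delicate part is bookkeeping the ``small'' terms so that the gap hypothesis $c_i + c_{i+1} < \lambda_i - \lambda_{i+1}$ is exactly what is needed to close the argument; I would be careful to state the cross-term bound so that the geometric-series-type contributions telescope cleanly against the gaps. (An alternative, possibly cleaner, route avoids picking a maximal component: one shows directly that $M$ restricted to the span of any eigenvectors with eigenvalue near a fixed $\mu$ is close to $\mu I$ while also being close to $\lambda_i I$ on the part meeting $V^i$, and plays these off — but the componentwise argument above is the most elementary and matches the excerpt's remark that ``no linear algebraic manipulations'' are needed.)
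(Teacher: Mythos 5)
Your approach is genuinely different from the paper's, which applies Courant--Fischer to $(M-\lambda_i I)^2$ restricted to $V^i$ to show each interval $[\lambda_i\pm c_i]$ contains at least $\dim V^i$ eigenvalues, and then closes by a dimension count: since $\sum_i \dim V^i = \dim V$ and the intervals are disjoint (by the gap hypothesis), every eigenvalue is accounted for. You instead attempt a purely local argument---any eigenvector with eigenvalue $\mu$ outside all intervals must vanish---and this runs into a real obstruction that is not just bookkeeping.

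The problem is in the final inequality. Taking the inner product of $\sum_i(\mu-\lambda_i)v_i = \sum_i e_i$ with your maximal component $v_m$ and using your (correct) approximate orthogonality bound $|\langle v_i,v_m\rangle|\le \frac{(c_i+c_m)\|v_i\|\|v_m\|}{|\lambda_i-\lambda_m|}$ together with $\|v_i\|\le\|v_m\|$, the best you can extract is
\[
|\mu - \lambda_m| \;\le\; \sum_{i\neq m}\frac{|\mu-\lambda_i|\,(c_i+c_m)}{|\lambda_i-\lambda_m|}\;+\;\sum_i c_i .
\]
This is far from forcing $|\mu-\lambda_m|\le c_m$. Take $k=2$, $\lambda_1=1$, $\lambda_2=0$, $c_1=c_2=0.3$ (so the gap hypothesis $c_1+c_2=0.6<1$ holds and the intervals are $[0.7,1.3]$ and $[-0.3,0.3]$), and $\mu=0.5$, which lies outside both intervals. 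With $m=1$ the right-hand side is $0.5\cdot 0.6/1 + 0.6 = 0.9$, comfortably larger than $|\mu-\lambda_1|=0.5$. So the chain of inequalities is satisfied and yields no contradiction; the ``slack'' you flagged does not telescope away. The difficulty is structural: $\mu$ can sit in a gap between two intervals, where $|\mu-\lambda_i|$ is comparable to $|\lambda_i-\lambda_m|$ for the nearby indices, so the cross terms are $\Theta(1)$ multiples of $\|v_m\|^2$ rather than small. A purely local, per-eigenvector argument of this shape seems to genuinely need the global counting that Courant--Fischer supplies: it is the pigeonhole ``$\sum\dim V^i=\dim V$ with disjoint intervals'' step that rules out stray eigenvalues in the gaps, and your proposal has no analogue of it.
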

\begin{proof}
The idea is to examine for each $i$ the operator $M_i^2 = (M- \lambda_i I)^2$. In particular, we claim it is enough to show the following:
\begin{claim}\label{claim:stripping}
For all $1 \leq i \leq k$, $\text{Spec}(M_i^2)$ contains $dim(V^i)$ eigenvalues less than $c_i^2$.
\end{claim}
Let's see why this implies the desired result. Notice that the eigenvalues of $M_i^2$ are exactly $(\mu-\lambda_i)^2$ for each $\mu$ in $Spec(M)$ (with matching multiplicities), and therefore that any eigenvalue $\mu_i \in Spec(M_i^2)$ less than $c_i^2$ implies the existence of a corresponding eigenvalue of $M$ in $[\lambda_i \pm c_i]$. If each $M_i^2$ has $\text{dim}(V^i)$ eigenvalues less than $c_i^2$, then $M$ has at least $\text{dim}(V^i)$ eigenvalues in each interval $[\lambda_i \pm c_i]$. Moreover, since these intervals are disjoint by assumption and $\sum \text{dim}(V^i) = \text{dim}(V)$, this must account for all eigenvalues of $M$.

It remains to prove the claim, which is essentially an immediate application of Courant-Fischer theorem \cite{fischer1905quadratische}. 
\begin{proof}[Proof of \Cref{claim:stripping}]
The Courant-Fischer theorem states that the $k$th smallest eigenvalue of a self-adjoint operator $A$ is:
\[
\lambda_{n-k+1} = \min_{U}\left\{\max_{f \in U}\left\{\frac{\langle f, Af \rangle}{\langle f,f \rangle}\right\} ~\middle|~ \text{dim}(U) = k\right\}.
\]
Setting $U=V^i$, $A=M_i^2$ and $k=\dim(V^i)$ gives the claim:
\[
\lambda_{n-k+1}(M_i^2) \leq \max_{f \in V^i}\left\{\frac{\langle f, M_i^2f \rangle}{\langle f,f \rangle}\right\} = \max_{f \in V^i}\left\{\frac{\norm{ (M- \lambda_i I)f}_2^2}{\langle f,f \rangle}\right\} \leq c_i^2
\]
since $(M-\lambda_i I)$ is self-adjoint and $\bigoplus_{i\in [k]} V^i$ is a $(\{\lambda_i\}_{i=1}^k,\{c_i\}_{i=1}^k)$-approximate eigendecomposition.
\end{proof}

\end{proof}
Note that this result is also trivially tight, as any true eigendecomposition is also a $( \{\lambda_i \pm c_i\}, \{c_i\})$-approximate eigendecomposition. We also note that similar strategies have been used in the numerical analysis literature (see e.g.\ \cite{horn1998eigenvalue}).

\section{Spectra of HD-walks}\label{sec:spectrum-eposet}
Given \Cref{thm:approx-ortho}, it is enough to prove that the HD-Level-Set Decomposition is an approximate eigenbasis for any HD-walk. This follows by the same inductive argument as for local-spectral expanders in \cite{bafna2020high}, where the only difference is that somewhat more care is required to deal with general eposet parameters. To start, it will be useful to lay out some notation along with a simple observation from repeated application of \Cref{def:eposet}.
\begin{lemma}[{\cite[Claim 8.8]{dikstein2018boolean}}]\label{lemma:DU-UD}
Let $(X,\Pi)$ be a $d$-dimensional $(\delta,\gamma)$-eposet. Then
\[
\norm{D_{k+1}U^{k+1}_{k-j} - (1-\delta^k_j)U^{k}_{k-j} - \delta_j^k U^{k}_{k-j-1}D_{k-j}} \leq \gamma^k_j,
\]
where
\[
\delta^k_{-1}=1,~\delta_j^k = \prod\limits_{i=k-j}^k \delta_i,
~\gamma^k_j = \gamma \sum\limits^{j-1}_{i=-1}\delta^k_i.
\]
\end{lemma}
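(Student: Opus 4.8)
I would prove this by induction on $j$ (with $k$ fixed, and in the natural range $1 \le k-j \le k \le d-1$ so that all the parameters $\delta_{k-j},\dots,\delta_k$ referenced in $\delta^k_j$ are defined), peeling off one averaging operator at a time from the tower $U^{k+1}_{k-j} = U_k U_{k-1}\cdots U_{k-j}$ and applying the defining inequality of \Cref{def:eposet} at the appropriate level. The one structural fact I would establish first is that each averaging operator is a contraction: for $f \in C_i$ one has $\norm{U_i f}^2 = \langle U_i f, U_i f\rangle = \langle f, D_{i+1}U_i f\rangle \le \norm{f}\,\norm{D_{i+1}U_i f}$, and since $D_{i+1}U_i$ is self-adjoint and stochastic (hence reversible with respect to $\pi_i$, so its spectrum lies in $[-1,1]$) we get $\norm{U_i} \le 1$; by adjointness $\norm{D_{i+1}} \le 1$ as well, and therefore any composition such as $U^k_{k-j} = U_{k-1}\cdots U_{k-j}$ has operator norm at most $1$. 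This submultiplicativity is exactly what lets me pass operator-norm error bounds through the tower of operators.

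\textbf{Base case.} For $j=0$ the claim reads $\norm{D_{k+1}U_k - (1-\delta_k)I - \delta_k U_{k-1}D_k} \le \gamma$ (using $U^{k+1}_k = U_k$, $U^k_k = I$, $\delta^k_0 = \delta_k$, and $\gamma^k_0 = \gamma\,\delta^k_{-1} = \gamma$), which is precisely \Cref{def:eposet} at level $k$.

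\textbf{Inductive step.} Assuming the statement for $j-1$, I would multiply the inequality
\[
\norm{D_{k+1}U^{k+1}_{k-j+1} - (1-\delta^k_{j-1})U^{k}_{k-j+1} - \delta^k_{j-1}U^{k}_{k-j}D_{k-j+1}} \le \gamma^k_{j-1}
\]
on the right by $U_{k-j}$ (norm $\le 1$), using $U^{k+1}_{k-j+1}U_{k-j} = U^{k+1}_{k-j}$ and $U^{k}_{k-j+1}U_{k-j} = U^{k}_{k-j}$, to obtain the same bound with the last operator replaced by $\delta^k_{j-1}U^{k}_{k-j}\,(D_{k-j+1}U_{k-j})$. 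Next I would apply \Cref{def:eposet} at level $k-j$, namely $\norm{D_{k-j+1}U_{k-j} - (1-\delta_{k-j})I - \delta_{k-j}U_{k-j-1}D_{k-j}} \le \gamma$, substitute inside, and use $\norm{U^k_{k-j}} \le 1$ together with the bookkeeping identities $\delta^k_j = \delta_{k-j}\,\delta^k_{j-1}$ and $U^k_{k-j}U_{k-j-1} = U^k_{k-j-1}$: this turns $\delta^k_{j-1}U^{k}_{k-j}(D_{k-j+1}U_{k-j})$ into $\delta^k_{j-1}(1-\delta_{k-j})U^k_{k-j} + \delta^k_j U^k_{k-j-1}D_{k-j}$ up to an error of norm $\le \gamma\,\delta^k_{j-1}$. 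Finally I would merge the two $U^k_{k-j}$ terms via $(1-\delta^k_{j-1}) + \delta^k_{j-1}(1-\delta_{k-j}) = 1 - \delta^k_{j-1}\delta_{k-j} = 1 - \delta^k_j$, and add the accumulated errors: $\gamma^k_{j-1} + \gamma\,\delta^k_{j-1} = \gamma^k_j$, which is the claimed bound.

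\textbf{Main obstacle.} There is no real conceptual difficulty here — the content is entirely in careful bookkeeping. The fiddly points are: (i) verifying the two recursions $\delta^k_j = \delta_{k-j}\delta^k_{j-1}$ and $\gamma^k_j = \gamma^k_{j-1} + \gamma\,\delta^k_{j-1}$ (the latter being why the error sum runs from $i=-1$ to $j-1$, with the convention $\delta^k_{-1}=1$ making the $j=0$ case and the sum consistent); (ii) keeping track of which inner product each operator norm is taken with respect to, so that submultiplicativity across adjacent levels is actually legitimate; and (iii) checking the index ranges so that every $\delta_i$ invoked, and $D_{k-j}$, is meaningful. Once these are pinned down the induction closes immediately.
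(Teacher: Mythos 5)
Your proof is correct, and it is the standard inductive argument. The paper itself does not re-prove this lemma---it simply cites it as {\cite[Claim 8.8]{dikstein2018boolean}}---so there is no in-paper proof to compare against, but your approach (induction on $j$, right-multiplying the inductive hypothesis by $U_{k-j}$, applying \Cref{def:eposet} at level $k-j$, and using $\norm{U_i}\le 1$ together with the recursions $\delta^k_j = \delta_{k-j}\delta^k_{j-1}$ and $\gamma^k_j = \gamma^k_{j-1} + \gamma\,\delta^k_{j-1}$) is exactly the argument the cited result rests on. The index-range bookkeeping ($0 \le j \le k-1$, $k \le d-1$, so that every $\delta_i$ and $D_{k-j}$ invoked is defined) and the contraction fact $\norm{U_i}\le 1$ (which also follows directly from Jensen applied to the conditional average defining $U_i$) are both handled correctly.
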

Applying this fact inductively implies that functions in the HD-Level-Set Decomposition are close to being eigenvectors.
\begin{proposition}\label{prop:pure-eig-vals}
Let $(X,\Pi)$ be a $(\delta,\gamma)$-eposet, and $Y$ the pure balanced walk of height $j$, with down operators at positions $(i_1,\ldots,i_{j})$. For $1 \leq \ell \leq k$, let $f_\ell=U^{k}_\ell g_\ell$ for some $g_\ell \in H^\ell$, and let
\[
\delta_j^k = \prod\limits_{i=k-j}^k \delta_i,
~\gamma^k_j = \gamma \sum\limits^{j-1}_{i=-1}\delta^k_i,
\]
where $\delta_{i}^k = 1$ for any $i<0$ for notational convenience. Then $f_\ell$ is an approximate eigenvector of $Y$:
\[
\norm{Yf_\ell - \prod\limits_{s=1}^j \left (1 - \delta_{k-2s+i_s-\ell}^{k-2s+i_s} \right )f_\ell} \leq \norm{g_\ell}\sum\limits_{s=1}^j \gamma^{k-2s+i_s}_{k-2s+i_s-\ell} \prod\limits_{t=1}^{s-1}\left ( 1 - \delta_{k-2t+i_t-\ell}^{k-2t+i_t} \right ) \leq (j+k)j\gamma\norm{g_\ell}.
\]
\end{proposition}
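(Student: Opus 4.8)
The plan is to prove the estimate by a straightforward induction on the length of the walk, peeling off the $2j$ averaging operators of $Y$ one at a time (from the innermost outward) and tracking how each acts on a function of the form $U^m_\ell g_\ell$. Write $Y = Z_{2j}\circ\cdots\circ Z_1$, where $Z_{i_1},\dots,Z_{i_j}$ are the down operators (with $i_1<\cdots<i_j$) and the other $Z_t$ are up operators, and set $W_t = Z_t\circ\cdots\circ Z_1$. I will maintain the invariant
\[
W_t f_\ell = c_t\, U^{m_t}_\ell g_\ell + E_t,
\]
where $m_t = k + \#\{\text{up's among }Z_1,\dots,Z_t\} - \#\{\text{down's among }Z_1,\dots,Z_t\}$ is the current level, $c_t = \prod_{s:\, i_s\le t}\bigl(1-\delta^{k-2s+i_s}_{k-2s+i_s-\ell}\bigr)$ is the accumulated eigenvalue factor, and $E_t$ is an error vector to be bounded. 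Since $Y$ is balanced, $m_{2j}=k$, so $W_{2j}f_\ell = c_{2j}f_\ell + E_{2j}$ with $c_{2j}$ exactly the claimed approximate eigenvalue; everything then reduces to controlling $\norm{E_{2j}}$.

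The inductive step splits into two cases. If $Z_{t+1}$ is an up operator, then since up operators compose exactly ($U_m U^m_\ell = U^{m+1}_\ell$) we get $W_{t+1}f_\ell = c_t U^{m_t+1}_\ell g_\ell + U_{m_t}E_t$, so $c_{t+1}=c_t$ and $\norm{E_{t+1}}\le\norm{E_t}$ using $\norm{U_m}\le 1$. If $Z_{t+1}=D_{m_t}$ is the $s$-th down operator (so $t+1=i_s$, and a short count gives $m_t = k-2s+i_s+1$), I apply \Cref{lemma:DU-UD} with its ``$k$'' set to $m_t-1$ and its ``$j$'' to $m_t-1-\ell$, using $U^{m_t}_\ell = U^{(m_t-1)+1}_{(m_t-1)-(m_t-1-\ell)}$:
\[
D_{m_t}U^{m_t}_\ell = \bigl(1-\delta^{m_t-1}_{m_t-1-\ell}\bigr)U^{m_t-1}_\ell + \delta^{m_t-1}_{m_t-1-\ell}\,U^{m_t-1}_{\ell-1}D_\ell + \mathrm{Err},\quad \norm{\mathrm{Err}}\le\gamma^{m_t-1}_{m_t-1-\ell}.
\]
Applying this to $g_\ell$ and using $D_\ell g_\ell = 0$ (since $g_\ell\in H^\ell=\mathrm{Ker}(D_\ell)$) annihilates the middle term, so $D_{m_t}U^{m_t}_\ell g_\ell = (1-\delta^{m_t-1}_{m_t-1-\ell})U^{m_t-1}_\ell g_\ell + R$ with $\norm{R}\le\gamma^{m_t-1}_{m_t-1-\ell}\norm{g_\ell}$; as $m_t-1 = k-2s+i_s$ this is exactly the factor in the statement. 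Hence $c_{t+1}=c_t(1-\delta^{k-2s+i_s}_{k-2s+i_s-\ell})$ and $\norm{E_{t+1}}\le|c_t|\,\gamma^{k-2s+i_s}_{k-2s+i_s-\ell}\norm{g_\ell}+\norm{E_t}$, using $\norm{D_{m_t}}\le 1$. (The degenerate case $m_t=\ell$, where the main term already sits at level $\ell$, is consistent with these formulas via the conventions $\delta^{\ell-1}_{-1}=1$, $\gamma^{\ell-1}_{-1}=0$.)

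Unwinding the recursion, only the $j$ down-steps contribute error, and at the $s$-th one the accumulated coefficient is $|c_{i_s-1}| = \prod_{t=1}^{s-1}(1-\delta^{k-2t+i_t}_{k-2t+i_t-\ell})\le 1$ since each factor lies in $[0,1]$. This gives
\[
\norm{E_{2j}} \le \norm{g_\ell}\sum_{s=1}^{j}\gamma^{k-2s+i_s}_{k-2s+i_s-\ell}\prod_{t=1}^{s-1}\bigl(1-\delta^{k-2t+i_t}_{k-2t+i_t-\ell}\bigr),
\]
the middle expression in the statement. For the crude bound, note $\gamma^{a}_{b} = \gamma\sum_{i=-1}^{b-1}\delta^{a}_i \le (b+1)\gamma$; and since at most $j$ up operators and exactly $s-1$ down operators precede the $s$-th down operator, $i_s\le j+s$, so $k-2s+i_s-\ell \le k+j-s-\ell \le k+j-2$ (using $s,\ell\ge 1$). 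Thus each summand is at most $(k+j)\gamma\norm{g_\ell}$, and summing over the $j$ down-steps yields $\norm{E_{2j}}\le (j+k)j\gamma\norm{g_\ell}$.

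The only real friction is the index bookkeeping: at each down-step one must correctly identify the current level $m_t$, hence which instance $\delta^{m_t-1}_{m_t-1-\ell}$ of the eposet parameter \Cref{lemma:DU-UD} produces, and verify these align with $\prod_s(1-\delta^{k-2s+i_s}_{k-2s+i_s-\ell})$ in the statement (together with the positional bound $i_s\le j+s$ used in the final estimate). The analytic content is entirely carried by \Cref{lemma:DU-UD}, the contraction bounds $\norm{U_m},\norm{D_m}\le 1$ (immediate from Jensen's inequality and adjointness), the annihilation $D_\ell g_\ell=0$, and the elementary estimate $\gamma^a_b\le(b+1)\gamma$; no further linear-algebraic input is needed.
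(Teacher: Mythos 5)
Your argument is correct and is essentially the same induction the paper uses: both proofs repeatedly apply \Cref{lemma:DU-UD} to the current down operator acting on a pure term $U^m_\ell g_\ell$, use $g_\ell\in\ker D_\ell$ to kill the $UD$ cross-term, and propagate a product of $(1-\delta)$-factors together with an additive error controlled by $\gamma^a_b$ and the contractivity $\norm{U},\norm{D}\le 1$. The only cosmetic difference is bookkeeping: the paper recurses on the number of remaining down operators in the unbalanced walk $Y^b_j=YU^k_\ell$ (peeling off the first $D$ together with one $U$ and shifting all later indices by $2$), whereas you do a single forward pass over all $2j$ operators, which makes the level counter $m_t$ and the index identity $m_t-1=k-2s+i_s$ explicit; the resulting coefficients and error sums are identical, and your final crude bound $(k+j-1)j\gamma\le(j+k)j\gamma$ is, if anything, marginally sharper.
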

\begin{proof}
We prove a slightly stronger statement to simplify the induction. For $b>0$, let $Y_{j}^b: C_\ell \to C_{\ell+b}$ denote an unbalanced walk with $j$ down operators, and $j+b$ up operators. If $Y_{j}^b$ has down operators in positions $(i_{1},\ldots,i_{j})$ and $g_\ell \in H^\ell$, we claim:
\[
\norm{Y^b_{j} g_\ell - \prod\limits_{s=1}^{j}\left (1 - \delta^{i_s+\ell-2s}_{i_s-2s} \right )Y_0^bg_\ell} \leq \norm{g_\ell}\sum\limits_{s=1}^j \gamma_{i_s-2s}^{i_s+\ell-2s} \prod\limits_{t=1}^{s-1}\left (1 - \delta^{i_t+\ell-2t}_{i_t-2t} \right ),
\]
which implies the result (notice that the indices $i_s$ shift by $b=k-\ell$). The base case $j=0$ is trivial. Assume the inductive hypothesis holds for all $Y^b_i, i<j$. By \Cref{lemma:DU-UD} and recalling $g_{\ell}\in \ker(D_{\ell})$, we have:
\begin{align*}
    Y^b_j g_\ell = \left(1-\delta^{i_1+\ell-2}_{i_1-2}\right)Y^{b}_{j-1}g_{\ell}+\Gamma g_\ell,
\end{align*}
where $\Gamma$ has spectral norm
\[
\norm{\Gamma} \leq \gamma^{i_1+\ell-2}_{i_1-2}.
\]
Notice that $Y^{b}_{j-1}$ has down operator indices $\{i_2-2,\ldots,i_j-2\}$.
The inductive hypothesis then implies:
\begin{align*}
        Y^b_j g_\ell &= \left(1-\delta^{i_1+\ell-2}_{i_1-2}\right)\prod\limits_{s=2}^{j}\left(1-\delta^{i_s+\ell-2s}_{i_s-2s}\right)Y^b_0g_\ell + \left(1-\delta^{i_1+\ell-2}_{i_1-2}\right)\Gamma'g_{\ell} + \Gamma g_\ell\\
        &= \prod\limits_{s=1}^{j}\left(1-\delta^{i_s+\ell-2s}_{i_s-2s}\right)g_\ell + \left(1-\delta^{i_1+\ell-2}_{i_1-2}\right)\Gamma'g_{\ell} + \Gamma g_\ell,
\end{align*}
where $\Gamma'g_{\ell}$ has norm
\[
\norm{\Gamma'g_{\ell}} \leq \norm{g_\ell}\sum\limits_{s=2}^{j}\gamma^{i_s+\ell-2s}_{i_s-2s}\prod_{t=2}^{s-1}\left ( 1- \delta^{i_t+\ell-2t}_{i_t-2t}\right ).
\]
Thus we may bound the norm of the righthand error term by:
\begin{align*}
\norm{\left(1-\delta^{i_1+\ell-2}_{i_1-2}\right)\Gamma'g_{\ell}+\Gamma g_\ell}
& \leq \left(1-\delta^{i_1+\ell-2}_{i_1-2}\right)\norm{\Gamma'}\norm{g_{\ell}} + \norm{\Gamma}\norm{g_\ell}\\
&\leq \sum\limits_{s=1}^j \gamma_{i_s-2s}^{i_s+\ell-2s} \prod\limits_{t=1}^{s-1}\left (1 - \delta^{i_t+\ell-2t}_{i_t-2t} \right )\norm{g_\ell},
\end{align*}
as desired. Recalling the shift in $i_s$ by $k-\ell$, we can then bound the resulting error by $(j+k)j\gamma\norm{g_\ell}$ since $\delta \in [0,1]^{d-1}$.
\end{proof}
It is worth noting that when $\gamma=0$, this implies that the HD-Level-Set decomposition is a true eigendecomposition. Since balanced walks are simply affine combinations of pure walks, this immediately implies a similar result for the more general case. To align with our definition of approximate eigendecompositions and \Cref{thm:approx-ortho}, we'll also need the following general relation between $\norm{g_\ell}$ and $\norm{f_\ell}$ for eposets proved in \cite{dikstein2018boolean} (albeit without the exact parameter dependence).
\begin{lemma}[{\cite[Lemma 8.11]{dikstein2018boolean}}]\label{lemma:fvsg-body}
Let $(X,\Pi)$ be a $d$-dimensional $(\delta,\gamma)$-eposet, $0 \leq \ell \leq k < d$, and let
\[
\rho^k_\ell = \prod\limits^{k-\ell}_{i=1} \left(1-\delta_{k-\ell-i}^{k-i}\right), \ \ \rho_{\text{min}} = \min_{0 \leq \ell \leq k}\{\rho^k_\ell\}.
\]
Then for any $f_\ell=U^{k}_{\ell}g_{\ell}$ for $g_\ell \in \text{Ker}(D_\ell)$ we have:
\[
\langle f_\ell, f_\ell \rangle \in (\rho^k_\ell \pm k^2\gamma)\langle g_\ell, g_\ell \rangle,
\]
and for all $i \neq \ell$:
\[
\langle f_\ell, f_i \rangle \leq O\left(\frac{k^2}{\rho_{\text{min}}}\gamma \norm{f_\ell}\norm{f_i} \right).
\]
\end{lemma}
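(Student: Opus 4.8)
The plan is to prove both statements by induction on $k - \ell$ using the basic eposet relation. Recall that $f_\ell = U^k_\ell g_\ell = U_{k-1} \cdots U_\ell g_\ell$, so I would track how the inner product $\langle U^k_\ell g_\ell, U^k_\ell g_\ell\rangle$ evolves as I peel off one up-operator at a time. By adjointness this equals $\langle g_\ell, D^k_\ell U^k_\ell g_\ell\rangle$ where $D^k_\ell U^k_\ell = D_\ell U_\ell \cdots$ suitably composed; the cleaner route is to write $\langle U^{k}_\ell g_\ell, U^{k}_\ell g_\ell\rangle = \langle U^{k-1}_\ell g_\ell, D_k U_{k-1} U^{k-1}_\ell g_\ell\rangle$ and apply \Cref{lemma:DU-UD} (with the appropriate indices) to replace $D_k U_{k-1}$, acting on a function of the form $U^{k-1}_\ell g_\ell$, by $(1-\delta^{k-1}_{j})\,\mathrm{id} + \delta^{k-1}_j U_{k-2}D_{k-1}$ up to spectral-norm error $\gamma^{k-1}_j$. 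Since $D_\ell g_\ell = 0$, the $U D$ term collapses appropriately under the induction (the ``$D$'' eventually hits $g_\ell$ or a lower level that has already been controlled), leaving the product $\rho^k_\ell = \prod_{i=1}^{k-\ell}(1 - \delta^{k-i}_{k-\ell-i})$ as the main term. Each application of \Cref{lemma:DU-UD} introduces an additive error bounded by $\gamma$ times a product of $\delta$'s (all in $[0,1]$), and there are $O(k)$ such applications with $O(k)$ terms each, giving the stated $O(k^2\gamma)$ slack; I would also need to carry along the bound $\norm{U^j_\ell g_\ell}^2 = O(\norm{g_\ell}^2)$ inductively to ensure the error terms are measured against $\norm{g_\ell}^2$ rather than blowing up.

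For the cross-term bound $\langle f_\ell, f_i\rangle$ with $i \neq \ell$, the idea is that in the $\gamma = 0$ case this is exactly zero — the HD-Level-Set decomposition is then a genuine orthogonal eigendecomposition (different $\rho$-weighted eigenvalues, so the eigenspaces are orthogonal). So I would again run the same peeling argument: WLOG $i < \ell$, write $\langle U^k_\ell g_\ell, U^k_i g_i\rangle = \langle g_\ell, D^k_\ell U^k_i g_i\rangle$, and repeatedly commute $D$ past $U$ using \Cref{lemma:DU-UD}; each commutation is exact up to $\gamma^{\cdot}_{\cdot}$ error, and once enough $D$'s have been pushed through they annihilate $g_\ell \in \ker(D_\ell)$ (or produce a telescoping cancellation), leaving only the accumulated error, which is $O(k^2\gamma)$ times norms. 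Dividing through by $\norm{f_\ell}\norm{f_i}$ and using the first part to relate $\norm{f_\ell}$ to $\norm{g_\ell}$ (losing a factor $1/\rho_{\min}$ since $\norm{g_\ell}^2 \le \norm{f_\ell}^2/(\rho^k_\ell - k^2\gamma) \lesssim \norm{f_\ell}^2/\rho_{\min}$) yields the $O(k^2\gamma/\rho_{\min})$ bound.

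The main obstacle I anticipate is bookkeeping the indices: \Cref{lemma:DU-UD} is stated for $D_{k+1}U^{k+1}_{k-j}$ with a specific family $\delta^k_j, \gamma^k_j$, and each step of the induction shifts all the relevant levels, so I must be careful that the product that accumulates is precisely $\prod_{i=1}^{k-\ell}(1-\delta^{k-i}_{k-\ell-i})$ and not an off-by-one variant, and that the $UD$ ``remainder'' terms at each stage are legitimately controlled either by the inductive hypothesis (for the diagonal term, where $D$ reduces the level and $\ker$ membership is preserved) or by being pushed all the way down onto $g_\ell$ (for the cross term). A secondary subtlety is confirming that $\gamma$ being ``sufficiently small'' guarantees $\rho^k_\ell - k^2\gamma > 0$ so that the norm equivalence is non-degenerate and the division in the cross-term bound is valid; this is why the statement carries the implicit smallness hypothesis on $\gamma$ inherited from \Cref{thm:decomp-ddfh}.
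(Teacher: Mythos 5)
The paper does not actually prove this lemma; it invokes it as a cited result from DDFH (their Lemma 8.11), noting only that the explicit constants $k^2\gamma$ and $k^2\gamma/\rho_{\min}$ refine DDFH's qualitative statement. So there is no ``paper's proof'' to compare against, and what you have written is a from-scratch proof sketch.

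That sketch is correct in approach and in its error accounting, and is almost certainly the same argument DDFH use. Defining $a_j = \langle U^j_\ell g_\ell, U^j_\ell g_\ell\rangle$, one application of adjointness plus \Cref{lemma:DU-UD} with $D_\ell g_\ell=0$ gives the recursion $a_{j+1} \in (1-\delta^j_{j-\ell})a_j \pm \gamma^j_{j-\ell}\norm{g_\ell}^2$, and since $\gamma^j_{j-\ell}\le k\gamma$ (a sum of at most $k$ terms, each a product of $\delta$'s in $[0,1]$), iterating over the $k-\ell$ steps yields $\rho^k_\ell\langle g_\ell,g_\ell\rangle \pm k^2\gamma\norm{g_\ell}^2$; the cross-term argument with the extra $1/\rho_{\min}$ from converting $\norm{g}$'s back into $\norm{f}$'s likewise goes through, assuming $k^2\gamma \lesssim \rho_{\min}$, which you correctly flag as the implicit smallness hypothesis. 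Two small phrasing glitches worth fixing if you wrote this out in full: (i) the operator you commute in one shot is $D_{j+1}U^{j+1}_\ell$ via \Cref{lemma:DU-UD}, not the single pair $D_kU_{k-1}$ with coefficients $\delta^{k-1}_j$ — the multi-level form is what produces the compounded $\delta^j_{j-\ell}$ factor cleanly; and (ii) in the cross-term, the $D$'s are all consumed commuting past the $U$'s acting on $g_i$, leaving a main term proportional to $U^\ell_i g_i$; the vanishing then comes from $\langle g_\ell, U^\ell_i g_i\rangle = \langle D_\ell g_\ell, U^{\ell-1}_i g_i\rangle = 0$, not from a $D$ literally landing on $g_\ell$. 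Neither affects the substance.
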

As an aside, we remark that the parameter $\rho^k_\ell$ turns out to be a crucial throughout much of our work, and while it is difficult to interpret on general eposets, we prove it has a very natural form as long as non-laziness holds.
\begin{claim}[$\rho^k_\ell$ for regular eposets]\label{claim:reg}
Let $(X,\Pi)$ be a regular, $\gamma$-non-lazy\footnote{One can prove this claim more generally for any $\beta$-non-laziness, but most $\gamma$-eposets of interest are additionally $\gamma$-non-lazy, so this simplified version is generally sufficient.} $d$-dimensional $(\delta,\gamma)$-eposet. Then for any $i \leq k < d$, we have:
\[
\rho^k_i \in \frac{1}{R(k,i)} \pm err,
\]
where $err \leq O\left(\frac{i^3k^2R_{\text{max}}}{\delta_i(1-\delta_{i-1})}\gamma\right)$. Likewise as long as $\gamma \leq O\left(\frac{\max_i\{\delta_i(1-\delta_{i-1})\}}{i^3k^2R^2_{\text{max}}}\right)$ we have
\[
\rho_{\text{min}}^{-1} \leq O(R_{\text{max}}),
\]
where $R_{\text{max}} \coloneqq \max_{0 \leq i \leq k}\{R(k,i)\}$.
\end{claim}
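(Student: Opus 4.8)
The plan is to unwind the definition $\rho^k_i = \prod_{j=1}^{k-i}(1-\delta^{k-j}_{k-i-j})$ and relate each factor $1-\delta^{k-j}_{k-i-j}$ to a ratio of regularity functions. The key observation is that the approximate eigenvalue of the one-step lower walk $U_{m-1}D_m$ restricted to $H^m$ (i.e. $\delta$-type quantities) should, by the regularity of the poset, equal $\frac{R(m,m-1)}{R(m,m-1)}$-type ratios — more precisely, combining \Cref{prop:pure-eig-vals} applied to the single-step lower walk with \Cref{lemma:fvsg-body} and the combinatorial identity that the stationary/down-up walk on a regular poset has transition structure governed by $R(\cdot,\cdot)$, one gets that $1-\delta^{k-j}_{k-i-j}$ is (up to $O(\gamma)$ error, with the error scaled by the relevant $\rho_{\min}^{-1}$ and laziness factors) equal to a ratio of the form $\frac{R(k-j, k-i-j)}{R(k-j+1,k-i-j)}$ or its telescoping analog. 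Then the product $\prod_{j=1}^{k-i}$ telescopes: all intermediate $R(\cdot,\cdot)$ factors cancel, leaving $\frac{R(i,i)}{R(k,i)} = \frac{1}{R(k,i)}$ as the main term.

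First I would establish the exact (i.e. $\gamma=0$) version: on a genuinely sequentially-differential regular poset, verify by a direct counting argument (in the style of \Cref{app:regularity}) that $1-\delta^{m}_{m-s} = \frac{R(m,m-s)\,/\,R(m+1,m-s)}{\text{(something telescoping)}}$, so that $\rho^k_i$ collapses exactly to $1/R(k,i)$. This is a clean finite identity about counts of chains and should follow from middle regularity: $m(k,i) = \prod$ of single-step chain counts, together with $R(k,i)\pi_i = \pi_k(X_\sigma)$-type normalizations. Second, I would track the error propagation: each of the $k-i$ factors carries an additive perturbation of size $O(\gamma)$ coming from the $\gamma^k_j$ terms in \Cref{lemma:DU-UD}/\Cref{prop:pure-eig-vals} and from the $\langle f_\ell,f_\ell\rangle$-vs-$\langle g_\ell,g_\ell\rangle$ slippage in \Cref{lemma:fvsg-body}; I would need the non-laziness hypothesis precisely to lower-bound the denominators $\delta_i(1-\delta_{i-1})$ (equivalently, to bound each true factor $1-\delta^{m}_{m-s}$ away from $0$ and from $1$) so that multiplying $k-i$ nearly-correct factors and dividing through keeps the accumulated error at $O\!\left(\frac{i^3k^2 R_{\max}}{\delta_i(1-\delta_{i-1})}\gamma\right)$. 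The polynomial factors $i^3k^2$ come from: $k^2$ from \Cref{lemma:fvsg-body}, an extra factor from summing $k-i \le k$ error terms, and the $i^3$ from the nested products of $\delta$'s appearing when one expands $\delta^m_s$ as a product of up to $i$ individual $\delta$'s and bounds the resulting geometric-type sum. Finally, the bound $\rho_{\min}^{-1} \le O(R_{\max})$ is immediate from the first part: $\rho^k_\ell \in \frac{1}{R(k,\ell)} \pm err \ge \frac{1}{R_{\max}} - err$, and the stated smallness assumption $\gamma \le O\!\left(\frac{\max_i\{\delta_i(1-\delta_{i-1})\}}{i^3k^2 R^2_{\max}}\right)$ is exactly what forces $err \le \frac{1}{2R_{\max}}$, so $\rho_{\min} \ge \frac{1}{2R_{\max}}$.

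The main obstacle I anticipate is the bookkeeping in the error analysis rather than any conceptual difficulty: the indices in $\delta^k_j$, $\gamma^k_j$ shift in a slightly awkward way across the $k-i$ telescoping factors (as already visible in the index gymnastics of \Cref{prop:pure-eig-vals}), and one must be careful that dividing a product of $(1 \pm O(\gamma))$-type factors by the true product $\rho^k_i$ does not blow up the error — this is exactly where a uniform lower bound on the true factors (hence on $\rho^k_i$ itself, which is a mild circularity one resolves by first proving the $\gamma=0$ identity and then perturbing) is essential. A secondary subtlety is making the reduction from the measured-poset operators $U_{m-1}D_m$ to the purely combinatorial ratios $R(m,s)/R(m+1,s)$ fully rigorous; this uses that $\Pi$ is induced from $\pi_d$ (so all the $D$-operator normalizations are the regularity functions) together with $\gamma$-non-laziness to control the conditioned weights, and I would isolate it as a short preliminary lemma before running the telescoping argument.
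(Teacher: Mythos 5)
Your proposal is correct, and it takes a genuinely different (and arguably cleaner) route than the paper. The paper first establishes the single-step identity $1-\delta^k_{k-i}\approx R(k,i)/R(k+1,i)$ via a laziness-counting argument on \Cref{eq:UD-DU-refined} (this is \Cref{claim:regularity2}), but then, rather than telescoping it, uses the link-projection lemma \Cref{lemma:link-projection} to show that $i$-links lie almost entirely in $V^i_k$, computes $\bar\Phi(X^i_\tau)$ under $N^{k-i}_i$ exactly combinatorially, matches it against the approximate spectral decomposition to read off $\lambda_j(N^{k-i}_i)\approx R(i,j)/R(k,i)$, and finally observes $\rho^k_i=\lambda_i(N^{k-i}_i)$ from \Cref{prop:pure-eig-vals}. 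Your route short-circuits this: apply \Cref{claim:regularity2} with $(k,i)\mapsto(k-j,\ell)$ for each $j=1,\dots,k-\ell$ to get $1-\delta^{k-j}_{k-\ell-j}\approx R(k-j,\ell)/R(k-j+1,\ell)$, and the product $\rho^k_\ell=\prod_j(1-\delta^{k-j}_{k-\ell-j})$ telescopes exactly to $1/R(k,\ell)$. This avoids going through \Cref{lemma:link-projection} entirely, which is an advantage because that lemma's error bound is phrased in terms of $\rho_{\min}^{-1}$ and then rewritten via \Cref{claim:reg} itself, an apparent circularity your route sidesteps. Both routes use non-laziness in the same place (bounding the maximum transition probability of $U_{i-1}D_i$ by $\gamma$ inside \Cref{claim:regularity2}).

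One refinement worth flagging: you worry about ``dividing a product of $(1\pm O(\gamma))$-type factors by the true product $\rho^k_i$'' and needing a uniform lower bound on the factors to control the relative error. That concern is real if you track multiplicative errors, but it is unnecessary here. Since every factor $1-\delta^{k-j}_{k-\ell-j}$ and every target $R(k-j,\ell)/R(k-j+1,\ell)$ lies in $[0,1]$, you should use the additive telescoping identity
\begin{equation*}
\prod_j x_j - \prod_j y_j = \sum_j (x_j - y_j)\Bigl(\prod_{j'<j} y_{j'}\Bigr)\Bigl(\prod_{j'>j} x_{j'}\Bigr),
\end{equation*}
which immediately gives $\bigl|\rho^k_\ell - 1/R(k,\ell)\bigr|\le\sum_j e_j$ with $e_j$ the per-factor error from \Cref{claim:regularity2}; this removes the circularity you flagged (no lower bound on $\rho^k_i$ is needed to control the error), and in fact produces an error bound of order $k^2\gamma + k R_{\max}\gamma$, which is no worse than (and arguably tighter than) the $O\bigl(i^3 k^2 R_{\max}\gamma/(\delta_i(1-\delta_{i-1}))\bigr)$ stated in the claim. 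The deduction $\rho_{\min}^{-1}\le O(R_{\max})$ from the smallness of $\gamma$ is then exactly as you say.
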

This gives a nice generalization of the interpretation of $\rho^k_i$ on hypergraphs, which is well known to be $\frac{1}{{k \choose i}}$ \cite{dikstein2018boolean}.
%may seem difficult to interpret in this abstract form, it turns out that one can show under weak structural assumptions, it simply corresponds to the underlying poset's regularity parameter: $\frac{1}{\rho^k_\ell} \approx R(k,\ell)$. For instance on a simplicial complex we have $\frac{1}{\rho^k_\ell} = {k \choose \ell}$, and similarly $\frac{1}{\rho^k_\ell} = {k \choose \ell}_q$ on a $q$-simplicial complex.
We prove this claim in \Cref{app:regularity}. For simplicity, we will assume throughout the rest of this work that our eposets are $\gamma$-non-lazy, which is true for most cases of interest (see \Cref{app:regularity}). All results holds in the more general case using $\rho^k_i$ unless otherwise noted.

Combining \Cref{prop:pure-eig-vals} and \Cref{lemma:fvsg-body} immediately implies that the HD-Level-Set Decomposition is an approximate eigendecomposition in the sense of \Cref{def:approx-eigen}.
\begin{corollary}\label{cor:balanced-eig}
Let $(X,\Pi)$ be a $(\delta,\gamma)$-eposet and let $M=\sum\limits_{Y \in \mathcal{Y}} \alpha_Y Y$ be an HD-walk. For $1 \leq \ell \leq k$, if $f_\ell=U^{k}_\ell g_\ell$ for some $g_\ell \in H^\ell$, then for 
%$\gamma \leq \frac{\rho^k_\ell}{2k^2}$ 
$\gamma \leq O\left(\frac{\max_i\{\delta_i(1-\delta_{i-1})\}}{k^5R^2_{\text{max}}}\right)$:
\[
\norm{Mf_\ell - \left (\sum\limits_{Y \in \mathcal Y} \alpha_Y \lambda_{Y,\delta,\ell}\right )f_\ell} \leq c\gamma\norm{f_\ell},
\]
where $\lambda_{Y,\delta,\ell}$ is the corresponding eigenvalues of the pure balanced walk $Y$ on a $(\delta,0)$-eposet (the form of which are given in \Cref{prop:pure-eig-vals}), and $c \leq O\left((h( M)+k)h(M)R(k,\ell)w(M)\right)$.
\end{corollary}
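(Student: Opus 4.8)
The plan is to reduce to \Cref{prop:pure-eig-vals} and \Cref{lemma:fvsg-body} by the triangle inequality, using \Cref{claim:reg} only to put the error bound into the stated form. Write $M = \sum_{Y \in \mathcal{Y}} \alpha_Y Y$ as the given affine combination of pure balanced walks, each of height $h(Y) \le h(M)$, and observe
\[
Mf_\ell - \Big(\sum_{Y} \alpha_Y \lambda_{Y,\delta,\ell}\Big) f_\ell = \sum_{Y} \alpha_Y \big(Yf_\ell - \lambda_{Y,\delta,\ell} f_\ell\big),
\]
so that $\norm{Mf_\ell - (\sum_Y \alpha_Y \lambda_{Y,\delta,\ell})f_\ell} \le \sum_Y |\alpha_Y| \cdot \norm{Yf_\ell - \lambda_{Y,\delta,\ell}f_\ell}$. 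For each pure walk $Y$, \Cref{prop:pure-eig-vals} (applied with $j = h(Y)$) says exactly that $\lambda_{Y,\delta,\ell}$ is the eigenvalue of $Y$ on a $(\delta,0)$-eposet and that $\norm{Yf_\ell - \lambda_{Y,\delta,\ell}f_\ell} \le (h(Y)+k)h(Y)\gamma\norm{g_\ell} \le (h(M)+k)h(M)\gamma\norm{g_\ell}$. Summing and recalling $w(M) = |\alpha|_1$ gives a total bound of $w(M)(h(M)+k)h(M)\gamma\norm{g_\ell}$.

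It remains to trade the $\norm{g_\ell}$ on the right-hand side for $\norm{f_\ell}$. By \Cref{lemma:fvsg-body}, $\langle f_\ell, f_\ell\rangle \ge (\rho^k_\ell - k^2\gamma)\langle g_\ell, g_\ell\rangle$, and by \Cref{claim:reg}, $\rho^k_\ell \ge \frac{1}{R(k,\ell)} - O\!\big(\tfrac{\ell^3 k^2 R_{\text{max}}}{\delta_\ell(1-\delta_{\ell-1})}\gamma\big)$. The hypothesis $\gamma \le O\!\big(\tfrac{\max_i\{\delta_i(1-\delta_{i-1})\}}{k^5 R_{\text{max}}^2}\big)$ is chosen precisely so that (i) \Cref{claim:reg} applies (it requires $\gamma \le O(\tfrac{\max_i\{\delta_i(1-\delta_{i-1})\}}{i^3k^2R_{\text{max}}^2})$, which follows since $i \le k$), and (ii) both the $\rho^k_\ell$ error term and $k^2\gamma$ are at most $\tfrac{1}{4R(k,\ell)}$, so that $\rho^k_\ell - k^2\gamma \ge \tfrac{1}{4R(k,\ell)}$ and hence $\norm{g_\ell} \le 2\sqrt{R(k,\ell)}\,\norm{f_\ell}$. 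Substituting,
\[
\norm{Mf_\ell - \Big(\sum_Y \alpha_Y \lambda_{Y,\delta,\ell}\Big)f_\ell} \le 2\sqrt{R(k,\ell)}\,w(M)(h(M)+k)h(M)\,\gamma\norm{f_\ell},
\]
which since $R(k,\ell) \ge 1$ is at most $c\gamma\norm{f_\ell}$ with $c = O\!\big((h(M)+k)h(M)R(k,\ell)w(M)\big)$.

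This is essentially a bookkeeping argument, so I do not expect a genuine obstacle; the only delicate point is confirming that the quantitative threshold on $\gamma$ simultaneously suffices to invoke \Cref{claim:reg} and to keep $\rho^k_\ell$ within a constant factor of $1/R(k,\ell)$, which is what makes the $\norm{g_\ell}\to\norm{f_\ell}$ conversion lose only a constant rather than a $\gamma$-dependent factor. As the surrounding text notes, one could instead avoid \Cref{claim:reg} and the non-laziness assumption entirely by leaving the final bound in terms of $\rho^k_\ell$, at the cost of a less interpretable error term.
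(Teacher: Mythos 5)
Your proof is correct and is exactly the ``immediate'' argument the paper intends: the paper states \Cref{cor:balanced-eig} without a separate proof, remarking only that it follows from combining \Cref{prop:pure-eig-vals} and \Cref{lemma:fvsg-body}, and your chain---triangle inequality over the affine combination, \Cref{prop:pure-eig-vals} for each pure walk with $h(Y) \le h(M)$, then \Cref{lemma:fvsg-body} together with \Cref{claim:reg} to trade $\norm{g_\ell}$ for $\norm{f_\ell}$---is precisely that. As a minor aside, your argument in fact yields $c = O\bigl((h(M)+k)h(M)\sqrt{R(k,\ell)}\,w(M)\bigr)$, which is a little sharper than the $R(k,\ell)$ stated; since $R(k,\ell) \ge 1$ this is of course consistent with the corollary as written.
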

Thus as long as the walk in question is self-adjoint (e.g. canonical or swap walk), \Cref{thm:approx-ortho} immediately implies that the true spectrum is concentrated around these approximate eigenvalues.

Before moving on it is instructive (and as we will soon see quite useful) to give an example application of \Cref{cor:balanced-eig} to a basic higher order random walk. 
\begin{corollary}[Spectrum of Lower Canonical Walks]\label{cor:lower}
Let $(X,\Pi)$ be a $(\delta,\gamma)$-eposet. The approximate eigenvalues of the canonical lower walk $\widecheck{N}_k^{k-\ell}$ are:
\[
\lambda_j(\widecheck{N}_k^{k-\ell}) = \prod_{s=1}^{k-\ell}(1-\delta^{k-s}_{k-s-j}).
\]
\end{corollary}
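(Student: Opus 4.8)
The plan is to identify $\widecheck{N}_k^{k-\ell}$ as a pure, self-adjoint walk and read its approximate eigenvalues off of \Cref{cor:balanced-eig} (equivalently \Cref{prop:pure-eig-vals}). Unwinding the definition, $\widecheck{N}_k^{k-\ell} = U^k_\ell D^k_\ell = (U_{k-1}\cdots U_\ell)(D_{\ell+1}\cdots D_k)$ is a composition of $k-\ell$ down operators followed by $k-\ell$ up operators, hence a pure walk of height $k-\ell$; it is self-adjoint because $(D^k_\ell)^\ast = U^k_\ell$ and $(U^k_\ell)^\ast = D^k_\ell$ by the adjointness of $U$ and $D$, so it is a legitimate HD-walk and \Cref{cor:balanced-eig} applies. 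The only point needing care is the position bookkeeping in \Cref{prop:pure-eig-vals}: since every down operator is applied before any up operator, the $s$-th down step acts at level $k-s+1$ with no preceding up step to shift its index, so it occupies position $i_s = s$. Substituting $i_s = s$ into the eigenvalue formula $\prod_{s=1}^{k-\ell}\bigl(1-\delta^{k-2s+i_s}_{k-2s+i_s-j}\bigr)$ for the $j$-th strip collapses the exponents $k-2s+i_s = k-s$ and yields exactly $\lambda_j(\widecheck{N}_k^{k-\ell}) = \prod_{s=1}^{k-\ell}\bigl(1-\delta^{k-s}_{k-s-j}\bigr)$, as claimed.

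A cleaner self-contained derivation, which also serves as a sanity check, is to peel the down operators off one at a time. Fix $f_j = U^k_j g_j$ with $g_j \in H^j = \mathrm{Ker}(D_j)$. Applying \Cref{lemma:DU-UD} with the outer down operator instantiated as $D_k$ gives $D_k U^k_j g_j = (1-\delta^{k-1}_{k-1-j})U^{k-1}_j g_j + \delta^{k-1}_{k-1-j}U^{k-1}_{j-1}D_j g_j + (\text{error})$, and the middle term vanishes since $D_j g_j = 0$; iterating this $k-\ell$ times descends $D^k_\ell$ to level $\ell$ and accumulates the product $\prod_{s=1}^{k-\ell}(1-\delta^{k-s}_{k-s-j})$, each application again killing its cross term via $g_j \in \mathrm{Ker}(D_j)$ exactly as in the proof of \Cref{prop:pure-eig-vals}. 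Then $U^k_\ell$ sends the resulting $U^\ell_j g_j$ back to $U^k_j g_j = f_j$ by composition of up operators, so $\widecheck{N}_k^{k-\ell} f_j$ equals $\bigl(\prod_{s=1}^{k-\ell}(1-\delta^{k-s}_{k-s-j})\bigr)f_j$ up to $O_{k,\delta}(\gamma)\norm{g_j}$ error. One should separately note the boundary case $j>\ell$: then some factor $1-\delta^{k-s}_{k-s-j}$ has negative subscript $k-s-j<0$, which is $0$ under the convention $\delta^m_i = 1$ for $i<0$, consistent with the fact that $D^k_\ell$ approximately annihilates $V^j_k$ once it has descended below level $j$.

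I expect the only genuine work here is this index bookkeeping --- reconciling the $-2s$ shifts of \Cref{prop:pure-eig-vals} with the all-downs-first structure of the lower walk, or equivalently verifying the iterated application of \Cref{lemma:DU-UD} above --- rather than anything conceptually new, and the error accounting is already handled by the ambient $(h(M)+k)h(M)w(M)$-type bounds. With the approximate eigenvalues in hand the HD-Level-Set Decomposition satisfies \Cref{def:approx-eigen} via \Cref{cor:balanced-eig}; and since $\widecheck{N}_k^{k-\ell}$ is self-adjoint, \Cref{thm:approx-ortho} further upgrades this to concentration of the true spectrum in the strips $[\lambda_j - O_{k,\delta}(\gamma),\,\lambda_j + O_{k,\delta}(\gamma)]$.
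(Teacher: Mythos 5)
Your proposal is correct and matches the paper's proof: the paper likewise observes that $\widecheck{N}_k^{k-\ell}=U^k_\ell D^k_\ell$ is a pure walk of height $k-\ell$ with down operators at positions $\{1,\ldots,k-\ell\}$, sets $i_s=s$ in \Cref{prop:pure-eig-vals}, and reads off $\prod_{s=1}^{k-\ell}(1-\delta^{k-s}_{k-s-j})$, noting the product vanishes for $j>\ell$. Your second, iterate-\Cref{lemma:DU-UD} derivation is just the proof of \Cref{prop:pure-eig-vals} specialized to this walk, so it is a sanity check rather than a genuinely different route.
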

\begin{proof}
The lower canonical walk $\widecheck{N}_k^{k-\ell}=U^k_\ell D^k_\ell$ is of height $k-\ell$, and has down operator at positions $\{1,\ldots,k-\ell\}$. In the language of \Cref{prop:pure-eig-vals} we therefore have $i_s=s$, which therefore gives:
\[
\lambda_j(\widecheck{N}_k^{k-\ell}) = \prod_{s=1}^{k-\ell}(1-\delta^{k-s}_{k-s-j}).
\]
Note this is $0$ when $j>\ell$.
\end{proof}
Similar to the case of $\rho^k_i$, while this is difficult to interpret in the general setting, the eigenvalues have a very natural form on non-lazy eposets given by the regularity parameters.
\begin{theorem}\label{claim:reg-lower}
Let $(X,\Pi)$ be a $\gamma$-non-lazy $(\delta,\gamma)$-eposet. The approximate eigenvalues of the canonical lower walk $\widecheck{N}_k^{k-i}$ are:
\[
\lambda_j(\widecheck{N}_k^{k-i}) \in \frac{R(i,j)}{R(k,j)} \pm c\gamma,
\]
where $c \leq O\left(\frac{i^4k^2R_{\text{max}}}{\delta_i(1-\delta_{i-1})}\gamma\right)$.
\end{theorem}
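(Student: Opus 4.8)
The plan is to identify the product formula for these approximate eigenvalues with a ratio of the quantities $\rho^\bullet_\bullet$ appearing in \Cref{lemma:fvsg-body}, and then invoke \Cref{claim:reg} at two different levels. As a starting point, \Cref{cor:lower} gives the exact form of the approximate eigenvalues,
\[
\lambda_j(\widecheck{N}_k^{k-i}) \;=\; \prod_{s=1}^{k-i}\bigl(1-\delta^{k-s}_{k-s-j}\bigr) \;=\; \prod_{m=i}^{k-1}\bigl(1-\delta^{m}_{m-j}\bigr),
\]
so the whole content is to show this product is within $O_{k,\delta}(\gamma)$ of $R(i,j)/R(k,j)$. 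The case $j>i$ is immediate and exact: the factor indexed by $m=k-j+1$ (which lies in $[i+1,k]$) equals $1-\delta^{k-j}_{-1}=0$ by the convention $\delta^{\bullet}_{-1}=1$ from \Cref{lemma:DU-UD}, so $\lambda_j(\widecheck{N}_k^{k-i})=0=R(i,j)/R(k,j)$ since $R(i,j)=0$ for $i<j$. Likewise $i=k$ is trivial, as the walk is the identity and both sides equal $1$. So from now on I assume $j\le i<k$.

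The key algebraic step is the telescoping identity $\lambda_j(\widecheck{N}_k^{k-i})=\rho^k_j/\rho^i_j$. To see it, I would unwind $\rho^m_j=\prod_{s=1}^{m-j}(1-\delta^{m-s}_{m-j-s})$ and reindex $s\mapsto s+1$ to get $\rho^{m+1}_j=(1-\delta^{m}_{m-j})\,\rho^m_j$ for every $m\ge j$, i.e.\ $\rho^{m+1}_j/\rho^m_j=1-\delta^m_{m-j}$. Multiplying these identities over $m=i,\dots,k-1$ telescopes to $\prod_{m=i}^{k-1}(1-\delta^m_{m-j})=\rho^k_j/\rho^i_j$, with the boundary convention $\rho^j_j=1$ covering the case $i=j$. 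This is pure reindexing of the definitions coming out of \Cref{prop:pure-eig-vals} and \Cref{lemma:fvsg-body}, so no real difficulty arises here.

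It then remains to substitute the regularity estimates. Applying \Cref{claim:reg} with top level $k$ gives $\rho^k_j\in\frac1{R(k,j)}\pm\mathrm{err}_k$, and applying it again with top level $i$ (valid since $j\le i<d$) gives $\rho^i_j\in\frac1{R(i,j)}\pm\mathrm{err}_i$, with both errors of order $O\!\bigl(\tfrac{i^3k^2R_{\max}}{\delta_i(1-\delta_{i-1})}\gamma\bigr)$; the same claim also gives $\rho_{\min}^{-1}\le O(R_{\max})$ once $\gamma$ is below the threshold stated there, so that $\Omega(1/R_{\max})\le\rho^i_j\le 1$. Writing $\rho^k_j=\frac1{R(k,j)}+e_k$ and $\rho^i_j=\frac1{R(i,j)}+e_i$ and expanding the quotient $\rho^k_j/\rho^i_j$ to first order in $e_k,e_i$ — using $R(i,j)\le R(k,j)\le R_{\max}$ and $R(i,j)/R(k,j)=1/R(k,i)\le 1$ — yields $\lambda_j(\widecheck{N}_k^{k-i})\in\frac{R(i,j)}{R(k,j)}\pm c\gamma$ with the stated dependence of $c$.

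The proof is essentially routine once the telescoping identity is spotted; I do not expect a genuine obstacle, only bookkeeping. The one place that requires mild care is the final error propagation through the division: one must know that $\gamma$ is small enough (exactly the regime in which \Cref{claim:reg} is stated) to guarantee that $\rho^i_j$ stays bounded away from $0$, so that perturbing the denominator does not amplify the error uncontrollably, and tracking the resulting factors of $R_{\max}$ and $i$ is what produces the constant $c$ in the statement.
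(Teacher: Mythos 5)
Your argument is correct in substance, but it takes a genuinely different route from the paper. The paper does not go through \Cref{claim:reg} at two levels: in \Cref{app:regularity} it proves a single claim for the \emph{upper} walk, namely $\lambda_j(\widehat{N}_i^{k-i}) \approx R(i,j)/R(k,j)$, by computing the non-expansion of a $j$-link at level $i$ in two ways --- once exactly, via adjointness of $U$ and $D$ together with the regularity identities $\pi_k(X_\tau)=R(k,j)\pi_j(\tau)$, and once spectrally, via \Cref{lemma:link-projection} (links concentrate on their eigenstrip) --- and then transfers the conclusion to $\widecheck{N}_k^{k-i}$ by noting (via \Cref{prop:pure-eig-vals}) that the upper and lower canonical walks share the same approximate eigenvalues; both \Cref{claim:reg} and \Cref{claim:reg-lower} are read off from that one claim. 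Your route instead treats \Cref{claim:reg} as a black box and derives the theorem by pure algebra: the reindexing $\lambda_j(\widecheck{N}_k^{k-i})=\prod_{m=i}^{k-1}(1-\delta^m_{m-j})$ and the telescoping identity $\rho^{m+1}_j=(1-\delta^m_{m-j})\rho^m_j$, hence $\lambda_j(\widecheck{N}_k^{k-i})=\rho^k_j/\rho^i_j$, are correct (and your handling of $j>i$ and $i=k$ is fine, modulo a cosmetic indexing slip: the vanishing factor sits at $s=k-j+1$, i.e.\ $m=j-1$), and since \Cref{claim:reg} is proved in the paper independently of this theorem, there is no circularity. What your approach buys is brevity and no further use of the link machinery; what it costs is the constant: dividing by $\rho^i_j\approx 1/R(i,j)$ amplifies both errors by a factor of up to $R(i,j)$, so your error propagation naturally yields $c = O\bigl(j^3k^2R_{\text{max}}^2/(\delta_j(1-\delta_{j-1}))\bigr)$ rather than the single power of $R_{\text{max}}$ stated in the theorem, so the claim that you recover ``the stated dependence of $c$'' is optimistic. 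Since the paper is itself loose about these polynomial factors (the appendix version of the claim even has mismatched indices relative to its own computation), this is a quantitative weakening rather than a gap, but you should state the constant you actually obtain, and note explicitly that both applications of \Cref{claim:reg} (at top levels $i$ and $k$) require the smallness condition on $\gamma$ from that claim so that $\rho^i_j=\Omega(1/R_{\text{max}})$ and the quotient is well defined.
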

The proof requires machinery developed in \Cref{sec:expansion} and \Cref{app:regularity}, and is given in \Cref{app:regularity}.
\section{Pseudorandomness and the HD-Level-Set Decomposition }\label{sec:pseudorandomness}
Now that we know the spectral structure of HD-walks, we shift to studying their combinatorial structure. In particular, we will focus on how natural notions of pseudorandomness control the projection of functions onto the HD-Level-Set Decomposition.

Before proceeding, we state a simple corollary of Lemma \ref{lemma:fvsg-body} that will prove useful going forward:

\begin{corollary}
\label{cor:DDFK-cor}
Let $(X,\Pi)$ be a $(\delta,\gamma)$-eposet and suppose $f\in C_k$ has HD-Level-Set Decomposition $f=f_0+\ldots+f_k$. If $\gamma\leq \frac{c'\rho_{\min}}{k^3}$ for a sufficiently small constant $c'>0$, then 
\begin{equation}
\label{eq:norm-sum}
    \sum_{j=0}^k \|f_j\|\leq O(\sqrt{k}\|f\|).
\end{equation}

Moreover, for any subset of indices $I$, it holds that
\begin{equation*}
    -\sum_{j\in I} \langle f,f_j\rangle \leq O\left(\frac{k^3\gamma \|f\|^2}{\rho_{min}}\right).
\end{equation*}
In particular, if $I=\{j: \langle f,f_j\rangle\leq 0\}$, then
\begin{equation*}
    \sum_{j\in I} \vert\langle f,f_j\rangle\vert \leq O\left(\frac{k^3\gamma \|f\|^2}{\rho_{min}}\right).
\end{equation*}
\end{corollary}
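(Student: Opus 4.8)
The plan is to derive everything from Lemma \ref{lemma:fvsg-body}, which controls both the diagonal terms $\langle f_j, f_j\rangle$ and the cross terms $\langle f_j, f_i\rangle$ of the HD-Level-Set Decomposition. First I would establish \eqref{eq:norm-sum}. Write $\|f\|^2 = \sum_j \langle f, f_j\rangle = \sum_j \|f_j\|^2 + \sum_{i\neq j}\langle f_i, f_j\rangle$. The cross-term bound from Lemma \ref{lemma:fvsg-body} gives $|\langle f_i, f_j\rangle| \leq O\!\left(\frac{k^2}{\rho_{\min}}\gamma\right)\|f_i\|\|f_j\|$, so summing over all $i\neq j$ contributes at most $O\!\left(\frac{k^4}{\rho_{\min}}\gamma\right)\left(\sum_j \|f_j\|\right)^2 \big/ k$ — more precisely $O\!\left(\frac{k^2\gamma}{\rho_{\min}}\right)\left(\sum_j\|f_j\|\right)^2$ by Cauchy–Schwarz. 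Hence $\sum_j\|f_j\|^2 \geq \|f\|^2 - O\!\left(\frac{k^2\gamma}{\rho_{\min}}\right)\left(\sum_j\|f_j\|\right)^2$, and combining with $\left(\sum_j \|f_j\|\right)^2 \leq (k+1)\sum_j\|f_j\|^2$ yields $\left(\sum_j\|f_j\|\right)^2 \leq (k+1)\|f\|^2 + O\!\left(\frac{k^3\gamma}{\rho_{\min}}\right)\left(\sum_j\|f_j\|\right)^2$. Under the hypothesis $\gamma \leq c'\rho_{\min}/k^3$ with $c'$ small enough, the error coefficient is at most $\tfrac12$, so it can be absorbed into the left-hand side, giving $\left(\sum_j\|f_j\|\right)^2 \leq O(k)\|f\|^2$, i.e. \eqref{eq:norm-sum}.

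Next, for the second bound, fix any index set $I$ and write $\sum_{j\in I}\langle f, f_j\rangle = \sum_{j\in I}\langle f_j, f_j\rangle + \sum_{j\in I}\sum_{i\neq j}\langle f_i, f_j\rangle \geq \sum_{j\in I}\sum_{i\neq j}\langle f_i, f_j\rangle$, since each $\langle f_j, f_j\rangle \geq 0$. Therefore $-\sum_{j\in I}\langle f, f_j\rangle \leq -\sum_{j\in I}\sum_{i\neq j}\langle f_i, f_j\rangle \leq \sum_{i\neq j}|\langle f_i, f_j\rangle| \leq O\!\left(\frac{k^2\gamma}{\rho_{\min}}\right)\left(\sum_j\|f_j\|\right)^2$, and plugging in \eqref{eq:norm-sum} bounds this by $O\!\left(\frac{k^3\gamma\|f\|^2}{\rho_{\min}}\right)$. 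The final ``in particular'' claim is immediate: if $I = \{j : \langle f, f_j\rangle \leq 0\}$, then $\sum_{j\in I}|\langle f, f_j\rangle| = -\sum_{j\in I}\langle f, f_j\rangle$, which is exactly the quantity just bounded.

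I expect the only real subtlety to be the bookkeeping in the first part — specifically, making sure the cross-term contribution is genuinely quadratic in $\sum_j\|f_j\|$ (rather than, say, in $\|f\|$) so that the absorption step is legitimate, and tracking the exact power of $k$ so that the stated hypothesis $\gamma \leq c'\rho_{\min}/k^3$ suffices. Everything else is a routine consequence of the triangle inequality, Cauchy–Schwarz, nonnegativity of $\langle f_j,f_j\rangle$, and Lemma \ref{lemma:fvsg-body}; no new ideas are needed beyond carefully chaining these estimates.
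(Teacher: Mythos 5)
Your proposal is correct and follows essentially the same route as the paper's proof: both derive \eqref{eq:norm-sum} by expanding $\|f\|^2 = \sum_j\|f_j\|^2 + \sum_{i\neq j}\langle f_i,f_j\rangle$, applying the cross-term bound of Lemma~\ref{lemma:fvsg-body} together with Cauchy--Schwarz on $\bigl(\sum_j\|f_j\|\bigr)^2 \leq (k+1)\sum_j\|f_j\|^2$, and then absorbing the error term under the hypothesis $\gamma\leq c'\rho_{\min}/k^3$; both then obtain the second bound by discarding the nonnegative diagonal terms $\|f_j\|^2$ and bounding the remaining cross terms via \eqref{eq:norm-sum}. The only divergence is cosmetic bookkeeping (indexing from $0$ versus $1$, and attributing $\sum_{i\neq j}\|f_i\|\|f_j\|\leq\bigl(\sum_j\|f_j\|\bigr)^2$ to Cauchy--Schwarz when it is just the expansion of the square), none of which affects correctness.
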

\begin{proof}
For the first claim, recall that by the approximate orthogonality of the HD-Level-Set Decomposition (Lemma~\ref{lemma:fvsg-body}), we have for all $i \neq j$:
\[
|\langle f_i, f_j \rangle| \leq O\left(\frac{k^2}{\rho_{\text{min}}}\gamma\norm{f_i}\norm{f_j}\right).
\]
Then, applying Cauchy-Schwarz gives:
\begin{align*}
\left (\sum\limits_{j=1}^k \norm{f_j}\right)^2 &\leq k\sum\limits_{j=1}^k \norm{f_j}^2\\
&\leq k\langle f,f \rangle -k \sum\limits_{i \neq j \neq 0} \langle f_i,f_j \rangle\\
&\leq k\langle f,f \rangle + c\gamma\sum\limits_{i \neq j \neq 0}\norm{f_i}\norm{f_j}\\
&\leq k\langle f,f \rangle + c\gamma\left (\sum\limits_{j=1}^k \norm{f_j}\right)^2
\end{align*}
where $c \leq O\left(\frac{k^3}{\rho_{\text{min}}}\right)$. By our assumption on $\gamma$, we have $c\gamma \leq \frac{1}{2}$, and therefore rearranging yields
\begin{equation*}%\label{eq:norm-sum}
    \sum\limits_{i=1}^k \norm{f_j} \leq O(\sqrt{k}\norm{f}).
\end{equation*}

We now show how the second claim is a consequence of the first. For any subset $I$, we have
\begin{align*}
    -\sum_{j\in I} \langle f,f_j\rangle&\leq -\sum_{j\in I}\sum_{i\neq j}\langle f,f_j\rangle\\
    &\leq \frac{Ck^2\gamma}{\rho_{\min}} \sum_{i,j} \norm{f_i}\norm{f_j}\\
    &=\frac{O(k^2\gamma)}{\rho_{\min}} \left(\sum_{i=0}^k \norm{f_i}\right)^2\\
    &\leq O\left(\frac{k^3\gamma \norm{f}}{\rho_{\min}}\right).
\end{align*}
\end{proof}
\subsection{$\ell_2$-pseudorandomness}
We start with pseudorandomness in the $\ell_2$-regime, which measures the variance of a set across links.
\begin{definition}[$\ell_2$-Pseudorandom functions \cite{bafna2020high}]\label{def:pseudorandom}
A function $f \in C_k$ is $(\varepsilon_1,\ldots,\varepsilon_\ell)$-$\ell_2$-pseudorandom if its variance across $i$-links is small for all $1 \leq i \leq \ell$:
\[
\mathrm{Var}(D^k_if) \leq \varepsilon_i |\mathbb{E}[f]|.
\]
\end{definition}
In their work on simplicial complexes,
BHKL \cite{bafna2020high} observed a close connection between $\ell_2$-pseudorandomness, the HD-Level-Set Decomposition, and the spectra of the lower canonical walks. We'll show the same connection holds in general for eposets.
% that this $\ell_2$-based notion of pseudorandomness corresponds exactly to 
% When the underlying poset is a simplicial complex, BHKL observed a close connection between this $\ell_2$-pseudorandomness and HD-Level-Set Decomposition. Namely by the adjointness of $D$ and $U$, it is possible to reduce analysis of $\text{Var}(D^k_if)$ to analyzing the lower canonical walk $\widecheck{N}_k^{k-i}$. The same trick holds for eposets, albeit with a more general set of parameteres.
\begin{theorem}\label{lem:low-level-weight}
Let $(X,\Pi)$ be a $(\delta,\gamma)$-eposet with $\gamma \leq O\left(\frac{\max_i\{\delta_i(1-\delta_{i-1})\}}{k^5R^2_{\text{max}}}\right)$. If $f \in C_k$ has HD-Level-Set Decomposition $f=f_0+\ldots+f_k$, then for any $\ell \leq k$, $\mathrm{Var}(D^k_\ell f)$ is controlled by its projection onto $V_k^0 \oplus \ldots \oplus V_k^\ell$ in the following sense: 
\begin{align*}
     \mathrm{Var}(D^k_\ell f)
     &\in \sum\limits_{j=1}^\ell \lambda_j(\widecheck{N}_k^{k-\ell})\langle f, f_j \rangle \pm c_k\gamma \|f\|^2,
\end{align*}
where $c_k \leq O(k^{5/2}R_{\text{max}})$ and $\lambda_j(\widecheck{N}_k^{k-\ell}) = \prod_{s=1}^{k-\ell}(1-\delta^{k-s}_{k-s-j})$.
\end{theorem}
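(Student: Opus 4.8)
The plan is to express the variance of $D^k_\ell f$ as a quadratic form in the lower canonical walk $\widecheck{N}_k^{k-\ell}$, expand that form against the HD-Level-Set Decomposition of $f$, and then use \Cref{cor:balanced-eig} to replace the action of $\widecheck{N}_k^{k-\ell}$ on each level by scalar multiplication by the corresponding approximate eigenvalue. First I would rewrite the left-hand side: since the averaging operators fix the constant function we have $\mathbb{E}[D^k_\ell f]=\mathbb{E}[f]$, and since $U^k_\ell$ is the adjoint of $D^k_\ell$ we get $\|D^k_\ell f\|^2=\langle f, U^k_\ell D^k_\ell f\rangle=\langle f,\widecheck{N}_k^{k-\ell} f\rangle$. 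Hence
\[
\mathrm{Var}(D^k_\ell f)=\langle f,\widecheck{N}_k^{k-\ell} f\rangle-\mathbb{E}[f]^2 .
\]

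Next I would substitute $f=f_0+\cdots+f_k$, so $\langle f,\widecheck{N}_k^{k-\ell} f\rangle=\sum_{j=0}^k\langle f,\widecheck{N}_k^{k-\ell} f_j\rangle$. Since $\widecheck{N}_k^{k-\ell}$ is a pure balanced walk of height $k-\ell$ and weight $1$, \Cref{cor:balanced-eig} applied to the component $f_j$, together with the eigenvalue formula of \Cref{cor:lower}, gives $\|\widecheck{N}_k^{k-\ell}f_j-\lambda_j(\widecheck{N}_k^{k-\ell})f_j\|\le O(k^2R_{\text{max}})\gamma\|f_j\|$, and hence by Cauchy--Schwarz $\langle f,\widecheck{N}_k^{k-\ell}f_j\rangle=\lambda_j(\widecheck{N}_k^{k-\ell})\langle f,f_j\rangle\pm O(k^2R_{\text{max}})\gamma\|f\|\,\|f_j\|$. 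Summing over $j$ and bounding the accumulated error using $\sum_j\|f_j\|\le O(\sqrt{k}\|f\|)$ from \Cref{cor:DDFK-cor} yields
\[
\langle f,\widecheck{N}_k^{k-\ell} f\rangle=\sum_{j=0}^k\lambda_j(\widecheck{N}_k^{k-\ell})\langle f,f_j\rangle\pm O(k^{5/2}R_{\text{max}})\gamma\|f\|^2 .
\]

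It then remains only to tidy the sum on the right. The level-$0$ space $V_k^0$ is spanned by the constant function, and $\mathbb{E}[f_j]=0$ for $j\ge 1$, so $f_0=\mathbb{E}[f]\mathbf{1}$ and $\lambda_0(\widecheck{N}_k^{k-\ell})\langle f,f_0\rangle=\mathbb{E}[f]^2$, which cancels the $-\mathbb{E}[f]^2$ term exactly. For $j>\ell$, the factor corresponding to $s=k-\ell$ in the product defining $\lambda_j(\widecheck{N}_k^{k-\ell})$ equals $1-\delta^\ell_{\ell-j}=0$ (using the convention $\delta^m_i=1$ for $i<0$), so all such terms vanish. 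What survives is exactly $\sum_{j=1}^\ell\lambda_j(\widecheck{N}_k^{k-\ell})\langle f,f_j\rangle$ up to the stated error, giving $c_k\le O(k^{5/2}R_{\text{max}})$.

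Each step is individually short, so I do not expect a genuine conceptual obstacle; the one place that needs care is the error accounting in the second step. Concretely, one must confirm that the hypothesis $\gamma\le O(\max_i\{\delta_i(1-\delta_{i-1})\}/(k^5R_{\text{max}}^2))$ is simultaneously strong enough to invoke \Cref{cor:balanced-eig} and (via the bound $\rho_{\text{min}}^{-1}\le O(R_{\text{max}})$ from \Cref{claim:reg}) \Cref{cor:DDFK-cor}, and that combining the per-level error $O(k^2R_{\text{max}})\gamma\|f_j\|$ with $\sum_j\|f_j\|\le O(\sqrt{k}\|f\|)$ produces precisely the claimed $O(k^{5/2}R_{\text{max}})\gamma\|f\|^2$ and not a larger quantity. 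This is routine bookkeeping rather than a true difficulty.
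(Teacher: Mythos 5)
Your proof is correct and follows essentially the same route as the paper's: rewrite $\mathrm{Var}(D^k_\ell f)$ as $\langle f,\widecheck{N}_k^{k-\ell}f\rangle-\mathbb{E}[f]^2$, expand against the HD-Level-Set Decomposition, invoke the approximate-eigenvector bound from \Cref{cor:balanced-eig}/\Cref{cor:lower}, and control the accumulated error via $\sum_j\|f_j\|\le O(\sqrt{k}\|f\|)$ from \Cref{cor:DDFK-cor}. The only cosmetic difference is that the paper phrases the per-level error in terms of $\rho_{\min}^{-1}$ and only converts to $R_{\text{max}}$ at the end via \Cref{claim:reg}, whereas you substitute $R_{\text{max}}$ directly; the bookkeeping is the same.
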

\begin{proof}
To start, notice that since $\langle D^k_\ell f, D^k_\ell f \rangle = \langle \widecheck{N}_k^{k-\ell} f, f \rangle$ it is enough to analyze the application of $\widecheck{N}_k^{k-\ell}$ to $f$. By \Cref{cor:lower}, we know that each $f_j$ is an approximate eigenvector satisfying:
\[
\norm{\widecheck{N}_k^{k-\ell} f_j - \lambda_j(\widecheck{N}_k^{k-\ell}) f_j} \leq O(k^2R(k,\ell)\gamma)\norm{f_j},
\]
where $\lambda_j(\widecheck{N}_k^{k-\ell}) = 0$ for $j > \ell$. Combining these observations gives:
\begin{align*}
\text{Var}(D^k_\ell f) &= \left \langle D^k_\ell f,D^k_\ell f \right \rangle - \mathbb{E}[D^k_\ell f]^2\\
&= \left \langle f,U^k_\ell D^k_\ell f \right \rangle - \langle f,f_0 \rangle\\
    &= \sum\limits_{j=1}^k \langle f, U^k_\ell D^k_\ell f_j \rangle\\
    &\in \sum\limits_{j=1}^\ell\lambda_j(\widecheck{N}_k^{k-\ell})\langle f, f_j \rangle \pm O\left(\frac{k^2}{\rho_{\text{min}}}\gamma \norm{f}\sum\limits_{j=1}^k \norm{f_j}\right).
\end{align*}
where we have additionally used the fact that $\langle f, f_0 \rangle = \mathbb{E}[f]^2 = \mathbb{E}[D^k_\ell f]^2$ and $\lambda_0(\widecheck{N}_k^{k-\ell})=1$. Applying \Cref{eq:norm-sum} from Corollary~\ref{cor:DDFK-cor} to bound the sum in the error term %It is left to bound the error term $\sum\limits \norm{f_j}$ by $O_k(\norm{f})$. Recall that by the approximate orthogonality of the HD-Level-Set Decomposition (Lemma~\ref{lemma:fvsg-body}), we have for all $i \neq j$:
%\[
%|\langle f_i, f_j \rangle| \leq O\left(\frac{k^2}{\rho_{\text{min}}}\gamma\norm{f_i}\norm{f_j}\right).
%\]
%Then, applying Cauchy-Schwarz gives:
%\begin{align*}
%\left (\sum\limits_{j=1}^k \norm{f_j}\right)^2 &\leq k\sum\limits_{j=1}^k \norm{f_j}^2\\
%&\leq k\langle f,f \rangle -k \sum\limits_{i \neq j \neq 0} \langle f_i,f_j \rangle\\
%&\leq k\langle f,f \rangle + c\gamma\sum\limits_{i \neq j \neq 0}\norm{f_i}\norm{f_j}\\
%&\leq k\langle f,f \rangle + c\gamma\left (\sum\limits_{j=1}^k \norm{f_j}\right)^2
%\end{align*}
%where $c \leq O\left(\frac{k^3}{\rho_{\text{min}}}\right)$. By our assumption $\gamma \leq \frac{\rho_{\min}}{2k^3}$, we have $c\gamma \leq \frac{1}{2}$, and therefore,
%\begin{equation}%\label{eq:norm-sum}
%    \sum\limits_{i=1}^k \norm{f_j} \leq O(\sqrt{k}\norm{f}).
%\end{equation}
and replacing $\rho$ with the relevant regularity parameters by \Cref{claim:reg} then gives the result.
\end{proof}
As an immediate corollary, we get a level-$i$ inequality for pseudorandom functions.
\begin{corollary}\label{cor:ell2-proj}
Let $(X,\Pi)$ be a $(\delta,\gamma)$-eposet with $\gamma \leq O\left(\frac{\max_i\{\delta_i(1-\delta_{i-1})\}}{k^5R^2_{\text{max}}}\right)$ and let $f \in C_k$ be an $(\varepsilon_1,\ldots,\varepsilon_\ell)$-$\ell_2$-pseudorandom function. Then for any $1 \leq i \leq \ell$:
\[
|\langle f, f_i \rangle| \leq R(k,i) \varepsilon_i|\mathbb{E}[f]| + c\gamma\| f\|^2,
\]
where $c \leq O\left(\frac{k^5R_{\text{max}}^2}{\max_i\{\delta_i(1-\delta_{i-1})\}}\right)$.
\end{corollary}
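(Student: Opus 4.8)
The plan is to derive this as a direct consequence of \Cref{lem:low-level-weight}, which already establishes that $\mathrm{Var}(D^k_\ell f)$ is well-approximated by a weighted sum of the inner products $\langle f, f_j\rangle$ over $j \le \ell$, with weights given by the approximate eigenvalues $\lambda_j(\widecheck{N}_k^{k-\ell})$ of the lower canonical walk. The idea is to choose the walk length cleverly so that a \emph{single} level $i$ is isolated on the right-hand side, and then use $\ell_2$-pseudorandomness to upper bound the variance term on the left. Concretely, I would apply \Cref{lem:low-level-weight} with $\ell = i$: then the sum on the right runs over $1 \le j \le i$, every term has the form $\lambda_j(\widecheck{N}_k^{k-i})\langle f, f_j\rangle$, and the ``diagonal'' term $\lambda_i(\widecheck{N}_k^{k-i})\langle f, f_i\rangle$ has coefficient $\lambda_i(\widecheck{N}_k^{k-i}) = \prod_{s=1}^{k-i}(1-\delta^{k-s}_{k-s-i})$, which is close to $1$ after renormalizing — more precisely, it equals $R(i,i)/R(k,i)\cdot R(k,i) = R(k,i)^{-1}\cdot\ldots$; I will want to track the exact constant, but the point is that multiplying through by $R(k,i)$ turns the diagonal coefficient into something $\Theta(1)$.

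The key steps, in order, are: (1) Invoke \Cref{lem:low-level-weight} at level $\ell = i$ to get $\mathrm{Var}(D^k_i f) \in \sum_{j=1}^i \lambda_j(\widecheck{N}_k^{k-i})\langle f, f_j\rangle \pm c_k\gamma\|f\|^2$. (2) Use $\ell_2$-pseudorandomness, i.e.\ $\mathrm{Var}(D^k_i f) \le \varepsilon_i|\mathbb{E}[f]|$, to bound the left-hand side. (3) Isolate the $j=i$ term: write $\lambda_i(\widecheck{N}_k^{k-i})\langle f, f_i\rangle = \mathrm{Var}(D^k_i f) - \sum_{j=1}^{i-1}\lambda_j(\widecheck{N}_k^{k-i})\langle f, f_j\rangle \pm c_k\gamma\|f\|^2$, and now bound each lower term $|\langle f, f_j\rangle|$ for $j < i$. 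For the lower terms there are two cases depending on sign: the negative contributions are controlled in aggregate by the second part of \Cref{cor:DDFK-cor} (which bounds $\sum_{j \in I}|\langle f, f_j\rangle|$ by $O(k^3\gamma\|f\|^2/\rho_{\min})$ when $I$ is the set of indices with non-positive inner product), while the positive contributions... (4) handle the positive lower terms by an inductive argument — this is where I'd expect to need to be careful. (5) Convert $\lambda_i(\widecheck{N}_k^{k-i})$ to $R(i,i)/R(k,i) = 1/R(k,i)$ using \Cref{claim:reg-lower} (incurring $O(\cdot)\gamma$ error absorbed into the $c\gamma\|f\|^2$ term), multiply through by $R(k,i)$, and collect all $\gamma$-errors, using the hypothesis on $\gamma$ together with $\rho_{\min}^{-1} = O(R_{\max})$ from \Cref{claim:reg} to get the stated constant $c \le O\!\left(\frac{k^5 R_{\max}^2}{\max_i\{\delta_i(1-\delta_{i-1})\}}\right)$.

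The main obstacle is step (4): controlling the positive lower-level contributions $\sum_{j=1}^{i-1}\lambda_j(\widecheck{N}_k^{k-i})\langle f, f_j\rangle$ when $\langle f, f_j\rangle > 0$. The cleanest fix is to prove the corollary by induction on $i$: for each $j < i$, the inductive hypothesis gives $|\langle f, f_j\rangle| \le R(k,j)\varepsilon_j|\mathbb{E}[f]| + c\gamma\|f\|^2$, and since $\lambda_j(\widecheck{N}_k^{k-i}) \le 1$ and there are at most $k$ such terms, the total positive lower contribution is at most $\sum_{j<i} R(k,j)\varepsilon_j|\mathbb{E}[f]| + O(k c\gamma\|f\|^2)$. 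This \emph{almost} works but bleeds a $\sum_{j<i}R(k,j)\varepsilon_j$ term into the bound rather than the clean $R(k,i)\varepsilon_i$. To recover the stated inequality exactly, I would instead go back to \Cref{lem:low-level-weight} and apply it at \emph{two} consecutive levels — $\ell = i$ and $\ell = i-1$ — and take an appropriate linear combination so that the lower-level inner products telescope out, leaving only $\langle f, f_i\rangle$ on the right (up to $\gamma$-error) against a difference of variances $\mathrm{Var}(D^k_i f) - c'\,\mathrm{Var}(D^k_{i-1}f)$ on the left, each of which is bounded by the corresponding $\varepsilon|\mathbb{E}[f]|$. This telescoping trick — which is exactly the idea BHKL use on simplicial complexes — is the real content; once the right combination of walk lengths is identified, the remaining manipulations are the routine $\gamma$-bookkeeping described above.
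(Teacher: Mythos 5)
Your steps (1)--(3) and the reductions via \Cref{lem:low-level-weight} at level $\ell = i$, \Cref{cor:DDFK-cor}, \Cref{claim:reg}, and the identification $\lambda_i(\widecheck{N}_k^{k-i}) = \rho^k_i \approx 1/R(k,i)$ are all exactly the ingredients of the paper's proof. The gap is in step (4): you have the signs backwards on the lower-level terms, and the ``obstacle'' you identify there is not actually an obstacle. When you rearrange
\[
\lambda_i\langle f, f_i\rangle \;\leq\; \mathrm{Var}(D^k_i f) \;-\; \sum_{j<i}\lambda_j(\widecheck{N}_k^{k-i})\langle f, f_j\rangle \;+\; c_k\gamma\|f\|^2,
\]
each lower-level term enters with a \emph{minus} sign. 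Since $\lambda_j(\widecheck{N}_k^{k-i}) \in [0,1]$, a term with $\langle f, f_j\rangle > 0$ makes the right-hand side \emph{smaller} and can simply be discarded; it only helps the upper bound. The only terms that hurt are those with $\langle f, f_j\rangle < 0$, and for those $-\lambda_j\langle f, f_j\rangle \leq -\langle f, f_j\rangle$, so their aggregate is at most $-\sum_{j\in I}\langle f, f_j\rangle$ over the set $I$ of negative indices, which is exactly what the second clause of \Cref{cor:DDFK-cor} bounds by $O(k^3\gamma\|f\|^2/\rho_{\min})$. You already invoked this for the negative case; the entire difficulty you flag with the positive case evaporates once the sign is tracked correctly. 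Consequently, neither the inductive argument nor the two-level telescoping you sketch at the end is needed --- the paper's proof proceeds in a single application of the lemma at level $i$, drops the favorable lower-level terms, absorbs the unfavorable ones into the $O(\gamma)$ error, divides by $\lambda_i = \rho^k_i$, applies the pseudorandomness hypothesis to $\mathrm{Var}(D^k_i f)$, and converts $\rho^k_i$ to $R(k,i)$ via \Cref{claim:reg}. (The matching lower bound $\langle f, f_i\rangle \geq -O(k^3\gamma\|f\|^2/\rho_{\min})$, needed for the absolute value, is \Cref{cor:DDFK-cor} with $I = \{i\}$.) You should also note that the telescoping route you propose, besides being unnecessary, would likely bleed $\varepsilon_{i-1}$ into the final bound, which the statement does not permit.
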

\begin{proof}
By Corollary~\ref{cor:DDFK-cor}, for any given $1\leq i\leq k$, it holds that $-\sum_{j\neq i}\langle f, f_j \rangle \leq O\left(\frac{k^3}{\rho_{\text{min}}}\gamma\| f\|^2\right)$. It follows from \Cref{lem:low-level-weight} that for all $0 \leq i \leq k$, the variance of $D_i^k f$ is lower bounded by its projection onto $f_i$:
\begin{align*}
    \text{Var}(D_i^k f) \geq \lambda_i(\widecheck{N}_k^{k-i}) \langle f,f_i \rangle - c\gamma\langle f,f \rangle,
\end{align*}
where $c \leq O(\frac{k^3}{\rho_{\text{min}}})$. Noting that $\lambda_i(\widecheck{N}_k^{k-i})=\rho^k_i$, if $i\leq \ell$, re-arranging the above and applying the pseudorandomness assumption gives:
\begin{align*}
    \langle f,f_i \rangle &\leq \frac{1}{\rho^k_i}\text{Var}(D_i^k f) + c_2\gamma\langle f,f \rangle\\
    &\leq \frac{1}{\rho^k_i}\varepsilon_i|\mathbb{E}[f]| + c_2\gamma\langle f,f \rangle,
\end{align*}
where $c_2 \leq O(\frac{k^3}{\rho^2_{\text{min}}})$. The lower bound on $\langle f,f_i \rangle$ is immediate from Corollary~\ref{cor:DDFK-cor} with the set $I=\{i\}$. Applying \Cref{claim:reg} then gives the result.
\end{proof}
As mentioned previously, this also recovers the tight inequality for simplicial complexes given in \cite{bafna2020high} where $R(k,i)={k \choose i}$, as well as providing the natural $q$-analog for $q$-simplicial complexes where $R(k,i)={k \choose i}_q$. 

\subsection{$\ell_\infty$-pseudorandomness}
While $\ell_2$-pseudorandomness is useful in its own right (e.g. for local-to-global algorithms for unique games \cite{bafna2020playing,bafna2020high}), there is also significant interest in a stronger $\ell_\infty$-variant in the hardness of approximation literature \cite{khot2018small,subhash2018pseudorandom}.
\begin{definition}[$\ell_\infty$-Pseudorandom functions]\label{def:pseudorandom-infty}
A function $f \in C_k$ is $(\varepsilon_1,\ldots,\varepsilon_\ell)$-$\ell_\infty$-pseudorandom if for all $1 \leq i \leq \ell$ its local expectation is close to its global expectation:
\[
\left \| D^k_if - \mathbb{E}[f] \right \|_\infty \leq \varepsilon_i.
\]
\end{definition}
In their recent work on $\ell_2$-structure of expanding simplicial complexes, BHKL prove a basic reduction from $\ell_\infty$ to $\ell_2$-pseudorandomness that allows for an analogous level-$i$ inequality for this notion as well. Here, we'll show the same result holds for general eposets. As in their work, we'll take advantage of a weak local-consistency property called locally-constant sign.
\begin{definition}[locally-constant sign \cite{bafna2020high}]
Let $(X,\Pi)$ be a graded poset. We say a function $f \in C_k$ has $\ell$-local constant sign if:
\begin{enumerate}
    \item $\mathbb{E}[f] \neq 0$,
    \item $\forall s \in X(\ell)$ s.t.\ $~\underset{X_s}{\mathbb{E}}[f] \neq 0: \text{sign}\left (\underset{X_s}{\mathbb{E}}[f]\right ) = \text{sign} \left (\mathbb{E}[f] \right )$.
\end{enumerate}
\end{definition}
With this in mind, we now state $\ell_\infty$-variant of \Cref{cor:ell2-proj}:
\begin{theorem}\label{thm:body-local-spec-proj}
Let $(X,\Pi)$ be a $(\delta,\gamma)$-eposet with $\gamma \leq O\left(\frac{\max_i\{\delta_i(1-\delta_{i-1})\}}{k^5R^2_{\text{max}}}\right)$ and let $f \in C_k$ have HD-Level-Set Decomposition $f=f_0+\ldots+f_k$. If $f$ is $(\varepsilon_1,\ldots,\varepsilon_\ell)$-$\ell_\infty$-pseudorandom, then for all $1 \leq i \leq \ell$:
\[
|\langle f, f_i \rangle| \leq  \left(R(k,i) + c\gamma\right)\varepsilon_i^2 + c\gamma\norm{f}^2,
\]
and if $f$ has $i$-local constant sign:
\[
|\langle f, f_i \rangle| \leq  \left(R(k,i) + c\gamma\right)\varepsilon_i|\mathbb{E}[f]| + c\gamma\norm{f}^2
\]
where in both cases $c \leq O\left(\frac{k^5R_{\text{max}}^2}{\max_i\{\delta_i(1-\delta_{i-1})\}}\right)$.
\end{theorem}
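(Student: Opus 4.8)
The plan is to reduce the $\ell_\infty$ statement to the $\ell_2$ level-$i$ inequality \Cref{cor:ell2-proj}, following BHKL: an $\ell_\infty$-pseudorandom function is automatically $\ell_2$-pseudorandom once one is willing to pay a factor of $\varepsilon_i$, and this loss is recovered under $i$-local constant sign. The only genuinely new ingredients are a one-line variance estimate and a short sign computation; everything else is bookkeeping with already-established machinery.

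\textbf{The unconditional bound.} Since the down operators preserve expectation, $\mathbb{E}[D^k_i f]=\mathbb{E}[f]$, so $\mathrm{Var}(D^k_i f)=\mathbb{E}[(D^k_i f-\mathbb{E}[f])^2]\le \norm{D^k_i f-\mathbb{E}[f]}_\infty^2\le \varepsilon_i^2$. I would then reopen the proof of \Cref{cor:ell2-proj}: combining \Cref{lem:low-level-weight} with \Cref{cor:DDFK-cor} yields, for each $1\le i\le\ell$, the two-sided estimate $\langle f,f_i\rangle\le \frac{1}{\rho^k_i}\mathrm{Var}(D^k_i f)+O(k^3/\rho_{\text{min}})\gamma\norm{f}^2$ and $-\langle f,f_i\rangle\le O(k^3\gamma\norm{f}^2/\rho_{\text{min}})$. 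Substituting the variance bound, using \Cref{claim:reg} to replace $1/\rho^k_i$ by $R(k,i)\pm O_{k,\delta}(\gamma)$ and $1/\rho_{\text{min}}$ by $O(R_{\text{max}})$, and collecting constants into $c$ gives $|\langle f,f_i\rangle|\le (R(k,i)+c\gamma)\varepsilon_i^2+c\gamma\norm{f}^2$.

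\textbf{The local-constant-sign bound.} Here the task is to upgrade the variance estimate from $\varepsilon_i^2$ to $\varepsilon_i|\mathbb{E}[f]|$. Assume without loss of generality $\mathbb{E}[f]>0$, and for $s\in X(i)$ write $D^k_i f(s)=\underset{X_s}{\mathbb{E}}[f]$. By $i$-local constant sign this quantity is $\ge 0$ for every $s$ (it is either $0$ or has the sign of $\mathbb{E}[f]$), so $|D^k_i f(s)|=D^k_i f(s)$, while $\ell_\infty$-pseudorandomness gives $D^k_i f(s)\le \mathbb{E}[f]+\varepsilon_i$. Hence $\mathbb{E}[(D^k_i f)^2]\le(\mathbb{E}[f]+\varepsilon_i)\mathbb{E}[D^k_i f]=(\mathbb{E}[f]+\varepsilon_i)\mathbb{E}[f]$, so $\mathrm{Var}(D^k_i f)\le\varepsilon_i\mathbb{E}[f]=\varepsilon_i|\mathbb{E}[f]|$; that is, $f$ is genuinely $(\varepsilon_1,\dots,\varepsilon_\ell)$-$\ell_2$-pseudorandom. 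Feeding this into the same displayed inequality (or directly into \Cref{cor:ell2-proj}) gives $|\langle f,f_i\rangle|\le(R(k,i)+c\gamma)\varepsilon_i|\mathbb{E}[f]|+c\gamma\norm{f}^2$.

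\textbf{Expected obstacle.} I do not anticipate a real obstacle: the content is the reduction $\mathrm{Var}(D^k_i f)\le\varepsilon_i^2$ together with the short sign computation. The only care needed is tracking the error terms---checking that the stated $\gamma$-range hypothesis is precisely what licenses $1/\rho^k_i=R(k,i)+O_{k,\delta}(\gamma)$ via \Cref{claim:reg} and lets every $\rho_{\text{min}}$-dependent constant collapse into the claimed $c\le O\!\left(k^5R_{\text{max}}^2/\max_i\{\delta_i(1-\delta_{i-1})\}\right)$. A cleaner (but slightly lossier) alternative is to invoke \Cref{cor:ell2-proj} as a black box with pseudorandomness parameters $\varepsilon_i^2/|\mathbb{E}[f]|$ (respectively $\varepsilon_i$), which recovers the $R(k,i)$ coefficient without the harmless $+c\gamma$ slack.
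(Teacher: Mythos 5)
Your proposal is correct, and it is worth noting where it coincides with and where it departs from the paper's own argument. For the locally-constant-sign bound you do essentially what the paper does: your observation that $D^k_i f\geq 0$ (for $\mathbb{E}[f]>0$) together with $\|D^k_i f-\mathbb{E}[f]\|_\infty\leq\varepsilon_i$ forces $\mathrm{Var}(D^k_i f)\leq\varepsilon_i\mathbb{E}[f]$ is just a repackaging of the paper's Lemma~\ref{lem:reduction} (there phrased via the size-biased distribution $P_i$), after which both arguments invoke \Cref{cor:ell2-proj}. For the unconditional bound, however, your route is genuinely different and somewhat more direct: the paper shifts $f$ to $f'=f+(\varepsilon_i-\mathbb{E}[f])\mathbbm{1}$ to manufacture $i$-local constant sign, applies the reduction lemma to $f'$, uses $\langle f',f_i\rangle=\langle f,f_i\rangle$ and $\mathbb{E}[f']=\varepsilon_i$, and needs a limiting argument for $\varepsilon_i=0$ plus control of $\langle f',f'\rangle$; you instead note that the down operators preserve expectation, so $\mathrm{Var}(D^k_i f)=\mathbb{E}[(D^k_i f-\mathbb{E}[f])^2]\leq\varepsilon_i^2$ outright, and feed this into the variance-to-projection machinery (\Cref{lem:low-level-weight}, \Cref{cor:DDFK-cor}, \Cref{claim:reg}) underlying \Cref{cor:ell2-proj}, which never actually uses $\mathbb{E}[f]$. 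This avoids the shift and the limit entirely and yields the same $(R(k,i)+c\gamma)\varepsilon_i^2+c\gamma\|f\|^2$ bound; the paper's shift trick buys only the convenience of reusing Lemma~\ref{lem:reduction} verbatim. Two small cautions: your ``black box'' variant that feeds $\varepsilon_i^2/|\mathbb{E}[f]|$ into \Cref{cor:ell2-proj} degenerates when $\mathbb{E}[f]=0$, so the re-opened-proof route should be the primary one (as you indeed propose); and the replacement of $1/\rho^k_i$ by $R(k,i)+c\gamma$ via \Cref{claim:reg} involves exactly the same constant bookkeeping the paper performs, so no new parameter issues arise from your approach.
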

We note that when $f$ is boolean, this bound simplifies to
\[
|\langle f, f_i \rangle| \leq \left(R(k,i)\varepsilon_i+c\gamma\right) \mathbb{E}[f],
\]
which we'll see in the next section is a particularly useful form for analyzing edge expansion. The proof of \Cref{thm:body-local-spec-proj} relies mainly on a reduction to the $\ell_2$-variant for functions with locally-constant sign. This reduction is almost exactly the same as in \cite{bafna2020high}, but we include it for completeness.
\begin{lemma}\label{lem:reduction}
Let $(X,\Pi)$ be a graded poset and $f \in C_k$ a $(\varepsilon_1,\ldots,\varepsilon_\ell)$-$\ell_\infty$-pseudorandom function with $i$-local constant sign for any $i \leq \ell$. Then $f$ is $(\varepsilon_1,\ldots,\varepsilon_\ell)$-$\ell_2$-pseudorandom.
\end{lemma}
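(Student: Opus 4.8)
The plan is to follow the BHKL reduction and verify the defining inequality $\mathrm{Var}(D^k_i f) \le \varepsilon_i |\mathbb{E}[f]|$ of $\ell_2$-pseudorandomness separately for each $1 \le i \le \ell$. The conceptual point is that the $\ell_\infty$-bound together with the $i$-local constant sign hypothesis pin the ``link-average'' function $g := D^k_i f \in C_i$ into a very narrow window: after normalizing signs it is nonnegative, it is bounded above by $\mathbb{E}[f] + \varepsilon_i$, and its mean is exactly $\mathbb{E}[f]$. Given these three facts the variance bound drops out of the elementary pointwise inequality $x^2 \le C x$, valid whenever $0 \le x \le C$.

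Concretely, I would first reduce to $\mathbb{E}[f] > 0$: the local-constant-sign hypothesis forces $\mathbb{E}[f] \ne 0$, and replacing $f$ by $-f$ leaves both $\mathrm{Var}(D^k_i f)$ and $|\mathbb{E}[f]|$ unchanged while flipping the sign of $\mathbb{E}[f]$. Next I would record two standard facts about the down operator on a regular poset: (i) $D^k_i$ computes link-averages, i.e.\ $(D^k_i f)(s) = \mathbb{E}_{X_s}[f]$ for every $s \in X(i)$ --- this uses middle regularity, in the same way as the identity $U^k_i f(y) = \tfrac{1}{R(k,i)}\sum_{x < y} f(x)$ from \Cref{sec:prelims} together with $\pi_k(X_s) = R(k,i)\pi_i(s)$; and (ii) since $D$ is an averaging operator adjoint to $U$ with $U^k_i \mathbf{1} = \mathbf{1}$, we have $\mathbb{E}_{\pi_i}[D^k_i f] = \mathbb{E}_{\pi_k}[f] = \mathbb{E}[f]$, so $\mathrm{Var}(D^k_i f) = \mathbb{E}_{\pi_i}[g^2] - \mathbb{E}[f]^2$. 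From (i) and $i$-local constant sign, each value $g(s) = \mathbb{E}_{X_s}[f]$ is either $0$ or has the sign of $\mathbb{E}[f] > 0$, so $g(s) \ge 0$; from $\ell_\infty$-pseudorandomness, $|g(s) - \mathbb{E}[f]| \le \varepsilon_i$, so in particular $g(s) \le \mathbb{E}[f] + \varepsilon_i$.

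The main computation is then one line: $g(s)^2 \le (\mathbb{E}[f] + \varepsilon_i)\, g(s)$ pointwise since $0 \le g(s) \le \mathbb{E}[f] + \varepsilon_i$, so averaging over $s \sim \pi_i$ and using $\mathbb{E}_{\pi_i}[g] = \mathbb{E}[f]$ gives $\mathbb{E}_{\pi_i}[g^2] \le (\mathbb{E}[f] + \varepsilon_i)\mathbb{E}[f]$, hence $\mathrm{Var}(D^k_i f) \le \varepsilon_i \mathbb{E}[f] = \varepsilon_i |\mathbb{E}[f]|$; doing this for all $i \le \ell$ finishes the proof. There is no genuine obstacle --- the argument is elementary --- and the only care needed is the bookkeeping that $D^k_i$ really outputs the link-averages controlled by the sign hypothesis and preserves the global mean. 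It is worth noting that the sign condition is precisely what upgrades the trivial bound $\mathrm{Var}(D^k_i f) \le \varepsilon_i^2$ (immediate from $|g - \mathbb{E}[f]| \le \varepsilon_i$ alone) to the sharper $\varepsilon_i |\mathbb{E}[f]|$ needed downstream, and it is only binding when $\varepsilon_i$ is comparable to or exceeds $\mathbb{E}[f]$: when $\mathbb{E}[f] > \varepsilon_i$, nonnegativity of $g$ is already automatic from $g(s) \ge \mathbb{E}[f] - \varepsilon_i > 0$.
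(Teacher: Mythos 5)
Your proof is correct and is essentially the same argument as the paper's, just presented more directly: the paper rewrites $\mathbb{E}_{\pi_i}[(D^k_i f)^2]/\mathbb{E}[f]$ as the mean of $D^k_i f$ under the tilted measure $P_i(s) \propto \pi_i(s)\,D^k_i f(s)$ (whose validity as a probability distribution is exactly the nonnegativity you extract from local constant sign) and bounds it by $\mathbb{E}[f]+\varepsilon_i$, which after unwinding is precisely your pointwise inequality $g^2 \leq (\mathbb{E}[f]+\varepsilon_i)g$ averaged over $\pi_i$. Both use the same three facts (link-average interpretation of $D^k_i$, mean preservation $\mathbb{E}_{\pi_i}[D^k_i f]=\mathbb{E}[f]$, and one-sided $\ell_\infty$ control), so there is no substantive difference.
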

\begin{proof}
As in \cite{bafna2020high}, the idea is to notice that locally constant sign allows us to rewrite $\norm{D^k_i f}_2^2$ as an expectation over some related distribution $P_{i}$:
\begin{align*}
    \frac{1}{\mathbb{E}[f]}\langle D^k_i f, D^k_i f \rangle  &= \sum\limits_{s \in X(i)} \pi_i(s)\left (\frac{1}{\mathbb{E}[f]}\sum\limits_{t \in X_s}\frac{\pi_k(t)f(t)}{\pi_k(X_s)}\right )D^k_i f(s)\\
      &= \sum\limits_{s \in X(i)} \left (\frac{1}{R(k,i)}\frac{\sum_{t \in X_s} \pi_k(t)f(t)}{\mathbb{E}[f]}\right )D^k_i f(s)\\
    &= \underset{P_{i}}{\mathbb{E}}[D^k_i f], \label{eq:exp-to-norm}
\end{align*}
where $P_i$ being a probability distribution follows from the locally-constant sign of $f$, and the second step follows from 
the fact that $\pi_k(X_s) = \sum_{t\in X_s}\pi_k(t)=R(k,i)\pi_i(s)$. The result then follows easily from averaging:
\[
\left |\frac{1}{\mathbb{E}[f]}\text{Var}(D^k_if)\right |
= \left |\underset{P_i}{\mathbb{E}}[D^k_i f] - \mathbb{E}[f] \right | \leq \|D^k_if - \mathbb{E}[f]\|_{\infty}.
\]
When $\mathbb{E}[f]>0$, the $\ell_\infty$-norm here may be replaced with maximum.
\end{proof}
The proof of \Cref{thm:body-local-spec-proj} now follows from reducing to the case of locally-constant sign. The argument is exactly as in the proof of \cite[Theorem 8.7]{bafna2020high}, but we include it for completeness.
\begin{proof}[Proof of \Cref{thm:body-local-spec-proj}]
We focus on the general bound, since the result for functions with locally constant sign is immediate from \Cref{lem:reduction} and \Cref{cor:ell2-proj}. The argument for general functions $f$ follows simply from noting that we can always shift $f$ to have locally constant sign. With this in mind, assume $\mathbb{E}[f]\geq 0$ for simplicity (the negative case is similar). Let $f'=f + (\varepsilon_i - \mathbb{E}[f])\mathbbm{1}$ be the aforementioned shift. As long as $\varepsilon_i>0$, it is easy to see that $f'$ has $i$-local constant sign and further that
\[
f' = f_0' + f_i + \ldots + f_k,
\]
where $f_0' = f_0+(\varepsilon_i - \mathbb{E}[f])\mathbbm{1}$. Since shifts have no effect on $\ell_\infty$-pseudorandomness, $f'$ is $(\varepsilon_1,\ldots, \varepsilon_\ell)$-$\ell_\infty$-pseudorandom by assumption, and therefore $(\varepsilon_1,\ldots, \varepsilon_\ell)$-$\ell_2$-pseudorandom by \Cref{lem:reduction}. We can now apply \Cref{cor:ell2-proj} to get:
\begin{align*}
\langle f +(\varepsilon_i - \mathbb{E}[f])\mathbbm{1}, f_i \rangle &\leq \frac{1}{\rho^k_i}\varepsilon_i\mathbb{E} [f+(\varepsilon_i - \mathbb{E}[f])\mathbbm{1}] + c\gamma \langle f + (\varepsilon_i - \mathbb{E}[f])\mathbbm{1}, f + (\varepsilon_i - \mathbb{E}[f])\mathbbm{1} \rangle \\
&\leq \left (\frac{1}{\rho^k_i}+c\gamma\right)\varepsilon_i^2 + c\gamma\langle f, f \rangle,
\end{align*}
since $\langle f_i, \mathbbm{1} \rangle=0$ for all $i>0$. Finally, as this holds for all $\varepsilon_i>0$, a limiting argument implies the result for $\varepsilon_i = 0$. Applying \Cref{claim:reg} completes the proof.
\end{proof}

\section{Expansion of HD-walks}\label{sec:expansion}
It is well known that higher order random walks on simplicial complexes (e.g.\ the Johnson graphs) are not small-set expanders. BHKL gave an exact characterization of this phenomenon for local-spectral expanders: they showed that the expansion of any $i$-link with respect to an HD-walk $M$ is almost exactly $1-\lambda_i(M)$. Moreover, using the level-$i$ inequality from the previous section, BHKL proved a tight converse to this result in an $\ell_2$-sense: \textit{any} non-expanding set must have high variance across links. This gave a complete $\ell_2$-characterization of non-expanding sets on local-spectral expanders, and lay the structural groundwork for new algorithms for unique games over HD-walks.

In this section, we'll show that these results extend to general expanding posets. To start, let's recall the definition of edge expansion.
\begin{definition}[Weighted Edge Expansion]
Let $(X,\Pi)$ be a graded poset and $M$ a $k$-dimensional HD-Walk. The weighted edge expansion of a subset $S \subset X(k)$ with respect to $M$ is
\[
\Phi(S) = \underset{v \sim \pi_k|_S}{\mathbb{E}}\left[ M(v,X(k) \setminus S)\right],
\]
where 
\[
M(v, X(k) \setminus S) = \sum\limits_{y \in X(k) \setminus S} M(v,y)
\]
and $M(v,y)$ denotes the transition probability from $v$ to $y$.
\end{definition}
Before we prove the strong connections between links and expansion, we need to introduce an important property of HD-walks, monotonic eigenvalue decay.
\begin{definition}[Monotonic HD-walk]
Let $(X,\Pi)$ be a $(\delta,\gamma)$-eposet. We call an HD-walk $M$ monotonic if its approximate eigenvalues $\lambda_i(M)$ (given in \Cref{cor:balanced-eig}) are non-increasing.
\end{definition}
Most HD-walks of interest (e.g.\ pure walks, partial-swap walks on simplicial or $q$-simplicial complexes, etc.) are monotonic. This property will be crucial to understanding expansion. To start, let's see how it allows us to upper bound the expansion of links.
\begin{theorem}[Local Expansion vs Global Spectra]\label{thm:link-lower}
Let $(X,\Pi)$ be a $(\delta,\gamma)$-eposet and $M$ be a $k$-dimensional monotonic HD-walk.
% and let $\mathcal{T}_i$ denote the maximum laziness of $U_{i-1}D_i$, that is:
% \[
% \mathcal{T}_i = \max_{\sigma \in X(i)}\left\{1^T_\sigma U_{i-1}D_i \id{\sigma} \right\}.
% \]
Then for all $0 \leq i \leq k$ and $\tau \in X(i)$:
\[
\Phi(X_\tau) \in 1 - \lambda_{i}(M) \pm c\gamma,
\]
where $c \leq O\left(\frac{k^5R^2_{\text{max}}(h(M)+k)h(M)w(M)}{\delta^k_{k-i}(1-\delta_{i-1})}\right)$.
\end{theorem}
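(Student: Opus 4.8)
The plan is to compute $\Phi(X_\tau)$ directly in terms of the indicator $\id{X_\tau}$ and the approximate eigendecomposition of $M$, then show that $\id{X_\tau}$ is concentrated in the first $i$ levels of the HD-Level-Set Decomposition so that $M$ acts on it essentially as multiplication by $\lambda_i(M)$ (using monotonicity). First I would recall the standard identity
\[
\Phi(X_\tau) = 1 - \frac{\langle \id{X_\tau}, M\id{X_\tau}\rangle}{\langle \id{X_\tau}, \id{X_\tau}\rangle},
\]
which holds because $M$ is stochastic and self-adjoint with stationary distribution $\pi_k$, and $\langle \id{X_\tau},\id{X_\tau}\rangle = \pi_k(X_\tau)$. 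So everything reduces to estimating the Rayleigh quotient of $\id{X_\tau}$ with respect to $M$.

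Next I would analyze the HD-Level-Set Decomposition $\id{X_\tau} = f_0 + \ldots + f_k$. The key structural fact is that $\id{X_\tau}$ is itself a lift: since $X_\tau = \{y \in X(k) : y > \tau\}$, one can write $\id{X_\tau}$ (up to normalization by $\pi_k(X_\tau)/\pi_k(y)$ weights, which are controlled by regularity) as being in the image of $U_i^k$ applied to a function supported near $\tau$ on $X(i)$ — more precisely, $\id{X_\tau}$ lies (approximately, up to $O(\gamma)$-errors via \Cref{lemma:DU-UD} and \Cref{lemma:fvsg-body}) in $V_k^0 \oplus \ldots \oplus V_k^i$, i.e.\ $f_j \approx 0$ for $j > i$. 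This is where the non-laziness hypothesis and the regularity machinery of \Cref{claim:reg} enter: they let me quantify $\|f_j\|$ for $j > i$ as $O(\gamma)$-small relative to $\|\id{X_\tau}\|$, and also relate the various norms to the regularity parameters $R(k,i)$. Granting this, I apply \Cref{cor:balanced-eig}: for each $j \le i$, $Mf_j = \lambda_j(M) f_j \pm O(\gamma)\|f_j\|$, so
\[
\langle \id{X_\tau}, M\id{X_\tau}\rangle \in \sum_{j=0}^i \lambda_j(M)\langle \id{X_\tau}, f_j\rangle \pm c\gamma\|\id{X_\tau}\|^2.
\]
Here monotonicity is used twice: first, $\lambda_0(M) = 1 \ge \lambda_1(M) \ge \ldots$, and second, to sandwich $\sum_{j \le i}\lambda_j(M)\langle \id{X_\tau}, f_j\rangle$ between $\lambda_i(M)\langle \id{X_\tau},\id{X_\tau}\rangle$ and something close to it — one needs $\langle \id{X_\tau}, f_j\rangle \ge 0$ for the dominant terms (or uses \Cref{cor:DDFK-cor} to bound the negative part of $\sum_j \langle \id{X_\tau}, f_j\rangle$ by $O(k^3\gamma/\rho_{\min})\|\id{X_\tau}\|^2$), together with $\sum_{j=0}^i \langle \id{X_\tau}, f_j\rangle = \|\id{X_\tau}\|^2 \pm O(\gamma)$ since the tail levels vanish. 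Combining, the Rayleigh quotient is $\lambda_i(M) \pm c\gamma$, and plugging into the expansion identity gives $\Phi(X_\tau) \in 1 - \lambda_i(M) \pm c\gamma$.

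The main obstacle I expect is the precise claim that $\id{X_\tau}$ has negligible mass on levels above $i$, together with tracking the constant $c$ through to the stated form $O\!\left(\frac{k^5 R_{\max}^2 (h(M)+k)h(M)w(M)}{\delta^k_{k-i}(1-\delta_{i-1})}\right)$. Showing $f_j \approx 0$ for $j > i$ is not purely formal: $\id{X_\tau}$ is a lift from level $i$ only in the $\gamma = 0$ (sequentially differential) case, and for genuine eposets one must argue that the down operators $D_{i+1}, \ldots$ applied to the level-$i$ preimage are small, invoking $D_{\ell}U_{\ell-1}$-type identities and the $\rho^k_\ell$ estimates of \Cref{claim:reg}; the $\delta^k_{k-i}(1-\delta_{i-1})$ in the denominator comes precisely from the worst-case $\rho^{-1}$-type factors and the normalization relating $\|f_i\|$ to $\|\id{X_\tau}\|$. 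The bookkeeping for the error constant — pushing the $w(M)$, $h(M)$ weight/height dependence from \Cref{cor:balanced-eig} through the sum over $j$ and the division by $\langle \id{X_\tau}, \id{X_\tau}\rangle = \pi_k(X_\tau)$ — is the part that will require the most care, but it is routine given the lemmas already in hand.
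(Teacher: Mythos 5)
Your outline reproduces the right skeleton—write $\Phi(X_\tau) = 1 - \langle \id{X_\tau}, M\id{X_\tau}\rangle/\langle\id{X_\tau},\id{X_\tau}\rangle$, decompose $\id{X_\tau}$ into HD-levels, apply \Cref{cor:balanced-eig} to each piece, and invoke monotonicity—but it misses the one genuinely non-trivial step, and in fact points at the wrong place for where the difficulty lies.

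First, a minor point: the fact that $\id{X_\tau}$ has \emph{no} mass on levels above $i$ is exact, not approximate. Since $\id{X_\tau} = R(k,i)\,U_i^k\,\id{\tau}$ and $U_i^k$ maps $C_i = V_i^0 \oplus \cdots \oplus V_i^i$ into $V_k^0\oplus\cdots\oplus V_k^i$, one has $\id{X_\tau,j}=0$ for all $j>i$ with no $\gamma$-error. The "main obstacle" you identify—showing $\id{X_\tau}$ has negligible mass on levels above $i$—is therefore free, and needs none of the non-laziness or regularity machinery.

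The actual gap is the opposite direction: you never show that $\id{X_\tau}$ has negligible mass on levels \emph{below} $i$. Your chain of estimates ends with "Combining, the Rayleigh quotient is $\lambda_i(M)\pm c\gamma$," but the quantity you have after the preceding steps is a weighted average $\sum_{j\le i}\lambda_j(M)\langle\id{X_\tau},\id{X_\tau,j}\rangle/\|\id{X_\tau}\|^2$ with weights summing to $1\pm O(\gamma)$. Nothing you have said rules out, say, half the mass sitting on level $1$ and half on level $i$, in which case the Rayleigh quotient is roughly $\frac12(\lambda_1+\lambda_i)$, not $\lambda_i$, and the theorem's \emph{lower bound} $\Phi(X_\tau)\ge 1-\lambda_i(M)-c\gamma$ fails. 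Monotonicity and positivity of projections only give you the upper bound $\Phi(X_\tau)\le 1-\lambda_i(M)+O(\gamma)$. The paper closes this gap with an extra lemma (\Cref{lemma:link-projection}) whose proof is the real content: it computes $\langle\id{X_\tau},N_k^1\id{X_\tau}\rangle/\|\id{X_\tau}\|^2$ \emph{combinatorially} as exactly $R(k,i)/R(k+1,i)$ (by counting $i$-faces under a $(k{+}1)$-face), identifies this with $\lambda_i(N_k^1)$ via the regularity relation of \Cref{claim:regularity2}, and then argues by contradiction that no $j<i$ can carry more than an $O(\gamma)$-fraction of $\|\id{X_\tau}\|^2$, with the spectral gap $(1-\delta_{i-1})\delta^k_{k-i}$ between consecutive approximate eigenvalues governing the constant (and this is exactly where that factor in the denominator of $c$ comes from, not from the normalization of $\|f_i\|$ as you guess). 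Without this concentration-at-level-$i$ step the argument does not yield the two-sided estimate claimed in the theorem.
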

% Note that for most expanding posets of interest, the maximum transition probability of the lower walk $\mathcal{T}_i$ is $O(\gamma)$. This is true for instance for posets whose links are spectral expanders, which (under sufficient regularity assumptions \cite{kaufman2021local}) is an equivalent condition to the definition of expanding posets (\Cref{def:eposet}) used in this work.
The key to proving \Cref{thm:link-lower} is to show that the weight of an $i$-link lies almost entirely on level $i$ of the HD-Level-Set Decomposition. To show this, we'll rely another connection between regularity and eposet parameters for non-lazy posets. 
\begin{claim}\label{claim:regularity2}
Let $(X,\Pi)$ be a $d$-dimensional $(\delta,\gamma)$-eposet. 
Then for every $1 \leq k \leq d$
and $0 \leq i \leq k$, the following approximate relation between the eposet and regularity parameters holds:
\[
\lambda_i(N_k^1) \in  \frac{R(k,i)}{R(k+1,i)} \pm \left(\gamma^k_{k-i} + R(k,i)\delta^k_{k-i}\gamma\right)
\]
where we recall $\lambda_i(N_k^1)=1-\prod\limits^{k}_{j=i} \delta_j$.
\end{claim}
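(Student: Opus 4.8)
The plan is to relate the eposet parameters to the regularity function by evaluating the single-step upper walk $N_k^1 = D_{k+1}U_k$ on a carefully chosen test function, namely the (normalized) indicator of a $k$-link $\mathbbm{1}_{X_\sigma}$ for $\sigma \in X(i)$. The key observation is that $U_k^{k+1}\mathbbm{1}_{X_\sigma}^{(i \to k)}$ has two clean interpretations: one combinatorial (counting which $(k+1)$-faces sit above $\sigma$ and how many of their $k$-subfaces lie above $\sigma$, which is exactly governed by the regularity ratios $R(k,i)/R(k+1,i)$), and one spectral (via \Cref{cor:balanced-eig}, since $\mathbbm{1}_{X_\sigma}$ has almost all its HD-Level-Set mass on level $i$, so $N_k^1$ acts on it approximately as multiplication by $\lambda_i(N_k^1) = 1 - \prod_{j=i}^k \delta_j$). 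Matching the two computations yields the approximate identity, with the error governed by how concentrated $\mathbbm{1}_{X_\sigma}$ is on level $i$ of the decomposition.

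First I would establish that the $i$-link indicator is concentrated on the $i$-th level set. Concretely, write $g_\sigma = \pi_k(X_\sigma)^{-1}\mathbbm{1}_{X_\sigma}$ and observe that $g_\sigma = U_i^k h$ for a suitable $h$ supported near $\sigma$ (indeed, $U_i^k$ applied to a normalized point mass at $\sigma$ gives a multiple of the link indicator, by the composition formula $U_i^k f(y) = \frac{1}{R(k,i)}\sum_{x<y} f(x)$). Then I would decompose $h$ itself in the HD-Level-Set Decomposition on $C_i$ and push forward, or more directly bound $\|g_\sigma - (g_\sigma)_i\|$ using Lemma~\ref{lemma:fvsg-body} and the fact that lifted functions $U_i^k(\cdot)$ land (approximately) in $V_k^i \oplus \cdots \oplus V_k^k$ with controlled higher-level contributions. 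This is where $\beta$-non-laziness enters: it controls $\langle g_\sigma, U_{i-1}D_i g_\sigma\rangle$ and hence the norm of the projection onto levels $\geq i+1$, which is what produces the $R(k,i)\delta^k_{k-i}\gamma$ term in the error.

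Next I would carry out the combinatorial evaluation of $\langle g_\sigma, N_k^1 g_\sigma\rangle$ — or more simply of the total mass $\mathbb{E}[U_k^{k+1} g_\sigma^{(\cdot)}]$ restricted back down — using only the regularity structure. Counting chains $\sigma < x < y$ with $x \in X(k)$, $y \in X(k+1)$ in two ways gives the ratio $R(k,i)/R(k+1,i)$ as the ``probability'' that a random up-step from a random $k$-face above $\sigma$ lands on a $(k+1)$-face that still lies above $\sigma$; this is exactly the non-lazy part of $N_k^1$ acting on the link, matching $\lambda_i(N_k^1)$. Finally, combining the spectral estimate $N_k^1 g_\sigma \approx \lambda_i(N_k^1) g_\sigma$ from \Cref{cor:balanced-eig} (with the level-$i$ concentration from the first step) against this combinatorial value, and invoking \Cref{lemma:DU-UD} to identify $\lambda_i(N_k^1) = 1 - \prod_{j=i}^k\delta_j$ and to account for the $\gamma^k_{k-i}$ error, yields the claimed two-sided bound.

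The main obstacle I anticipate is the first step: precisely controlling how much of the link indicator $\mathbbm{1}_{X_\sigma}$ leaks onto levels above $i$ in the HD-Level-Set Decomposition, and tracking the constant $R(k,i)$ that multiplies the $\delta^k_{k-i}\gamma$ error. The naive bound from Lemma~\ref{lemma:fvsg-body} gives approximate orthogonality but not immediately the sharp coefficient; one needs to use that $g_\sigma$ is a \emph{lift} $U_i^k h$ together with the non-laziness bound on the lower walk at $\sigma$ to argue the higher-level components are small in the right normalization. The rest — the chain-counting and the appeal to \Cref{cor:balanced-eig} and \Cref{lemma:DU-UD} — should be routine bookkeeping with the regularity identities proved in \Cref{app:regularity}.
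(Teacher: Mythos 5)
Your approach is genuinely different from the paper's and, as written, is circular. You propose to test $N_k^1$ against the normalized $i$-link indicator $g_\sigma$, compute $\langle g_\sigma, N_k^1 g_\sigma\rangle$ combinatorially to get $R(k,i)/R(k+1,i)$, and then compare against a spectral computation that treats $g_\sigma$ as ``almost an eigenvector with eigenvalue $\lambda_i(N_k^1)$''. The latter step requires that $\id{X_\sigma}$ concentrates on $V_k^i$ --- precisely the content of \Cref{lemma:link-projection} --- but the paper's proof of \Cref{lemma:link-projection} \emph{uses} \Cref{claim:regularity2}: one computes the non-expansion of the link combinatorially as $R(k,i)/R(k+1,i)$, then invokes \Cref{claim:regularity2} to identify this with $\lambda_i(N_k^1)$, and only then can one argue by contradiction (using the known spacing of the $\lambda_j$'s) that there cannot be appreciable mass on any level $j\neq i$. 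Without \Cref{claim:regularity2} already in hand, the combinatorial value $R(k,i)/R(k+1,i)$ is not recognizably close to $\lambda_i(N_k^1)$, and there is no way to rule out that some or all of the non-expansion you compute is coming from mass sitting on levels $j<i$ with the larger eigenvalues $\lambda_j>\lambda_i$. You flagged the concentration of $\id{X_\sigma}$ as ``the main obstacle,'' but the issue is not sharp coefficient tracking --- it is that this concentration is logically downstream of the claim you are trying to prove.

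There are also two smaller imprecisions worth noting. Since $g_\sigma = U_i^k h$ with $h$ supported at level $i$, and $U_i^k$ carries $V_i^j$ into $V_k^j$, the indicator $\id{X_\sigma}$ lives \emph{exactly} in $V_k^0\oplus\cdots\oplus V_k^i$ with zero mass above level $i$; the worry is leakage onto levels $j<i$, not $j>i$. Relatedly, the quantity $\langle g_\sigma, U_{i-1}D_i g_\sigma\rangle$ you invoke does not typecheck ($g_\sigma\in C_k$ but $D_i$ acts on $C_i$), and it is not clear what non-laziness bound you intend there.

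The paper's actual route avoids test functions entirely and breaks the circle. Starting from the refined composition identity (\Cref{eq:UD-DU-refined})
\[
D_{k+1}U^{k+1}_{i} \;=\; (1-\delta^k_{k-i})\,U^{k}_{i} \;+\; \delta_{k-i}^k\, U^{k}_{i-1}D_{i} \;+\; \sum_{j=-1}^{k-i-1}U^k_{k-j-1}\Gamma_jU^{k-j-1}_{i},
\]
it compares the \emph{laziness} (the probability that the walk $X(k)\to X(k+1)\to X(i)$ ends below the starting $k$-face) of both sides. The left side is exactly $R(k,i)/R(k+1,i)$ by chain counting; on the right, the $U^k_i$ term is totally lazy and contributes $1-\delta^k_{k-i}=\lambda_i(N_k^1)$, the $U^k_{i-1}D_i$ term is bounded by $\delta^k_{k-i}R(k,i)\gamma$ via non-laziness (\Cref{lemma:anti-concentration}), and the $\Gamma_j$ terms contribute at most $\gamma^k_{k-i}$. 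No concentration of link indicators is needed. If you want to rescue a link-based proof, you would have to establish the level-$i$ concentration of $\id{X_\sigma}$ by some independent argument that does not pass through the $\lambda_i = R(k,i)/R(k+1,i)$ identity; I do not see such an argument in the paper or in your sketch.
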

We prove this relation in \Cref{app:regularity}. With this in hand, we can show links project mostly onto their corresponding level.

\begin{lemma}\label{lemma:link-projection}
Let $(X,\Pi)$ be a $d$-dimensional $(\delta,\gamma)$-eposet with $\gamma \leq O\left(\frac{\max_i\{\delta_i(1-\delta_{i-1})\}}{k^5R^2_{\text{max}}}\right)$. Then for all $0 \leq i \leq k < d$ and $\tau \in X(i)$, $\id{X_\tau}$ lies almost entirely in $V_k^i$. That is for all $j \neq i$:
\[
\left |\frac{\langle \id{X_{\tau},i}, \id{X_\tau,j} \rangle}{\langle \id{X_\tau}, \id{X_\tau}\rangle}\right| \leq O\left(\frac{k^3R_{\text{max}}}{\delta^k_{k-i}(1-\delta_{i-1})} \gamma\right).
\]
\end{lemma}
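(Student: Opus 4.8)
The plan is to prove the substantive statement behind ``$\id{X_\tau}$ lies almost entirely in $V_k^i$,'' namely that $\|\id{X_\tau,j}\|^2 = O(\gamma)\cdot\|\id{X_\tau}\|^2$ for every $j\neq i$; the displayed cross-term bound then follows at once from the approximate orthogonality in \Cref{lemma:fvsg-body}. The starting point is the clean interaction of links with the lift operators: a $k$-face $y$ lies above exactly one rank-$i$ face equal to $\tau$ precisely when $y\in X_\tau$, and $U^k_i$ averages over the $R(k,i)$ rank-$i$ faces below $y$, so $\id{X_\tau}=R(k,i)\,U^k_i\id{\tau}$. Since $U^k_i$ carries $V^j_i$ into $V^j_k$, this already forces $\id{X_\tau,j}=0$ for $j>i$; and for $j=0$ the bound is trivial because $\id{X_\tau,0}$ is a multiple of $\mathbbm{1}$ while $\langle\id{X_\tau,i},\mathbbm{1}\rangle=0$. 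So the work is confined to $1\le j<i$, where the idea is to pit the upper walk's approximate eigenvalue on level $i$ against its (strictly larger) eigenvalues on lower levels.

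First I would compute $\langle N_k^1\id{X_\tau},\id{X_\tau}\rangle=\|U_k\id{X_\tau}\|^2$ exactly. By middle regularity, every $(k{+}1)$-face above $\tau$ contains the same number $R'$ of $k$-faces that also lie above $\tau$, so $U_k\id{X_\tau}=\tfrac{R'}{R(k+1,k)}\id{X^{k+1}_\tau}$; combining $\pi_{k+1}(X^{k+1}_\tau)=R(k+1,i)\pi_i(\tau)$ with the double-counting identity $R(k+1,k)R(k,i)=R(k+1,i)R'$ (count rank-$(i,k)$ pairs below a fixed $(k{+}1)$-face two ways) gives $\langle N_k^1\id{X_\tau},\id{X_\tau}\rangle=\tfrac{R(k,i)^2}{R(k+1,i)}\pi_i(\tau)$. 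Second, expanding $\id{X_\tau}$ in the HD-Level-Set Decomposition and using that each $\id{X_\tau,j}$ is an approximate eigenvector of $N_k^1$ with eigenvalue $\lambda_j(N_k^1)=1-\prod_{\ell=j}^k\delta_\ell$ (\Cref{prop:pure-eig-vals}), together with the approximate orthogonality and norm bounds of \Cref{lemma:fvsg-body} and \Cref{cor:DDFK-cor}, gives $\langle N_k^1\id{X_\tau},\id{X_\tau}\rangle=\sum_j\lambda_j(N_k^1)\|\id{X_\tau,j}\|^2\pm O_k(R_{\text{max}}\gamma)\|\id{X_\tau}\|^2$.

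Next I would equate the two expressions, substitute $\|\id{X_\tau}\|^2=R(k,i)\pi_i(\tau)$, and invoke \Cref{claim:regularity2} to replace $\lambda_i(N_k^1)$ by $R(k,i)/R(k+1,i)$ up to an error of order $\gamma^k_{k-i}+R(k,i)\delta^k_{k-i}\gamma$. Subtracting $\lambda_i(N_k^1)\|\id{X_\tau}\|^2$ and writing $\|\id{X_\tau}\|^2=\sum_j\|\id{X_\tau,j}\|^2$ up to an $O_k(R_{\text{max}}\gamma)$-relative error yields $\sum_{j<i}\big(\lambda_j(N_k^1)-\lambda_i(N_k^1)\big)\|\id{X_\tau,j}\|^2=O_{k,\delta}(R_{\text{max}}\gamma)\|\id{X_\tau}\|^2$. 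Every summand is nonnegative since $\lambda_j(N_k^1)$ decreases in $j$, and the smallest gap over $j<i$ is $\lambda_{i-1}(N_k^1)-\lambda_i(N_k^1)=(1-\delta_{i-1})\prod_{\ell=i}^k\delta_\ell=(1-\delta_{i-1})\delta^k_{k-i}$, so dividing through gives $\|\id{X_\tau,j}\|^2\le O\!\big(\tfrac{k^3R_{\text{max}}}{\delta^k_{k-i}(1-\delta_{i-1})}\gamma\big)\|\id{X_\tau}\|^2$ for each $j<i$. The displayed inequality is then immediate: combine the approximate orthogonality estimate $|\langle\id{X_\tau,i},\id{X_\tau,j}\rangle|\le O(k^2\rho_{\text{min}}^{-1}\gamma)\|\id{X_\tau,i}\|\|\id{X_\tau,j}\|$ of \Cref{lemma:fvsg-body} with the norm bounds of \Cref{cor:DDFK-cor} and $\rho_{\text{min}}^{-1}=O(R_{\text{max}})$ from \Cref{claim:reg} (using $\delta^k_{k-i}(1-\delta_{i-1})\le1$ to match the advertised error).

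The step I expect to be the main obstacle is the quantitative matching of the two evaluations of $\langle N_k^1\id{X_\tau},\id{X_\tau}\rangle$: the combinatorial side has to be computed exactly, which rests entirely on the middle-regularity double count, while the spectral side requires careful bookkeeping of the \Cref{prop:pure-eig-vals} and \Cref{lemma:fvsg-body} error terms so that the residual is genuinely $O(\gamma)$ times a regularity-dependent factor rather than $O(\gamma^{1/2})$ or carrying an uncontrolled $\rho_{\text{min}}$. The eigenvalue-gap denominator $\delta^k_{k-i}(1-\delta_{i-1})$ is then forced and cannot be removed, precisely because two consecutive approximate eigenvalues of $N_k^1$ can genuinely be this close; this is also the form of the error needed downstream in \Cref{thm:link-lower}.
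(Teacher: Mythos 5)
Your proposal is correct and takes essentially the same route as the paper: both evaluate $\langle N_k^1\,\id{X_\tau},\id{X_\tau}\rangle$ once combinatorially via regularity and once spectrally via the HD-Level-Set Decomposition together with \Cref{claim:regularity2}, and then use the monotone decay of the approximate eigenvalues and the gap $(1-\delta_{i-1})\delta^k_{k-i}$ to squeeze out the levels $j<i$ (the paper phrases this as a contradiction, you as a direct subtract-and-divide, but the estimate is the same). One small point worth flagging: your target $\|\id{X_\tau,j}\|^2 = O(\gamma)\|\id{X_\tau}\|^2$, equivalently $\langle\id{X_\tau},\id{X_\tau,j}\rangle/\|\id{X_\tau}\|^2 = O(\gamma)$, is in fact the quantity the paper's proof establishes and the one consumed downstream in \Cref{thm:link-lower}; the displayed inequality in the lemma (with $\id{X_\tau,i}$ in the inner product) reads as a typo, since that cross-term already vanishes up to $O(\gamma)$ by \Cref{lemma:fvsg-body} alone without any of the link-specific analysis.
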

\begin{proof}
We'll show that the expansion of $\id{X_\tau}$ with respect to the upper walk $N_k^1$ is almost exactly $1-\lambda_i(N_k^1)$, which implies most of the weight must lie on $V^k_i$. We'll start by analyzing the expansion of $\id{X_\tau}$ through a simple combinatorial argument. First, since $D$ and $U$ are adjoint we have:
\begin{align*}
    \bar{\Phi}(\id{X_\tau}) &= \frac{\langle \id{X_\tau}, D_{k+1}U_k \id{X_\tau} \rangle }{\langle \id{X_\tau}, \id{X_\tau} \rangle}\\
    &=\frac{\langle U_k\id{X_\tau}, U_k \id{X_\tau} \rangle }{\langle \id{X_\tau}, \id{X_\tau} \rangle}.
\end{align*}
The trick is now to notice that ${\langle \id{X_\tau}, \id{X_\tau} \rangle} = R(k,i)\pi_i(\tau)$, and  $\langle U_k\id{X_\tau}, U_k \id{X_\tau} \rangle = \frac{R(k,i)^2}{R(k+1,i)}\pi_i(\tau)$. As a result, applying \Cref{claim:regularity2} gives:
\[
 \bar{\Phi}(\id{X_\tau}) \in \lambda_i(N_k^1) \pm (c\gamma + R(k,i)\gamma),
\]
for $c \leq k\gamma$. To see why this implies that most of the weight lies on $V^k_i$, note that we can also unfold the expansion of $\id{X_\tau}$ in terms of the HD-Level-Set decomposition:
\begin{align*}
\bar{\Phi}(\id{X_\tau}) &= \frac{1}{\langle \id{X_\tau}, \id{X_\tau} \rangle }\sum\limits_{j=0}^{i} \langle \id{X_\tau}, N^1_k \id{X_\tau,j} \rangle\\
&\in \frac{1}{\langle \id{X_\tau}, \id{X_\tau} \rangle }\sum\limits_{j=0}^{i} \lambda_i(N^1_k)\langle \id{X_\tau}, \id{X_\tau,j} \rangle \pm c_2\gamma
\end{align*}
where $c_2 \leq \frac{k\sqrt{k}}{\rho_{\text{min}}}$.
Recall from Corollary~\ref{cor:DDFK-cor} that for the set $I$ of indices with negative inner product, it holds that $-\sum_{j\in I}\langle \id{X_\tau}, \id{X_\tau,j} \rangle \leq O\left(\frac{k^3}{\rho_{\text{min}}}\gamma \langle  \id{X_\tau},  \id{X_\tau} \rangle\right)$.  Moreover, the positive inner products (i.e. the indices not in $I$) must sum to at least $\langle  \id{X_\tau},  \id{X_\tau} \rangle$. Then if there exists some $j \neq i$ such that $\langle \id{X_\tau}, \id{X_\tau,j} \rangle >c_3 \langle \id{X_\tau}, \id{X_\tau} \rangle$ for large enough $c_3 \leq O\left(\frac{1}{\delta^k_{k-i}(1-\delta_{i-1})} \cdot \left(\frac{k^3}{\rho_{\text{min}}}\gamma + R(k,i)\gamma\right)\right)$, the non-expansion would be strictly larger than $\lambda_i(N_k^1) + c\gamma +R(k,i)\gamma$ giving the desired contradiction (note that $(1-\delta_{i-1})\delta^k_{k-1}$ is the gap between the $i-1$st and $i$th approximate eigenvalue). The form in the theorem statement then follows from applying \Cref{claim:reg}.
\end{proof}
We note that the above is the only result in our work that truly relies on non-laziness (it is used only to replace $\rho$ with regularity in all other results). It is possible to recover the upper bound in \Cref{thm:link-lower} for general eposets via arguments used in \cite{bafna2020high}, but the lower bound remains open for concentrated posets. With that in mind, we now prove \Cref{thm:link-lower}.
\begin{proof}[Proof of \Cref{thm:link-lower}]
By the previous lemma, we have 
\[
\left |\frac{\langle \id{X_\tau}, \id{X_\tau,j} \rangle}{\langle \id{X_\tau}, \id{X_\tau}\rangle}\right| \leq O\left(\frac{1}{\delta^k_{k-i}(1-\delta_{i-1})} \cdot \left(\frac{k^3}{\rho_{\text{min}}}\gamma + R(k,i)\gamma\right)\right).
\]
Expanding out $\bar{\Phi}(\id{X_\tau})$ then gives:
\begin{align*}
\bar{\Phi}(\id{X_\tau}) &= \frac{1}{\langle \id{X_\tau}, \id{X_\tau} \rangle }\sum\limits_{j=0}^{i} \langle \id{X_\tau}, M \id{X_\tau,j} \rangle\\
&\leq \frac{1}{\langle \id{X_\tau}, \id{X_\tau} \rangle }\sum\limits_{j=0}^{i} \lambda_i(M)\langle \id{X_\tau}, \id{X_\tau,j} \rangle + c_2\gamma\\
&\leq \lambda_i(M)\frac{\langle \id{X_\tau}, \id{X_\tau,i} \rangle}{\langle \id{X_\tau}, \id{X_\tau} \rangle} + err_1\\
&\leq \lambda_i(M) + err_2.
\end{align*}
where $c_2,err_1,err_2 \leq O\left(\frac{k}{\delta^k_{k-i}(1-\delta_{i-1})} \left(\frac{k^2(h(M)+k)h(M)w(M)}{\rho_{\text{min}}}\gamma + R(k,i)\gamma\right)\right)$ and the last step follows from the approximate orthogonality. As usual, the form in the theorem statement then follows from applying \Cref{claim:reg}.
\end{proof}

Altogether, we've seen that for sufficiently nice expanding posets, the expansion of any $i$-link with respect to an HD-walk is almost exactly $1-\lambda_i(M)$. Since HD-walks are generally poor expanders (have large $\lambda_1(M)$), \Cref{thm:link-lower} implies that links are examples of small, non-expanding sets. Following BHKL, we'll now prove a converse to this result: any non-expanding set must be explained by some structure inside links. To help give a precise statement, we first recall BHKL's notion of Stripped Threshold Rank (specialized to eposets for convenience).
\begin{definition}[Stripped Threshold Rank \cite{bafna2020high}]
Let $(X,\Pi)$ be a $(\delta,\gamma)$-eposet and $M$ a $k$-dimensional HD-walk with $\gamma$ small enough that \Cref{thm:approx-ortho} implies the HD-Level-Set Decomposition has a corresponding decomposition of disjoint eigenstrips $C_k=\bigoplus W_k^i$. The ST-Rank of $M$ with respect to $\eta$ is the number of strips containing an eigenvector with eigenvalue at least $\eta$:
\[
R_\eta(M) = |\{ W_k^i : \exists f \in V^i, Mf = \lambda f, \lambda > \eta\}|.
\]
We often write just $R_\eta$ when $M$ is clear from context.
\end{definition}
With this in mind, we'll show a converse to \Cref{thm:link-lower} in both $\ell_2$ and $\ell_\infty$ senses (respectively that any non-expanding set must have high variance over links, and must be more concentrated than expected in some particular link). It is convenient to express these results through their contrapositives: that pseudorandom sets expand. The proof is the same as in \cite{bafna2020high} for simplicial complexes, but we include it for completeness.
\begin{theorem}\label{thm:hdx-expansion}
Let $(X,\Pi)$ $(\delta,\gamma)$-eposet, $M$ a $k$-dimensional, monotonic HD-walk, and $\gamma$ small enough that the eigenstrip intervals of \Cref{thm:approx-ortho} are disjoint. For any $\eta>0$, let $r=R_\eta(M)-1$. Then the expansion of a set $S \subset X(k)$ of density $\alpha$ is at least:
\[
\Phi(S) \geq 1 - \alpha - (1-\alpha)\eta - \sum\limits_{i=1}^{r} (\lambda_i(M)-\eta)R(k,i)\varepsilon_i - c\gamma
\]
where $S$ is $(\varepsilon_1,\ldots,\varepsilon_r)$-pseudorandom and $c \leq O\left(\frac{k^{5}R^2_{\text{max}}(h(M)+k)h(M)w(M)}{\max_i\{\delta_{i}(1-\delta_{i-1})\}}\right)$. 
\end{theorem}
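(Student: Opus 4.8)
The plan is to write $\Phi(S)$ spectrally and expand $\id{S}$ in the HD-Level-Set Decomposition, mirroring BHKL's argument for simplicial complexes but feeding in the eposet versions of the relevant lemmas (\Cref{cor:balanced-eig}, \Cref{cor:DDFK-cor}, \Cref{thm:body-local-spec-proj}, \Cref{claim:reg}). Since $M$ is stochastic and self-adjoint, $\Phi(S) = 1 - \frac{\langle \id{S}, M\id{S}\rangle}{\langle \id{S},\id{S}\rangle}$, with $\langle\id{S},\id{S}\rangle = \mathbb{E}[\id{S}] = \alpha$, so it suffices to upper bound the Rayleigh quotient by $\alpha + (1-\alpha)\eta + \sum_{i=1}^r(\lambda_i(M)-\eta)R(k,i)\varepsilon_i + c\gamma$. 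Writing $\id{S} = f_0 + \cdots + f_k$ with $f_0 = \alpha\mathbbm{1}$, I would first use \Cref{cor:balanced-eig} to replace each $Mf_j$ by $\lambda_j(M)f_j$ up to an $\ell_2$ error $O_{k,M}(\gamma)\norm{f_j}$; these errors sum to $O_{k,M}(\gamma)\norm{\id{S}}^2$ after invoking the norm bound $\sum_j\norm{f_j}\le O(\sqrt k)\norm{\id{S}}$ from \Cref{cor:DDFK-cor}. This yields $\langle\id{S}, M\id{S}\rangle \le \sum_{j=0}^k \lambda_j(M)\langle\id{S}, f_j\rangle + O_{k,M}(\gamma)\langle\id{S},\id{S}\rangle$.

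Next I would split the sum at the threshold rank. The $j=0$ term is exactly $\lambda_0(M)\langle\id{S}, f_0\rangle = \alpha^2$. For $j\ge 1$ write $\lambda_j(M) = \eta + (\lambda_j(M)-\eta)$; the $\eta$-part contributes $\eta\sum_{j\ge 1}\langle\id{S}, f_j\rangle = \eta(\langle\id{S},\id{S}\rangle - \alpha^2) = (1-\alpha)\eta\alpha$. For the remainder $\sum_{j\ge 1}(\lambda_j(M)-\eta)\langle\id{S}, f_j\rangle$ I use monotonicity of $M$ together with the disjointness hypothesis on the eigenstrips to argue that $r = R_\eta(M)-1$ is precisely the largest index with $\lambda_j(M) > \eta$, so that $\lambda_j(M)-\eta > 0$ for $1\le j\le r$ and $\lambda_j(M)-\eta \le 0$ for $j > r$. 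The $j>r$ terms are then non-positive whenever $\langle\id{S}, f_j\rangle\ge 0$, and the indices with $\langle\id{S}, f_j\rangle<0$ contribute at most $O_k(\gamma)\langle\id{S},\id{S}\rangle$ by the negative-sum bound in \Cref{cor:DDFK-cor}. For $1\le j\le r$, since $S$ is $(\varepsilon_1,\ldots,\varepsilon_r)$-pseudorandom and boolean, the boolean form of the level-$i$ inequality (\Cref{thm:body-local-spec-proj}, or \Cref{cor:ell2-proj} in the $\ell_2$ regime) gives $\langle\id{S}, f_j\rangle \le (R(k,j)\varepsilon_j + c\gamma)\alpha$; multiplying by $\lambda_j(M)-\eta\le 1$ and summing produces $\alpha\sum_{j=1}^r(\lambda_j(M)-\eta)R(k,j)\varepsilon_j$ plus an $O_k(\gamma)\alpha$ error.

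Collecting the pieces gives $\langle\id{S}, M\id{S}\rangle \le (\alpha + (1-\alpha)\eta + \sum_{j=1}^r(\lambda_j(M)-\eta)R(k,j)\varepsilon_j)\,\alpha + c\gamma\alpha$, and dividing by $\langle\id{S},\id{S}\rangle = \alpha$ and subtracting from $1$ yields the stated bound, with $c$ a product of the error constants from \Cref{cor:balanced-eig}, \Cref{cor:DDFK-cor}, and \Cref{thm:body-local-spec-proj}, accounting for the $k^5 R_{\text{max}}^2(h(M)+k)h(M)w(M)/\max_i\{\delta_i(1-\delta_{i-1})\}$ shape (and we replace $\rho^k_i$, $\rho_{\text{min}}$ by regularity parameters via \Cref{claim:reg}). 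The main obstacle, as in BHKL, is consistent bookkeeping of the error terms — specifically ensuring the negative inner products $\langle\id{S}, f_j\rangle$ and the entire $j>r$ tail cost only $O(\gamma)$, which is exactly what the two parts of \Cref{cor:DDFK-cor} are designed to deliver — and checking that the disjointness of the eigenstrips makes $R_\eta(M)$ act as a clean threshold on the approximate eigenvalues $\lambda_j(M)$, so that $\lambda_j(M)-\eta$ has the correct sign on each side of $r$. Everything else is a transcription of the simplicial-complex proof with $\binom{k}{i}$ replaced by $R(k,i)$.
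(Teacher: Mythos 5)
Your proposal is correct and follows essentially the same route as the paper's proof: both expand $\id{S}$ in the HD-Level-Set Decomposition, use \Cref{cor:balanced-eig} plus the norm-sum bound of \Cref{cor:DDFK-cor} to pass from $M$ to the approximate eigenvalues at cost $O_{k,M}(\gamma)$, split at the threshold index $r$ using monotonicity and the negative-sum bound of \Cref{cor:DDFK-cor}, and finish with the level-$i$ inequality (\Cref{cor:ell2-proj} or \Cref{thm:body-local-spec-proj}). The only difference is cosmetic bookkeeping—you substitute $\lambda_j = \eta + (\lambda_j-\eta)$ for all $j\ge 1$ up front to isolate the $(1-\alpha)\eta$ term, whereas the paper replaces $\lambda_j$ by $\eta$ only for $j>r$ and then regroups—so the two derivations are algebraically identical.
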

\begin{proof}
Let $\mathbbm{1}_S=\mathbbm{1}_{S,0} + \ldots + \mathbbm{1}_{S,k}$ be the HD-Level-Set Decomposition of the indicator of $S$. By linearity of the inner product, we may then write:
\begin{align*}
\Phi(S) &= 1 - \frac{1}{\mathbb{E}[\mathbbm{1}_{S}]}\langle \mathbbm{1}_{X_\tau}, M \mathbbm{1}_{X_\tau} \rangle\\
&= 1 - \frac{1}{\mathbb{E}[\mathbbm{1}_{S}]}\sum\limits_{j=0}^k \langle \mathbbm{1}_{S}, M \mathbbm{1}_{S,j} \rangle\\
&= 1 - \frac{1}{\mathbb{E}[\mathbbm{1}_{S}]}\sum\limits_{j=0}^k \lambda_j(M)\langle \mathbbm{1}_{S}, \mathbbm{1}_{S,j} \rangle - \frac{1}{\mathbb{E}[\mathbbm{1}_{S}]}\sum\limits_{j=0}^k\langle \mathbbm{1}_{S}, \Gamma_j\mathbbm{1}_{S,j} \rangle
\end{align*}
 where $\norm{\Gamma_j} \leq O\left((h(M)+k)h(M)\frac{w(M)}{\rho_{\text{min}}}\right)$. The trick is now to notice we can bound the righthand error term using Cauchy-Schwarz:
 \begin{align*}
 \left |\frac{1}{\mathbb{E}[\mathbbm{1}_{S}]}\sum\limits_{j=0}^k\langle \mathbbm{1}_{S}, \Gamma_j\mathbbm{1}_{S,j} \rangle\right | &\leq \frac{1}{\mathbb{E}[\mathbbm{1}_{S}]}\sum\limits_{j=0}^k\left |\langle \mathbbm{1}_{S}, \Gamma_j\mathbbm{1}_{S,j} \rangle\right|\\
 &\leq \frac{1}{\mathbb{E}[\mathbbm{1}_{S}]}\sum\limits_{j=0}^k\norm{\Gamma_j}\norm{\mathbbm{1}_{S}}\norm{\mathbbm{1}_{S,j}}\\
 &\leq c\gamma\frac{\norm{\mathbbm{1}_{S}}}{\mathbb{E}[\mathbbm{1}_{S}]}\sum\limits_{j=0}^k\norm{\mathbbm{1}_{S,j}}\\
 &\leq c_1\gamma,
 \end{align*}
 where $c \leq O\left((h(M)+k)h(M)\frac{w(M)}{\rho_{\text{min}}}\right)$ and $c_1 \leq O(\sqrt{k}c)$ by \Cref{eq:norm-sum}.
% \begin{align*}
% \Phi(S) &= 1- \frac{1}{\mathbb{E}[\mathbbm{1}_S]}\sum\limits_{i=0}^k \langle \mathbbm{1}_{S},M \mathbbm{1}_{S,i} \rangle\\
% &\geq 1- \frac{1}{\mathbb{E}[\mathbbm{1}_S]}\sum\limits_{i=0}^k \lambda_i(M)\langle \mathbbm{1}_{S}, \mathbbm{1}_{S,i} \rangle - c\gamma
% \end{align*}
% where $c \leq O\left(k^{1/2}(h(M)+k)h(M)\frac{|\alpha|_1}{\rho_{\text{min}}}\right)$ by the same argument as our error analysis in \Cref{thm:local-vs-global}. 
Since $M$ is a monotonic walk, we can further write:
\begin{align*}
\Phi(S) &\geq 1- \frac{1}{\mathbb{E}[\mathbbm{1}_S]}\sum\limits_{i=0}^{r} \lambda_i(M)\langle \mathbbm{1}_{S}, \mathbbm{1}_{S,i} \rangle - \frac{1}{\mathbb{E}[\mathbbm{1}_S]} \sum\limits_{i=r+1}^k \lambda_i(M)\langle \mathbbm{1}_{S}, \mathbbm{1}_{S,i} \rangle - c_1\gamma
\\
&\geq 1- \frac{1}{\mathbb{E}[\mathbbm{1}_S]}\sum\limits_{i=0}^{r} \lambda_i(M)\langle \mathbbm{1}_{S}, \mathbbm{1}_{S,i} \rangle - \frac{\eta}{\mathbb{E}[\mathbbm{1}_S]} \sum\limits_{i=r+1}^k \langle \mathbbm{1}_{S}, \mathbbm{1}_{S,i} \rangle - c_2\gamma\\
&= 1 - \frac{1}{\mathbb{E}[\mathbbm{1}_S]}\sum\limits_{i=0}^{r} \lambda_i(M)\langle \mathbbm{1}_{S}, \mathbbm{1}_{S,i} \rangle - \eta\left(1 - \frac{1}{\mathbb{E}[\mathbbm{1}_S]}\sum\limits_{i=0}^{r} \langle \mathbbm{1}_{S}, \mathbbm{1}_{S,i} \rangle \right) - c_2\gamma\\
&= 1- \eta - \frac{1}{\mathbb{E}[\mathbbm{1}_S]}\sum\limits_{i=0}^{r} (\lambda_i(M)-\eta)\langle \mathbbm{1}_{S}, \mathbbm{1}_{S,i} \rangle  - c_2\gamma\\
&= 1- \eta - (1-\eta)\alpha - \frac{1}{\mathbb{E}[\mathbbm{1}_S]}\sum\limits_{i=1}^{r} (\lambda_i(M)-\eta)\langle \mathbbm{1}_{S}, \mathbbm{1}_{S,i} \rangle  - c_2\gamma,
\end{align*}
where $c_2 \leq O\left(k^2(h(M)+k)h(M)\frac{w(M)}{\rho_{\text{min}}}\right)$. To justify the second inequality, observe that for any $r<i\leq k$ such that $\langle \mathbbm{1}_S,\mathbbm{1}_{S,i}\rangle\geq 0$, replacing $\lambda_i(M)$ with $\eta$ is valid. For the set $I$ of $r<i\leq k$ with negative inner product, Corollary~\ref{cor:DDFK-cor} implies that the sum over $I$ is $O(k^3\gamma/\rho_{\min})$, so the inequality remains valid by absorbing the small error into $c_2$. Applying \Cref{cor:ell2-proj} to bound $\langle \mathbbm{1}_{S}, \mathbbm{1}_{S,i} \rangle$ then gives the $\ell_2$-variant result, \Cref{thm:body-local-spec-proj} gives the $\ell_\infty$-variant, and \Cref{claim:reg} gives the form given in the theorem statement.
\end{proof}
We note that \Cref{thm:hdx-expansion} recovers the analogous result for simplicial complex in \cite{bafna2020high} by plugging in the appropriate value $R(k,i) = {k \choose i}$. BHKL also prove this special case is tight in two senses. First, they show that if one wants to retain linear dependence on the pseudorandomness parameter $\varepsilon$, \Cref{thm:hdx-expansion} is tight in both the $\ell_2$ and $\ell_\infty$-regimes. Second, they show that the dependence on $k$ is necessary in the $\ell_2$-regime, even if we allow sub-optimal dependence on $\varepsilon$. In the next section, we'll generalize this result to $q$-simplicial complexes as well. In both cases the proofs are highly structural and depend on the underlying structure of the poset---it remains an interesting open problem whether this bound is tight for all poset structures.

% One might reasonably wonder if the analogous result holds for the $\ell_\infty$ regime, where a $k$-independent bound for the Grassmann poset was used to prove the 2-2 Games Conjecture \cite{subhash2018pseudorandom}. In fact, it is was recently shown that a $k$-independent variant of \Cref{thm:hdx-expansion} holds over expanding simplicial complexes, so such a lower bound is at least not possible in this special case. The problem remains open for expanding $q$-simplicial complexes, but we conjecture that a $k$-independent bound does indeed exist for these objects. We note, however, that it is necessary to change the definition of pseudorandomness and relevant notion of locality in this case to better match the structure of the Grassmann poset (see \cite{subhash2018pseudorandom} for details), as there exist non-expanding sets which are not concentrated in any link.

\section{The Grassmann and $q$-eposets}\label{sec:q-HDX}
In this section, we examine the specification of our results on eposets to expanding subsets of the Grassmann poset. We show that our analysis is tight in this regime via a classic example of a small non-expanding set in the Grassmann graphs called co-links.

\subsection{Spectra}
We'll start by examining the spectrum of HD-walks on the Grassmann and $q$-eposets. We'll focus our attention in this section on the most widely used walks in the literature, the canonical and partial-swap walks. To start, recall that the Grassmann poset itself is sequentially differential with parameters
\begin{equation}\label{eq:grassmann-params}
\delta_i = \frac{(q^i-1)(q^{n-i+1}-1)}{(q^{i+1}-1)(q^{n-i}-1)}.
\end{equation}
% A quick computation gives
% \[
% \rho^k_\ell = \prod\limits^{k-\ell}_{i=1} \left(1-\delta_{k-\ell-i}^{k-i}\right) = q^{\ell(k-\ell)}\frac{{n-2\ell \choose k-\ell}_q}{{k \choose \ell}_q{n-\ell \choose k-\ell}_q},
% \]
Plugging this into \Cref{prop:pure-eig-vals} gives a nice exact form for the spectra of canonical walks.
\begin{corollary}[Grassmann Poset $N_k^j$ Spectra]
\label{cor:canon-q-HDX-spectra}
Let $X=G_q(n,d)$ be the Grassmann Poset, $k+j \leq d$, and $f_\ell=U_\ell^{k}g_\ell$ for some $g_\ell \in H^\ell$. Then:
\[
N_k^j f_\ell = \lambda_\ell f_\ell,
\]
where,
\[
\lambda_\ell = q^{\ell j}\frac{{k+j-\ell \choose j}_q}{{k+j \choose j}_q} \frac{{n-k-\ell \choose j}_q}{{n-k \choose j}_q} \approx q^{-\ell j}.
\]
\end{corollary}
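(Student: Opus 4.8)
The plan is to exploit the fact that the Grassmann poset $G_q(n,d)$ is \emph{sequentially differential}, i.e.\ a $(\delta,0)$-eposet with $\delta_i$ as in \eqref{eq:grassmann-params}: once $\gamma=0$, every error term in our general machinery vanishes, so the statement reduces to evaluating a single product of eposet parameters. Concretely, since $\gamma=0$, \Cref{prop:pure-eig-vals} shows each $f_\ell=U^k_\ell g_\ell$ is a \emph{true} eigenvector of the pure walk $N_k^j=\widehat N_k^j$, with eigenvalue exactly the product given there. Equivalently, writing $A=D^{k+j}_k$ and $B=U^{k+j}_k$, the operators $AB=\widehat N_k^j$ and $BA=\widecheck N^{\,j}_{k+j}$ share their nonzero spectrum, and since $B$ carries $V^\ell_k$ into $V^\ell_{k+j}$, the eigenvalue of $\widehat N_k^j$ on $V^\ell_k$ equals that of $\widecheck N^{\,j}_{k+j}$ on $V^\ell_{k+j}$, which \Cref{cor:lower} gives in closed form. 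Either route yields
\[
\lambda_\ell \;=\; \prod_{s=1}^{j}\Bigl(1-\prod_{i=\ell}^{\,k+j-s}\delta_i\Bigr),
\]
and everything that remains is bookkeeping.

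The one computational ingredient is the multiplicative telescoping of the Grassmann parameters: since $\delta_i=\frac{(q^i-1)(q^{n-i+1}-1)}{(q^{i+1}-1)(q^{n-i}-1)}$, both the ``first'' and the ``second'' factors telescope, giving $\prod_{i=a}^{b}\delta_i=\frac{(q^a-1)(q^{n-a+1}-1)}{(q^{b+1}-1)(q^{n-b}-1)}$. Substituting $a=\ell$, $b=k+j-s$ and clearing denominators, a short manipulation — collecting terms using that each of the two pairs of exponents sums to $n+1$ — factors the numerator of $1-\prod_{i=\ell}^{k+j-s}\delta_i$ as $q^{\ell}(q^{\,k+j-s+1-\ell}-1)(q^{\,n-k-j+s-\ell}-1)$, so that
\[
1-\prod_{i=\ell}^{\,k+j-s}\delta_i \;=\; \frac{q^{\ell}\,(q^{\,k+j-s+1-\ell}-1)(q^{\,n-k-j+s-\ell}-1)}{(q^{\,k+j-s+1}-1)(q^{\,n-k-j+s}-1)}.
\]
Multiplying over $s=1,\dots,j$ pulls out a factor $q^{\ell j}$; the remaining numerator and denominator each split into two runs of $(q^{\,\cdot}-1)$ factors, and each such run equals a Gaussian binomial coefficient times the common factor $(q-1)(q^2-1)\cdots(q^j-1)$. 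The common factors cancel between numerator and denominator, leaving exactly $q^{\ell j}\frac{\binom{k+j-\ell}{j}_q}{\binom{k+j}{j}_q}\frac{\binom{n-k-\ell}{j}_q}{\binom{n-k}{j}_q}$, as claimed.

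Finally, the asymptotic $\lambda_\ell\approx q^{-\ell j}$ follows from the elementary estimate $\binom{m}{j}_q=q^{j(m-j)}\bigl(1+O(q^{-1})\bigr)$ (compare leading powers of $q$ in numerator and denominator), which gives $\binom{m-\ell}{j}_q/\binom{m}{j}_q=q^{-\ell j}\bigl(1+O(q^{-1})\bigr)$ for each of the two ratios above, and hence $\lambda_\ell=q^{\ell j}\cdot q^{-\ell j}\cdot q^{-\ell j}\cdot\bigl(1+O(q^{-1})\bigr)$. I expect the main (and only mild) obstacle to be clerical rather than conceptual: correctly matching the down-operator positions of $N_k^j$ to the indices in \Cref{prop:pure-eig-vals} — or, equivalently, justifying the $AB$/$BA$ reduction together with the eigenstrip correspondence — and then keeping the index ranges straight in the telescoping products so that the common $(q-1)\cdots(q^j-1)$ factors cancel cleanly. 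Once $\gamma=0$ removes all the error terms, there is no genuine difficulty.
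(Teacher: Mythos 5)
Your proof is correct and follows the same route as the paper: specialize \Cref{prop:pure-eig-vals} to $\gamma=0$ on the Grassmann, telescope $\prod_{i=\ell}^{k+j-s}\delta_i$, factor the numerator of $1-\prod\delta_i$, and recognize the resulting product as a ratio of Gaussian binomials. You actually spell out two steps the paper compresses (the explicit factorization $q^\ell(q^{k+j-s+1-\ell}-1)(q^{n-k-j+s-\ell}-1)$ and the $q^{j(m-j)}$ asymptotic behind $\approx q^{-\ell j}$), and the $AB$/$BA$ reduction via \Cref{cor:lower} you mention as an alternative is a perfectly valid equivalent way to avoid tracking the down-operator positions $i_s=j+s$ directly.
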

\begin{proof}
By \Cref{prop:pure-eig-vals} we have that
\begin{align*}
\lambda_\ell(N^j_k) &=
\prod\limits_{s=1}^j \left (1 - \prod^{k-s+j}_{i=\ell} \delta_i \right )\\
&= \prod\limits_{s=1}^j \left (1 - \prod^{k-s+j}_{i=\ell} \frac{(q^i-1)(q^{n-i+1}-1)}{(q^{i+1}-1)(q^{n-i}-1)} \right ).
\end{align*}
The result then follows from telescoping the interior product and simplifying:
\begin{align*}
&= \prod\limits_{s=1}^j \left (1 - \frac{(q^\ell-1)(q^{n-\ell+1}-1)}{(q^{k-s+j+1}-1)(q^{n+s-k-j}-1)} \right )\\
&=q^{\ell j}\left(\prod\limits_{s=1}^j \frac{ \left(q^{k+j-s-\ell+1}-1\right) }{\left(q^{k+j-s+1}-1\right)}\right)\left(\prod\limits_{s=1}^j \frac{ \left(q^{n+s-k-j-\ell}-1\right)}{ \left(q^{n+s-k-j}-1\right)}\right)\\
&=q^{\ell j}\frac{{k+j-\ell \choose j}_q}{{k+j \choose j}_q} \frac{{n-k-\ell \choose j}_q}{{n-k \choose j}_q}
\end{align*}
as desired.
\end{proof}
This recovers a very simple proof of classical results to this effect (see e.g.\ \cite{delsarte1976association}). An analogous computation gives an approximate bound on the spectrum of $N^j_k$ on $q$-eposets as well.
\begin{corollary}[$q$-eposets $N_k^j$ Spectra]
Let $(X,\Pi)$ be a $d$-dimensional $\gamma$-$q$-eposet with $\gamma \leq q^{-\Omega(k^2)}$, $k+j \leq d$, and $f_\ell=U_\ell^{k-1}g_\ell$ for some $g_\ell \in H^\ell$. Then:
\begin{align*}
\norm{N_k^j f_\ell - \frac{{k+j - \ell \choose j}_q}{{k+j \choose j}_q}f_\ell} \leq O\left(j(j+k){k \choose \ell}_q\right)\gamma\norm{f_\ell}
\end{align*}
\end{corollary}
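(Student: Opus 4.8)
The idea is to mirror the exact computation used for the Grassmann poset in \Cref{cor:canon-q-HDX-spectra}, but now working over a general $\gamma$-$q$-eposet where the eposet parameters are only $\delta_i = q\frac{q^i-1}{q^{i+1}-1}$ rather than the full Grassmann values in \Cref{eq:grassmann-params}. By definition of a $\gamma$-$q$-eposet, the underlying complex is a $(\delta,\gamma)$-eposet with these specific $\delta_i$, so \Cref{prop:pure-eig-vals} applies directly: the function $f_\ell = U_\ell^k g_\ell$ is an approximate eigenvector of the upper canonical walk $N_k^j$ with approximate eigenvalue $\prod_{s=1}^{j}\left(1 - \prod_{i=\ell}^{k-s+j}\delta_i\right)$, and with error at most $(j+k)j\gamma\norm{g_\ell}$ (using the crude bound at the end of \Cref{prop:pure-eig-vals}), or more precisely the finer sum-of-$\gamma^k_j$ bound stated there.

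First I would substitute $\delta_i = q\frac{q^i-1}{q^{i+1}-1}$ into the interior product $\prod_{i=\ell}^{k-s+j}\delta_i$ and telescope: the product $\prod_{i=\ell}^{m} q\frac{q^i-1}{q^{i+1}-1} = q^{m-\ell+1}\cdot\frac{q^\ell - 1}{q^{m+1}-1}$, so with $m = k-s+j$ this becomes $q^{k-s+j-\ell+1}\frac{q^\ell-1}{q^{k-s+j+1}-1} = \frac{q^{k-s+j+1}-q^{k-s+j-\ell+1}}{q^{k-s+j+1}-1}$. Hence $1 - \prod_{i=\ell}^{k-s+j}\delta_i = \frac{q^{k-s+j-\ell+1}-1}{q^{k-s+j+1}-1}$. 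Taking the product over $s=1,\dots,j$ telescopes in exactly the same way as in the proof of \Cref{cor:canon-q-HDX-spectra}, yielding $\prod_{s=1}^j \frac{q^{k+j-s-\ell+1}-1}{q^{k+j-s+1}-1} = \frac{\binom{k+j-\ell}{j}_q}{\binom{k+j}{j}_q}$ by definition of the Gaussian binomial coefficient. This is precisely the claimed approximate eigenvalue, and note it is the Grassmann value $q^{\ell j}\frac{\binom{k+j-\ell}{j}_q}{\binom{k+j}{j}_q}\frac{\binom{n-k-\ell}{j}_q}{\binom{n-k}{j}_q}$ stripped of its $n$-dependent factors, reflecting that the $q$-eposet $\delta_i$ is the ``$n\to\infty$'' limit of the true Grassmann parameters.

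The only remaining task is to pin down the error term in the form $O\!\left(j(j+k)\binom{k}{\ell}_q\right)\gamma\norm{f_\ell}$. \Cref{prop:pure-eig-vals} gives the bound in terms of $\norm{g_\ell}$, so I would convert to $\norm{f_\ell}$ using \Cref{lemma:fvsg-body}, which says $\langle f_\ell, f_\ell\rangle \in (\rho^k_\ell \pm k^2\gamma)\langle g_\ell,g_\ell\rangle$, together with \Cref{claim:reg} identifying $\rho^k_\ell \approx \frac{1}{R(k,\ell)}$. Here $R(k,\ell) = \binom{k}{\ell}_q$ since the complex sits inside the Grassmann poset, so $\norm{g_\ell} \lesssim \sqrt{\binom{k}{\ell}_q}\,\norm{f_\ell}$ up to lower-order $\gamma$ corrections; folding this into the $(j+k)j\gamma\norm{g_\ell}$ bound gives the stated form. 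The main obstacle — really a bookkeeping hazard rather than a deep one — is making sure the condition $\gamma \leq q^{-\Omega(k^2)}$ is strong enough to invoke \Cref{claim:reg} and \Cref{lemma:fvsg-body} (whose hypotheses involve $\rho_{\min}^{-1} \lesssim R_{\max} \lesssim \binom{k}{\lfloor k/2\rfloor}_q = q^{\Theta(k^2)}$ and $\delta_i(1-\delta_{i-1})$, which for the $q$-eposet parameters is $\Theta(1)$), so that the factor $\frac{q^{-\Omega(k^2)}}{\ldots}$ appearing there is indeed dominated; one should double-check that $\sqrt{R_{\max}}\cdot\gamma$ and $R_{\max}\cdot\gamma$ terms all collapse into the single clean error $O(j(j+k)\binom{k}{\ell}_q)\gamma$ claimed, possibly by being slightly generous with constants inside the $O(\cdot)$.
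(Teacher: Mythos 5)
Your proposal is correct and follows essentially the same route the paper does: the corollary is an instance of \Cref{cor:balanced-eig} (equivalently \Cref{prop:pure-eig-vals} plus \Cref{lemma:fvsg-body} and \Cref{claim:reg}) specialized to the $q$-eposet parameters $\delta_i = q\frac{q^i-1}{q^{i+1}-1}$, where the interior product telescopes to $1 - \prod_{i=\ell}^{k-s+j}\delta_i = \frac{q^{k-s+j-\ell+1}-1}{q^{k-s+j+1}-1}$ and the product over $s$ telescopes to $\frac{\binom{k+j-\ell}{j}_q}{\binom{k+j}{j}_q}$, exactly as you write. Two small remarks: the $U_\ell^{k-1}$ in the statement is a typo for $U_\ell^k$ (as in \Cref{cor:canon-q-HDX-spectra}), and you implicitly correct it; and your norm conversion actually yields the slightly stronger bound $O\bigl(j(j+k)\sqrt{\tbinom{k}{\ell}_q}\bigr)\gamma\norm{f_\ell}$, which implies the stated $O\bigl(j(j+k)\tbinom{k}{\ell}_q\bigr)\gamma\norm{f_\ell}$ since $\tbinom{k}{\ell}_q\geq 1$ — it would be cleaner to note this rather than leaving the impression the two are identical. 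The hypothesis $\gamma\leq q^{-\Omega(k^2)}$ is indeed exactly what is needed to absorb the $k^2\gamma$ correction in \Cref{lemma:fvsg-body} and to invoke \Cref{claim:reg}, since $R_{\max}\leq\binom{k}{\lfloor k/2\rfloor}_q=q^{\Theta(k^2)}$ and $\delta_i(1-\delta_{i-1})=\Theta(1)$ for these parameters.
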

Note that for small enough $\gamma$, \Cref{thm:approx-ortho} implies that the true spectra is then concentrated around these values as well. It is also worth noting that these eigenvalues are, as one would expect, the natural $q$-analog of the corresponding eigenvalues on simplicial complexes.

It turns out that this fact will carry over to the important class of partial-swap walks as well. Partial-swap walks on simplicial complexes were originally analyzed by AJT \cite{alev2019approximating}, who showed they can be written as a hypergeometric combination of canonical walks. Their proof is specific to the structure of simplicial complexes, and some work is required to generalize their ideas to the Grassmann case. Following the overall proof strategy of AJT, it will be helpful to first show that the canonical walks themselves can be written as an expectation of swap walks over a $q$-hypergeometric distribution, and then use the $q$-binomial inversion theorem to derive the desired result.
\begin{lemma}[$q$-analog of  {\cite[Lemma 4.11]{alev2019approximating}}]\label{lemma:N-vs-S-q}
Let $(X,\Pi)$ be a pure, measured $q$-simplicial complex. Then:
\[
N_k^j = \sum\limits_{i=0}^j q^{i^2}\frac{{j \choose i}_q{k \choose k-i}_q}{{k+j \choose k}_q}S_k^i
\]
\end{lemma}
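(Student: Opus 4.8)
The plan is to follow the strategy of AJT \cite{alev2019approximating}: decompose the canonical walk $N_k^j$ according to the dimension of the intersection of its two endpoints, identify each resulting ``stratum'' with a partial-swap walk, and then read off the coefficients by a counting argument. First I would unfold the operator. Since $U$ and $D$ are adjoint, a short computation shows that for $V,W \in X(k)$,
\[
N_k^j(V,W) = \frac{1}{R(k+j,k)^2\,\pi_k(V)}\sum_{\substack{T \in X(k+j)\\ T \supseteq V+W}}\pi_{k+j}(T) = \frac{\pi_{k+j}(X_{V+W})}{R(k+j,k)^2\,\pi_k(V)},
\]
which is nonzero only when $\dim(V\cap W)\ge k-j$, equivalently $\dim(V+W)\le k+j$. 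Writing $i := k-\dim(V\cap W)\in\{0,\dots,j\}$, the face $V+W$ has dimension $k+i$, and the regularity identity $\pi_{k+j}(X_{V+W}) = R(k+j,k+i)\,\pi_{k+i}(V+W)$ (together with purity, which guarantees a $(k+j)$-face above $V+W$ whenever $V+W\in X$) shows that on the stratum $\dim(V\cap W)=k-i$ the quantity $N_k^j(V,W)$ equals a fixed scalar multiple of $\pi_{k+i}(V+W)/\pi_k(V)$.

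Next I would compare with $N_k^i$. Running the same unfolding with $j$ replaced by $i$ shows that $N_k^i(V,W)$, on the stratum $\dim(V\cap W)=k-i$, also equals a fixed multiple of $\pi_{k+i}(V+W)/\pi_k(V)$ (here $V+W$ is the unique $(k+i)$-face lying above itself). Hence on this stratum $N_k^j(V,W)$ and $N_k^i(V,W)$ are proportional with a constant that does not depend on $V$, and the two walks have the same support there (again by purity). Since by definition $S_k^i$ is $N_k^i$ restricted to exactly this stratum and then renormalized to be stochastic, it follows that the restriction of $N_k^j$ to the stratum $\dim(V\cap W)=k-i$ is a scalar multiple of $S_k^i$; summing over $i$ gives $N_k^j = \sum_{i=0}^j \beta_i S_k^i$ with scalars $\beta_i\ge 0$ that sum to $1$, both sides being stochastic.

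It remains to identify $\beta_i$. Because $S_k^i$ is stochastic, $\beta_i$ equals the probability that the $N_k^j$-walk started at $V$ lands in the stratum $\dim(V\cap W)=k-i$; that is, sample $T\supseteq V$ of dimension $k+j$ with probability proportional to $\pi_{k+j}(T)$ and then $W\subseteq T$ of dimension $k$ uniformly. Conditioning on $T$, this probability is a pure count inside $T\cong\mathbb{F}_q^{k+j}$ with the fixed $k$-dimensional subspace $V$, hence independent of $\pi_{k+j}$: it is $\#\{W\subseteq T: \dim W = k,\ \dim(V\cap W)=k-i\}/\binom{k+j}{k}_q$. I would evaluate the numerator in two steps --- choose $A = V\cap W$ as a $(k-i)$-dimensional subspace of $V$ ($\binom{k}{k-i}_q$ ways), then choose $W$ with $W\cap V = A$, which amounts to choosing an $i$-dimensional subspace of the $(i+j)$-dimensional quotient $T/A$ that meets the fixed $i$-dimensional subspace $V/A$ only at $0$ --- using the standard fact that the number of $i$-dimensional subspaces of $\mathbb{F}_q^{i+j}$ intersecting a fixed $i$-dimensional subspace trivially is $q^{i^2}\binom{j}{i}_q$ (counted by picking an ordered basis one vector at a time and dividing by $\prod_{m=0}^{i-1}(q^i-q^m)$). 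This gives $\beta_i = q^{i^2}\binom{j}{i}_q\binom{k}{k-i}_q/\binom{k+j}{k}_q$, as claimed; the fact that these sum to $1$ is exactly a form of the $q$-Vandermonde identity.

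The main obstacle, compared with the hypergraph argument of AJT, is the weight bookkeeping in the first two steps: one has to verify that the non-uniform measure $\Pi$ interacts cleanly enough with the stratification that each stratum of $N_k^j$ is an \emph{exact} scalar multiple of $S_k^i$ with a starting-face-independent constant, which is precisely where the identity $\pi_{k+j}(X_{V+W}) = R(k+j,k+i)\,\pi_{k+i}(V+W)$ and the purity of $X$ are used. The second, milder, difficulty is the subspace count in the last step, where the $q$-analog genuinely differs from the set case by the extra factor $q^{i^2}$.
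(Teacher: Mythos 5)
Your proposal is correct and follows essentially the same strategy as the paper's proof: decompose $N_k^j$ by the dimension of the intersection of the endpoints, observe that conditioned on the intermediate face $T$ the probability of landing in the $i$-stratum is a purely $q$-combinatorial quantity independent of $\Pi$, and identify each stratum of $N_k^j$ with the corresponding swap walk $S_k^i$. The one place where you diverge is also the place where you are more careful: the paper asserts that the ``$i$-swapping $j$-walk'' coincides with $S_k^i$ by appealing to a regularity/symmetry argument of AJT (their Proposition 4.9), whereas you derive the same conclusion directly by computing $N_k^j(V,W) = \pi_{k+j}(X_{V+W})/\bigl(R(k+j,k)^2\pi_k(V)\bigr)$, using $\pi_{k+j}(X_{V+W}) = R(k+j,k+i)\,\pi_{k+i}(V+W)$ on the stratum $\dim(V\cap W)=k-i$, and comparing with the analogous formula for $N_k^i(V,W)$ to see that the ratio is a constant independent of $V,W$. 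This makes the proportionality of the restricted $N_k^j$ to $S_k^i$ explicit and self-contained, and properly flags where purity and downward closure enter (to guarantee $V+W\in X(k+i)$ has the right weight whenever the transition probability is nonzero). Your closing count of $i$-dimensional subspaces of $T/A$ meeting $V/A$ trivially, giving the $q^{i^2}\binom{j}{i}_q$ factor, is also the same enumeration the paper invokes via Brouwer--Cohen--Neumaier; rederiving it is fine and the computation checks out.
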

\begin{proof}
We follow the structure and notation of \cite[Lemma 4.11]{alev2019approximating}. Assume that the canonical walk starts at a subspace $V \in X(k)$, and walks up to $W \in X(k+j)$. We wish to analyze the probability that upon walking back down to level $k$, a subspace $V'$ with intersection $k-i$ is chosen, that is:
\[
dim(V \cap V')=k-i.
\]
Let such an event be denoted $\mathcal{E}_i(W)$. It follows from elementary $q$-combinatorics (see e.g.\ \cite[Lemma 9.3.2]{brouwer2012distance}) that
\[
\Pr_{V' \subset W}[\mathcal{E}_i(W)~|~W] = q^{i^2}\frac{{j \choose i}_q{k \choose k-i}_q}{{k+j \choose k}_q},
\]
where $V' \in X(k)$ is drawn uniformly from the $k$-dimensional subspaces of $W$. In essence, we wish to relate this process to the swap walk $S_k^i$. To do so, note that while the swap walk (as defined) only walks up to $X(k+i)$, walking up to $X(k+j)$ and conditioning on intersection $i$, a process called the \textit{$i$-swapping $j$-walk} by \cite{alev2019approximating}, is exactly the same due to symmetry (via the regularity condition, see \cite{alev2019approximating}[Proposition 4.9] for a more detailed explanation). Thus consider the $i$-swapping $j$-walk, and let $T'_i$ denote the variable standing for the subspace chosen by the walk. Conditioned on picking the same $W$ as the canonical walk in its ascent, we may relate $T'_i$ to the canonical walk:
\[
\Pr[T'_i = T~|~W] = \Pr[V'=T~|~W~\text{and}~\mathcal{E}_i(W)]
\]
We may now decompose the canonical walk by intersection size:
\begin{align*}
    N_k^j(V,T) &= \sum\limits_{i=0}^j \sum\limits_{W \in X(k+j)}\Pr[W]\Pr[\mathcal{E}_i(W)~|~W]\Pr[V'=T~|~W~\text{and}~\mathcal{E}_i(W)]\\
    &=\sum\limits_{i=0}^j \sum\limits_{W \in X(k+j)}q^{i^2}\frac{{j \choose i}_q{k \choose k-i}_q}{{k+j \choose k}_q}\underset{W \supset V}{\mathbb{E}}[\Pr[V'=T~|~W~\text{and}~\mathcal{E}_i(W)]]\\
    &=\sum\limits_{i=0}^j \sum\limits_{W \in X(k+j)}q^{i^2}\frac{{j \choose i}_q{k \choose k-i}_q}{{k+j \choose k}_q}\underset{W \supset V}{\mathbb{E}}[\Pr[T'_i=T~|~W~]]\\
    &=\sum\limits_{i=0}^j \sum\limits_{W \in X(k+j)}q^{i^2}\frac{{j \choose i}_q{k \choose k-i}_q}{{k+j \choose k}_q}\Pr[T_i'=T]\\
    &=\sum\limits_{i=0}^j \sum\limits_{W \in X(k+j)}q^{i^2}\frac{{j \choose i}_q{k \choose k-i}_q}{{k+j \choose k}_q}S_k^i(V,T)
\end{align*}
\end{proof}
This results in the $q$-analog of the analogous result on simplicial complexes \cite[Lemma 4.11]{alev2019approximating}. To recover the analogous statement writing partial-swap walks in terms of canonical walks, we can now apply a $q$-Binomial inversion theorem. 
\begin{lemma}[$q$-Binomial Inversion (Theorem 2.1 \cite{zou2017q})]\label{lemma:q-binom-inversion}
Suppose $\{a_i\}_{i\geq1}$, $\{b_i\}_{i\geq1}$ are two sequences. 
If:
\[
a_j = \sum\limits_{i=1}^{j}(-1)^i{j \choose i}_q b_i,
\]
then
\[
b_j = \sum\limits_{i=1}^{j} (-1)^iq^{{j-i \choose 2}}{j \choose i}_q a_i
\]
\end{lemma}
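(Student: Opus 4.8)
The plan is to derive this inversion from a $q$-analog of the classical binomial-inversion orthogonality, rather than from generating functions. First I would isolate the combinatorial core: for $i \le j$, define
\[
S(j,i) \;:=\; \sum_{k=i}^{j} (-1)^{k}\, q^{\binom{j-k}{2}}\, \binom{j}{k}_q \binom{k}{i}_q ,
\]
and reduce the whole statement to the single claim that $S(j,i) = (-1)^i\,\mathbbm{1}\{i=j\}$. Granting this, one substitutes the hypothesis $a_i = \sum_{m=1}^{i}(-1)^m\binom{i}{m}_q b_m$ into the right-hand side of the target identity, interchanges the two summations, and reads off
\[
\sum_{i=1}^{j}(-1)^i q^{\binom{j-i}{2}}\binom{j}{i}_q a_i
\;=\; \sum_{m=1}^{j}(-1)^m b_m \sum_{i=m}^{j}(-1)^i q^{\binom{j-i}{2}}\binom{j}{i}_q\binom{i}{m}_q
\;=\; \sum_{m=1}^{j}(-1)^m b_m\, S(j,m) \;=\; b_j ,
\]
which is exactly the desired formula. (Restricting the sums to indices $\ge 1$ is harmless, since only indices $m\ge 1$ survive the interchange.)

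To prove the orthogonality claim, I would first apply the subset-of-a-subset identity $\binom{j}{k}_q\binom{k}{i}_q = \binom{j}{i}_q\binom{j-i}{k-i}_q$ to factor out $\binom{j}{i}_q$, then reindex with $m = k-i$ and $n = j-i$ (so that $(-1)^k = (-1)^i(-1)^m$ and $\binom{j-k}{2}=\binom{n-m}{2}$), obtaining
\[
S(j,i) \;=\; (-1)^i \binom{j}{i}_q \sum_{m=0}^{n} (-1)^{m}\, q^{\binom{n-m}{2}}\binom{n}{m}_q .
\]
The key move is then to reflect the summation variable $m \mapsto n-m$ and use $\binom{n}{m}_q = \binom{n}{n-m}_q$; this converts the inner sum into $(-1)^n\sum_{m=0}^{n}(-1)^m q^{\binom{m}{2}}\binom{n}{m}_q$, which equals $(-1)^n(1;q)_n$ by the $q$-binomial theorem $\sum_{m}(-1)^m q^{\binom{m}{2}}\binom{n}{m}_q x^m = \prod_{r=0}^{n-1}(1-xq^r)$ evaluated at $x=1$, where $(1;q)_n := \prod_{r=0}^{n-1}(1-q^r)$. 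Since $(1;q)_n$ vanishes for every $n\ge 1$ (the $r=0$ factor is $1-1=0$) and equals $1$ for $n=0$, this yields $S(j,i) = (-1)^i\binom{j}{i}_q\,\mathbbm{1}\{n=0\} = (-1)^i\,\mathbbm{1}\{i=j\}$, completing the argument.

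I expect the reflection step to be the only place where care is genuinely needed: turning the weight $q^{\binom{n-m}{2}}$ into $q^{\binom{m}{2}}$ is precisely what makes the $q$-binomial theorem applicable, and this one-sided asymmetry is exactly the source of the extra factor $q^{\binom{j-i}{2}}$ appearing in the conclusion but not in the hypothesis (for $q=1$ it disappears and we recover ordinary self-inverse binomial inversion). Everything else — the subset-of-a-subset identity, the $q$-binomial theorem, and the final change of summation order — is routine, and one could alternatively package the same computation through the two $q$-exponential generating functions, but the orthogonality route above is the most elementary and self-contained.
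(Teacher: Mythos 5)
Your proof is correct. Note that the paper itself does not prove this lemma; it cites it as Theorem~2.1 of \cite{zou2017q} and remarks only that the stated form is an "immediate application" of the more general result there. So there is no paper proof to compare against, and what you have done is supply a self-contained derivation where the authors simply outsourced to the literature.

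Your route via the orthogonality relation
\[
S(j,i) = \sum_{k=i}^{j}(-1)^k q^{\binom{j-k}{2}}\binom{j}{k}_q\binom{k}{i}_q = (-1)^i\,\mathbbm{1}\{i=j\}
\]
is the standard way to establish such inversions, and every step checks out: the subset-of-a-subset identity $\binom{j}{k}_q\binom{k}{i}_q=\binom{j}{i}_q\binom{j-i}{k-i}_q$ is a factorial identity; the reindexing $m=k-i$, $n=j-i$ is bookkeeping; the reflection $m\mapsto n-m$ correctly converts the weight $q^{\binom{n-m}{2}}$ to $q^{\binom{m}{2}}$; and the $q$-binomial theorem $\sum_{m=0}^n(-1)^m q^{\binom{m}{2}}\binom{n}{m}_q x^m = \prod_{r=0}^{n-1}(1-xq^r)$ at $x=1$ vanishes for $n\geq 1$ (the $r=0$ factor is zero) and equals $1$ for $n=0$, giving exactly $S(j,i)=(-1)^i\binom{j}{i}_q(-1)^n\mathbbm{1}\{n=0\}=(-1)^i\mathbbm{1}\{i=j\}$. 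Your observation that the reflection step is where the asymmetric correction factor $q^{\binom{j-i}{2}}$ is generated is the right thing to flag; it also explains why the $q=1$ degeneration recovers the self-inverse classical inversion. The remark about the sums starting at index $1$ rather than $0$ is handled correctly, since only indices $m\geq 1$ appear after the interchange and the orthogonality computation never needed the $k=0$ or $i=0$ terms. This is a clean, elementary proof that would serve well as an appendix if one wanted the paper to be self-contained here.
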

We note that \cite[Theorem 2.1]{zou2017q} is stated in slightly more generality in the original work, but the above lemma is an immediate application. With this in hand, we can finally prove that swap walks on the Grassmann poset are indeed HD-walks:
\begin{proposition}\label{prop:Grassmann-decomp}
Let $(X,\Pi)$ be a weighted pure $q$-simplicial complex. Then for $k+j \leq d$:
\[
 S_k^j = \frac{1}{q^{j^2}{k \choose k-j}_q}\sum\limits_{i=0}^{j} (-1)^{j-i}q^{{j-i \choose 2}}{j \choose i}_q{k+i \choose i}_q
 N_k^{i},
 \]
and similarly,
\[
 J_q(n,k,t) = S_k^{k-t}=\frac{1}{q^{(k-t)^2}{k \choose t}_q}\sum\limits_{i=0}^{k-t} (-1)^{k-t-i}q^{{k-t-i \choose 2}}{k-t \choose i}_q{k+i \choose i}_q
 N_k^{i}
 \]
\end{proposition}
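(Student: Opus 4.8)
The plan is to derive \Cref{prop:Grassmann-decomp} directly from \Cref{lemma:N-vs-S-q} by $q$-binomial inversion, mirroring AJT's argument in the simplicial case. First I would rewrite \Cref{lemma:N-vs-S-q} in a form matching the hypothesis of \Cref{lemma:q-binom-inversion}: using $\binom{k+j}{k}_q=\binom{k+j}{j}_q$ and $\binom{k}{k-i}_q=\binom{k}{i}_q$, multiply through by $\binom{k+j}{j}_q$ to get
\[
\binom{k+j}{j}_q N_k^j=\sum_{i=0}^j q^{i^2}\binom{j}{i}_q\binom{k}{i}_q S_k^i,
\]
and split off the $i=0$ term, which is exactly the identity operator since $S_k^0=I$. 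Setting $a_j:=\binom{k+j}{j}_q N_k^j - I$ and $b_i:=(-1)^i q^{i^2}\binom{k}{i}_q S_k^i$ for $i\ge 1$, the surviving terms ($i\ge 1$) give precisely $a_j=\sum_{i=1}^j(-1)^i\binom{j}{i}_q b_i$.

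Applying \Cref{lemma:q-binom-inversion} then yields $b_j=\sum_{i=1}^j(-1)^i q^{\binom{j-i}{2}}\binom{j}{i}_q a_i$, i.e.
\[
(-1)^j q^{j^2}\binom{k}{j}_q S_k^j=\sum_{i=1}^j(-1)^i q^{\binom{j-i}{2}}\binom{j}{i}_q\binom{k+i}{i}_q N_k^i - \Bigl(\sum_{i=1}^j(-1)^i q^{\binom{j-i}{2}}\binom{j}{i}_q\Bigr)I,
\]
where I substituted $a_i=\binom{k+i}{i}_q N_k^i - I$ and $\binom{k+i}{i}_q=\binom{k+i}{k}_q$. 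To bring this to the stated shape I would extend the first sum down to $i=0$ (whose term is $q^{\binom{j}{2}}I$ because $N_k^0=I$) and invoke the finite $q$-binomial identity $\sum_{i=0}^j(-1)^i q^{\binom{j-i}{2}}\binom{j}{i}_q=\delta_{j,0}$ --- a special case of the $q$-binomial theorem, vanishing for $j\ge 1$ thanks to the $(1-q^0)$ factor --- which forces the scalar correction term to cancel the leftover $i=0$ contribution exactly. Multiplying by $(-1)^j$, using $(-1)^j(-1)^i=(-1)^{j-i}$, and dividing by $q^{j^2}\binom{k}{j}_q=q^{j^2}\binom{k}{k-j}_q$ produces the first displayed identity of the proposition. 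Specializing $j=k-t$ and recalling that on the Grassmann poset the partial-swap walk $S_k^{k-t}$ is exactly the Grassmann graph $J_q(n,k,t)$ (its two faces meet in dimension $k-(k-t)=t$), together with $\binom{k}{k-(k-t)}_q=\binom{k}{t}_q$, gives the second identity. To see these are genuine HD-walks, apply both sides to the all-ones function: each $N_k^i$ and $S_k^j$ fixes $\mathbf{1}$, so the coefficients sum to $1$ and $S_k^j$ is an affine combination of the self-adjoint canonical walks, hence an HD-walk.

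The computation is essentially bookkeeping, so the only real obstacle is the mismatch between the form of \Cref{lemma:q-binom-inversion} stated in the excerpt --- sums from $i=1$ with a $(-1)^i$ (rather than the familiar $(-1)^{j-i}$) weight --- and the natural shape of \Cref{lemma:N-vs-S-q}: the substitutions $a_j,b_i$ and the splitting at $i=0$ must be chosen with care, and the final cancellation of the stray identity-operator terms via the $\delta_{j,0}$ identity is the one place where a sign or index slip could occur. A secondary point worth verifying is that the proportionality constant implicit in the definition of $S_k^j$ (``restrict $N_k^j$ to faces meeting in dimension $k-j$'') is the one consistent with \Cref{lemma:N-vs-S-q} --- equivalently, that the $i$-swapping $j$-walk and the partial-swap walk $S_k^i$ agree --- but this is already settled by the regularity argument used in the proof of \Cref{lemma:N-vs-S-q}.
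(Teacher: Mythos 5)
Your proposal is correct and follows essentially the same route as the paper: start from \Cref{lemma:N-vs-S-q}, split off the $i=0$ identity term, apply the $q$-binomial inversion of \Cref{lemma:q-binom-inversion}, and then use the $q$-binomial theorem ($\sum_{i=0}^j(-1)^i q^{\binom{j-i}{2}}\binom{j}{i}_q=\delta_{j,0}$) to absorb the leftover identity-operator terms into the $i=0$ summand. The only differences are cosmetic — you swap the roles of the sequences $a_j$ and $b_i$ relative to the paper's (slightly typo-ridden) presentation — and your extra remark that the coefficients sum to $1$, confirming $S_k^j$ is an affine combination, is a harmless bonus.
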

\begin{proof}
The proof is an easy application of \Cref{lemma:q-binom-inversion} and the $q$-Binomial theorem. In particular, for any $V,V' \in X(k)$, let 
\[
a_i = (-1)^iq^{i^2}{k \choose k-i}_qS_k^i(V,V').
\]
Noting that $N_0^j=S_0^j=I$, \Cref{lemma:N-vs-S-q} gives the following equality:
\[
{k+j \choose k}_q\left (N_k^j(V,V')-\frac{1}{{k+j \choose k}_q}I(V,V')\right ) = \sum\limits_{i=1}^j (-1)^{i}{j \choose i}_qa_i.
\]
Setting the second sequence $\{b_i\}_{i\geq1}$ to
\[
b_i = {k+i \choose k}_q\left(N_k^i(V,V')-\frac{1}{{k+i \choose k}_q}I(V,V')\right ),
\]
\Cref{lemma:q-binom-inversion} then implies:
\begin{align*}
q^{j^2}{k \choose k-j}_qS_k^j(V,V') &= \sum\limits_{i=1}^j(-1)^{j-i}q^{{j-i \choose 2}}{k+i \choose k}\left(N_k^i(V,V')-\frac{1}{{k+i \choose k}_q}I(V,V')\right )\\
& = \sum\limits_{i=1}^j(-1)^{j-i}q^{{j-i \choose 2}}{k+i \choose k}N_k^i(V,V')-\sum\limits_{i=1}^j(-1)^{j-i}q^{{j-i \choose 2}}I(V,V')\\
& =  \sum\limits_{i=0}^j(-1)^{j-i}q^{{j-i \choose 2}}{k+i \choose k}N_k^i(V,V')
\end{align*}
where the last step follows from the $q$-Binomial theorem.
\end{proof}
Once again, we note that this is unsurprisingly the $q$-analog of the analogous statement on simplicial complexes (see \cite[Corollary 4.13]{alev2019approximating}). Finally, we'll use this to show that the eigenvalues of partial-swap walks on $q$-simplicial complexes are given by the natural $q$-analog of the simplicial complex case.
\begin{corollary}\label{cor:swap-q-HDX-spectra}
Let $(X,\Pi)$ be a $d$-dimensional $\gamma$-$q$-eposet with $\gamma$ sufficiently small, $k+j \leq d$, and $f_\ell = U^{k}_\ell g_\ell$ for some $g_\ell \in H^\ell$. Then:
\[
\norm{S_k^j f_\ell - \frac{{k-j \choose \ell}_q}{{k \choose \ell}_q}f_\ell} \leq O\left (\left(\frac{q}{q-1}\right)^{\min(j,k-j)+2} k^2{k \choose \ell}_q\right) \gamma \norm{f_\ell}
\]
\end{corollary}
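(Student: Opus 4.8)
The plan is to leverage the structural decomposition \Cref{prop:Grassmann-decomp}, which expresses the partial-swap walk as a short affine combination of canonical walks whose spectrum on $q$-eposets we already understand. Write $S_k^j = \sum_{i=0}^j \alpha_i N_k^i$ with $\alpha_i = (-1)^{j-i}q^{\binom{j-i}{2}}\binom{j}{i}_q\binom{k+i}{i}_q\big/\big(q^{j^2}\binom{k}{k-j}_q\big)$, so that $w(S_k^j) = \sum_{i=0}^j |\alpha_i|$. First I would apply the preceding corollary (spectrum of $N_k^j$ on $q$-eposets) to each $N_k^i$ with $0\le i\le j$: since $f_\ell = U_\ell^k g_\ell$ with $g_\ell\in H^\ell$, this gives $\norm{N_k^i f_\ell - \frac{\binom{k+i-\ell}{i}_q}{\binom{k+i}{i}_q}f_\ell} \le O\big(i(i+k)\binom{k}{\ell}_q\big)\gamma\norm{f_\ell}$, with the $i=0$ term contributing no error since $N_k^0=I$. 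Summing against $\alpha_i$ and using the triangle inequality yields
\[
\norm{S_k^j f_\ell - \Lambda f_\ell} \le \Big(\sum_{i=0}^j |\alpha_i|\,i(i+k)\Big)\,O\big(\textstyle\binom{k}{\ell}_q\big)\,\gamma\norm{f_\ell},\qquad \Lambda := \sum_{i=0}^j \alpha_i\,\frac{\binom{k+i-\ell}{i}_q}{\binom{k+i}{i}_q}.
\]
The true spectrum then concentrates around $\Lambda$ by \Cref{thm:approx-ortho}, so it remains to (a) evaluate $\Lambda$ and (b) bound the prefactor.

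For (a), the factor $\binom{k+i}{i}_q$ cancels, and using $\binom{k}{k-j}_q=\binom{k}{j}_q$ we get $\Lambda = q^{-j^2}\binom{k}{j}_q^{-1}\sum_{i=0}^j(-1)^{j-i}q^{\binom{j-i}{2}}\binom{j}{i}_q\binom{k-\ell+i}{i}_q$. I would evaluate the inner sum using a classical Gaussian-binomial identity --- equivalently, the $q$-binomial inversion of \Cref{lemma:q-binom-inversion} applied to the $q$-Vandermonde identity $\binom{m+n}{n}_q=\sum_i q^{i^2}\binom{m}{i}_q\binom{n}{i}_q$ --- which gives $\sum_{i=0}^j(-1)^{j-i}q^{\binom{j-i}{2}}\binom{j}{i}_q\binom{K+i}{i}_q = q^{j^2}\binom{K}{j}_q$ for every $K$; with $K=k-\ell$ this yields $\Lambda = \binom{k-\ell}{j}_q/\binom{k}{j}_q$. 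The ``trinomial revision'' identity $\binom{k}{j}_q\binom{k-j}{\ell}_q = \binom{k}{\ell}_q\binom{k-\ell}{j}_q$ then rewrites this as $\Lambda = \binom{k-j}{\ell}_q/\binom{k}{\ell}_q$, the claimed eigenvalue; as a sanity check, on the full Grassmann poset $S_k^j = J_q(n,k,k-j)$ and this recovers the classical eigenvalues of the Grassmann graphs.

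For (b), write $m=j-i$ and use the standard estimates $q^{b(a-b)}\le \binom{a}{b}_q\le q^{b(a-b)}\prod_{r\ge1}(1-q^{-r})^{-1}$. A short computation shows the exponent of $q$ in $|\alpha_i|$ equals $m\big(j-k-\frac{m+1}{2}\big)$ up to a $\prod_r(1-q^{-r})^{\pm O(1)}$ correction; since $j\le k$ this exponent is $\le -\frac{m(m+1)}{2}\le 0$, so $|\alpha_j| = \Theta_q(1)$ while $|\alpha_{j-m}|$ decays like $q^{-m(m+1)/2}$. Hence $\sum_{i=0}^j|\alpha_i|\,i(i+k) \le j(j+k)|\alpha_j| + 2k^2\sum_{m\ge1}q^{-m(m+1)/2} = O\big(k^2\big)\cdot\prod_r(1-q^{-r})^{O(1)}$, and tracking how many Gaussian-binomial factors actually appear (using $\binom{k}{j}_q=\binom{k}{k-j}_q$ to work with the smaller of $j$ and $k-j$) sharpens $\prod_r(1-q^{-r})^{O(1)}$ to $(q/(q-1))^{\min(j,k-j)+O(1)}$, giving the stated bound. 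The hard part will be exactly this last $q$-combinatorial bookkeeping: one must pit ratios of Gaussian binomials against the $q^{\binom{j-i}{2}}/q^{j^2}$ normalization tightly enough that all $j{+}1$ error contributions still sum to $O_q(k^2)$ rather than something growing with $j$; everything else is a direct application of results already in hand.
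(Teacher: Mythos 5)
Your proposal is correct and follows essentially the same route as the paper: decompose $S_k^j$ via \Cref{prop:Grassmann-decomp}, apply the approximate eigenvalue bounds for the canonical walks $N_k^i$ on $q$-eposets (the paper invokes \Cref{cor:balanced-eig} with the weight $w(S_k^j)$ where you sum term-by-term with the coefficient decay $|\alpha_{j-m}|\lesssim q^{-m(m+1)/2}$, which is a slightly sharper but equivalent bookkeeping), and evaluate the resulting eigenvalue combination to $\binom{k-j}{\ell}_q/\binom{k}{\ell}_q$. The only difference is that you actually derive the Gaussian-binomial identity (via inverting $q$-Vandermonde with \Cref{lemma:q-binom-inversion}) and the weight bound that the paper's proof asserts by direct computation, which is a welcome addition but not a different argument.
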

\begin{proof}
This follows from combining \Cref{cor:balanced-eig}, \Cref{cor:canon-q-HDX-spectra}, and \Cref{prop:Grassmann-decomp}. Let $t=k-j$. In particular, it is sufficient to note that (in the notation of \Cref{cor:balanced-eig}):
\begin{align*}
\sum\limits_{Y \in \mathcal Y}\alpha_Y \lambda_{Y,\delta,\ell}&=\frac{1}{q^{(k-t)^2}{k \choose t}_q}\sum\limits_{i=0}^{k-t} (-1)^{k-t-i}q^{{k-t-i \choose 2}}{k-t \choose i}_q{k + i -\ell \choose i}_q\\
&= \frac{{k-j \choose \ell}_q}{{k \choose \ell}_q}.
\end{align*}
and further that:
\begin{align*}
w(S^j_k) &= \frac{1}{q^{j^2}{k \choose k-j}_q}\sum\limits_{i=0}^j q^{{j-i \choose 2}}{j \choose i}_q{k+i \choose i}_q\\
&\leq \frac{q^{jk}}{q^{j^2}{k \choose k-j}_q}\sum\limits_{i=0}^j q^{-i}\\
&\leq \left(\frac{q}{q-1}\right)^{\min(j,k-j)+1}
\end{align*}
\end{proof}
Again, since the swap walks are self-adjoint \Cref{thm:approx-ortho} implies that for small enough $\gamma$ the true spectra is closely concentrated around these values as well. It is worth noting that if the above analysis is repeated using the exact eposet parameters for the Grassmann (see \Cref{eq:grassmann-params}), this recovers the standard eigenvalues of the Grassmann graphs (see e.g. \cite{delsarte1976association}).

% We can also look at the eigenvalues for the sequentially differential case, which corresponds to the Grassmann Graphs:
% \begin{corollary}\label{cor:Johnson-eigs}
% Let $X=G_q(n,d)$ be the Grassmann poset, $k+j \leq d$,  and $f_\ell = U^{k}_\ell g_\ell$ for some $g_\ell \in H^\ell$. Then:
% \[
% S^j_k f_\ell = \lambda_\ell f_\ell,
% \]
% where
% \[
% \lambda_\ell \leq \frac{{t \choose \ell}_q}{{k \choose \ell}_q}  + q^{-n}\left (j\left( \frac{q}{q-1}\right)^{\min(j,k-j)}q^{k+j+\ell}\right)
% \]
% \end{corollary}
% \begin{proof}
% The proof is essentially the same as \Cref{cor:swap-q-HDX-spectra}, except the eigenvalues have the additional term
% \[
% q^{\ell j}\frac{{n-k-\ell \choose j}_q}{{n-k \choose j}_q} \in  [1 - jq^{k+j+\ell-n-1},1]
% \]
% Plugging this into the prior analysis proves the result.
% \end{proof}

\subsection{Pseudorandom Functions and Small Set Expansion}
With an understanding of the spectra of HD-walks on $q$-simplicial complexes, we move to studying its combinatorial structure. By direct computation, it is not hard to show that on $q$-eposets, $\rho^k_\ell=\frac{1}{{k \choose \ell}_q}$ (\Cref{claim:reg} would only imply this is approximately true). As a result, we get a level-$i$ inequality for $q$-simplicial complexes that is the natural $q$-analog of BHKL's inequality for basic simplicial complexes.
\begin{theorem}\label{thm:grassmann-level}
Let $(X,\Pi)$ be a $\gamma$-$q$-eposet with $\gamma \leq q^{-\Omega(k^2)}$, and let $f: C_k \to \R$ be any function on $k$-faces with HD-Level-Set Decomposition $f= f_0+\ldots+f_k$. If $f$ is $(\varepsilon_1,\ldots,\varepsilon_\ell)$-$\ell_\infty$-pseudorandom, then for all $1 \leq i \leq \ell$:
\[
|\langle f, f_i \rangle| \leq  \left({k \choose i}_q + c\gamma\right)\varepsilon_i^2 + c\gamma\norm{f}^2.
\]
If $f$ additionally has $i$-local constant sign or is $(\varepsilon_1,\ldots,\varepsilon_\ell)$-$\ell_2$-pseudorandom, then
\[
|\langle f, f_i \rangle| \leq  {k \choose i}_q\varepsilon_i|\mathbb{E}[f]| + c\gamma\norm{f}^2
\]
where in both cases $c \leq q^{O(k^2)}$
\end{theorem}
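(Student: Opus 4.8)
The plan is to specialize the two general eposet results already proved—\Cref{cor:ell2-proj} for the $\ell_2$-case and \Cref{thm:body-local-spec-proj} for the $\ell_\infty$-case—to the setting of $q$-eposets, using the exact value of the regularity parameter. First I would observe that for a $q$-simplicial complex the downward regularity function counting $i$-dimensional subspaces of a $k$-dimensional subspace is exactly the Gaussian binomial coefficient $R(k,i)=\binom{k}{i}_q$; this is immediate from the definition of the poset. Consequently $R_{\text{max}}=\max_{0\le i\le k}\binom{k}{i}_q \le q^{O(k^2)}$, so the constraint $\gamma\le O\bigl(\max_i\{\delta_i(1-\delta_{i-1})\}/(k^5 R_{\text{max}}^2)\bigr)$ appearing in the general theorems is implied by our hypothesis $\gamma\le q^{-\Omega(k^2)}$ for a suitable choice of the implied constant, once one checks that $\delta_i$ and $1-\delta_{i-1}$ are bounded away from $0$ and $1$ by quantities depending only on $q$ (which follows from the $q$-eposet definition $\delta_i=q\frac{q^i-1}{q^{i+1}-1}\in[\,\tfrac{1}{q},\,1)$, giving $1-\delta_{i-1}\ge 1-\frac{1}{q}$ and $\delta_i\ge\frac{1}{q}$, hence $\max_i\{\delta_i(1-\delta_{i-1})\}=\Theta_q(1)$). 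This bookkeeping reduces the theorem to plugging $R(k,i)=\binom{k}{i}_q$ into the conclusions of \Cref{cor:ell2-proj} and \Cref{thm:body-local-spec-proj}.

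Second, for the $\ell_\infty$-bound I would invoke \Cref{thm:body-local-spec-proj} directly: with $R(k,i)=\binom{k}{i}_q$ and $c\le O\bigl(k^5 R_{\text{max}}^2/\max_i\{\delta_i(1-\delta_{i-1})\}\bigr)$, the general bound $|\langle f,f_i\rangle|\le (R(k,i)+c\gamma)\varepsilon_i^2+c\gamma\|f\|^2$ becomes $|\langle f,f_i\rangle|\le(\binom{k}{i}_q+c\gamma)\varepsilon_i^2+c\gamma\|f\|^2$, and since $R_{\text{max}}^2\le q^{O(k^2)}$ while the denominator is $\Theta_q(1)$, we get $c\le q^{O(k^2)}$ as claimed. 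The $i$-local-constant-sign refinement comes from the second conclusion of \Cref{thm:body-local-spec-proj} in exactly the same way, yielding $|\langle f,f_i\rangle|\le\binom{k}{i}_q\varepsilon_i|\mathbb{E}[f]|+c\gamma\|f\|^2$.

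Third, for the $\ell_2$-pseudorandom case (and hence also, via \Cref{lem:reduction}, the locally-constant-sign case), I would appeal to \Cref{cor:ell2-proj}, which gives $|\langle f,f_i\rangle|\le R(k,i)\varepsilon_i|\mathbb{E}[f]|+c\gamma\|f\|^2$; substituting $R(k,i)=\binom{k}{i}_q$ and the same bound $c\le q^{O(k^2)}$ finishes this piece. One small sharpening worth noting is that on a $q$-eposet one can compute $\rho^k_\ell$ exactly rather than merely approximately: telescoping $\rho^k_\ell=\prod_{i=1}^{k-\ell}(1-\delta^{k-i}_{k-\ell-i})$ with the Grassmann-poset values of $\delta$ collapses to $\rho^k_\ell=1/\binom{k}{\ell}_q$, matching the hypergeometric computation used in \Cref{cor:canon-q-HDX-spectra}; this lets us write $\frac{1}{\rho^k_i}=\binom{k}{i}_q$ exactly in the rearrangement step of \Cref{cor:ell2-proj}'s proof, so the only error incurred is the $O(\gamma)$-error term, with no loss from replacing $\rho$ by an approximate regularity quantity.

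The main obstacle is not conceptual but a matter of carefully verifying that the hypothesis $\gamma\le q^{-\Omega(k^2)}$ is strong enough to satisfy \emph{all} the smallness constraints invoked along the chain \Cref{thm:approx-ortho} $\to$ \Cref{cor:balanced-eig} $\to$ \Cref{cor:ell2-proj}/\Cref{thm:body-local-spec-proj}—in particular the disjointness of the eigenstrip intervals (needed so that \Cref{thm:approx-ortho} applies and the HD-Level-Set projections $\id{S,i}$ are well defined), which requires $c_i+c_{i+1}<\lambda_i-\lambda_{i+1}$ where the $\lambda_i$ are the $q$-analog eigenvalues $\approx q^{-ij}$ and the gaps are themselves exponentially small in $k$. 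Since both the error coefficients $c_i=O(k^2 R_{\text{max}}\gamma w(M))$ and the spectral gaps are of the form $q^{\pm O(k^2)}$, the condition $\gamma\le q^{-\Omega(k^2)}$ with a large enough hidden constant is exactly what makes all these inequalities go through simultaneously; I would state this once at the top of the proof and then let everything else follow by direct substitution.
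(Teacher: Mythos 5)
Your proposal is correct and follows essentially the same route as the paper: observe that on a $q$-eposet $\rho^k_\ell = 1/\binom{k}{\ell}_q$ exactly (so the regularity constant $R(k,i)=\binom{k}{i}_q$ can be substituted without incurring the approximation error of Claim~\ref{claim:reg}), then specialize \Cref{cor:ell2-proj} and \Cref{thm:body-local-spec-proj}. One small slip worth correcting: since $\delta_{i-1}=q\frac{q^{i-1}-1}{q^{i}-1}\geq \frac{q}{q+1}$, the inequality $1-\delta_{i-1}\geq 1-\frac{1}{q}$ is backwards (in fact $1-\delta_{i-1}=\frac{q-1}{q^{i}-1}$ can be as small as $q^{-\Omega(k)}$); fortunately the hypothesis $\gamma\leq q^{-\Omega(k^2)}$ and the target bound $c\leq q^{O(k^2)}$ have enough slack to absorb this.
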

For large enough $q,\gamma^{-1}$, this result is exactly tight. The key to showing this fact is to examine a local structure unique to the Grassmann called \textit{co-links}. The co-link of an element $W \in X(k')$, is all of the subspaces \textit{contained in} $W$:
\[
\bar{X}_W = \{ V \in X(k): V \subseteq W \}.
\]
Just like links, co-links of dimension $i$ (that is $k'=d-i$) also come through levels $0$ through $i$ of the complex, although this is somewhat trickier to see.
\begin{lemma}[HD-level-set decomposition of co-links]
Let $X=G_q(d,k)$ and ${\cal S}=\overline{X}_W$ be a co-link of dimension $i$ for $W\in X(d-i)$. Then, we have
\begin{align*}
    \id{\cal S}\in V_k^0\oplus \cdots \oplus V_k^{i}.
\end{align*}
\end{lemma}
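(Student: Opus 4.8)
The plan is to avoid orthogonality computations and instead realize $\id{\mathcal S}$ as a \emph{lift} from level $i$. The key structural fact is that $\mathrm{range}(U_i^k)=V_k^0\oplus\cdots\oplus V_k^i$: since the up operators compose, $U_i^kU_\ell^i=U_\ell^k$, we have $U_i^kV_i^\ell=U_\ell^k(\ker D_\ell)=V_k^\ell$ for every $\ell\le i$, and $C_i=\bigoplus_{\ell=0}^i V_i^\ell$, so $U_i^k(C_i)=\bigoplus_{\ell=0}^i V_k^\ell$. Hence it is enough to find $h\in C_i$ with $U_i^k h=\id{\mathcal S}$. To do this I would exploit the symmetry of the Grassmann: the stabilizer $G:=\mathrm{Stab}(W)$ inside $\mathrm{GL}_d(\mathbb F_q)$ acts on the poset as automorphisms, commutes with all averaging operators, and fixes $\id{\mathcal S}$ (as $\mathcal S=\{V\in X(k):V\subseteq W\}$ is $G$-invariant); so one may look for a $G$-invariant $h$.

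Now use that the $G$-orbits on each $X(m)$ are parametrised by the single integer $\dim(Z\cap W)$, so $G$-invariant functions on $X(m)$ are spanned by the orbit indicators $\chi^{(m)}_\ell:=\id{\{Z\in X(m):\dim(Z\cap W)=\ell\}}$. The decisive point --- and the only place the \emph{ambient dimension being exactly $d$} (self-duality of the poset) enters --- is the dimension bound $\dim(Z\cap W)\ge\dim Z+\dim W-d=m-i$ for $Z\in X(m)$. In the only nontrivial case $i<k\le d-i$ (so also $i<d/2$) this forces $\ell$ to range over $\{0,\dots,i\}$ on level $i$ and over $\{k-i,\dots,k\}$ on level $k$, so both invariant subspaces have dimension exactly $i+1$. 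Being $G$-equivariant, $U_i^k$ restricts to a map between them, whose matrix in the orbit-indicator bases is (up to the overall factor $1/\binom ki_q$) the standard $q$-count
\[
M_{\ell',\ell}=\#\{Z\in X(i):Z\subseteq V,\ \dim(Z\cap W)=\ell\}=q^{(i-\ell)(\ell'-\ell)}\binom{\ell'}{\ell}_q\binom{k-\ell'}{i-\ell}_q,
\]
for any $V\in X(k)$ with $\dim(V\cap W)=\ell'$.

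The last step is to check $M$ is invertible. Writing $\ell'=k-i+a$ with $a\in\{0,\dots,i\}$, the factor $\binom{k-\ell'}{i-\ell}_q=\binom{i-a}{i-\ell}_q$ vanishes unless $\ell\ge a$, so $M$ is triangular, and its diagonal entries $q^{(i-a)(k-i)}\binom{k-i+a}{a}_q$ are nonzero; hence $M$ is invertible. So $U_i^k$ carries the level-$i$ invariant space \emph{onto} the level-$k$ invariant space, which contains $\id{\mathcal S}=\chi^{(k)}_k$; therefore $\id{\mathcal S}\in\mathrm{range}(U_i^k)=V_k^0\oplus\cdots\oplus V_k^i$. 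I do not expect a single technically hard step: the identity $\mathrm{range}(U_i^k)=V_k^0\oplus\cdots\oplus V_k^i$ and the enumeration of $M$ are routine. The real content is the bookkeeping in the middle paragraph --- it is exactly the bound $\dim(Z\cap W)\ge m-i$ that keeps the transition matrix square and invertible, and without it (e.g.\ for a Grassmann poset embedded in a strictly larger ambient space) co-links genuinely acquire a nonzero component on level $i+1$ and the statement fails. (Alternatively, one can run the same argument through the order-reversing involution $V\mapsto V^\perp$, which sends each $V_k^j$ to $V_{d-k}^j$ and the co-link $\mathcal S$ to the link of the $i$-face $W^\perp$ at level $d-k$, which is low-degree via $\id{X_\tau}=R(k,i)\,U_i^k\id{\{\tau\}}$; this is morally identical.)
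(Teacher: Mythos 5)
Your proof is correct and follows essentially the same route as the paper's: both reduce to finding a preimage of $\id{\mathcal S}$ under $U_i^k$ among functions depending only on $\dim(\cdot\cap W)$ (your $G$-invariant ansatz is the paper's $\alpha_U = g(\dim(U\cap W))$), and both establish existence by observing that the resulting $(i+1)\times(i+1)$ linear system is triangular with nonzero diagonal. Your write-up adds two things the paper leaves implicit — the closed-form $q$-binomial entries $M_{\ell',\ell}$ of the transition matrix, and a clear identification of where the self-duality $n=d$ enters (the bound $\dim(Z\cap W)\ge m-i$ that makes the level-$i$ and level-$k$ invariant subspaces have the same dimension $i+1$; without it the argument, and indeed the statement, fails) — but the core argument is the same.
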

\begin{proof}
Since we know that $V_k^0\oplus \cdots \oplus V_k^i=\mathrm{Im}(U_i^k C_i)$ (see e.g.\ \cite{dikstein2018boolean}), all we need to do is to show that there exists an $f\in C_i$ such that $\id{\cal S}=U_i^k f$. More specifically, we can write $f=\sum_{U\in X(i)}\alpha_U \id{U}$. Then, we have
\begin{align*}
    (U_i^k f)(V) = ~ \sum_{U\in X(i)} \alpha_U (U_i^k \id{U})(V)
    =~ \frac{1}{R(k,i)}\sum_{U\in X(i),U\subset V} \alpha_U.
\end{align*}
Suppose $\alpha_U=g(\dim(U\cap W))$ for some function $g:\{0,\dots,i\}\rightarrow \R$. We will prove that there exists a unique $g$ that satisfies the desired equations.

Consider the dimension of $V\cap W$. If $V\subset W$, i.e., $\dim(V\cap W)=k$, then for all $U \in X(i)$ s.t.\ $U\subset V$, $\dim(U\cap W)=i$. Then, for all $V\subset W$ we must have:
\begin{align*}
    U_i^k \id{V} = \frac{1}{R(k,i)}\sum_{U\in X(i),U\subset V} g(i) = g(i) = 1.
\end{align*}
On the other hand, consider $V\not\subset W$. In this case we must have $\dim(V\cap W)=k-j$ for some $i \geq j>0$ and further that $\dim(U\cap W)\in \{i-j,\dots, i\}$ for all $U \in X(i)$ s.t.\ $U\subset V$. This gives the following set of linear equations:
\begin{align*}
    U_i^k \id{V} = \sum_{\ell = i-j}^{i-1} c_{j,\ell} g(\ell) + c_{j,i} = 0 ~~~\forall 1\leq j \leq i,
\end{align*}
where $c_{j,\ell} := R(k,i)^{-1}\cdot |\{U\in X(i): U\subset V, \dim(U\cap W)=\ell, \dim(V\cap W)=k-j\}|$ is a constant for all $\ell\in \{i-j, \dots, i\}$. Since this system can be written as a triangular form with positive diagonal, it is invertible and there exists a unique solution for $g(0),\dots, g(i-1)$ as desired. By definition, such a solution must satisfy $f=\sum_{U\in X(i)}g(\dim(U\cap W))\id{U}$, so we have constructed $f \in C_i$ such that $U_i^k f=\id{\cal S}$, which completes the proof of the claim.
\end{proof}
Using this fact, we can show that our level-$i$ inequality is exactly tight.
\begin{proposition}
Let $X=G_q(d,k)$ be the Grassmann poset. For any $i \leq k \in \mathbb{N}$ and $c<1$, there exist large enough $q,d$ and a set $S \subset X(k)$ such that
\[
\langle \id{S}, \id{S,i} \rangle > c{k \choose i}_q\varepsilon_i\langle \id{S}, \id{S} \rangle
\]
where $S$ is $(i,\varepsilon_i)$-pseudorandom.
\end{proposition}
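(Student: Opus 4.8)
The plan is to take the tight example to be a single \emph{co-link}: fix $W \in X(d-i)$ and set $S = \bar{X}_W$. By the co-link lemma just established, $\id{S} \in V_k^0 \oplus \cdots \oplus V_k^i$, so its HD-Level-Set Decomposition is $\id{S} = \id{S,0} + \cdots + \id{S,i}$; the goal is to show that as $q \to \infty$ essentially all of the level-$i$ ``variance budget'' of $\id{S}$ lands on the $i$-th strip, pushing $\langle \id{S},\id{S,i}\rangle$ up against the level-$i$ inequality of \Cref{thm:grassmann-level}.

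First I would record the relevant $q$-combinatorial quantities. A direct count gives $\mathbb{E}[\id{S}] = \alpha := \binom{d-i}{k}_q / \binom{d}{k}_q$ and, for $0 \le \ell \le i$ and $\sigma \in X(\ell)$, $D^k_\ell \id{S}(\sigma) = p_\ell := \binom{d-i-\ell}{k-\ell}_q / \binom{d-\ell}{k-\ell}_q$ when $\sigma \subseteq W$ and $0$ otherwise; since $\Pr_\sigma[\sigma \subseteq W]\cdot p_\ell = \alpha$, this gives $\|D^k_\ell \id{S}\|_2^2 = \alpha p_\ell$. Because $p_i \gg \alpha$ once $q$ is large ($\deg_q p_i = -i(k-i) > -ik = \deg_q \alpha$), the level-$i$ pseudorandomness of $S$ equals $\varepsilon_i := p_i - \alpha$ in both the $\ell_2$ and the $\ell_\infty$ sense, so $S$ is genuinely $(i,\varepsilon_i)$-pseudorandom.

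Next, since the Grassmann poset is sequentially differential ($\gamma = 0$), \Cref{lem:low-level-weight} holds exactly; combining it with $\lambda_0(\widecheck{N}_k^{k-\ell}) = 1$, $\langle \id{S},\id{S,0}\rangle = \mathbb{E}[\id{S}]^2 = \alpha^2$, and the eigenvalue identity $\lambda_j(\widecheck{N}_k^{k-\ell}) = \binom{\ell}{j}_q / \binom{k}{j}_q$ for $q$-eposets (from \Cref{claim:reg-lower}, equivalently a direct telescoping of the Grassmann $\delta$-parameters together with $\rho^k_\ell = \binom{k}{\ell}_q^{-1}$) yields, for every $0 \le \ell \le i$,
\[
\alpha p_\ell \;=\; \sum_{j=0}^{\ell} \binom{\ell}{j}_q \, x_j, \qquad x_j := \langle \id{S}, \id{S,j}\rangle / \textstyle\binom{k}{j}_q .
\]
This is a unitriangular system, so $x_\ell = \alpha p_\ell - \sum_{j<\ell}\binom{\ell}{j}_q x_j$, and I would solve it by induction on $\ell$ tracking only the leading power of $q$. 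Using $\deg_q \binom{M}{L}_q = L(M-L)$ one gets $\deg_q(\alpha p_\ell) = -i(2k-\ell)$, and, under the inductive hypothesis $\deg_q x_j = -i(2k-j)$ for $j<\ell$, the cross term $\binom{\ell}{j}_q x_j$ has $q$-degree exactly $(i-j)(\ell-j)\ge 1$ \emph{smaller} than $\alpha p_\ell$ --- this is precisely where $\ell \le i$ is used. Hence $\alpha p_\ell$ strictly dominates the recursion, so $\deg_q x_\ell = -i(2k-\ell)$ with the same positive leading coefficient as $\alpha p_\ell$; in particular $x_i = \alpha p_i\,(1-o(1))$ as $q \to \infty$.

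Finally, $\langle \id{S}, \id{S,i}\rangle = \binom{k}{i}_q x_i = \binom{k}{i}_q\,\alpha p_i\,(1-o(1))$, whereas the right-hand side of the level-$i$ inequality is $\binom{k}{i}_q \varepsilon_i \langle \id{S},\id{S}\rangle = \binom{k}{i}_q (p_i-\alpha)\,\alpha = \binom{k}{i}_q\,\alpha p_i\,(1-o(1))$, again because $p_i \gg \alpha$; so the ratio of the two tends to $1$ and hence exceeds any fixed $c<1$ once $q$ and $d$ are large enough. The main obstacle is the degree bookkeeping in the inductive step: one must check that every cross term genuinely loses a factor of $q$ and that the surviving leading coefficients stay positive, so that the $(1-o(1))$ factors represent real gains rather than cancellations; everything else is routine $q$-binomial arithmetic.
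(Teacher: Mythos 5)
Your proof is correct and uses the same co-link construction, but the way you extract the level-$i$ weight is genuinely different from the paper's. The paper computes the non-expansion of $\bar X_W$ with respect to the one-step lower walk $U_{k-1}D_k$ directly from a subspace count, getting $\approx q^{-i}$, and then uses a single linear constraint $\sum_{j=0}^i \lambda_j w_j \approx q^{-i}$ together with the geometric eigenvalue decay $\lambda_j(\widecheck{N}_k^1)\approx q^{-j}$ to force all but $O(1/q)$ of the mass onto level $i$. You instead impose the full family of variance constraints at levels $\ell=0,\dots,i$ (equivalently, the non-expansions with respect to $\widecheck{N}_k^{k-\ell}$), which gives a unitriangular system $\alpha p_\ell = \sum_{j\le\ell}\binom{\ell}{j}_q x_j$ in the strip masses, and then solve it by induction on $\ell$, tracking $q$-degrees. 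Your route is more mechanical (the $(i-j)(\ell-j)\ge 1$ degree-gap argument handles everything at once and gives the exact leading coefficient of each $x_\ell$, not just a bound on the residual mass), at the cost of juggling $i+1$ equations rather than the paper's one; the paper's route is shorter but leans on the specific shape of the lower-walk spectrum to control the residual.

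One small thing to tighten: the eigenvalue identity $\lambda_j(\widecheck{N}_k^{k-\ell}) = \binom{\ell}{j}_q/\binom{k}{j}_q$ and $\rho^k_\ell = \binom{k}{\ell}_q^{-1}$ only hold for the \emph{idealized} $q$-eposet $\delta$-parameters $\delta_i = q\frac{q^i-1}{q^{i+1}-1}$. The Grassmann $G_q(d,k)$ is a $(\delta,0)$-eposet for the slightly different $\delta_i = \frac{(q^i-1)(q^{d-i+1}-1)}{(q^{i+1}-1)(q^{d-i}-1)}$, which only converges to the idealized values as $d\to\infty$. So \Cref{lem:low-level-weight} is exact, but the eigenvalues appearing in it carry $q^{-\Theta(d)}$ corrections; you should either work with the exact telescoped $\lambda_j$ (the degree bookkeeping in $q$ is unaffected once $d$ is taken large enough) or state explicitly that you take $d\to\infty$ first and then $q\to\infty$, as the statement permits (``there exist large enough $q,d$''). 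Likewise it is worth one sentence to note that $p_\ell$ is increasing in $\ell$ (since $\deg_q p_\ell = -i(k-\ell)$), so that $\varepsilon_i := p_i - \alpha$ witnesses $(\varepsilon_i,i)$-pseudorandomness at every level $j\le i$, not just $j=i$. Neither of these is a real gap --- they're bookkeeping that your degree argument already implicitly absorbs.
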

\begin{proof}
The proof goes through examining a ``co-link'' of dimension $i$, that is for $W \in X(d-i)$:
\[
\bar{X}_W = \{ V \in X(k) : V \subset W \}.
\]
For simplicity, let $S \coloneqq \bar{X}_W$.
The density of the co-link $S$ in any $j$-link $X_V$ is:
\[
\alpha_j = \frac{(q^{d-i-j}-1)\ldots (q^{d-k+1-i}-1)}{(q^{d-j}-1) \ldots (q^{d-k+1}-1)} = q^{-i(k-j)} + o_{q,d}(1).
\]
The idea is now to examine the (non)-expansion of the co-link with respect to the lower walk $U_{k-1}D_k$. By direct computation, the probability of returning to $\bar{X}_W$ after moving to a $(k-1)$-dimensional subspace is exactly:
\begin{equation}\label{eq:grassmann-exp}
\bar{\Phi}(\bar{X}_W) = \frac{q^{d-i}-q^{k-1}}{q^d-q^{k-1}} = q^{-i} \pm q^{-\Omega(d)}
\end{equation}
On the other hand, by \Cref{prop:pure-eig-vals} the approximate eigenvalues of the lower walk are given by
\[
\lambda_j = \frac{q^{k-j}-1}{q^{k}-1}= q^{-j} - O(q^{-k})
\]
Since a dimension-$i$ co-link has no projection onto levels $i+1$ through $k$, we can also write the non-expansion as:
\begin{align*}
\bar{\Phi}(\bar{X}_W) = \frac{1}{\langle \id{S},\id{S} \rangle}\sum\limits_{j=0}^i q^{-j} \langle \id{S},\id{S,j} \rangle - O(q^{-k})
\end{align*}
for large enough $q,d$. Combining this with our previous formula for the non-expansion in \Cref{eq:grassmann-exp}, we get that there exists a universal constant $c'$ such that for large enough $q$ and $d$, $\id{\bar{X}_{W}}$ cannot have more than a $\frac{c'}{q}$ fraction of its mass on levels $1$ through $i-1$. Finally, noticing that:
\[
{k \choose i}_q \alpha_i = 1 + o_q(1)
\]
we have
\begin{align*}
\frac{\langle \id{S}, \id{S,i}\rangle}{\langle \id{S}, \id{S}\rangle} \geq \frac{q-c'}{q}
\geq c{k \choose i}_q\alpha_i
\end{align*}
since the latter is strictly bounded away from $1$ for large enough $q$. This completes the result since $\bar{X}_W$ is $(\alpha_i,i)$-pseudorandom.
\end{proof}

We'll close the section by giving an immediate application of \Cref{thm:grassmann-level} to the expansion of pseudorandom sets, and briefly discuss connections with the proof of the 2-2 Games Conjecture and algorithms for unique games. Namely, as corollary of \Cref{thm:grassmann-level}, we show that for both the canonical and partial-swap walks, sufficiently pseudorandom functions expand near perfectly.

\begin{corollary}[$q$-eposets Edge-Expansion]\label{cor:grassmann-exp}
Let $(X,\Pi)$ be a $d$-dimensional $\gamma$-$q$-eposet, $S \subset X(k)$ a subset whose indicator function $\id{S}$ is $(\varepsilon_1,\ldots,\varepsilon_\ell)$-pseudorandom. Then the edge expansion of $S$ with respect to the canonical walk $N_k^j$ is bounded by:
\[
\Phi_{\pi_k}(N^j_k,S) \geq 1 - \mathbb{E}[\id{S}] - \sum\limits_{i=1}^\ell \frac{{k+j-i \choose j}_q}{{k+j \choose j}_q}{k \choose i}_q\varepsilon_i  - q^{-(\ell+1)j} - q^{O(k^2)}\gamma
\]
Further, the edge expansion of $S$ with respect to the partial-swap walk $S_k^j$ is bounded by:
\[
\Phi_{\pi_k}(S^j_k,S) \geq 1 - \mathbb{E}[\id{S}] - \sum\limits_{i=1}^\ell {k-j \choose i}_q \varepsilon_i - q^{-(\ell+1)j} - q^{O(k^2)}\gamma
\]
\end{corollary}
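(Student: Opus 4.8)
The plan is to obtain both inequalities by specializing the general edge-expansion bound \Cref{thm:hdx-expansion} to the two walks in question. First I would assemble the walk-dependent data on a $q$-eposet: $N_k^j$ is a single pure (hence self-adjoint) walk with $w(N_k^j)=1$ and $h(N_k^j)=j$, while $S_k^j$ is a self-adjoint HD-walk by \Cref{prop:Grassmann-decomp} with $h(S_k^j)=j$ and $w(S_k^j)\le (q/(q-1))^{\min(j,k-j)+1}$ (the weight bound computed in the proof of \Cref{cor:swap-q-HDX-spectra}). Their approximate eigenvalues are $\lambda_i(N_k^j)=\binom{k+j-i}{j}_q/\binom{k+j}{j}_q$ (the $q$-eposet version of \Cref{cor:canon-q-HDX-spectra}) and $\lambda_i(S_k^j)=\binom{k-j}{i}_q/\binom{k}{i}_q$ (\Cref{cor:swap-q-HDX-spectra}). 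Since $\binom{m}{j}_q$ and $\binom{m}{i}_q$ are increasing in $m$, both eigenvalue sequences are strictly decreasing in $i$, so both walks are monotonic. On a $q$-simplicial complex one has $R(k,i)=\binom{k}{i}_q$ exactly and $\rho^k_i=1/\binom{k}{i}_q$ exactly (as recorded before \Cref{thm:grassmann-level}), so $R_{\max}$, $\rho_{\min}^{-1}$, $w(M)$, $h(M)$ are all at most $q^{O(k^2)}$; and because the $q$-eposet parameters $\delta_i=q\frac{q^i-1}{q^{i+1}-1}$ are bounded away from $0$ and $1$, the quantity $\max_i\{\delta_i(1-\delta_{i-1})\}$ is $\Theta(1)$. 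Hence for $\gamma\le q^{-\Omega(k^2)}$ the eigenstrip intervals of \Cref{thm:approx-ortho} are disjoint and the constant $c$ in \Cref{thm:hdx-expansion} is $q^{O(k^2)}$.

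With this in hand I would choose, for each $M\in\{N_k^j,S_k^j\}$, the threshold $\eta:=\lambda_{\ell+1}(M)$ (or any $\eta\in(0,\lambda_\ell(M))$ in the degenerate case $\ell=k$). Strict monotonicity then forces $R_\eta(M)=\ell+1$, so the parameter $r=R_\eta(M)-1$ of \Cref{thm:hdx-expansion} equals $\ell$, matching the pseudorandomness hypothesis $(\varepsilon_1,\dots,\varepsilon_\ell)$. The only arithmetic input needed is the elementary bound $\frac{q^a-1}{q^b-1}\le q^{a-b}$ for integers $a\le b$ (equivalently $q^{a-b}(q^b-1)=q^a-q^{a-b}\ge q^a-1$). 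Applying it factor-by-factor to the telescoping products $\lambda_{\ell+1}(N_k^j)=\prod_{s=1}^{j}\frac{q^{k+j-\ell-s}-1}{q^{k+j-s+1}-1}$, in which every factor has exponent gap $\ell+1$, and $\lambda_{\ell+1}(S_k^j)=\prod_{s=0}^{\ell}\frac{q^{k-j-s}-1}{q^{k-s}-1}$, in which every factor has exponent gap $j$, yields $\eta\le q^{-(\ell+1)j}$ in both cases.

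Finally I would substitute into \Cref{thm:hdx-expansion}. Since $\id{S}$ is boolean, the level-$i$ inequality invoked there is exactly the boolean case of \Cref{thm:grassmann-level} (equivalently \Cref{thm:hdx-expansion} with $R(k,i)=\binom{k}{i}_q$), so the $i$-th summand contributes at most $\lambda_i(M)\binom{k}{i}_q\varepsilon_i$ up to $q^{O(k^2)}\gamma$ error. Using $\lambda_i(N_k^j)\binom{k}{i}_q=\frac{\binom{k+j-i}{j}_q}{\binom{k+j}{j}_q}\binom{k}{i}_q$ and $\lambda_i(S_k^j)\binom{k}{i}_q=\binom{k-j}{i}_q$, discarding the harmless $-\eta$ inside the level-$i$ sum, and bounding the remaining $(1-\mathbb{E}[\id{S}])\eta\le\eta\le q^{-(\ell+1)j}$, collapses \Cref{thm:hdx-expansion} into precisely the two displayed inequalities with cumulative error term $q^{O(k^2)}\gamma$.

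I expect the main obstacle to be bookkeeping rather than any new idea: one must check carefully that every walk-dependent constant entering \Cref{thm:hdx-expansion} and the level-$i$ inequality ($w$, $h$, $R_{\max}$, $\rho_{\min}^{-1}$, and the inter-strip gaps controlling when the $\gamma$-error is absorbable) is genuinely $q^{O(k^2)}$ for these specific walks, so that all errors merge into the single clean $q^{O(k^2)}\gamma$, and that the threshold choice $\eta=\lambda_{\ell+1}(M)$ yields the exact bound $q^{-(\ell+1)j}$ appearing in the statement rather than merely $\Theta(q^{-(\ell+1)j})$; both points are handled by the $q$-binomial inequality above.
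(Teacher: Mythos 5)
The paper omits a proof of this corollary (it is stated as an immediate consequence after \Cref{thm:grassmann-level} with no argument given), so the comparison is against the intended route rather than a written one. Your approach — specializing \Cref{thm:hdx-expansion} to $M \in \{N_k^j, S_k^j\}$, reading off $\lambda_i(M)$ and $R(k,i)=\binom{k}{i}_q$ from the $q$-eposet computations, and picking $\eta$ near $\lambda_{\ell+1}(M)$ — is the natural and almost certainly intended derivation, and your arithmetic checks out: the telescoping factor-by-factor bound $\frac{q^a-1}{q^b-1}\le q^{a-b}$ does give $\lambda_{\ell+1}(N_k^j)\le q^{-(\ell+1)j}$ and $\lambda_{\ell+1}(S_k^j)\le q^{-(\ell+1)j}$, the identities $\lambda_i(N_k^j)\binom{k}{i}_q = \frac{\binom{k+j-i}{j}_q}{\binom{k+j}{j}_q}\binom{k}{i}_q$ and $\lambda_i(S_k^j)\binom{k}{i}_q = \binom{k-j}{i}_q$ reproduce the stated summands, both eigenvalue sequences are strictly decreasing so the walks are monotonic, and the constants $R_{\max}, w(M), h(M), \rho_{\min}^{-1}$ are indeed $q^{O(k^2)}$ with $\max_i\{\delta_i(1-\delta_{i-1})\}=\Theta(1)$.

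One bookkeeping point you flag but don't fully resolve: setting $\eta = \lambda_{\ell+1}(M)$ does \emph{not} immediately force $R_\eta(M)=\ell+1$, because the actual eigenvalues in the $(\ell+1)$-th eigenstrip occupy an interval $[\lambda_{\ell+1}(M)-c\gamma,\lambda_{\ell+1}(M)+c\gamma]$, and some may exceed $\lambda_{\ell+1}(M)$, which would make $R_\eta(M)=\ell+2$ and require a pseudorandomness parameter $\varepsilon_{\ell+1}$ you don't have. The fix is trivial but should be stated: take $\eta$ to be the top of the $(\ell+1)$-th eigenstrip, i.e.\ $\eta = \lambda_{\ell+1}(M) + c\gamma$ where $c\gamma$ is the eigenstrip half-width from \Cref{thm:approx-ortho}; then $R_\eta(M)=\ell+1$ by disjointness of the strips, and the extra $c\gamma$ contribution to the $(1-\alpha)\eta$ and $\sum_i(\lambda_i-\eta)R(k,i)\varepsilon_i$ terms is absorbed into the $q^{O(k^2)}\gamma$ error. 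With that adjustment the proof is complete.
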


Note that $S^j_k$ on $q$-eposets is a generalization of the Grassmann Graphs (and are equivalent when $X$ is the Grassmann Poset). While our definition of pseudorandomness is weaker than that of \cite{subhash2018pseudorandom} and therefore necessarily depends on the dimension $k$, we take the above as evidence that the framework of expanding posets may be important for making further progress on the Unique Games Conjecture. In particular, combined with recent works removing this $k$-dependence on simplicial complexes \cite{bafna2021hypercontractivity,gur2021hypercontractivity}, it seems plausible that the framework of expanding posets may lead to a more general understanding of the structure underlying the unique games conjecture.
\bibliographystyle{unsrtnat}  
\bibliography{references} 
\newpage
\appendix
\section{Eposet Parameters and Regularity}\label{app:regularity}
In this appendix we will discuss connections between notions of regularity, the averaging operators, and eposet parameters. To start, we'll show that downward and middle regularity (which are defined only on adjacent levels of the poset) imply extended regularity between any two levels.
\begin{proposition}
Let $(X,\Pi)$ be a $d$-dimensional regular measured poset. Then for any $i < k \leq d$, there exist regularity constant $R(k,i)$ such that for any $x_k \in X(k)$, there are exactly $R(k,i)$ elements $x_i \in X(i)$ such that $x_k > x_i$.
\end{proposition}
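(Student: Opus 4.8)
The plan is to count saturated (rank-by-rank) chains descending from a fixed top element to level $i$ in two different ways, once using downward regularity and once using middle regularity, and to observe that equating the two counts pins down the number of level-$i$ elements below the top element. Fix an arbitrary $x_k \in X(k)$ and let $\mathcal{C}(x_k)$ be the set of chains $x_k \gtrdot x_{k-1} \gtrdot \cdots \gtrdot x_{i+1} \gtrdot x_i$ with each $x_j \in X(j)$.

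\textbf{The two counts.} First I would compute $|\mathcal{C}(x_k)|$ directly from downward regularity: building a chain from the top down, $x_k$ covers exactly $R(k)$ elements of $X(k-1)$, each of those covers exactly $R(k-1)$ elements of $X(k-2)$, and so on, so
\[
|\mathcal{C}(x_k)| = \prod_{j=i+1}^{k} R(j),
\]
which is manifestly independent of $x_k$. Second, I would partition $\mathcal{C}(x_k)$ according to the bottom element $x_i$ of each chain. By transitivity of ``$<$'', every chain in $\mathcal{C}(x_k)$ has a bottom element satisfying $x_i < x_k$, and middle regularity says that for each such $x_i \in X(i)$ there are exactly $m(k,i)$ chains in $\mathcal{C}(x_k)$ ending at it. Writing $n(x_k)$ for the number of $x_i \in X(i)$ with $x_i < x_k$, this gives $|\mathcal{C}(x_k)| = n(x_k)\, m(k,i)$.

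\textbf{Conclusion.} Combining the two expressions yields $n(x_k) = \frac{1}{m(k,i)}\prod_{j=i+1}^{k} R(j)$, a value independent of the choice of $x_k$, so I would simply define $R(k,i)$ to be this common value; it is consistent with downward regularity when $k = i+1$ (where $m(i+1,i)=1$) and with the conventions $R(i,i)=1$ and $R(j,i)=0$ for $j<i$. I do not anticipate a genuine obstacle here; the only point deserving a sentence of care is the legitimacy of dividing by $m(k,i)$, i.e.\ that $m(k,i) \geq 1$, which holds because the poset is graded and pure, so any $x_i < x_k$ with rank difference $k-i$ lies on at least one saturated chain.
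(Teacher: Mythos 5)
Your proof is correct and matches the paper's argument exactly: both count saturated chains from $x_k$ down to level $i$ once via downward regularity (giving $\prod_{j=i+1}^k R(j)$) and once by grouping chains by their bottom element via middle regularity (giving $n(x_k)\,m(k,i)$), then divide. Your extra remark that $m(k,i)\geq 1$ because any $x_i<x_k$ lies on some saturated chain is a sensible sanity check that the paper leaves implicit.
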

\begin{proof}
Given any element $x_k \in X(k)$, downward regularity promises there are exactly $\prod_{j=i+1}^k R(j)$ unique chains $x_k < x_{k-1} < \ldots < x_{i+1} < x_i$. By middle regularity, any fixed $x_i \in X(i)$ which appears in this fashion appears in exactly $m(k,i)$ chains. Noting that $x_i < x_k$ if and only if $x_i$ appears in such a chain, the total number of $x_i < x_k$ must be exactly:
\[
R(k,i) = \frac{\prod_{j=i+1}^k R(j)}{m(k,i)}.
\]
\end{proof}
A similar argument shows that regularity allows the up operators to compose in the natural way.
\begin{proposition}
Let $(X,\Pi)$ be a $d$-dimensional regular measured poset. Then for any $i < k \leq d$ we have:
\[
U^k_if(x_k) = \frac{1}{R(k,i)}\sum\limits_{x_i<x_k}f(x_i)
\]
\end{proposition}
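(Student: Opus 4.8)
The plan is to unfold the composition $U^k_i = U_{k-1}\circ\cdots\circ U_i$ and then regroup the resulting sum by its bottom element, using the two regularity conditions exactly as in the proof of the preceding proposition. First I would argue by induction on $k-i$ (the base case $k=i+1$ being the definition of $U_i$) that for any $x_k\in X(k)$,
\[
U^k_i f(x_k) \;=\; \frac{1}{\prod_{j=i+1}^{k} R(j)}\sum_{\substack{x_k \gtrdot x_{k-1}\gtrdot\cdots\gtrdot x_{i}}} f(x_i),
\]
where the sum ranges over all saturated chains descending from $x_k$ to level $i$. This is immediate from the definition $U_j g(y)=\frac{1}{R(j)}\sum_{x\lessdot y} g(x)$ together with downward regularity, which guarantees that the product of normalization factors collected along the way is exactly $\prod_{j=i+1}^{k} R(j)$ and that this is the number of such chains.

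Next I would regroup the sum over chains according to the bottom element $x_i$. Since $X$ is graded (the rank function preserves cover relations), $x_i < x_k$ if and only if $x_i$ occurs as the bottom of at least one saturated chain below $x_k$; and by \emph{middle regularity} the number of such chains with prescribed endpoints $x_k > x_i$ is exactly $m(k,i)$, independent of the choice of $x_i$ (here using purity so that all maximal chains behave uniformly). Hence
\[
\sum_{\substack{x_k\gtrdot x_{k-1}\gtrdot\cdots\gtrdot x_i}} f(x_i) \;=\; m(k,i)\sum_{x_i < x_k} f(x_i).
\]

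Finally I would substitute the identity $R(k,i)=\dfrac{\prod_{j=i+1}^{k}R(j)}{m(k,i)}$ established in the previous proposition, which collapses the two normalization constants into $R(k,i)$ and yields
\[
U^k_i f(x_k) \;=\; \frac{1}{R(k,i)}\sum_{x_i < x_k} f(x_i),
\]
as claimed. I do not expect a genuine obstacle here: the argument is essentially bookkeeping, and the only point requiring a moment's care is confirming that \emph{every} $x_i$ below $x_k$ (not just those reachable by one particular chain) is accounted for with the correct multiplicity $m(k,i)$ — this is precisely what middle regularity, combined with gradedness and purity, delivers.
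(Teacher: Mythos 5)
Your proof is correct and follows essentially the same route as the paper: expand the composition $U^k_i$ into an iterated sum over saturated chains with normalization $\prod_{j=i+1}^k R(j)$, regroup by the bottom element $x_i$ using middle regularity to get a factor of $m(k,i)$, and collapse via the identity $R(k,i)=\prod_{j=i+1}^k R(j)/m(k,i)$. The only (minor, welcome) difference is that you make explicit the gradedness argument that every $x_i < x_k$ is reached by at least one saturated chain, a point the paper leaves implicit.
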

\begin{proof}
Expanding out $U^k_i f(y)$ gives:
\begin{align*}
    U^k_i f(x_k) = \frac{1}{\prod\limits_{j=i+1}^kR(j)} \sum\limits_{x_{k-1}<x_k}\ldots\sum\limits_{x_{i}<x_{i+1}} f(x_i)
\end{align*}
The number of times each $x_i$ appears in this sum is exactly the number of chains starting at $x_k$ and ending at $x_i$, so by middle regularity:
\begin{align*}
    \frac{1}{\prod\limits_{j=i+1}^k R(j)} \sum\limits_{x_{k-1}<x_k}\ldots\sum\limits_{x_{i+1}<x_i} f(x_i) &= \frac{m(k,i)}{\prod\limits_{j=i+1}^k R(j)}\sum\limits_{x_{i}<x_{k}} f(x_i)\\
    &=\frac{1}{R(k,i)}\sum\limits_{x_{i}<x_{k}} f(x_i).
\end{align*}
as desired.
\end{proof}

We'll now take a look at the connection between eposet parameters and regularity. It is convenient to first start with a lemma stating that non-laziness is equivalent to bounding the maximum transition probability of the lower walk.
\begin{lemma}\label{lemma:anti-concentration}
Let $(X,\Pi)$ be a $d$-dimensional measured poset. Then for any $0 < i \leq d$, the maximum laziness of the lower walk is also the maximum transition probability:
\[
\max_{\sigma \in X(i)}\left\{ \id{\sigma}^T U_{i-1}D_i \id{\sigma}  \right\} = \max_{\sigma,\tau \in X(i)}\left\{\id{\sigma}^T U_{i-1}D_i\id{\tau}\right\}.
\]
\end{lemma}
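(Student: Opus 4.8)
The plan is to expand $\id{\sigma}^T U_{i-1}D_i \id{\tau}$ explicitly using the definitions of the averaging operators and to observe that every entry of the lower-walk matrix is dominated by a diagonal entry. First I would unfold $D_i$: for a fixed $\tau \in X(i)$, the function $D_i\id{\tau} \in C_{i-1}$ is nonnegative and is supported precisely on the set of lower neighbors $\{y \in X(i-1): y \lessdot \tau\}$, with $(D_i\id{\tau})(y) = \pi_i(\tau)/\pi_i(X_y)$ for such $y$ (and $0$ otherwise). Applying $U_{i-1}$ and using downward regularity — so that the averaging weight $1/R(i,i-1)$ attached to each lower neighbor does not depend on the $i$-face — I get, for any $\sigma \in X(i)$,
\[
\id{\sigma}^T U_{i-1}D_i \id{\tau} = (U_{i-1}D_i\id{\tau})(\sigma) = \frac{1}{R(i,i-1)}\sum_{y \lessdot \sigma}(D_i\id{\tau})(y) = \frac{1}{R(i,i-1)}\sum_{\substack{y \lessdot \sigma\\ y \lessdot \tau}}(D_i\id{\tau})(y),
\]
where the last step simply drops the vanishing summands.

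The key step is then immediate. Since each term $(D_i\id{\tau})(y)$ is nonnegative and $\{y: y\lessdot\sigma,\ y\lessdot\tau\} \subseteq \{y: y\lessdot\tau\}$, the right-hand side above is a sub-sum of the corresponding quantity with $\sigma$ replaced by $\tau$, namely $\id{\tau}^T U_{i-1}D_i\id{\tau} = \frac{1}{R(i,i-1)}\sum_{y\lessdot\tau}(D_i\id{\tau})(y)$. Hence $\id{\sigma}^T U_{i-1}D_i\id{\tau} \le \id{\tau}^T U_{i-1}D_i\id{\tau}$ for all $\sigma,\tau\in X(i)$. Taking the maximum over all ordered pairs gives $\max_{\sigma,\tau}\{\id{\sigma}^T U_{i-1}D_i\id{\tau}\} \le \max_{\tau}\{\id{\tau}^T U_{i-1}D_i\id{\tau}\}$, while the reverse inequality is trivial because the diagonal pairs $(\tau,\tau)$ are themselves among the pairs being maximized; this yields the claimed equality.

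I do not anticipate any genuine obstacle — the argument is a one-line sub-sum comparison. The only subtlety worth flagging is the role of downward regularity: it is exactly what makes the coefficient of each lower neighbor in $U_{i-1}$ uniform (equal to $1/R(i,i-1)$), so that restricting the sum from the lower neighbors of $\tau$ to the common lower neighbors of $\sigma$ and $\tau$ can only delete nonnegative terms. One could alternatively note that $U_{i-1}D_i = U_{i-1}U_{i-1}^{*}$ is (a rescaling of) a Gram matrix and invoke Cauchy–Schwarz on its entries, but that route passes through an extra $\sqrt{\pi_i(\sigma)/\pi_i(\tau)}$ factor and so needs the reversibility of the walk to clean up; the direct combinatorial argument above avoids this entirely, so that is what I would write.
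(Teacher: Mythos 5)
Your proof is correct and follows the same basic route as the paper: expand the off-diagonal entry of $U_{i-1}D_i$ and observe that it is a sub-sum of a diagonal entry. One detail worth flagging is that you compare $\id{\sigma}^T U_{i-1}D_i\id{\tau}$ against the diagonal entry at $\tau$, namely $\id{\tau}^T U_{i-1}D_i\id{\tau}$, and this is the right comparison: every summand in your expansion of $\id{\sigma}^T U_{i-1}D_i\id{\tau}$ carries the same factor $\pi_i(\tau)/\pi_i(X_y)$, so shrinking the index set from $\{y:y\lessdot\tau\}$ to $\{y:y\lessdot\sigma,\;y\lessdot\tau\}$ only drops nonnegative terms. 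The paper's displayed chain instead ends at $\id{\sigma}^T U_{i-1}D_i\id{\sigma}$, whose summands carry $\pi_i(\sigma)$ rather than $\pi_i(\tau)$; the sub-sum argument does not give that bound directly, and it can actually fail when $\pi_i(\tau)\gg\pi_i(\sigma)$ (e.g.\ a small three-element example on two levels already violates it), so the correct target really is the diagonal at $\tau$. Your version also avoids the simplicial-complex-specific notation $\pi_\tau(\sigma\setminus\tau)$, making the argument apply verbatim to any downward-regular graded poset. In short: same idea, but a cleaner and more careful rendition that also fixes a small slip in the paper's chain of inequalities.
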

\begin{proof}
Assume that $\tau \neq \sigma$. Then the transition probability from $\tau$ to $\sigma$ is exactly
\begin{align*}
\id{\sigma}^T U_{i-1}D_i\id{\tau} &= \frac{\pi_\tau(\sigma \setminus \tau)}{R(i,i-1)}\\
& \leq \frac{1}{R(i,i-1)}\sum\limits_{\tau \lessdot \sigma} \pi_\tau(\sigma \setminus \tau)\\
&= \id{\sigma}^\tau U_{i-1}D_i\id{\sigma},
\end{align*}
which implies the result.
\end{proof}
We now prove our two claims relating the eposet parameters to regularity.
\begin{claim}
Let $(X,\Pi)$ be a $d$-dimensional $(\delta,\gamma)$-eposet. 
Then for every $1 \leq k \leq d$
and $0 \leq i \leq k$, the following approximate relation between the eposet and regularity parameters holds:
\[
\lambda_i(N_k^1) \in  \frac{R(k,i)}{R(k+1,i)} \pm \left(\gamma^k_{k-i} + R(k,i)\delta^k_{k-i}\gamma\right)
\]
where we recall $\lambda_i(N_k^1)=1-\prod\limits^{k}_{j=i} \delta_j$.
\end{claim}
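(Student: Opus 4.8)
The plan is to probe the upper walk $N_k^1 = D_{k+1}U_k$ with a single $i$-link $\id{X_\tau}$, $\tau \in X(i)$, and compute the Rayleigh quotient $\langle \id{X_\tau}, N_k^1 \id{X_\tau}\rangle / \langle \id{X_\tau}, \id{X_\tau}\rangle$ two different ways. The first computation is purely combinatorial and is \emph{exact}: it produces the ratio $R(k,i)/R(k+1,i)$. The second uses \Cref{lemma:DU-UD} to show that $\id{X_\tau}$ is an approximate eigenvector of $N_k^1$ with eigenvalue $\lambda_i(N_k^1) = 1-\delta^k_{k-i}$. Equating the two outputs yields the claimed approximate identity.

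\textbf{Step 1 (combinatorial side).} Since $U_k$ and $D_{k+1}$ are adjoint, $\langle \id{X_\tau}, N_k^1\id{X_\tau}\rangle = \langle U_k\id{X_\tau}, U_k\id{X_\tau}\rangle$. A middle-regularity count shows that a face $z \in X(k+1)$ lies above exactly $m(k{+}1,i)/m(k,i)$ elements $y \in X(k)$ with $\tau < y \lessdot z$ when $z > \tau$, and above none otherwise; combined with the product formula $R(k,i) = \prod_{j=i+1}^k R(j)/m(k,i)$ established in \Cref{app:regularity}, this gives $U_k\id{X_\tau} = \tfrac{R(k,i)}{R(k+1,i)}\id{X^{k+1}_\tau}$, where $X^{k+1}_\tau$ is the rank-$(k{+}1)$ link of $\tau$. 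Evaluating $\|\id{X^{k'}_\tau}\|^2 = \pi_{k'}(X_\tau) = R(k',i)\pi_i(\tau)$ for $k'=k,k+1$ then yields exactly
\[
\frac{\langle \id{X_\tau}, N_k^1\id{X_\tau}\rangle}{\langle \id{X_\tau}, \id{X_\tau}\rangle} \;=\; \frac{R(k,i)}{R(k+1,i)}.
\]

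\textbf{Step 2 (spectral side).} Write $\id{X_\tau} = R(k,i)\,U^k_i\id{\tau}$ and $\id{X^{k+1}_\tau} = R(k{+}1,i)\,U^{k+1}_i\id\tau$, so that $N_k^1\id{X_\tau} = R(k,i)\,D_{k+1}U^{k+1}_i\id\tau$. Feeding $\id\tau$ into \Cref{lemma:DU-UD} with $j = k-i$ and multiplying through by $R(k,i)$ gives
\[
N_k^1\id{X_\tau} \;=\; (1-\delta^k_{k-i})\,\id{X_\tau} \;+\; R(k,i)\delta^k_{k-i}\,U^k_{i-1}D_i\id\tau \;+\; R(k,i)\,E\id\tau, \qquad \|E\| \le \gamma^k_{k-i}.
\]
Pairing with $\id{X_\tau}$, dividing by $\|\id{X_\tau}\|^2 = R(k,i)\pi_i(\tau)$, and substituting the exact value from Step 1 produces
\[
\frac{R(k,i)}{R(k+1,i)} \;=\; \bigl(1-\delta^k_{k-i}\bigr) \;+\; \delta^k_{k-i}\,\frac{\langle\id{X_\tau}, U^k_{i-1}D_i\id\tau\rangle}{\pi_i(\tau)} \;+\; \frac{\langle\id{X_\tau}, E\id\tau\rangle}{\pi_i(\tau)},
\]
and it remains only to absorb the last two terms into the stated error.

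\textbf{Step 3 (error bookkeeping), and the main obstacle.} The $E$-term is handled by Cauchy--Schwarz, $|\langle \id{X_\tau}, E\id\tau\rangle| \le \|\id{X_\tau}\|\cdot\gamma^k_{k-i}\|\id\tau\|$, contributing at the scale of $\gamma^k_{k-i}$. For the cross term, rewrite $\langle\id{X_\tau}, U^k_{i-1}D_i\id\tau\rangle = R(k,i)\,\langle D_i N_i^{k-i}\id\tau,\, D_i\id\tau\rangle$ using $U^k_{i-1} = U^k_iU_{i-1}$, self-adjointness of $N_i^{k-i} = D^k_iU^k_i$, and adjunction, then bound it via the facts that all $U$'s and $D$'s are $\ell_2$-contractions together with the key estimate $\|D_i\id\tau\|^2 = \pi_i(\tau)\cdot(U_{i-1}D_i\id\tau)(\tau) \le \gamma\,\pi_i(\tau)$ --- i.e.\ the $\gamma$-non-laziness of the poset (\Cref{def:non-lazy}), which says the self-loop probability of the lower walk at $\tau$ is at most $\gamma$. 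This is the crux of the proof: the naive Cauchy--Schwarz bounds carry spurious $\sqrt{R(k,i)}$ (and $\sqrt\gamma$) factors, so getting down to the claimed error $\gamma^k_{k-i} + R(k,i)\delta^k_{k-i}\gamma$ (which is $O(R(k,i)\gamma)$) requires squeezing these using non-laziness and the approximate orthogonality of the HD-Level-Set Decomposition (\Cref{lemma:fvsg-body}). One cannot instead invoke \Cref{lemma:link-projection} (that links concentrate on their own level), since that lemma is proved \emph{using} the present claim; the argument must instead extract just the amount of level-$i$ concentration of $\id\tau$ it needs directly from the non-laziness bound on $\|D_i\id\tau\|$. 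Once both error terms are controlled, rearranging the last display and recalling $\lambda_i(N_k^1) = 1-\delta^k_{k-i}$ completes the proof.
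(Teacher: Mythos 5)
Your Steps 1 and 2 are correct and are structurally quite close to the paper's argument: the paper also extracts the exact value $R(k,i)/R(k+1,i)$ and the approximate value $1-\delta^k_{k-i}$ from the same operator identity, though it reads both off as the \emph{laziness probability} $\Pr[\text{output }i\text{-face lies below the starting }k\text{-face}]$ rather than as a Rayleigh quotient against $\id{X_\tau}$; both readings agree because, by regularity, the quantity is constant over $\tau$.

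The gap is entirely in Step 3, and it is real. With the coarse form of \Cref{lemma:DU-UD} that you use, Cauchy--Schwarz on the $E$-term gives $\frac{|\langle\id{X_\tau},E\id\tau\rangle|}{\pi_i(\tau)}\leq\frac{\norm{\id{X_\tau}}\norm{\id\tau}}{\pi_i(\tau)}\gamma^k_{k-i}=\sqrt{R(k,i)}\,\gamma^k_{k-i}$, not $\gamma^k_{k-i}$ (so the opening sentence of Step 3 is not correct as stated); and the contraction-plus-$\norm{D_i\id\tau}^2\leq\gamma\pi_i(\tau)$ argument on the cross term gives $\delta^k_{k-i}R(k,i)\sqrt\gamma$, not $\delta^k_{k-i}R(k,i)\gamma$. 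You name these losses but your proposed fix --- ``extract the level-$i$ concentration of $\id\tau$ from the non-laziness bound on $\norm{D_i\id\tau}$'' --- does not work as written. The diagonal bound $\norm{D_i\id\tau}^2=\pi_i(\tau)\cdot\widecheck{N}^1_i(\tau,\tau)\leq\gamma\pi_i(\tau)$ only controls one entry of the lower-walk kernel; what is needed for the cross term is the \emph{uniform} bound $\widecheck{N}^1_i(\sigma,\tau)\leq\gamma$ for all $\sigma$ (this is \Cref{lemma:anti-concentration}, which upgrades diagonal non-laziness to all entries). With that, an $\ell_1$--$\ell_\infty$ argument closes it: $\langle N_i^{k-i}\id\tau,\widecheck{N}^1_i\id\tau\rangle=\sum_\sigma\pi_i(\sigma)\bigl(N_i^{k-i}\id\tau\bigr)(\sigma)\,\widecheck{N}^1_i(\sigma,\tau)\leq\gamma\,\mathbb{E}[N_i^{k-i}\id\tau]=\gamma\pi_i(\tau)$, no square root.

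The $E$-term cannot be salvaged from the operator-norm bound alone; the $\sqrt{R(k,i)}$ loss is inherent to pairing $\norm{\id{X_\tau}}$ against $\norm{\id\tau}$. This is the bigger structural gap: the paper's proof uses not \Cref{lemma:DU-UD} but the \emph{refined} identity from \cite[Lemma A.1]{bafna2020high}, $D_{k+1}U^{k+1}_i=(1-\delta^k_{k-i})U^k_i+\delta^k_{k-i}U^k_{i-1}D_i+\sum_{j}U^k_{k-j-1}\Gamma_jU^{k-j-1}_i$ with $\sum_j\norm{\Gamma_j}\leq\gamma^k_{k-i}$, and handles each $\Gamma_j$ via the probabilistic interpretation. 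In your framework the same refinement works: writing $\id{X_\tau}=\tfrac{R(k,i)}{R(k-j-1,i)}U^k_{k-j-1}\id{X^{k-j-1}_\tau}$, moving $U^k_{k-j-1}$ across as $D^k_{k-j-1}$, and applying Cauchy--Schwarz at level $k-j-1$ gives a contribution $\norm{\Gamma_j}/R(k-j-1,i)\leq\norm{\Gamma_j}$ per term, summing to $\gamma^k_{k-i}$. Without this refinement, Step 3 does not reach the claimed error, and your proof does not close.
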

\begin{proof}
One of our main analytical tools so far has been the relation between the upper and lower walks given in  \Cref{lemma:DU-UD}:
\[
\norm{D_{k+1}U^{k+1}_{i} - (1-\delta^k_{k-i})U^{k}_{i} - \delta_{k-i}^k U^{k}_{i-1}D_{i}} \leq \gamma^k_{k-i}.
\]
For this result, we'll actually need a refinement of this result given in \cite[Lemma A.1]{bafna2020high}:\footnote{Formally the result is only stated for simplicial complexes in \cite{bafna2020high}, but the same proof holds for eposets.}
\begin{equation}\label{eq:UD-DU-refined}
D_{k+1}U^{k+1}_{i} = (1-\delta^k_{k-i})U^{k}_{i} + \delta_{k-i}^k U^{k}_{i-1}D_{i} + \sum\limits_{j=-1}^{k-i-1}U^k_{k-j-1}\Gamma_jU^{k-j-1}_{i}
\end{equation}
where $\sum \norm{\Gamma_j} \leq \gamma^k_{k-i}$. The idea is now to examine the ``laziness'' of the two sides of this equality. In other words, given a starting $k$-face $\tau$, what is the probability that the resulting $i$-face $\sigma$ satisfies $\sigma < \tau$? 

To start, we'll argue that the laziness of the lefthand side is exactly $\frac{R(k,i)}{R(k+1,i)}$. This follows from noting that there are $R(k,i)$ $i$-faces $\sigma$ satisfying $\sigma < \tau$, and $R(k+1,i)$ options after taking the initial up-step of the walk to $\tau' > \tau$. After the down-steps, the resulting $i$-face is uniformly distributed over these $R(k+1,i)$ options $\sigma < \tau'$, and since every $\sigma < \tau < \tau'$, all original $R(k,i)$ lazy options are still viable after the up-step to $\tau'$.

Analyzing the right-hand side is a bit trickier. The initial term $(1-\delta^k_{k-i})U^k_i$ is completely lazy, so it contributes exactly $(1-\delta^k_{k-i})=\lambda_i(N_k^1)$. We'll break the second term into two steps: walking from $X(k)$ to $X(i)$ via $U^k_i$, then from $X(i)$ to $X(i)$ via the lower walk $U_{i-1}D_i$. Starting at a $k$-face $\tau$, notice that after applying the down step $U^k_i$ we are uniformly spread over $\sigma < \tau$. Computing the laziness then amounts to asking what the probability of staying in this set is after the application of $UD$, which one can naively bound by the maximum transition probability times the set size $R(k,i)$. By non-laziness, the maximum transition probability is at most $\gamma$ (see \Cref{lemma:anti-concentration}).

The third term can be handled similarly. The first down step $U^k_{k-j-1}$ spreads $\tau$ evenly across $\sigma < \tau$ in $X(k-j-1)$. The resulting $i$-face $\sigma'$ after applying $\Gamma_j U^{k-j-1}_i$ is less than $\tau$ if and only if the intermediary $(k-j-1)$-face after applying $\Gamma_j$ is less than $\tau$, which is bounded by the spectral norm $\norm{\Gamma_j}$.\footnote{We note that $\Gamma_j$ is not stochastic, but it is self-adjoint and an easy exercise to see that the analogous reasoning still holds.}

Putting everything together, since both sides of \Cref{eq:UD-DU-refined} must have equivalent laziness, we get that $\lambda_i(N_k^1)$ must be within $\sum \norm{\Gamma_j} + \delta^k_{k-i}R(k,i)\gamma$ as desired.
\end{proof}
\Cref{claim:reg} and \Cref{claim:reg-lower} can both be proving an analogous theorem for the upper walk.
\begin{claim}[Regularity and Upper Walk Spectrum]
Let $(X,\Pi)$ be a $d$-dimensional $(\delta,\gamma)$-eposet. Then for any $j \leq i \leq k < d$, we have:
\[
\lambda_j(N^{k-i}_i) \in \frac{R(i,j)}{R(k,i)} \pm err,
\]
where $err \leq O\left(\frac{i^4k^2R_{\text{max}}}{\delta_i(1-\delta_{i-1})}\gamma\right)$.
\end{claim}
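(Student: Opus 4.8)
The plan is to reduce to the single-step upper walk, for which \Cref{claim:regularity2} already relates the approximate eigenvalue to the poset's regularity, via a telescoping identity. The starting point is the observation that the approximate eigenvalue of the upper canonical walk $N^{k-i}_i$ on the $j$-th level of the HD-Level-Set Decomposition \emph{factors} over the single-step upper walks:
\[
\lambda_j\bigl(N^{k-i}_i\bigr) \;=\; \prod_{t=i}^{k-1}\lambda_j\bigl(N^1_t\bigr).
\]
Indeed, $N^{k-i}_i = D^k_i U^k_i$ is a pure walk of height $k-i$ with its $k-i$ down-steps in positions $k-i+1,\dots,2(k-i)$, so the eigenvalue formula of \Cref{prop:pure-eig-vals} (computed exactly as in \Cref{cor:lower}, or via the fact that $\widehat N^{k-i}_i$ and $\widecheck N^{k-i}_k$ share their nonzero spectrum) evaluates to $\prod_{s=1}^{k-i}(1-\delta^{k-s}_{k-s-j})$; reindexing $t=k-s$ and using $\lambda_j(N^1_t)=1-\delta^{t}_{t-j}$ from \Cref{claim:regularity2} gives the displayed identity. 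One could instead prove this by induction on $k-i$, peeling off one step $D_kU_{k-1}\approx(1-\delta_{k-1})I+\delta_{k-1}U_{k-2}D_{k-1}$ at a time via the eposet relation, but the direct computation is cleaner.

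Given the factorization, I would apply \Cref{claim:regularity2} to each factor: $\lambda_j(N^1_t) = \frac{R(t,j)}{R(t+1,j)} + \varepsilon_t$ with $|\varepsilon_t| \le \gamma^t_{t-j} + R(t,j)\delta^t_{t-j}\gamma$. The product of the main terms telescopes,
\[
\prod_{t=i}^{k-1}\frac{R(t,j)}{R(t+1,j)} \;=\; \frac{R(i,j)}{R(k,j)},
\]
which is the target value (the statement as written has $R(k,i)$ in the denominator, but $R(k,j)$ is what the telescoping and the downstream applications \Cref{claim:reg}, \Cref{claim:reg-lower} require; the specialization $j=i$ then reads $\lambda_i(N^{k-i}_i)\approx 1/R(k,i)$). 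To control the deviation, write $a_t=\frac{R(t,j)}{R(t+1,j)}\in[0,1]$ and $b_t=a_t+\varepsilon_t$; then $\bigl|\prod_t b_t-\prod_t a_t\bigr|\le\sum_t|\varepsilon_t|\prod_{t'>t}|b_{t'}|\le 2\sum_t|\varepsilon_t|$ once $\gamma$ is small enough that $\sum_t|\varepsilon_t|\le\ln 2$ (so $\prod_{t'>t}|b_{t'}|\le e^{\sum|\varepsilon_{t'}|}\le 2$). Using $\delta^{t}_{t-j}\le 1$, $\gamma^t_{t-j}\le k\gamma$, and $R(t,j)\le R(k,j)\le R_{\max}$, each $|\varepsilon_t|\le (k+R_{\max})\gamma$, so $\sum_{t=i}^{k-1}|\varepsilon_t|\le(k-i)(k+R_{\max})\gamma=O(k^2R_{\max}\gamma)$, giving $\lambda_j(N^{k-i}_i)\in\frac{R(i,j)}{R(k,j)}\pm O(k^2R_{\max}\gamma)$, which is well within the claimed bound since $\frac{i^4}{\delta_i(1-\delta_{i-1})}\ge 1$.

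The main thing to be careful about is precisely this error accumulation across the $k-i$ factors: the per-factor estimate of \Cref{claim:regularity2} is only usable because (approximate) non-laziness pins the lower-walk transition probabilities at $O(\gamma)$, and one must choose $\gamma$ small enough --- polynomially in $k$, $R_{\max}$, and the relevant eigenvalue gap $\delta_i(1-\delta_{i-1})$ --- that multiplying $k-i$ perturbed factors does not amplify the error past the stated bound. Once this upper-walk statement is in hand, \Cref{claim:reg-lower} follows since $\widecheck N^{k-i}_k$ and $\widehat N^{k-i}_i$ have the same nonzero spectrum, and \Cref{claim:reg} is the $j=i$ instance, using $\rho^k_i=\lambda_i(\widecheck N^{k-i}_k)$.
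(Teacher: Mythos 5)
Your proof is correct and takes a genuinely different route from the paper's. The paper proves this claim by computing (exactly) the non-expansion of the $i$-link $X^i_\tau$ of a fixed $\tau\in X(j)$ with respect to $N^{k-i}_i$ via a direct combinatorial argument, and then invoking \Cref{lemma:link-projection} (that $j$-faces' links project almost entirely onto the $j$-th eigenstrip) to identify this quantity with $\lambda_j(N^{k-i}_i)$ up to error. You instead factor the approximate-eigenvalue \emph{formula} $\lambda_j(N^{k-i}_i)=\prod_{s=1}^{k-i}(1-\delta^{k-s}_{k-s-j})=\prod_{t=i}^{k-1}\lambda_j(N^1_t)$ (which is indeed the correct evaluation of \Cref{prop:pure-eig-vals} for the upper canonical walk on $X(i)$), apply \Cref{claim:regularity2} to each single-step factor, and telescope $\prod_t R(t,j)/R(t+1,j)=R(i,j)/R(k,j)$. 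Your route is more self-contained — it uses only \Cref{claim:regularity2} and the eigenvalue formula, whereas the paper's route layers \Cref{lemma:link-projection} (which itself calls \Cref{claim:regularity2}) on top — and it produces a \emph{sharper} error estimate $O(k^2 R_{\max}\gamma)$ that avoids the $1/(\delta_i(1-\delta_{i-1}))$ dependence (an artifact of using the eigenstrip gap inside \Cref{lemma:link-projection}); the paper's bound is only weaker. Both arguments implicitly need $\gamma$-non-laziness, which \Cref{claim:regularity2} relies on even though neither that claim's statement nor this one's says so explicitly. Your error-accumulation step is sound: the telescoping identity $\prod b_t - \prod a_t = \sum_t(b_t-a_t)\prod_{t'<t}a_{t'}\prod_{t'>t}b_{t'}$ with $a_t\in[0,1]$, $|b_t|\le e^{|\varepsilon_t|}$, and $\sum|\varepsilon_t|\le\ln 2$ gives exactly what you state.

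You also correctly diagnosed a typo in the paper's statement: the denominator should be $R(k,j)$, not $R(k,i)$. This is confirmed both by the telescoping computation and by the downstream uses — the paper's own proof of the claim lands on $R(i,j)/R(k,j)$ if one tracks $U^k_j\id{\tau}=\frac{1}{R(k,j)}\id{X^k_\tau}$ correctly, and both \Cref{claim:reg} (the $j=i$ specialization, using $R(i,i)=1$) and \Cref{claim:reg-lower} (which has $R(k,j)$ in the denominator) require it.
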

\begin{proof}
This follows almost immediately from the fact that $i$-links lie almost entirely on the $i$th eigenstrip (\Cref{lemma:link-projection}). In particular, it is enough to examine the expansion of $i$-links with respect to the upper canonical walk $N_i^{k-i}$. On the one hand, for any $j \leq i$ and $\tau \in X(j)$ we have:
\begin{align*}
\bar{\Phi}(X^i_\tau) &= \frac{\langle \id{X^i_\tau}, N^{k-i}_i\id{X^i_\tau}\rangle}{\langle \id{X^i_\tau}, \id{X^i_\tau}\rangle}\\
&=\frac{\langle U^k_j\id{\tau}, U^k_j\id{\tau} \rangle}{\langle U^i_j\id{\tau},U_j^i\id{\tau} \rangle}\\
&= \frac{R(i,j)^2}{R(k,i)^2}\frac{\langle \id{X^k_\tau},\id{X^k_\tau} \rangle}{\langle \id{X^i_\tau},\id{X^i_\tau} \rangle}\\
&= \frac{R(i,j)}{R(k,i)}\frac{\langle \id{\tau},\id{\tau} \rangle}{\langle \id{\tau},\id{\tau} \rangle}\\
&= \frac{R(i,j)}{R(k,i)}.
\end{align*}
where we have applied the fact that $\langle X^\ell_\tau,X^\ell_\tau \rangle = R(\ell,j)\langle \id{\tau},\id{\tau}\rangle$. On the other hand, by \Cref{lemma:link-projection} we also have that:
\begin{align*}
    \bar{\Phi}(\id{X^i_\tau}) &= \frac{1}{\langle \id{X^i_\tau},\id{X^i_\tau} \rangle}\sum\limits_{\ell=0}^i \langle \id{X^i_\tau},N^{k-i}_i\id{X^i_\tau,\ell} \rangle\\
    & \in \frac{1}{\langle \id{\tau},\id{\tau} \rangle}\sum\limits_{\ell=0}^i \lambda_j(N^{k-i}_i)\langle \id{X^i_\tau},\id{X^i_\tau,\ell} \rangle + c\gamma \\
    & \in \lambda_j(N^{k-i}_i)\frac{\langle \id{\tau},\id{\tau,j} \rangle}{\langle \id{\tau},\id{\tau} \rangle}+\sum\limits_{j=0}^i err_1\\
    & \in \lambda_j(N^{k-i}_i) + err_2
\end{align*}
where as in the proof of \Cref{thm:link-lower}, $c,err_1,err_2 \leq O\left(\frac{i^4k^2R_{\text{max}}}{\delta_{i-j}^i(1-\delta_{j-1})}\gamma\right)$.
\end{proof}
\Cref{claim:reg} follows immediately from observing that $\rho^k_i = \lambda_i(N^{k-i}_i)$ (by \Cref{prop:pure-eig-vals}). \Cref{claim:reg-lower} follows from observing that $\widehat{N}_i^{k-i}$ and $\widecheck{N}_k^{k-i}$ have the same approximate eigenvalues (similarly by \Cref{prop:pure-eig-vals}).

Finally we close out the section by discussing the connection between non-laziness and a variant of eposets called local-spectral expanders \cite{kaufman2021local}. To start, let's recall this latter definition. 
\begin{definition}[Local-Spectral Expander \cite{dinur2017high,kaufman2021local}]
A $d$-dimensional measured poset $(X,\Pi)$ is a $\gamma$-local-spectral expander if the graph underlying every link\footnote{Here the link of $\tau$ is not just its top level faces, but the complex given by taking this set, removing $\tau$ from each face, and downward closing.} of dimension at most $d-2$ is a $\gamma$-spectral expander.\footnote{A graph is a $\gamma$-spectral expander if its weighted adjacency matrix has no non-trivial eigenvalues greater than $\gamma$ in absolute value.}
\end{definition}
Under suitable regularity conditions (see \cite{kaufman2021local}), local-spectral expansion is equivalent to the notion of expanding posets used in this work. A simple argument shows that $\gamma$-local-spectral expanders are $\gamma$-non-lazy.
\begin{lemma}
Let $(X,\Pi)$ be a $d$-dimensional $\gamma$-local-spectral expander, and $0 < i < d$. The laziness of the lower walk on level $i$ is at most:
\[
\max_{\sigma \in X(i)}\left\{\frac{\langle \id{\sigma}, U_{i-1}D_i \id{\sigma} \rangle}{\langle \id{\sigma}, \id{\sigma} \rangle} \right\} \leq \gamma.
\]
\end{lemma}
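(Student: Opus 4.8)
The plan is to relate the laziness of the lower walk at a face $\sigma\in X(i)$ to the stationary weight of $\sigma$, viewed as a vertex, inside the link graph of an $(i-1)$-face lying below it, and then to invoke spectral expansion of that link graph to bound this weight by $\gamma$. This turns the statement into the classical fact that in a $\gamma$-spectral expander every vertex carries at most a $\gamma$-fraction of the stationary mass.

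First I would unfold the quadratic form. Fix $\sigma\in X(i)$. Since $U_{i-1}$ and $D_i$ are adjoint and $\langle\id{\sigma},\id{\sigma}\rangle=\pi_i(\sigma)$, a direct computation gives
\[
\frac{\langle\id{\sigma},U_{i-1}D_i\id{\sigma}\rangle}{\langle\id{\sigma},\id{\sigma}\rangle}=(U_{i-1}D_i\id{\sigma})(\sigma)=\frac{1}{R(i,i-1)}\sum_{\tau\lessdot\sigma}\frac{\pi_i(\sigma)}{\pi_i(X_\tau)},
\]
using that $(D_i\id{\sigma})(\tau)=\pi_i(\sigma)/\pi_i(X_\tau)$ whenever $\sigma\gtrdot\tau$, together with $\pi_i(X_\tau)=R(i,i-1)\pi_{i-1}(\tau)$. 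So it suffices to show $\pi_i(\sigma)/\pi_i(X_\tau)\le\gamma$ for every $\tau\lessdot\sigma$.

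Second, I would identify $\pi_i(\sigma)/\pi_i(X_\tau)$ with the stationary probability of $\sigma$ in the graph $G_\tau$ underlying the link $X_\tau$. The vertex set of $G_\tau$ is $\{y\in X(i):y>\tau\}$, it has no self-loops, and its stationary distribution is $p_\tau(y)=\pi_i(y)/\pi_i(X_\tau)$; the last point holds because every $(i+1)$-face above a vertex $y$ is automatically above $\tau$, so summing the edge weights incident to $y$ reproduces $R(i+1,i)\pi_i(y)$ up to a global normalization. Since $\tau$ has dimension $i-1\le d-2$ (using $i<d$), the hypothesis says $G_\tau$ is a $\gamma$-spectral expander, i.e.\ $\|\tilde A_\tau-P_\tau\|\le\gamma$, where $\tilde A_\tau$ is the weighted adjacency operator of $G_\tau$ and $P_\tau$ is the $p_\tau$-orthogonal projection onto constant functions. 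Evaluating this bound on $\id{\sigma}$ and using $\tilde A_\tau(\sigma,\sigma)=0$, $\langle\id{\sigma},P_\tau\id{\sigma}\rangle_{p_\tau}=p_\tau(\sigma)^2$, and $\|\id{\sigma}\|_{p_\tau}^2=p_\tau(\sigma)$ yields $p_\tau(\sigma)^2\le\gamma\,p_\tau(\sigma)$, hence $p_\tau(\sigma)\le\gamma$. Plugging back into the first display and averaging over the $R(i,i-1)$ faces $\tau\lessdot\sigma$ bounds the laziness at $\sigma$ by $\gamma$; taking the maximum over $\sigma\in X(i)$ finishes the proof.

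I expect the only real subtlety to be the bookkeeping in the second step: checking that the ``graph underlying the link'' is exactly the right object — no self-loops, and stationary measure precisely $\pi_i$ conditioned on lying above $\tau$ — so that the classical one-line estimate ``every vertex of a $\gamma$-expander is $\gamma$-light'' applies verbatim to $\sigma$. Everything else (the unfolding in step one, the quadratic-form computation in step three) is short and mechanical. One could alternatively route step three through \Cref{lemma:anti-concentration} and the operator identity of \Cref{lemma:DU-UD}, but passing directly through the link graph is cleanest.
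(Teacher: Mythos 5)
Your proof is correct and follows essentially the same route as the paper's: unfold $(U_{i-1}D_i\id{\sigma})(\sigma)$ as an average over $\tau\lessdot\sigma$ of the stationary weight of $\sigma$ in the link graph $G_\tau$, then use non-laziness of $G_\tau$ together with $\gamma$-spectral expansion (your $\|\tilde A_\tau-P_\tau\|\le\gamma$ is the paper's $\|A_\tau-UD_\tau\|\le\gamma$, since the lower walk on a graph is exactly the projection onto constants) to conclude each such weight is at most $\gamma$. The only cosmetic difference is that you write the link vertex as $\sigma$ rather than $\sigma\setminus\tau$ and spell out the rank-one quadratic form explicitly, but the argument is the same.
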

\begin{proof}
Through direct computation, the laziness probability of the lower walk at $\sigma \in X(i)$ is exactly
\[
\frac{\langle \id{\sigma}, U_{i-1}D_i \id{\sigma} \rangle}{\langle \id{\sigma}, \id{\sigma} \rangle} = \frac{1}{R(i,i-1)} \sum \limits_{\tau \lessdot \sigma } \pi_\tau(\sigma \setminus \tau)
\]
It is therefore enough to argue that $\pi_\tau(\sigma \setminus \tau) \leq \gamma$. This follows from the fact that the graph underlying the link $X_\tau$ is a $\gamma$-spectral expander. In particular, recall that an equivalent formulation of this definition states that:
\[
\norm{A_\tau - UD_\tau} \leq \gamma, 
\]
where $A_\tau$ is the standard (non-lazy upper) walk and $UD_\tau$ is the lower walk on the graph underlying $X_\tau$. This implies that the weight of any vertex $v$ in the graph is at most $\gamma$, as:
\[
\frac{\langle \id{v},UD_\tau \id{v}\rangle}{\langle \id{v}, \id{v} \rangle} = \frac{\langle \id{v},(UD_\tau - A_\tau)\id{v}\rangle}{\langle \id{v}, \id{v} \rangle} \leq \norm{A_\tau - UD_\tau} \leq \gamma
\]
where we have used the fact that $A_\tau$ is non-lazy by definition. Since $\pi_\tau(\sigma \setminus \tau)$ is exactly the weight of the vertex $\sigma \setminus \tau$ in $X_\tau$, this completes the proof.
\end{proof}
\end{document}